\def\MR#1{\href{http://www.ams.org/mathscinet-getitem?mr=#1}{MR#1}}
\def\arXiv#1{arXiv:\href{http://arXiv.org/abs/#1}{#1}}
\numberwithin{equation}{section}
\numberwithin{figure}{section}
\newtheorem{theorem}{Theorem}[section]
\newtheorem{proposition}[theorem]{Proposition}
\newtheorem{lemma}[theorem]{Lemma}
\newtheorem{corollary}[theorem]{Corollary}
\theoremstyle{definition}
\newtheorem{definition}[theorem]{Definition}
\theoremstyle{remark}
\newtheorem{remark}[theorem]{Remark}
\newcommand{\abs}[1]{\left\lvert#1\right\rvert}
\newcommand{\sabs}[1]{\lvert#1\rvert}
\newcommand{\norm}[1]{\left\lVert#1\right\rVert}
\newcommand{\snorm}[1]{\lVert#1\rVert}
\newcommand{\ang}[1]{\left\langle #1 \right\rangle}
\newcommand{\ceil}[1]{\left\lceil #1 \right\rceil}
\newcommand{\paren}[1]{\left( #1 \right)}
\newcommand{\set}[1]{\left\{ #1 \right\}}
\newcommand{\wt}{\widetilde}
\DeclareMathOperator{\sign}{sign}
\newcommand{\symmdiff}{\mathbin{\triangle}}
\newcommand{\x}{\times}
\newcommand{\e}{\varepsilon}
\newcommand{\EE}{\mathbb{E}}
\newcommand{\RR}{\mathbb{R}}
\newcommand{\PP}{\mathbb{P}}
\newcommand{\cB}{\mathcal{B}}
\newcommand{\cP}{\mathcal{P}}
\newcommand{\cQ}{\mathcal{Q}}
\newcommand{\cR}{\mathcal{R}}
\newcommand{\bG}{\mathbf{G}}
\newcommand{\bH}{\mathbf{H}}
\title[An $L^p$ theory of sparse graph convergence I]{An $L^p$ theory
  of sparse graph convergence I: limits, sparse random
  graph models,\\ and power law distributions}
\author{Christian Borgs}
\address{Microsoft Research\\
One Memorial Drive\\
Cambridge, MA 02142} \email{borgs@microsoft.com}
\author{Jennifer T.\ Chayes}
\address{Microsoft Research\\
One Memorial Drive\\
Cambridge, MA 02142} \email{jchayes@microsoft.com}
\author{Henry Cohn}
\address{Microsoft Research\\
One Memorial Drive\\
Cambridge, MA 02142} \email{cohn@microsoft.com}
\author{Yufei Zhao}
\address{Department of Mathematics\\
Massachusetts Institute of Technology\\
Cambridge, MA 02139} \email{yufeiz@mit.edu}
\thanks{Zhao was supported by a
Microsoft Research PhD Fellowship and internships at Microsoft Research New England.}
\begin{document}

\begin{abstract}
We introduce and develop a theory of limits for sequences of sparse graphs
based on $L^p$ graphons, which generalizes both the existing $L^\infty$
theory of dense graph limits and its extension by Bollob\'as and Riordan to
sparse graphs without dense spots.  In doing so, we replace the no dense
spots hypothesis with weaker assumptions, which allow us to analyze graphs
with power law degree distributions.  This gives the first broadly applicable
limit theory for sparse graphs with unbounded average degrees.  In this
paper, we lay the foundations of the $L^p$ theory of graphons, characterize
convergence, and develop corresponding random graph models, while we prove
the equivalence of several alternative metrics in a companion paper.
\end{abstract}

\setcounter{tocdepth}{1}
\maketitle

\tableofcontents

\section{Introduction}

Understanding large networks is a fundamental problem in modern graph theory.
What does it mean for two large graphs to be similar to each other, when they
may differ in obvious ways such as their numbers of vertices?  There are many
types of networks (biological, economic, mathematical, physical,
social, technological, etc.), whose details vary widely, but similar structural and growth
phenomena occur in all these domains.  In each case, it is natural to
consider a sequence of graphs with size tending to infinity and ask whether
these graphs converge to any meaningful sort of limit.

For dense graphs, the theory of graphons provides a comprehensive and
flexible answer to this question (see, for example,
\cite{BCLSV1,BCLSV2,Lov,LS06,LS07}).  Graphons characterize the limiting
behavior of dense graph sequences, under several equivalent metrics that
arise naturally in areas ranging from statistical physics to combinatorial
optimization.  Because dense graphs have been the focus of much of the graph
theory developed in the last half century, graphons and
related structural results about dense graphs play a foundational role in
graph theory. However, many large networks of interest in other
fields are sparse, and in the dense theory all sparse graph sequences
converge to the zero graphon. This greatly limits the applicability of
graphons to real-world networks. For example, in statistical physics dense
graph sequences correspond to mean-field models, which are conceptually
important as limiting cases but rarely applicable in real-world systems.

At the other extreme, there is a theory of graph limits for very sparse
graphs, namely those with bounded degree or at least bounded average degree
\cite{AL,AlSt,BS,Lyons} (see also \cite{NOdM1,NOdM2,NOdM3,NOdM4} for a
broader framework based on first-order logic). Although this theory covers
some important physical cases, such as crystals, it also does not apply to
most networks of current interest. And although it is mathematically
completely different in spirit from the theory of dense graph limits, it is
also limited in scope. It covers the case of $n$-vertex graphs with $O(n)$
edges, while dense graph limits are nonzero only when there are $\Omega(n^2)$
edges.

Bollob\'as and Riordan \cite{BR} took an important step towards bridging the
gap between these theories.  They adapted the theory of graphons to sparse
graphs by renormalizing to fix the effective edge density, which captures the
intuition that two graphs with different densities may nevertheless be
structurally similar. Under a boundedness assumption (Assumption~4.1 in
\cite{BR}), which says that there are no especially dense spots within the
graph, they showed that graphons remain the appropriate limiting objects. In
other words, sparse graphs without dense spots converge to graphons after
rescaling.  Thus, these sparse graph sequences are characterized by their
asymptotic densities and their limiting graphons.

The Bollob\'as-Riordan theory extends the scope of graphons to sparse graphs,
but the boundedness assumption is nevertheless highly restrictive.  In loose terms, it
means the edge densities in different parts of the graph are all on roughly
the same scale. By contrast, many of the most exciting network models have
statistics governed by power laws \cite{CSN,Mitz}.  Such models generally
contain dense spots, and we therefore must broaden the theory of graphons to handle
them.

One setting in which these difficulties arise in practice is statistical
estimation of network structure. Each graphon has a corresponding random
graph model converging to it, and it is natural to try to fit these models
to an observed network and thus estimate the underlying graphon (see, for
example, \cite{BC}).  Using the Bollob\'as-Riordan theory, Wolfe and Olhede
\cite{WO} developed an estimator and proved its consistency under certain
regularity conditions.  Their theorems provide valuable statistical tools,
but the use of the Bollob\'as-Riordan theory limits the applicability of
their approach to graphs without dense spots and thus excludes many
important cases.

In this paper, we develop an $L^p$ theory of graphons for all $p>1$, in
contrast with the $L^\infty$ theory studied in previous papers.\footnote{The paper
\cite{KKLS} and the
online notes to Section~17.2 of \cite{Lov} go a little beyond $L^\infty$
graphons to study graphons in $\bigcap_{1\le p < \infty} L^p$.} The $L^p$
theory provides for the first time the flexibility to account for power laws,
and we believe it is the right convergence theory for sparse graphs (outside
of the bounded average degree regime). It generalizes dense graph limits and
the Bollob\'as-Riordan theory, which together are the special case
$p=\infty$, and it extends all the way to the natural barrier of $p=1$.

It is also worth noting that, in the process of developing an $L^p$ theory
of graphons, we give a new $L^p$ version of the Szemer\'edi regularity lemma for
all $p>1$ in its so-called weak (integral) form, which also naturally suggests
the correct formulation for stronger forms.  Long predating the theory of graph
limits and graphons, it was recognized that the regularity lemma is a cornerstone
of modern graph theory and indeed other aspects of discrete mathematics, so attempts
were made to  extend it to non-dense graphs.  Our  $L^p$ version of the weak
Szemer\'edi regularity lemma  generalizes and  extends previous work,
as discussed below.

We will give precise definitions and theorem statements in
\textsection\ref{sec:defres}, but first we sketch some examples motivating
our theory.

We begin with dense graphs and $L^\infty$ graphons.  The most basic random
graph model is the Erd\H os-R\'enyi model $G_{n,p}$, with $n$ vertices and
edges chosen independently with probability $p$ between each pair of
vertices.  One natural generalization replaces $p$ with a symmetric $k \times
k$ matrix; then there are $k$ blocks of $n/k$ vertices each, with edge
density $p_{i,j}$ between the $i$-th and $j$-th blocks.  As $k \to \infty$,
the matrix becomes a symmetric, measurable function $W \colon [0,1]^2 \to
[0,1]$ in the continuum limit.  Such a function $W$ is an $L^\infty$ graphon.
All large graphs can be approximated by $k \times k$ block models with $k$
large via Szemer\'edi regularity, from which it follows that limits of dense
graph sequences are $L^\infty$ graphons.

For sparse graphs the edge densities will converge to zero, but we would like
a more informative answer than just $W=0$.  To determine the asymptotics, we
rescale the density matrix $p$ by a function of $n$ so that it no longer
tends to zero. In the Bollob\'as-Riordan theory, the boundedness assumption
ensures that the densities are of comparable size (when smoothed out by local
averaging) and hence remain bounded after rescaling. They then converge to an
$L^\infty$ graphon, and the known results on $L^\infty$ graphons apply modulo
rescaling.

For an example that cannot be handled using $L^\infty$ graphons, consider the
following configuration model.  There are $n$ vertices numbered $1$ through
$n$, with probability $\min(1,n^\beta (ij)^{-\alpha})$ of an edge between $i$
and $j$, where $0 < \alpha < 1$ and $0 \le \beta < 2\alpha$.  In other words,
the probabilities behave like $(ij)^{-\alpha}$, but boosted by a factor of
$n^\beta$ in case they become too small.\footnote{The inequalities $\alpha<1$
and $\beta<2\alpha$ each have a natural interpretation: the first avoids
having almost all the edges between a sublinear number of vertices, while the
second ensures that the cut-off from taking the minimum with $1$ affects only
a negligible fraction of the edges.}  This model is one of the simplest ways
to get a power law degree distribution, because the expected degree of vertex
$i$ scales according to an inverse power law in $i$ with exponent $\alpha$.
The expected number of edges is on the order of $n^{\beta-2\alpha+2}$, which
is superlinear when $\beta > 2\alpha-1$. However, rescaling by the edge
density $n^{\beta-2\alpha}$ does not yield an $L^\infty$ graphon. Instead, we
get $W(x,y) = (xy)^{-\alpha}$, which is unbounded.

Unbounded graphons are of course far more expressive than bounded graphons,
because they can handle an unbounded range of densities simultaneously.  This
issue does not arise for dense graphs: without rescaling, all densities are
automatically bounded by $1$. However, unboundedness is ubiquitous for
sequences of sparse graphs.

To deal with unbounded graphons, we must reexamine the foundations of the
theory of graphons.  To have a notion of density at all, a graphon must at
least be in $L^1([0,1]^2)$. Neglecting for the moment the limiting case of
$L^1$ graphons, we show that $L^p$ graphons are well behaved when $p>1$.  In
the example above, the $p>1$ case covers the full range $0 < \alpha < 1$,
and we think of it as the primary case, while $p=1$ is slightly degenerate
and requires additional uniformity hypotheses (see
Appendix~\ref{sec:uniform}).

Each graphon $W$ can be viewed as the archetype for a whole class of graphs,
namely those that approximate it.  It is natural to call these graphs
$W$-quasirandom, because they behave as if they were randomly generated using
$W$. From this perspective, the $L^p$ theory of graphons completes the
$L^\infty$ theory: it adds the missing graphons that describe sparse graphs
but not dense graphs.

The remainder of this paper is devoted to three primary tasks:
\begin{enumerate}
\item We lay the foundations of the $L^p$ theory of graphons.

\item \label{task2} We characterize the sparse graph sequences that
    converge to $L^p$ graphons via the concept of $L^p$ upper regularity,
    and we establish the theory of convergence under the cut metric.

\item \label{task3} For each $L^1$ graphon $W$, we develop sparse
    $W$-random graph models and show that they converge to $W$.
\end{enumerate}
Our main theorems are Theorems~\ref{thm:G-limit} and~\ref{thm:rand-converge},
which deal with tasks~\ref{task2} and~\ref{task3}, respectively.
Theorem~\ref{thm:G-limit} says that every $L^p$ upper regular sequence of
graphs with $p>1$ has a subsequence that converges to an $L^p$ graphon, and
Theorem~\ref{thm:rand-converge} says that sparse $W$-random graphs converge
to $W$ with probability $1$.  We also prove a number of other results, which
we state in Section~\ref{sec:defres}.  One topic we do not address here is
``right convergence'' (notions of convergence based on quotients or
statistical physics models). We analyze right convergence in detail in the
companion paper \cite{BCCZ}.

\section{Definitions and results} \label{sec:defres}

\subsection{Notation}

We consider weighted graphs, which include as a special case simple
unweighted graphs. We denote the vertex set and edge set of a graph $G$
by $V(G)$ and $E(G)$, respectively.

In a weighted graph $G$, every vertex $i \in V$ is given a weight $\alpha_i
= \alpha_i(G) > 0$, and every edge $ij \in E(G)$ (allowing loops with $i=j$)
is given a weight $\beta_{ij} = \beta_{ij}(G) \in \RR$. We set $\beta_{ij} =
0$ whenever $ij \notin E(G)$. For each subset $U \subseteq V$, we write
\[
\alpha_U := \sum_{i \in U} \alpha_i \qquad \text{and} \qquad \alpha_G := \alpha_{V(G)}.
\]
We say a
sequence $(G_n)_{n \ge 0}$ of weighted graphs has \emph{no dominant nodes} if
\[
\lim_{n \to \infty} \frac{\max_{i \in V(G_n)} \alpha_i(G_n)}{\alpha_{G_n}} = 0.
\]
A simple (unweighted) graph is one in which $\alpha_i = 1$ for all $i \in V$,
$\beta_{ij} = 1$ whenever $ij \in E$, and $\beta_{ij} = 0$ whenever $ij
\notin E$. A simple graph contains no loops or multiple edges.

For $c \in \RR$, we write $cG$ for the weighted graph obtained from $G$ by
multiplying all edge weights by $c$, while the vertex weights remain
unchanged.

For $1 \leq p \leq \infty$ we define the $L^p$ norms
\[
\norm{G}_p := \paren{\sum_{i,j \in V(G)}
\frac{\alpha_i\alpha_j}{\alpha_G^2} \sabs{\beta_{ij}}^p}^{1/p} \qquad
\text{when } 1 \leq p < \infty
\]
and
\[
\norm{G}_\infty := \max_{i,j\in V(G)} \sabs{\beta_{ij}}.
\]
The quantity $\norm{G}_1$ can be viewed as the edge density when $G$ is a
simple graph. When considering sparse graphs, we usually normalize the edge
weights by considering the weighted graph $G / \norm{G}_1$, in order to
compare graphs with different edge densities.  (Of course this assumes
$\snorm{G}_1 \ne 0$, but that rules out only graphs with no edges, and we
will often let this restriction pass without comment.)

In the previous works \cite{BCLSV1,BCLSV2} on convergence of dense graph
sequences, only graphs with uniformly bounded $\snorm{G}_\infty$ were
considered. In this paper, we relax this assumption. As we will see, this
relaxation is useful even for sparse simple graphs due to the normalization
$G / \norm{G}_1$.

Given that we are relaxing the uniform bound on $\norm{G}_\infty$, one
might think, given the title of this paper, that we impose a uniform
bound on $\norm{G}_p$. This is \emph{not} what we do. A bound on
$\norm{G}_p$ is too restrictive: for a simple graph $G$, an upper
bound on $\norm{G / \norm{G}_1}_p = \norm{G}_1^{\frac1p - 1}$ corresponds to
a lower bound on $\norm{G}_1$, which forces $G$ to be dense. Instead,
we impose an $L^p$ bound on edge densities with respect to vertex set
partitions. This is explained next.

\subsection{$L^p$ upper regular graphs} \label{sec:defres-up-reg-graph}

For any $S, T \subseteq V(G)$, define the edge density (or average
edge weight, for weighted graphs) between $S$ and
$T$ by
\[
\rho_G(S,T) := \sum_{s \in S, t \in T}
\frac{\alpha_s\alpha_t}{\alpha_S \alpha_T} \beta_{st}.
\]

We introduce the following hypothesis. Roughly speaking, it says that for
every partition of the vertices of $G$ in which no part is too small, the
weighted graph derived from averaging the edge weights with respect to the
partition is bounded in $L^p$ norm (after normalizing by the overall edge
density of the graph).

\begin{definition} \label{def:Lp-upper-reg}
A weighted graph $G$ (with vertex weights $\alpha_i$ and edge weights $\beta_{ij}$) is said to be \emph{$(C,\eta)$-upper $L^p$ regular} if
$\alpha_i \leq \eta \alpha_G$ for all $i \in V(G)$, and whenever $V_1 \cup \dots \cup V_m$ is a partition of $V(G)$ into
disjoint vertex sets with $\alpha_{V_i} \geq \eta \alpha_G$ for each $i$, one has
\begin{equation}\label{eq:G-Lp}
\sum_{i,j=1}^m \frac{\alpha_{V_i}\alpha_{V_j}}{\alpha_G^2} \abs{\frac{\rho_G(V_i,V_j)}{\snorm{G}_1}}^p \leq C^p.
\end{equation}
\end{definition}

Informally, a graph $G$ is $(C,\eta)$-upper $L^p$ regular if $G /
\norm{G}_1$ has $L^p$ norm at most $C$ after we average over any partition
of the vertices into blocks of at least $\eta\abs{V(G)}$ in size (and no
vertex has weight greater than $\eta \alpha_G$).  We allow $p=\infty$, in
which case \eqref{eq:G-Lp} must be modified in the usual way to
\[
\max_{1\le i,j\le m} \abs{\frac{\rho_G(V_i,V_j)}{\snorm{G}_1}} \leq C.
\]
Strictly speaking, we should move $\snorm{G}_1$ to the right side of this
inequality and \eqref{eq:G-Lp}, to avoid possibly dividing by zero, but we
feel writing it this way makes the connection with $G/\snorm{G}_1$ clearer.

We will use the terms \emph{upper $L^p$ regular} and \emph{$L^p$ upper
regular} interchangeably. The former is used so that we do not end up writing
\emph{$(C, \eta)$ $L^p$ upper regular}, which looks a bit odd.

Note that the definition of $L^p$ upper regularity is interesting only for
$p > 1$, since \eqref{eq:G-Lp} automatically holds when $p = 1$ and $C = 1$.
See Appendix~\ref{sec:uniform} for a more refined definition, which plays
the same role when $p=1$.

Previous works on regularity and graph limits for sparse graphs (e.g.,
\cite{BR, Koh}) assume a strong hypothesis, namely that $|\rho_G(S,T)| \leq C
\snorm{G}_1$ whenever $\abs{S},\abs{T} \geq \eta\abs{V(G)}$. This is
equivalent to what we call $(C,\eta)$-upper $L^\infty$ regularity, and it is
strictly stronger than $L^p$ upper regularity for $p<\infty$. The
relationship between these notions will be come clearer when we discuss the
graph limits in a moment. For now, it suffices to say that the limit of a
sequence of $L^p$ upper regular graphs is a graphon with a finite $L^p$ norm.

\subsection{Graphons} \label{sec:defres-graphon}

In this paper, we define the term \emph{graphon} as follows.

\begin{definition} A \emph{graphon} is a symmetric, integrable function
$W \colon [0,1]^2 \to \RR$.
\end{definition}

Here \emph{symmetric} means $W(x,y) = W(y,x)$ for all $x,y \in [0,1]$. We
will use $\lambda$ to denote Lebesgue measure throughout this paper (on
$[0,1]$, $[0,1]^2$, or elsewhere), and \emph{measurable} will mean Borel
measurable.

Note that in other books and papers, such as \cite{BCLSV1,BCLSV2,Lov}, the
word ``graphon'' sometimes requires the image of $W$ to be in $[0,1]$, and
the term \emph{kernel} is then used to describe more general functions.

We define the $L^p$ norm on graphons for $1 \leq p < \infty$ by
\[
\norm{W}_p := (\EE [\abs{W}^p])^{1/p} = \paren{\int_{[0,1]^2}
  \abs{W(x,y)}^p \, dx\, dy}^{1/p},
\]
and $\norm{W}_\infty$ is the essential
supremum of $W$.

\begin{definition} An \emph{$L^p$ graphon} is a graphon $W$ with
$\norm{W}_p < \infty$.
\end{definition}

By nesting of norms, an $L^q$ graphon is automatically an $L^p$ graphon for
$1 \leq p \leq q \leq \infty$.  Note that as part of the definition, we
assumed all graphons are $L^1$.

We define the inner product for graphons by
\[
\ang{U, W} = \EE[UW] = \int_{[0,1]^2} U(x,y)W(x,y) \,dx\,dy.
\]
H\"older's inequality will be very useful:
\[
\abs{\ang{U, W}} \leq \norm{U}_p \norm{W}_{p'},
\]
where $1/p + 1/p' = 1$ and $1 \leq p, p' \leq \infty$. The special case
$p=p'=2$ is the Cauchy-Schwarz inequality.

Every weighted graph $G$ has an associated graphon $W^G$ constructed as
follows. First divide the interval $[0,1]$ into intervals $I_1, \dots,
I_{\abs{V(G)}}$ of lengths $\lambda(I_i) = \alpha_i / \alpha_G$ for each $i
\in V(G)$. The function $W^G$ is then given the constant value $\beta_{ij}$
on $I_i \x I_j$ for all $i, j \in V(G)$. Note that $\norm{W^G}_p =
\norm{G}_p$ for $1 \leq p \leq \infty$.

In the theory of dense graph limits, one proceeds by analyzing the
associated graphons $W^{G_n}$ for a sequence of graphs $G_n$, and in
particular one is interested in the limit of $W^{G_n}$ under the cut
metric. However, for sparse graphs, where the density of the graphs
tend to zero, the sequence $W^{G_n}$ converges to an uninteresting
limit of zero. In order to have a more interesting theory of sparse
graph limits, we consider the normalized associated graphons
$W^G/\norm{G}_1$ instead.

\begin{definition}[Stepping operator]
For a graphon $W \colon [0,1]^2 \to \RR$ and a partition $\cP = \{J_1, \dots,
J_m\}$ of $[0,1]$ into measurable subsets, we define a step-function $W_\cP
\colon [0,1]^2 \to \RR$ by
\[
W_\cP(x,y) := \frac{1}{\lambda(J_i)\lambda(J_j)} \int_{J_i \x J_j}
W \, d\lambda \qquad \text{for all } (x,y) \in J_i \x J_j.
\]
In other words, $W_\cP$ is produced from $W$
by averaging over each cell $J_i \x J_j$.
\end{definition}

A simple yet useful property of the stepping operator is that it is
contractive with respect to the cut norm $\norm{\cdot}_\square$
(defined in the next subsection) and all $L^p$ norms, i.e.,
$\norm{W_\cP}_\square \leq \norm{W}_\square$ and $\norm{W_\cP}_p \leq
\norm{W}_p$ for all graphon $W$ and $1 \leq p \leq \infty$.

We can rephrase the definition of a $(C,\eta)$-upper $L^p$ regular graph
using the language of graphons. Let $ V_1 \cup \cdots \cup V_m$ be a
partition of $V(G)$ as in Definition~\ref{def:Lp-upper-reg}, and let $\cP =
\{J_1, \dots, J_m\}$, where $J_i$ is the subset of $[0,1]$ corresponding to
$V_i$, i.e., $J_i = \bigcup_{v \in V_i} I_v$, where $I_v$ is as in the
definition of $W^G$. Then \eqref{eq:G-Lp} simply says that
\[
\snorm{(W^G)_\cP}_p \leq C \snorm{G}_1.
\]
This motivates the following notation of $L^p$ upper regularity for graphons.

\begin{definition} \label{def:W-upper-reg}
We say that a graphon $W \colon [0,1]^2 \to \RR$ is \emph{$(C,\eta)$-upper
$L^p$ regular} if whenever $\cP$ is a partition of $[0,1]$ into measurable
sets each having measure at least $\eta$,
\[
\norm{W_\cP}_p \leq C.
\]
\end{definition}

Given a weighted graph $G$, if the normalized associated graphon $W^G /
\norm{G}_1$ is $(C, \eta)$-upper $L^p$ regular and the vertex weights are all
at most $\eta \alpha_G$, then $G$ must also be $(C,\eta)$-upper $L^p$
regular. The converse is not true, as the definition of upper regularity for
graphons involves partitions $\cP$ of $[0,1]$ that do not necessarily respect
the vertex-atomicity of $V(G)$. For example, $K_3$ is a $(C, 1/2)$-upper
$L^p$ regular graph for every $C>0$ and $p > 1$ because no valid partition of
vertices exist, but the same is not true for the graphon
$W^{K_3}/\norm{K_3}_1$.

\subsection{Cut metric} \label{sec:defres-cut}

The most important metric on the space of graphons is the cut metric.
(Strictly speaking, it is merely a pseudometric, since two graphons with cut
distance zero between them need not be equal.)  It is defined in terms of the
cut norm introduced by Frieze and Kannan \cite{FK}.

\begin{definition}[Cut metric] For a graphon
$W \colon [0,1]^2 \to \RR$, define the \emph{cut norm} by
\begin{equation} \label{eq:cut-norm}
\norm{W}_\square := \sup_{S,T \subseteq [0,1]} \abs{\int_{S \x T}
W(x,y) \, dx\,dy},
\end{equation}
where $S$ and $T$ range over measurable subsets of $[0,1]$. Given two
graphons $W, W' \colon [0,1]^2 \to \RR$, define
\[
d_\square(W,W') := \norm{W - W'}_\square
\]
and the \emph{cut metric} (or \emph{cut distance}) $\delta_\square$ by
\[
\delta_\square(W,W') := \inf_\sigma d_\square(W^\sigma,W'),
\]
where $\sigma$ ranges over all measure-preserving bijections $[0,1] \to
[0,1]$ and $W^\sigma(x,y) := W(\sigma(x), \sigma(y))$.
\end{definition}

For a survey covering many properties of the cut metric, see \cite{Jan}.  One
convenient reformulation is that it is equivalent to the $L^\infty\to L^1$
operator norm, which is defined by
\[
\norm{W}_{\infty \to 1} = \sup_{\norm{f}_\infty,\norm{g}_\infty \le 1}
\abs{\int_{[0,1]^2} W(x,y) f(x) g(y) \, dx \,dy},
\]
where $f$ and $g$ are functions from $[0,1]$ to $\RR$.  Specifically, it is
not hard to show that
\begin{equation} \label{eq:cut-to-operator-norm}
\norm{W}_\square \le \norm{W}_{\infty \to 1} \le 4 \norm{W}_\square,
\end{equation}
by checking that $f$ and $g$ take on only the values $\pm 1$ in the extreme
case.

We can extend the $d$ and $\delta$ notations to any norm on the space of
graphons. In particular, for $1 \leq p \leq \infty$, we define
\[
d_p(W, W') := \norm{W - W'}_p
\qquad
\text{and}
\qquad
\delta_p(W,W') := \inf_\sigma d_p(W^\sigma,W'),
\]
with $\sigma$ ranging overall measure-preserving bijections $[0,1] \to
[0,1]$ as before.

To define the cut distance between two weighted graphs $G$ and $G'$, we use
their associated graphons. If $G$ and $G'$ are weighted graphs on the same
set of vertices (with the same vertex weights), with edge weights given by
$\beta_{ij}(G)$ and $\beta_{ij}(G')$ respectively, then we define
\[
d_\square(G, G') := d_\square(W^G, W^{G'})
= \max_{S,T \subseteq V(G)} \abs{\sum_{i \in S, j \in T} \frac{\alpha_i
    \alpha_j}{\alpha_G^2} (\beta_{st}(G) - \beta_{st}(G'))},
\]
where $W^G$ and $W^{G'}$ are constructed using the same partition of $[0,1]$
based on the vertex set. The final equality uses the fact that the cut norm
for a graphon associated to a weighted graph can always be achieved by $S$
and $T$ in \eqref{eq:cut-norm} that correspond to vertex subsets. This is due
to the bilinearity of the expression of inside the absolute value in
\eqref{eq:cut-norm} with respect to the fractional contribution of each
vertex to the sets $S$ and $T$.

When $G$ and $G'$ have different
vertex sets, $d_\square(G, G')$ no longer makes sense, but it still
makes sense to define
\[
\delta_\square(G, G') := \delta_\square(W^G, W^{G'}).
\]
Similarly, for a weighted graph $G$ and a graphon $W$, define
\[
\delta_\square(G, W) := \delta_\square(W^G, W).
\]
To compare graphs of different densities, we can compare the normalized
associated graphons, i.e., $\delta_\square(G/\snorm{G}_1,
G'/\snorm{G'}_1)$. We will sometimes refer to this quantity as the
\emph{normalized cut metric}.

\subsection{$L^p$ upper regular sequences} \label{sec:defres-up-reg-seq}

\begin{definition} Let $1 < p \leq \infty$ and $C > 0$. We say that
  $(G_n)_{n \ge 0}$ is \emph{a $C$-upper $L^p$ regular sequence} of weighted
  graphs if for every $\eta > 0$ there is some $n_0 = n_0(\eta)$ such
  that $G_n$ is $(C + \eta, \eta)$-upper $L^p$ regular for all $n \geq
  n_0$. In other words, $G_n$ is $(C + o(1), o(1))$-upper $L^p$
  regular as $n \to \infty$. An $L^p$ upper regular sequence of graphons is defined
  similarly.
\end{definition}

As an example what kind of graphs this definition excludes, a sequence of
graphs $G_n$ formed by taking a clique on a subset of $o(\abs{V(G_n)})$
vertices and no other edges is not $C$-upper $L^p$ regular for any $1 < p
\leq \infty$ and $C > 0$.  Furthermore, in Appendix~\ref{sec:superlinear} we
show that the average degree in a $C$-upper $L^p$ regular sequence of simple
graphs must tend to infinity.

Now we are ready to state one of the main results of the paper, which
asserts the existence of limits for $L^p$ upper regular sequences.

\begin{theorem} \label{thm:G-limit}
Let $p > 1$ and let $(G_n)_{n \ge 0}$ be a $C$-upper $L^p$ regular sequence of weighted graphs. Then
there exists an $L^p$ graphon $W$ with $\norm{W}_p \leq C$ so that
\[
\liminf_{n\to\infty} \delta_\square\paren{\frac{G_n}{\snorm{G_n}_1}, W}= 0.
\]
\end{theorem}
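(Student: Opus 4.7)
The plan is to follow the compactness-via-regularity strategy familiar from the dense theory, but with the ordinary weak Szemer\'edi regularity lemma replaced by its $L^p$ analogue developed earlier in the paper. Set $U_n := W^{G_n}/\snorm{G_n}_1$; for $n$ large this is a graphon with essentially the same upper $L^p$ regularity constants as $G_n$, since the no-dominant-nodes clause in the definition lets us convert between partitions of $V(G_n)$ and partitions of $[0,1]$ at a cost of $o(1)$. For each integer $k\geq 1$, the $L^p$ weak regularity lemma supplies a partition $\cP_{n,k}$ of $[0,1]$ with at most $N_k = N_k(C,p)$ parts such that $\snorm{U_n - (U_n)_{\cP_{n,k}}}_\square \leq 1/k$, and by successively refining I may assume $\cP_{n,k+1}$ refines $\cP_{n,k}$.

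The main technical step is to align these partitions across $n$ and extract a convergent subsequence. By applying a measure-preserving bijection $\sigma_n$ to $U_n$ and rounding the measures of the parts to multiples of a small rational $1/M_k$, I can arrange that for each fixed $k$ there are only finitely many combinatorial possibilities for $\cP_{n,k}$ (viewed as an ordered interval partition). On each cell the step-value of $(U_n)_{\cP_{n,k}}$ is bounded in absolute value by $C/\eta_k^{2/p}$, a finite constant at each fixed $k$, since a single term in the defining sum of $\snorm{(U_n)_{\cP_{n,k}}}_p^p$ already dominates the value to the $p$th power times the cell measure squared. A diagonal argument then yields a subsequence $(n_j)$ along which, simultaneously for every $k$, the shape of $\cP_{n_j,k}$ stabilizes to a fixed partition $\cP_k$ and each of the finitely many cell values converges. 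The limits assemble into a step graphon $W_k$ with $\snorm{W_k}_p \leq C$, by Fatou or by directly taking limits in the $L^p$ regularity bound.

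Because $\cP_{k+1}$ refines $\cP_k$, the sequence $(W_k)$ is a martingale with respect to the product filtration generated by $\cP_k \times \cP_k$, uniformly bounded in $L^p$. Since $p>1$, the $L^p$ martingale convergence theorem (relying on reflexivity of $L^p$) delivers a symmetric limit $W \in L^p([0,1]^2)$ with $W_k \to W$ both almost everywhere and in $L^p$, and $\snorm{W}_p \leq C$. To finish, the triangle inequality for $\delta_\square$ along the subsequence gives
\[
\delta_\square(U_{n_j}, W) \;\leq\; \snorm{U_{n_j} - (U_{n_j})_{\cP_{n_j,k}}}_\square \;+\; d_\square\bigl((U_{n_j})_{\cP_{n_j,k}}^{\sigma_{n_j}}, W_k\bigr) \;+\; \snorm{W_k - W}_\square,
\]
where the first term is at most $1/k$, the third tends to $0$ as $k\to\infty$ (since $L^p$ convergence dominates cut-norm convergence), and the second tends to $0$ as $j\to\infty$ for each fixed $k$ by construction. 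Letting $k$ then $j$ tend to infinity yields $\liminf_j \delta_\square(U_{n_j}, W)=0$. I expect the alignment step to be the principal obstacle: preserving the nested refinement structure through the measure-preserving rearrangements, and arranging that a \emph{single} diagonal subsequence works for all $k$ simultaneously, while verifying that the deteriorating pointwise bound $C/\eta_k^{2/p}$ on cell values is harmless because it is finite at each fixed $k$ and the poor dependence on $k$ is absorbed by the martingale-$L^p$ convergence, not by any pointwise estimate.
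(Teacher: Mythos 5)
Your overall strategy is the one the paper uses (weak regularity to produce bounded step approximations, alignment of partitions, a diagonal subsequence, and $L^p$ martingale convergence), but there are two places where your write-up asserts exactly the steps that require real work, and the first of them is a genuine gap as stated. You set $U_n = W^{G_n}/\norm{G_n}_1$ and claim that $U_n$ is upper $L^p$ regular as a \emph{graphon} with essentially the same constants as $G_n$, ``at a cost of $o(1)$'' from the no-dominant-nodes condition. Graphon upper regularity (Definition~\ref{def:W-upper-reg}) quantifies over arbitrary measurable partitions of $[0,1]$, which need not respect vertex atomicity; the paper explicitly points out that the implication from graph upper regularity to graphon upper regularity of $W^G/\norm{G}_1$ fails in general (the $K_3$ example after Definition~\ref{def:W-upper-reg}), and only the opposite direction is automatic. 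A measurable part corresponds to a fractional vertex set, and controlling $\norm{(U_n)_\cP}_p$ for such partitions from the vertex-partition hypothesis is a nontrivial rounding problem. The paper avoids it entirely: it proves a vertex-partition version of the weak regularity lemma (Theorem~\ref{thm:G-upper-reg-lemma}), rerunning the argument with $S$, $T$ and all partitions restricted to unions of vertices and using $\alpha_i \le \eta\alpha_G$ to round set sizes into $[\eta,2\eta)$. Your conversion claim may be provable along similar lines, but it is precisely the missing argument, not an $o(1)$ bookkeeping step.

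The second soft spot is the interaction between nesting and part sizes. Since $\norm{U_n}_p$ is not bounded by $C$ (for sparse simple graphs it equals $\norm{G_n}_1^{1/p-1}\to\infty$), you cannot invoke the regularity lemma for $L^p$-bounded graphons; you must use the upper-regular version (Theorem~\ref{thm:W-upper-reg}), and your later cell-value bound $C/\eta_k^{2/p}$, as well as $\norm{W_k}_p\le C$, rely on every part of $\cP_{n,k}$ having measure at least $\eta_k$. But ``by successively refining I may assume $\cP_{n,k+1}$ refines $\cP_{n,k}$'' is not innocent: passing to common refinements destroys the part-size lower bound, after which upper regularity gives no control on $\norm{(U_n)_{\cP_{n,k}}}_p$ at all. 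The paper factors the proof to sidestep this: it first densifies, applying Theorem~\ref{thm:G-upper-reg-lemma} once per $n$ to get a step graphon with $L^p$ norm at most $C+o(1)$ and cut distance $o(1)$ to $G_n/\norm{G_n}_1$, and only then runs the alignment/diagonal/martingale argument (Theorem~\ref{thm:compact}) on this genuinely $L^p$-bounded sequence, where contractivity of the stepping operator bounds all refinements regardless of part sizes, nesting is free, and the equitable lemmas (Lemmas~\ref{lem:W-L^2-reg-equi} and~\ref{lem:W-L^p-reg-equi}) make the alignment exact, so the martingale identity holds on the nose rather than approximately as in your rounding-to-multiples-of-$1/M_k$ scheme. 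If you repair the graph-to-graphon conversion and replace the ad hoc nesting/rounding by this two-step factorization, your argument becomes the paper's proof.
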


In other words, some subsequence of $G_n/\snorm{G_n}_1$ converges to
$W$ in the cut metric.  An analogous result holds for $L^p$ upper
regular sequences of graphons.

\begin{theorem} \label{thm:W-upper-reg-limit}
Let $p > 1$ and let $(W_n)_{n \ge 0}$ be a $C$-upper $L^p$ regular sequence of graphons. Then
there exists an $L^p$ graphon $W$ with $\norm{W}_p \leq C$ so that
\[
\liminf_{n\to\infty} \delta_\square(W_n, W) = 0.
\]
\end{theorem}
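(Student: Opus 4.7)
The plan is to adapt the Lov\'asz-Szegedy compactness argument for dense graphons to the $L^p$ setting, using the $L^p$ weak regularity lemma that the introduction promises to establish for $p>1$ together with the $L^p$-contractivity of the stepping operator noted in the excerpt.

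For each $k \ge 1$, I would apply the $L^p$ weak regularity lemma to every $W_n$ to obtain a partition $\cP_n^{(k)}$ of $[0,1]$ with at most $N_k$ parts, each of measure at least $\eta_k > 0$, such that $\snorm{W_n - (W_n)_{\cP_n^{(k)}}}_\square \le 2^{-k}$. By $C$-upper $L^p$ regularity of $(W_n)$ together with contractivity of the stepping operator, $\snorm{(W_n)_{\cP_n^{(k)}}}_p \le C + o(1)$, so in particular the individual block values are uniformly bounded by roughly $(C+o(1))\eta_k^{-2/p}$. Applying measure-preserving rearrangements $\sigma_n^{(k)}$ that send the blocks of $\cP_n^{(k)}$ to consecutive subintervals of $[0,1]$ in decreasing order of measure places each $(W_n)_{\cP_n^{(k)}}^{\sigma_n^{(k)}}$ in a compact finite-dimensional parameter space (the simplex of block lengths, times a bounded set of symmetric $N_k \x N_k$ matrices of block values). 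A diagonal extraction over $k$, with common refinements taken at successive stages to keep the partitions nested, then yields a subsequence $(W_{n_j})$, bijections $\tau_j$, and step-function limits $U_k$ such that $(W_{n_j})_{\cP_{n_j}^{(k)}}^{\tau_j} \to U_k$ in $L^1$ (hence in cut norm) for each fixed $k$, with $\snorm{U_k}_p \le C$ by Fatou applied to an almost-everywhere-convergent sub-subsequence.

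Combining the two approximations gives $\snorm{W_{n_j}^{\tau_j} - U_k}_\square \le 2^{-k} + o(1)$, so the $U_k$ form a Cauchy sequence in cut norm and converge to a graphon $W$ with $\delta_\square(W_{n_j}, W) \to 0$. The bound $\snorm{W}_p \le C$ then follows by extracting one more subsequence along which $U_k \to W$ almost everywhere and applying Fatou once more: $\snorm{W}_p \le \liminf_k \snorm{U_k}_p \le C$.

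The main obstacle I anticipate is the coherent coordination of the rearrangements $\tau_j$ simultaneously across all scales $k$: the partitions $\cP_n^{(k)}$ and $\cP_n^{(k+1)}$ are not a priori nested, so their canonical interval rearrangements disagree. The standard fix is to replace $\cP_n^{(k+1)}$ by its common refinement with $\cP_n^{(k)}$ before applying regularity at scale $k+1$; $L^p$-contractivity of the stepping operator preserves the uniform $L^p$ bounds under refinement, and the resulting nested structure permits a single sequence of bijections to be chosen that stabilizes the block labels as $k$ grows.
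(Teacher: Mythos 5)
Your overall architecture (weak regularity to get step-function approximations with parts of measure bounded below, a diagonal extraction of limits of those step functions, then gluing) is the same route the paper takes, but two of your steps have genuine gaps. The first concerns the uniform $L^p$ bound. The graphons $W_n$ are only assumed upper regular; nothing bounds $\norm{W_n}_p$, which may even be infinite. So (i) the regularity lemma you can apply to $W_n$ itself must be the one for upper regular graphons (Theorem~\ref{thm:W-upper-reg}, equivalently Proposition~\ref{prop:densify}), whose error is an absolute $C\e$; the weak regularity lemma for $L^p$ graphons (Lemma~\ref{lem:W-L^p-reg-equi}), with error $\e\norm{W_n}_p$, is useless here. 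And (ii) your claim that ``$L^p$-contractivity of the stepping operator preserves the uniform $L^p$ bounds under refinement'' is backwards: contractivity says coarsening does not increase $\norm{\cdot}_p$, so refining a partition can only increase $\norm{(W_n)_\cP}_p$, with no bound except the uncontrolled $\norm{W_n}_p$. The only available source of the bound $\norm{(W_n)_{\cP}}_p \le C+o(1)$ is the upper regularity of $W_n$, which applies solely to partitions all of whose parts have measure at least $\eta_n$; common refinements of $\cP_n^{(k)}$ and $\cP_n^{(k+1)}$ (or refinements produced at stage $k+1$) can have arbitrarily small parts, and then you have no $L^p$ control at all. Maintaining a lower bound on part sizes through a refinement scheme for an upper regular graphon is exactly the technical heart of Section~\ref{sec:reg-lem-up-reg} (Lemmas~\ref{lem:small-cut} and~\ref{lem:W-stab}, plus the truncation for $1<p<2$). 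The paper sidesteps your difficulty by performing all nested refinements not on $W_n$ but on the densified graphons $(W_n)_{\cP_n}$, which genuinely satisfy $\norm{(W_n)_{\cP_n}}_p \le C+o(1)$, inside the proof of Theorem~\ref{thm:compact}.

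The final limit step is also not justified as written. The space of graphons is not complete under the cut norm (cf.\ Proposition~\ref{prop:G-no-limit}(b)), so ``the $U_k$ are Cauchy in cut norm and hence converge to a graphon $W$'' requires an argument, and cut-norm convergence does not yield an a.e.\ convergent subsequence, so your concluding Fatou step for $\norm{W}_p\le C$ does not run. Both points are repairable with structure you already have: with nested partitions one has $(U_{k+1})_{\cP_k}=U_k$, so $(U_k)$ is an $L^p$-bounded martingale and converges a.e.\ and in $L^p$ to some $W$ with $\norm{W}_p\le C$ (this is precisely how the paper proves Theorem~\ref{thm:compact}); alternatively, weak sequential compactness of the $L^p$ ball for $p>1$ together with weak lower semicontinuity of $\norm{\cdot}_p$ upgrades the Cauchy property to cut-norm convergence to a limit in the ball. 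As written, though, both the refinement step and the limit step are gaps.
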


These theorems, and all the remaining results in this subsection, are proved
in \textsection\ref{sec:limit}.

The next proposition says that, conversely, every sequence that
converges to an $L^p$ graphon must be an $L^p$ upper regular
sequence.

\begin{proposition} \label{prop:conv-to-Lp} Let $1 \leq p \leq \infty$, let
  $W$ be an $L^p$ graphon, and let $(W_n)_{n \ge 0}$ be a sequence of graphons with
  $\delta_\square(W_n, W) \to 0$ as $n \to \infty$. Then $(W_n)_{n \ge 0}$ is a
  $\norm{W}_p$-upper $L^p$ regular sequence.
\end{proposition}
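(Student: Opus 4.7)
The plan is first to reduce to the setting $\norm{W_n - W}_\square \to 0$. Since $\delta_\square(W_n, W) \to 0$, one may choose measure-preserving bijections $\sigma_n \colon [0,1] \to [0,1]$ with $\norm{W_n^{\sigma_n} - W}_\square \to 0$. The property of being $(C,\eta)$-upper $L^p$ regular is invariant under replacing a graphon by $W^\sigma$ for any measure-preserving bijection $\sigma$, because the partitions of $[0,1]$ appearing in Definition~\ref{def:W-upper-reg} pull back along $\sigma$ to partitions with the same measures and the $L^p$ norm of the stepped function is unchanged. Hence I may replace $W_n$ by $W_n^{\sigma_n}$ and assume $\norm{W_n - W}_\square \to 0$.

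Now fix $\eta > 0$, and let $\cP = \{J_1, \dots, J_m\}$ be any partition of $[0,1]$ with $\lambda(J_i) \geq \eta$ for all $i$. By linearity of the stepping operator and the triangle inequality,
\[
\norm{(W_n)_\cP}_p \leq \norm{W_\cP}_p + \norm{(W_n - W)_\cP}_p.
\]
The first term is at most $\norm{W}_p$ by the $L^p$ contractivity of the stepping operator noted just before Definition~\ref{def:W-upper-reg}. For the second, on each cell $J_i \times J_j$ the function $(W_n - W)_\cP$ takes the constant value
\[
\frac{1}{\lambda(J_i)\lambda(J_j)} \int_{J_i \times J_j} (W_n - W)\, d\lambda,
\]
and the definition of the cut norm together with $\lambda(J_i), \lambda(J_j) \geq \eta$ bounds this constant in absolute value by $\eta^{-2}\norm{W_n - W}_\square$. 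Hence $\norm{(W_n - W)_\cP}_\infty \leq \eta^{-2}\norm{W_n - W}_\square$, and since the ambient space has total measure one this bound also controls $\norm{(W_n - W)_\cP}_p$.

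Combining the two estimates yields $\norm{(W_n)_\cP}_p \leq \norm{W}_p + \eta^{-2}\norm{W_n - W}_\square$, uniformly over all partitions $\cP$ with cells of measure at least $\eta$. Choosing $n_0 = n_0(\eta)$ so that $\norm{W_n - W}_\square < \eta^3$ for all $n \geq n_0$ then gives $\norm{(W_n)_\cP}_p \leq \norm{W}_p + \eta$ for every admissible $\cP$, so $W_n$ is $(\norm{W}_p + \eta, \eta)$-upper $L^p$ regular for $n \geq n_0$, which is exactly what it means for $(W_n)_{n \ge 0}$ to be a $\norm{W}_p$-upper $L^p$ regular sequence. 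There is no real obstacle in this plan; the only point worth highlighting is that the cut-norm bound on $W_n - W$ automatically supplies $L^\infty$ control on the stepped differences uniformly across all admissible partitions, which is precisely what the definition of upper regularity demands.
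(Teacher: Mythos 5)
Your proof is correct and follows essentially the same route as the paper: the paper packages your key estimate as Lemma~\ref{lem:small-cut-upper-regular} (cut-norm distance at most $\eta^3$ forces $\abs{(W_n)_\cP - W_\cP} \leq \eta$ pointwise on partitions with parts of measure at least $\eta$, hence $\norm{(W_n)_\cP}_p \leq \norm{W}_p + \eta$), combined with the observation that $W$ itself is trivially $(\norm{W}_p,\eta)$-upper $L^p$ regular. Your explicit handling of the measure-preserving bijections and the invariance of upper regularity under them is a point the paper leaves implicit, but it is the same argument.
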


An analogous result about weighted graphs follows as an immediate
corollary by setting $W_n = W^{G_n}/\norm{G_n}_1$.

\begin{corollary} \label{cor:G-conv-to-Lp} Let $1 \leq p \leq
  \infty$, let $W$ be an $L^p$ graphon, and let $(G_n)_{n \ge 0}$ be a sequence of
  weighted graphs with no dominant nodes and with $\delta_\square(G_n/\norm{G_n}_1, W) \to 0$ as $n \to
  \infty$. Then $(G_n)_{n \ge 0}$ is a $\norm{W}_p$-upper $L^p$ regular
  sequence.
\end{corollary}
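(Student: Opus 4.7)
The plan is to deduce the corollary from Proposition~\ref{prop:conv-to-Lp} by applying it to the normalized associated graphons $W_n := W^{G_n}/\snorm{G_n}_1$, and then translating the resulting upper regularity statement for graphons into the corresponding statement for the graphs $G_n$. The ``no dominant nodes'' hypothesis enters only when we need to guarantee that the vertex-weight condition $\alpha_i \le \eta \alpha_{G_n}$ in Definition~\ref{def:Lp-upper-reg} holds for all sufficiently large $n$.

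First I would observe that by the definition of $\delta_\square$ for weighted graphs, $\delta_\square(W_n, W) = \delta_\square(G_n/\snorm{G_n}_1, W) \to 0$, so Proposition~\ref{prop:conv-to-Lp} applies to $(W_n)_{n \ge 0}$ and yields that this graphon sequence is $\snorm{W}_p$-upper $L^p$ regular. Thus for each $\eta > 0$ there is $n_0 = n_0(\eta)$ such that for all $n \ge n_0$ and every measurable partition $\cP$ of $[0,1]$ into sets of measure at least $\eta$, one has $\snorm{(W_n)_\cP}_p \le \snorm{W}_p + \eta$. After possibly enlarging $n_0$, the no-dominant-nodes assumption also gives $\max_i \alpha_i(G_n)/\alpha_{G_n} \le \eta$ for $n \ge n_0$.

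Next I would check the translation from partitions of $V(G_n)$ to partitions of $[0,1]$. Given any partition $V_1 \cup \cdots \cup V_m$ of $V(G_n)$ with $\alpha_{V_i} \ge \eta \alpha_{G_n}$, let $\cP = \{J_1, \ldots, J_m\}$ with $J_i = \bigcup_{v \in V_i} I_v$, where the $I_v$ are the intervals used to construct $W^{G_n}$. Then $\lambda(J_i) = \alpha_{V_i}/\alpha_{G_n} \ge \eta$, so $\cP$ is an admissible partition for Definition~\ref{def:W-upper-reg}. Since the stepping operator commutes with scalar multiplication, $(W_n)_\cP = (W^{G_n})_\cP/\snorm{G_n}_1$, and by direct computation from the construction of $W^{G_n}$,
\[
\snorm{(W^{G_n})_\cP}_p^p = \sum_{i,j=1}^m \frac{\alpha_{V_i}\alpha_{V_j}}{\alpha_{G_n}^2}\abs{\rho_{G_n}(V_i,V_j)}^p,
\]
so the bound $\snorm{(W_n)_\cP}_p \le \snorm{W}_p + \eta$ is exactly inequality~\eqref{eq:G-Lp} with $C = \snorm{W}_p + \eta$. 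Combined with the vertex-weight bound above, this shows $G_n$ is $(\snorm{W}_p + \eta, \eta)$-upper $L^p$ regular for $n \ge n_0$, which is the definition of being a $\snorm{W}_p$-upper $L^p$ regular sequence. The only step that is not purely mechanical is the simultaneous use of the no-dominant-nodes condition, but this is straightforward since both quantifiers ``for every $\eta > 0$ eventually'' can be merged by taking the maximum of the two threshold indices; I do not anticipate any real obstacle in the argument.
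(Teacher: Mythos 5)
Your proposal is correct and is essentially the paper's own argument: the paper derives the corollary immediately from Proposition~\ref{prop:conv-to-Lp} by setting $W_n = W^{G_n}/\snorm{G_n}_1$, relying on the observation already made in \S\ref{sec:defres-graphon} that graphon upper regularity of $W^G/\snorm{G}_1$ together with the vertex-weight bound (supplied here by the no-dominant-nodes hypothesis) implies upper regularity of the graph $G$. Your explicit verification of the partition translation and the merging of the two thresholds just spells out what the paper treats as immediate.
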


The two limit results, Theorems~\ref{thm:G-limit} and
\ref{thm:W-upper-reg-limit}, are proved by first developing a
regularity lemma showing that one can approximate an $L^p$ upper
regular graph(on) by an $L^p$ graphon with respect to cut metric, and
then establishing a limit result in the space of $L^p$ graphons. The
latter step can be rephrased as a compactness result for $L^p$
graphons, which we state in the next subsection.

We note that a sequence of graphs might not have a limit without the $L^p$
upper regularity assumption. It could go wrong in two ways: (a) a sequence
might not have any Cauchy subsequence, and (b) even a Cauchy sequence is not
guaranteed to converge to a limit.

\begin{proposition}   \label{prop:G-no-limit}
  (a) There exists a sequence of simple graphs
    $G_n$ so that
    \[
    \delta_\square(G_n/\norm{G_n}_1, G_m/\norm{G_m}_1) \geq 1/2 \qquad
    \text{for all $n$ and $m$ with $n \neq m$}.
    \]

    \noindent (b) There exists a sequence of simple graphs $G_n$ such that
    $(G_n/\norm{G_n}_1)_{n \ge 0}$ is a Cauchy sequence with respect to
    $\delta_\square$ but does not converge to any graphon $W$ with
    respect to $\delta_\square$.
\end{proposition}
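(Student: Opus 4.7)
The plan is to use spike graphs at geometrically separated scales. Let $G_n$ be the disjoint union of a clique $K_{k_n}$ with $N_n - k_n$ isolated vertices, taking $k_n = 2^n$ and $N_n = 4^n$, so that the clique occupies a $2^{-n}$ fraction of the vertex set. Up to $o(1)$ corrections from the missing diagonal, the normalized associated graphon $W_n := W^{G_n}/\snorm{G_n}_1$ equals $4^n \mathbf{1}_{[0,2^{-n}]^2}$. To bound $\delta_\square(W_n, W_m)$ from below when $n < m$, fix any measure-preserving bijection $\sigma$; the rearranged graphon $W_n^\sigma$ is supported on some set $A \times A$ with $\lambda(A) = 2^{-n}$. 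Testing against $S = T = [0, 2^{-m}]$ yields $\int_{S \times T} W_n^\sigma \le 4^n \lambda(S)^2 = 4^{n-m} \le 1/4$, while $\int_{S \times T} W_m = 1$. Hence $\snorm{W_n^\sigma - W_m}_\square \ge 3/4$ uniformly in $\sigma$, and $\delta_\square(W_n, W_m) \ge 3/4 > 1/2$ for all $n \ne m$.

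\textbf{Part (b).} I would seek a sequence whose normalized graphons concentrate mass in a way that is Cauchy in the cut metric but admits no $L^1$ graphon limit. The plan is a hierarchical construction built so that (i) the successive cut-norm differences $d_\square(W_n, W_{n+1})$ are summable, forcing Cauchy-ness; and (ii) the partition-averaged $L^p$ norms $\snorm{(W_n)_\cP}_p$ blow up for every $p > 1$ on every sufficiently coarse partition $\cP$, so that the sequence fails to be $L^p$ upper regular for any $p > 1$. By Proposition~\ref{prop:conv-to-Lp}, condition (ii) rules out any $L^p$ graphon limit with $p > 1$. To exclude the remaining $L^1 \setminus L^p$ case, I would suppose for contradiction that $\delta_\square(W_n, W) \to 0$ along rearrangements $\sigma_n$, and track the limiting integrals $\int_{S \times T} W_n^{\sigma_n}$ against carefully chosen shrinking test sets; the concentration structure of the construction forces these integrals to remain bounded away from zero even as $\lambda(S \times T) \to 0$, contradicting the absolute continuity of $L^1$ integrals and hence the existence of $W$.

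\textbf{Main obstacle.} The crux of (b) is simultaneously achieving Cauchy-ness and preventing convergence. Pure shrinking-spike constructions (as in part~(a)) fail to be Cauchy because they visibly separate at every scale, while many natural concentration constructions (hub graphs, matchings, narrow diagonal strips) turn out to be transformed into the constant graphon under a well-chosen measure-preserving rearrangement, so they do converge in $\delta_\square$. The construction must therefore introduce concentration rigid enough that no $\sigma$ can smooth it into the density of an $L^1$ function, while still permitting the successive cut-norm differences to telescope to zero; designing a simple-graph realization with both properties is the delicate step.
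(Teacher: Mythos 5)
Your part (a) is correct and is essentially the paper's own argument: normalized clique ``spikes'' at geometrically shrinking scales, separated by testing against the square that supports the smaller spike. One small quantitative slip: the value of $W_n$ on its support is $N_n^2/(k_n(k_n-1)) = 4^n\cdot 2^n/(2^n-1)$, not $4^n$, so your bound $\int_{S\times T}W_n^\sigma \le 4^{n-m}$ should carry an extra factor $2^n/(2^n-1)\le 2$ (and you should start at $n\ge 1$ so the clique is nonempty); the conclusion $\delta_\square \ge 1/2$ still follows, though the claimed $3/4$ does not quite.

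Part (b), however, is not a proof: you state desiderata for a ``hierarchical construction'' and sketch the shape of a contradiction, but no sequence of simple graphs is ever produced, and, as you yourself concede under ``Main obstacle,'' producing one is exactly the delicate step. In particular, the assertion that ``the concentration structure of the construction forces these integrals to remain bounded away from zero'' is not backed by any construction, so the argument cannot be checked. The paper resolves this by imitating the classical $L^1$-bounded martingale that either doubles or dies: take $G_1$ to be a single edge and $G_{n+1} := G_n \x H_n$, where $H_n$ is a quasirandom simple graph with $\delta_\square(H_n, \tfrac12 1_{[0,1]^2}) \le 4^{-n}$, i.e., every edge of $G_n$ is replaced by a quasirandom bipartite graph of density about $\tfrac12$. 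A cell-by-cell comparison gives $\delta_\square(G_{n+1}, \tfrac12 G_n) \le 4^{-n}$, and since $\norm{G_{n+1}}_1/\norm{G_n}_1 = \tfrac12 + O(4^{-n})$, the normalized graphons $W_n = W^{G_n}/\norm{G_n}_1$ satisfy $\delta_\square(W_{n+1}, W_n) \le 6(3/4)^n$, hence form a Cauchy sequence. For nonconvergence, the support of $W_n$ shrinks to measure $0$ while $\EE W_n = 1$; if $\delta_\square(W_n, U)\to 0$, then $U \ge 0$ and $\EE U = 1$, yet taking $S$ to be the complement of the support of $W_m$ (a union of at most $\abs{V(G_m)}^2$ rectangles with $\lambda(S) \ge 1-\e$ for $m$ large) and comparing with $W_n$, $n \gg m$, rectangle by rectangle gives $\ang{U^{\sigma_n}, 1_S} \le \e$; this forces $U \equiv 0$, contradicting $\EE U = 1$. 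Your proposed endgame (rule out $p>1$ limits via Proposition~\ref{prop:conv-to-Lp}, then an absolute-continuity contradiction for $L^1$) is plausible in outline and is close in spirit to the paper's final step, but without an explicit construction certified to be simultaneously Cauchy and persistently concentrated, part (b) is missing its main content.
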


\subsection{Compactness of $L^p$ graphons} \label{sec:defres-compact}

Lov\'asz and Szegedy~\cite{LS07} proved that the space of $[0,1]$-valued
graphons is compact with respect to the cut distance (after
identifying graphons with cut distance zero). We extend this result to
$L^p$ graphons.

\begin{theorem}[Compactness of the $L^p$ ball with respect to cut
  metric] \label{thm:compact} Let $1 < p \leq \infty$ and $C > 0$, and let
  $(W_n)_{n \ge 0}$ be a sequence of $L^p$ graphons with $\norm{W_n}_p\leq C$
  for all $n$. Then there exists an $L^p$ graphon $W$ with $\norm{W}_p
  \leq C$ so that
  \[
  \liminf_{n \to
    \infty} \delta_\square(W_n, W) = 0.
  \]
  In other words, $\cB_{L^p}(C) := \{L^p \text{ graphons } W:
  \norm{W}_p \leq C\}$ is compact with respect to the cut metric
  $\delta_\square$ (after identifying points of distance zero).
\end{theorem}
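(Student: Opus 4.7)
The plan is to mirror the Lov\'asz--Szegedy compactness proof for $L^\infty$ graphons, replacing boundedness with uniform integrability derived from the $L^p$ bound. The two key ingredients are an $L^p$ version of the weak (Frieze--Kannan) regularity lemma, which we develop earlier in the paper, and the martingale convergence theorem. First I would apply the $L^p$ weak regularity lemma: for every $\varepsilon>0$, every graphon $U$ with $\snorm{U}_p\le C$ admits a partition $\cP$ of $[0,1]$ into at most $N=N(\varepsilon,C,p)$ measurable parts with $\snorm{U-U_\cP}_\square\le \varepsilon$. Applied iteratively with $\varepsilon_k=2^{-k}$ and refined to nested equitable partitions into $m_k$ parts of equal measure $1/m_k$, this gives, for each $W_n$, a chain $\cP_{n,1}\prec \cP_{n,2}\prec\cdots$ with $\snorm{W_n-(W_n)_{\cP_{n,k}}}_\square\le 2^{-k}$ and $|\cP_{n,k}|=m_k$ depending only on $k$, $p$, $C$. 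Relabeling via a measure-preserving bijection $\sigma_n$, I may assume $\cP_{n,k}$ is the partition of $[0,1]$ into $m_k$ consecutive intervals of equal length, independent of $n$; call this common partition $\cP_k$.

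Next comes the diagonal extraction. For each fixed $k$, the step function $(W_n^{\sigma_n})_{\cP_k}$ is encoded by a symmetric $m_k\x m_k$ matrix, and the contractivity of stepping gives $\snorm{(W_n^{\sigma_n})_{\cP_k}}_p\le C$, which forces each entry into a bounded interval of size $2C\, m_k^{2/p}$. By Bolzano--Weierstrass applied entrywise and a standard diagonal argument across $k$, I obtain a subsequence along which every matrix entry converges for every $k$. Let $U_k$ be the step function on $\cP_k$ with these limit entries. Fatou's lemma gives $\snorm{U_k}_p\le C$, and since the partitions are nested, $(U_{k+1})_{\cP_k}=U_k$, so $(U_k)_{k\ge 1}$ is a martingale with respect to the filtration $(\cP_k\x\cP_k)_{k\ge 1}$ on $[0,1]^2$.

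Since $p>1$, the uniform bound $\snorm{U_k}_p\le C$ implies uniform integrability, so by the martingale convergence theorem $U_k\to W$ almost everywhere and in $L^1$, and $\snorm{W}_p\le C$ by Fatou. To verify cut-metric convergence along the subsequence, apply the triangle inequality
\[
\snorm{W_n^{\sigma_n}-W}_\square \le \snorm{W_n^{\sigma_n}-(W_n^{\sigma_n})_{\cP_k}}_\square + \snorm{(W_n^{\sigma_n})_{\cP_k}-U_k}_\square + \snorm{U_k-W}_\square.
\]
The first term is at most $2^{-k}$ by construction; the third is at most $\snorm{U_k-W}_1$, which tends to $0$ as $k\to\infty$; and for any fixed $k$ the second term tends to $0$ along the subsequence by entrywise convergence of finitely many matrix entries (using $\snorm{\cdot}_\square\le \snorm{\cdot}_1$). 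Choosing $k=k(n)\to\infty$ slowly then yields $\delta_\square(W_n,W)\to 0$ along the extracted subsequence.

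The main obstacle is the martingale convergence step: in the $L^\infty$ case it is immediate from boundedness, but for $L^p$ with $1<p<\infty$ we must exploit uniform integrability (which fails at $p=1$) in order to guarantee a genuine limit rather than a divergent sequence of conditional averages. This is precisely where the hypothesis $p>1$ enters, and, as Proposition~\ref{prop:G-no-limit} illustrates, the conclusion genuinely can fail without it. A secondary technical point is aligning the partitions across the sequence $(W_n)_n$ via equitable refinement and measure-preserving relabeling, so that a single diagonal extraction can operate on matrices of a common size at each scale $k$.
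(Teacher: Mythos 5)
Your proposal is correct and follows essentially the same route as the paper's proof: weak regularity (the $L^2$ lemma plus truncation for $1<p<2$) to get equitable step approximations of size independent of $n$, relabeling by measure-preserving bijections, a diagonal extraction using the entrywise bound forced by $\snorm{\cdot}_p\le C$, the martingale structure of the limits $U_k$, and martingale convergence followed by the same triangle inequality. The only differences are cosmetic: the paper uses a relabeling $\sigma_{n,k}$ for each scale $k$ rather than a single $\sigma_n$, and invokes the $L^p$ martingale convergence theorem directly instead of uniform integrability plus Fatou, neither of which changes the argument.
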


For a proof, see \textsection\ref{sec:Lp-graphons}. The analogous claim for
$p=1$ is false without additional hypotheses, as
Proposition~\ref{prop:G-no-limit} implies that the $L^1$ ball of graphons is
neither totally bounded nor complete with respect to $\delta_\square$. The
example showing that the $L^1$ ball is not totally bounded is easy: the
sequence $W_n = 2^{2n} 1_{[2^{-n},2^{-n}]\times [2^{-n},2^{-n}]}$ satisfies
$\delta_\square(W_n, W_m) > 1/2$ for every $m \neq n$. Our example showing
incompleteness is a bit more involved, and we defer it to the proof of
Proposition~\ref{prop:G-no-limit}(b).  See Theorem~\ref{thm:unifintcompact}
for an $L^1$ version of Theorem~\ref{thm:compact} under the hypothesis of
uniform integrability.

\subsection{Sparse $W$-random graph models} \label{sec:defres-W-random}

Our main result on this topic is that every graphon $W$ gives rise to a
natural random graph model, which produces a sequence of sparse graphs
converging to $W$ in the normalized cut metric. When $W$ is nonnegative, the
model produces sparse simple graphs. If $W$ is allowed negative values, the
resulting random graphs have $\pm 1$ edge weights.

We explain this construction in two steps.

\medskip 

\emph{Step 1: From $W$ to a random weighted graph.} Given any graphon
$W$, define $\bH(n,W)$ to be a random weighted
graph on $n$ vertices (labeled by $[n] = \{1, 2, \dots, n\}$, with all
vertex weights 1) constructed as follows: let $x_1, \dots, x_n$ be
i.i.d.\ chosen uniformly in $[0,1]$, and then assign the weight of the
edge $ij$ to be $W(x_i, x_j)$ for all distinct $i, j \in [n]$.

\emph{Step 2: From a weighted graph to a sparse random graph.}
Let $H$ be a weighted graph with $V(H) = [n]$ (with all vertex weights
1) and edge weights $\beta_{ij}$ (with $\beta_{ii} = 0$), and let
$\rho > 0$. When $\beta_{ij} \geq 0$ for all $ij$, the sparse random simple graph
$\bG(H,\rho)$ is defined by taking $V(H)$ to be the set of vertices
and letting $ij$ be an edge with probability $\min\{\rho \beta_{ij},
1\}$, independently for all $ij \in E(H)$. If we allow negative edge
weights on $H$, then we take $\bG(H,\rho)$ to be a random graph
with edge weights $\pm 1$, where $ij$ is made an edge with probability
$\min\{\rho \sabs{\beta_{ij}}, 1\}$ and given edge weight $+1$ if
$\beta_{ij} > 0$ and $-1$ if $\beta_{ij} < 0$.

Finally, given any graphon $W$ we define the sparse
$W$-random (weighted) graph to be $\bG(n, W, \rho) :=
\bG(\bH(n,W),\rho)$.

\medskip 

We also view $\bH(n,W)$ and $\bG(n,W,\rho_n)$ as graphons in the usual way,
where the vertices are
ordered according to the ordering of $x_1, \dots, x_n$ as real
numbers and each vertex is represented by an interval of length $1/n$. For example, we use this interpretation in the notation
$d_1(\bH(n,W), W)$.

\medskip 

Note that it is also possible to consider other random weighted graph models
where the edge weights are chosen from some other distribution (other than
$\pm 1$). Many of our results generalize easily, but we stick to our model
for simplicity.

Here is our main theorem on $W$-random graphs. Note that we use the same
i.i.d.\ sequence $x_1, x_2, \dots$ for constructing $\bH(n,W)$ and $\bG(n,W,\rho_n)$
for different values of $n$, i.e., without resampling the $x_i$'s.

\begin{theorem}[Convergence of $W$-random graphs] \label{thm:rand-converge}
  Let $W$ be an $L^1$ graphon.
  \begin{enumerate}
  \item[(a)]  We have $d_1(\bH(n,W), W) \to
    0$ as $n\to \infty$ with probability $1$.
  \item[(b)] If $\rho_n >0 $ satisfies $\rho_n \to 0$ and $n
    \rho_n \to \infty$ as $n \to \infty$, then
    \[
    d_\square(\rho_n^{-1}\bG(n,W,\rho_n), W) \to 0
    \]
    as $n \to \infty$ with probability $1$.
  \end{enumerate}
\end{theorem}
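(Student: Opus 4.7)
For part (a), the plan is truncation plus $L^1$-approximation plus the strong law. First I would write $W = W_M + W_{>M}$ with $W_M = W \cdot \mathbf{1}_{\abs{W}\le M}$; since $W \in L^1$, $\snorm{W_{>M}}_1 \to 0$ as $M \to \infty$. Then I would approximate $W_M$ in $L^1$ by a symmetric continuous function $W'$. For this continuous piece, Glivenko--Cantelli gives $\sup_i \abs{x_{(i)} - i/n} \to 0$ a.s.; combined with uniform continuity of $W'$, this implies that the constant value $W'(x_{(i)}, x_{(j)})$ taken by $\bH(n, W')$ on the cell $[(i-1)/n, i/n] \x [(j-1)/n, j/n]$ is close to $W'$ throughout the cell, yielding $d_1(\bH(n, W'), W') \to 0$ a.s. The two residual terms have the form $d_1(\bH(n, U), 0) \le \frac{1}{n^2}\sum_{i\ne j}\abs{U(x_i, x_j)} + \int_{\bigcup_i I_i \x I_i}\abs{U}$ for $U \in \set{W_M - W',\; W_{>M}}$; the first summand converges a.s.\ to $\snorm{U}_1$ by the $U$-statistic SLLN (applicable since $U \in L^1$), and the second tends to $0$ by absolute continuity of the integral. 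The triangle inequality then bounds $\limsup_n d_1(\bH(n, W), W)$ by $2\snorm{W_M - W'}_1 + 2\snorm{W_{>M}}_1$ a.s., which can be made arbitrarily small.

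For part (b), I would again truncate and work conditional on $x_1, x_2, \dots$. Fix $M > 0$; since the Bernoulli coin flips of $\bG(n, W, \rho_n)$ are independent across edges, and since $\set{W \ne 0} = \set{W_M \ne 0} \sqcup \set{W_{>M} \ne 0}$, there is a natural coupling under which $\bG(n, W, \rho_n) = \bG(n, W_M, \rho_n) + \bG(n, W_{>M}, \rho_n)$ as signed weighted graphs. For the bounded piece, once $n$ is large enough that $\rho_n M < 1$, the entries of $\rho_n^{-1}\bG(n, W_M, \rho_n) - \bH(n, W_M)$ are conditionally independent, centered, bounded by $\rho_n^{-1} + M$ in absolute value, and have conditional variance at most $M\rho_n^{-1}$. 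For each fixed pair $(S, T)$ of vertex subsets, Bernstein's inequality gives a bound of the form $2\exp(-cn^2\rho_n t^2 / M)$ on the probability that $\frac{1}{n^2}\sum_{i \in S,\, j \in T}(\cdots)_{ij}$ exceeds $t$ in absolute value. A union bound over the $4^n$ pairs together with the hypothesis $n\rho_n \to \infty$ lets me choose $t_n$ of order $(n\rho_n)^{-1/2} \to 0$ for which the tail probabilities are summable, and Borel--Cantelli yields $d_\square(\rho_n^{-1}\bG(n, W_M, \rho_n), \bH(n, W_M)) \to 0$ a.s. Combined with part (a) applied to $W_M$, this gives $d_\square(\rho_n^{-1}\bG(n, W_M, \rho_n), W_M) \to 0$ a.s.

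For the unbounded tail I would bound the cut norm by the $L^1$ norm: $\snorm{\rho_n^{-1}\bG(n, W_{>M}, \rho_n)}_\square \le \snorm{\rho_n^{-1}\bG(n, W_{>M}, \rho_n)}_1$, and the right side equals $\sabs{E(\bG(n, W_{>M}, \rho_n))}/(n^2\rho_n)$. Its conditional mean is $\frac{1}{n^2}\sum_{i\ne j}\min(\abs{W_{>M}(x_i, x_j)},\, \rho_n^{-1})$, which tends a.s.\ to $\snorm{W_{>M}}_1$ (combining $\rho_n^{-1} \to \infty$, monotone convergence, and the $U$-statistic SLLN applied to $\abs{W_{>M}}$), while a Chernoff bound on the conditional Bernoulli edge count provides concentration around this mean. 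Assembling via the triangle inequality and sending first $n \to \infty$ and then $M \to \infty$ (using $\snorm{W - W_M}_1 = \snorm{W_{>M}}_1 \to 0$) then completes the proof of (b).

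The main obstacle will be the cut-norm concentration step in part (b). The union bound over $4^n$ subset pairs $(S,T)$ costs a factor of $e^{2n\log 2}$, which the Bernstein tail $\exp(-cn^2\rho_n t^2/M)$ can absorb only because $n\rho_n \to \infty$ allows $t = t_n \to 0$ of order $(n\rho_n)^{-1/2}$. This is where the paper's asymptotic scaling hypothesis enters essentially: in the bounded-degree regime $n\rho_n = O(1)$ the cut-metric convergence fails and one is forced into the separate Benjamini--Schramm framework.
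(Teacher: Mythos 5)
Your proof is correct, and part (a) is essentially the paper's argument: the paper approximates $W$ by a step function $W_\cP$ with equal-length parts and uses equidistribution of the sample points where you use a continuous approximant plus Glivenko--Cantelli, and both treatments control the residual by Hoeffding's $U$-statistic strong law (the paper's Theorem~\ref{thm:SLLN-W}). In part (b) your decomposition is genuinely different in its organization, though the core mechanism (Chernoff-type concentration, a union bound over exponentially many vertex-subset pairs, Borel--Cantelli, all powered by $n\rho_n\to\infty$) is the same. The paper never truncates $W$ at a fixed level $M$: it works directly with $H_n=\bH(n,W)$, replaces its weights by $\rho_n^{-1}\min\{\rho_n|\beta_{ij}|,1\}$ (the clipping already built into the model), shows via Hoeffding that this clipping costs only $o(1)$ in $d_1$ (Lemma~\ref{eq:random-sparsify}), and then applies a single cut-norm concentration lemma (Lemma~\ref{eq:Gnp-cut}, proved with a tailored Chernoff bound, a union bound over the $2^{n+1}$ sets $U\subseteq V$ via $\beta_{S\times T}=\beta_{S\cup T}+\beta_{S\cap T}-\beta_{S\setminus T}-\beta_{T\setminus S}$, and Borel--Cantelli) to the clipped graph, whose weights are bounded by $\rho_n^{-1}$. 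You instead split $W=W_M+W_{>M}$, couple the two sparse graphs on the disjoint supports, handle the bounded piece by Bernstein plus a union bound over the $4^n$ pairs $(S,T)$, and dispose of the heavy tail by bounding its cut norm by its normalized edge count; the paper's single clipping avoids your coupling step and the separate tail estimate, while your fixed-$M$ truncation lets you invoke part (a) for $W_M$ instead of reproving an $L^1$ comparison. Two small points to tighten: with $t_n$ of order $(n\rho_n)^{-1/2}$ the Bernstein exponent is only $\Theta(n)$, so the implied constant must be taken large enough (depending on $M$) to beat the $4^n$ union bound, and for the tail piece you only get (and only need) $\limsup$ of the conditional mean bounded by $\norm{W_{>M}}_1$, not convergence to it; both are easily fixed and do not affect the argument.
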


Part (a) is proved in \textsection\ref{sec:weighted-random} and part (b) in
\textsection\ref{sec:sparse-random}.  Note that we use $d_1$ and $d_\square$
(as opposed to $\delta_1$ and $\delta_\square$) because we have ordered the
vertices of the graphs according to the ordering of the sample points
$x_1,\dots,x_n$. Of course the sample point ordering is not determined by the
graphs alone.

\begin{corollary} \label{cor:G-rand-converge-normalized}
  Let $W$ be an $L^1$ graphon with $\norm{W}_1 > 0$. Let $\rho_n >0$ satisfy $\rho_n \to 0$ and $n
    \rho_n \to \infty$ as $n \to \infty$, and let $G_n = \bG(n,
  W, \rho_n)$. Then
  \[
  \delta_\square(G_n / \norm{G_n}_1, W /
  \norm{W}_1) \to 0
  \]
  as $n \to \infty$ with probability $1$.
\end{corollary}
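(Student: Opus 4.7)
The plan is to bootstrap from Theorem~\ref{thm:rand-converge}(b). Set $U_n := \rho_n^{-1} G_n$, viewed as a graphon, so that Theorem~\ref{thm:rand-converge}(b) gives $d_\square(U_n, W) \to 0$ almost surely. Since $G_n/\snorm{G_n}_1 = U_n/\snorm{U_n}_1$, the triangle inequality together with the elementary bound $\snorm{U_n}_\square \leq \snorm{U_n}_1$ yields
\[
d_\square\paren{\frac{U_n}{\snorm{U_n}_1}, \frac{W}{\snorm{W}_1}} \leq \frac{\abs{\snorm{W}_1 - \snorm{U_n}_1}}{\snorm{W}_1} + \frac{d_\square(U_n, W)}{\snorm{W}_1}.
\]
The second term already vanishes by Theorem~\ref{thm:rand-converge}(b), and the first does too provided we establish the auxiliary claim that $\snorm{U_n}_1 \to \snorm{W}_1$ almost surely (which uses $\snorm{W}_1 > 0$).

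To prove this auxiliary claim, first treat the case $W \geq 0$, where $G_n$ is a simple graph. Condition on the i.i.d.\ sample points $x_1, \ldots, x_n$ used to build $\bH(n, W)$. The edges of $G_n = \bG(n, W, \rho_n)$ are then conditionally independent Bernoullis with parameter $\min(\rho_n W(x_i, x_j), 1)$, so
\[
\EE\sqcond{\snorm{U_n}_1}{x_1, \ldots, x_n} = \frac{1}{n^2} \sum_{i \neq j} \min\paren{W(x_i, x_j), \rho_n^{-1}}.
\]
The strong law of large numbers for U-statistics applied to the fixed i.i.d.\ sequence $(x_i)_{i \geq 1}$ yields $\EE\sqcond{\snorm{U_n}_1}{x_1, \ldots, x_n} = \EE\sqb{\min(W(X_1, X_2), \rho_n^{-1})} + o(1)$ almost surely, and the monotone convergence theorem drives the latter to $\snorm{W}_1$ as $\rho_n \to 0$. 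A conditional Hoeffding bound for the independent edge indicators gives a tail estimate of the form $\exp(-c n^2 \rho_n^2 \epsilon^2)$; since $n\rho_n \to \infty$, a standard subsequence plus Borel--Cantelli argument upgrades convergence in probability to almost sure convergence. Signed $W$ reduces to the nonnegative case by running the same argument on $\abs{W}$, since $\snorm{G_n}_1$ depends only on magnitudes of edge weights.

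The main obstacle is the auxiliary claim $\snorm{U_n}_1 \to \snorm{W}_1$: cut-norm convergence does not in general imply $L^1$ convergence, and the truncation $\min(\rho_n W, 1)$ built into the sparse model $\bG(n, W, \rho_n)$ interacts nontrivially with the unboundedness of $L^1$ graphons. The hypothesis $n\rho_n \to \infty$ is precisely what allows the monotone convergence theorem to absorb the truncation while simultaneously leaving enough density for conditional concentration at the $L^1$ scale.
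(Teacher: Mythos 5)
Your reduction is sound: the splitting
\[
\frac{U_n}{\snorm{U_n}_1}-\frac{W}{\snorm{W}_1}
= U_n\paren{\frac{1}{\snorm{U_n}_1}-\frac{1}{\snorm{W}_1}}+\frac{U_n-W}{\snorm{W}_1},
\]
together with $\snorm{U_n}_\square\le\snorm{U_n}_1$, does give your displayed inequality, and this matches the structure of the paper's own argument, which likewise reduces everything to Theorem~\ref{thm:rand-converge}(b) plus the claim $\rho_n^{-1}\snorm{G_n}_1\to\snorm{W}_1$ a.s. The genuine gap is in your proof of that claim, in the concentration step. A plain additive Hoeffding bound over the $\binom n2$ conditionally independent edge indicators, with deviation threshold $\e\rho_n n^2$, gives exponent of order $n^2\rho_n^2\e^2$ — exactly what you wrote — but under the sole hypothesis $n\rho_n\to\infty$ this exponent can grow as slowly as $\log\log n$ (take $\rho_n=\sqrt{\log\log n}/n$), so the tail probabilities are not summable and Borel--Cantelli does not apply. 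Nor does ``subsequence plus Borel--Cantelli'' rescue you: that device yields almost sure convergence only along a subsequence, and there is no monotonicity or coupling of the edge variables across different $n$ that lets you pass to the full sequence. The repair is to use a multiplicative Chernoff/Bernstein bound exploiting that the conditional edge probabilities sum to $q\approx\tfrac12\rho_n n^2\snorm{W}_1$: the deviation $\e\rho_n n^2$ then corresponds to $\lambda\gtrsim\e$, and the failure probability is $\exp(-c_\e\,\rho_n n^2)=\exp(-\omega(n))$, which is summable. This is precisely Lemma~\ref{lem:Chernoff} and the computation in Lemma~\ref{eq:Gnp-cut}. A smaller point: Hoeffding's SLLN (Theorem~\ref{thm:SLLN-W}) applies to a fixed kernel, while yours, $\min(W,\rho_n^{-1})$, varies with $n$; you need the sandwich argument (truncate at a fixed level $K$, apply the SLLN, then let $K\to\infty$), exactly as in the last lines of the paper's proof of Theorem~\ref{thm:rand-converge}(b). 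This is routine but should be said.

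For comparison, the paper obtains the auxiliary claim without any new concentration work: apply Theorem~\ref{thm:rand-converge}(b) to the graphon $\abs{W}$ and note that $\snorm{G_n}_1$ depends only on the magnitudes $\sabs{\beta_{ij}}$, so pairing the cut-norm convergence $d_\square(\rho_n^{-1}\bG(n,\abs{W},\rho_n),\abs{W})\to0$ with $S=T=[0,1]$ immediately gives $\rho_n^{-1}\snorm{G_n}_1\to\snorm{\,\abs{W}\,}_1=\snorm{W}_1$ almost surely. If you adopt that one-line observation (or replace your Hoeffding step by Lemma~\ref{lem:Chernoff} as above), your argument is complete.
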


Furthermore, for any $1 \leq p \leq \infty$, if $W$ is an $L^p$ graphon, then
$\norm{\bH(n,W)}_p \to \norm{W}_p$ with probability $1$ (this is an immediate
consequence of Theorem~\ref{thm:SLLN-W} below).  Thus $\bH(n,W)$ generates a
sequence of $L^p$ graphons converging to $W$. Also, by
Proposition~\ref{prop:conv-to-Lp} and Theorem~\ref{thm:rand-converge}(b),
$\bG(n,W,\rho_n)$ is a $\norm{W}_p$-upper $L^p$ regular sequence that
converges to $W$ in normalized cut metric.

Note that the sparsity assumption $\rho_n \to 0$ is necessary since the
edges of $\bG(n,W,\rho_n)$ are included with probability $\min\{\rho_n
\abs{W(\cdot,\cdot)}, 1\}$, so $\rho_n$ needs to be arbitrarily close to
zero in order to ``see'' the unbounded part of $W$.  Similarly, the
assumption that $n \rho_n \to \infty$ means the expected average degree
tends to infinity, which is necessary by Corollary~\ref{cor:G-conv-to-Lp}
and Proposition~\ref{prop:superlinear}.

We will prove Theorem~\ref{thm:rand-converge}(a) using a theorem of Hoeffding on $U$-statistics,
while Theorem~\ref{thm:rand-converge}(b) follows from
Theorem~\ref{thm:rand-converge}(a) via a Chernoff-type argument that
shows that if $H$ is a weighted graph with many vertices, then $\rho^{-1}\bG(H, \rho)$ is
close to $H$ in cut metric.

Theorem~\ref{thm:rand-converge} was proved for $L^\infty$ graphons
as Theorem~4.5 in \cite{BCLSV1},\footnote{Technically, Theorem~4.5 in \cite{BCLSV1}
is just a close analogue, since it uses $\delta_\square$ instead of $d_1$ and $d_\square$.}
but the proof given there does not seem to extend to
Theorem~\ref{thm:rand-converge}. The proof here is much shorter
than that in \cite{BCLSV1}, though, unlike that proof, our proof
gives no quantitative guarantees.

Using sparse $W$-random graphs, we can fully justify the name
$W$-\emph{quasirandom} for graphs approximating a graphon $W$.  The following
proposition shows that every sequence of sparse simple graphs converging to
$W$ is close in cut metric to $W$-random graphs:

\begin{proposition} \label{prop:Wquasirandom}
  Let $p > 1$, and let $(G_n)_{n \ge 0}$ be a
  sequence of simple graphs such that $\norm{G_n}_1 \to 0$ and
  $\delta_\square(G_n/\norm{G_n}_1, W) \to 0$, where $W$ is an $L^p$ graphon.
  Let $G'_n = \bG(\abs{V(G_n)}, W, \norm{G_n}_1)$. Then with probability $1$,
  one can order the vertices of $G_n$ and $G'_n$ so that
  \[
  d_\square\paren{\frac{G_n}{\norm{G_n}_1}, \frac{G'_n}{\norm{G'_n}_1}} \to 0.
  \]
\end{proposition}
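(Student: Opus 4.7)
The plan is to reduce, via the triangle inequality, to $\delta_\square(G_n/\snorm{G_n}_1, G'_n/\snorm{G'_n}_1) \to 0$ almost surely, using Theorem~\ref{thm:rand-converge}(b) to handle the $W$-random side, and then convert this graphon-level cut distance convergence to a vertex-level one, exploiting that both normalized graphons are step functions on the common partition of $[0,1]$ into $\sabs{V(G_n)}$ equal intervals.

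Set $\rho_n := \snorm{G_n}_1$. Since each $G_n$ is simple, $G_n \ge 0$; cut convergence to $W$ then forces $W \ge 0$ a.e., and testing the cut norm with $S = T = [0,1]$ yields $\snorm{W}_1 = \lim_n \snorm{G_n/\rho_n}_1 = 1$. To invoke Theorem~\ref{thm:rand-converge}(b) we need $\sabs{V(G_n)}\rho_n \to \infty$: by Corollary~\ref{cor:G-conv-to-Lp} the sequence $(G_n)$ is $\snorm{W}_p$-upper $L^p$ regular with $p > 1$, and Proposition~\ref{prop:superlinear} of Appendix~\ref{sec:superlinear} then gives the required superlinearity of the average degree. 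Theorem~\ref{thm:rand-converge}(b) now yields $d_\square(G'_n/\rho_n, W) \to 0$ a.s. A strong-law argument --- essentially the same Hoeffding-for-$U$-statistics input used in proving Theorem~\ref{thm:rand-converge}(a), now applied to the Bernoulli edge indicators of $G'_n$ --- gives $\snorm{G'_n}_1/\rho_n \to \snorm{W}_1 = 1$ a.s.; since $G'_n \ge 0$ (as $W \ge 0$), one has $d_\square(G'_n/\snorm{G'_n}_1, G'_n/\rho_n) = \sabs{1 - \snorm{G'_n}_1/\rho_n} \cdot 1 \to 0$, so $d_\square(G'_n/\snorm{G'_n}_1, W) \to 0$ a.s.\ as well. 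Combining with the hypothesis $\delta_\square(G_n/\snorm{G_n}_1, W) \to 0$, the triangle inequality produces $\delta_\square(G_n/\snorm{G_n}_1, G'_n/\snorm{G'_n}_1) \to 0$ a.s.

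It remains to realize this graphon cut distance convergence through a choice of vertex orderings. Since $G_n/\snorm{G_n}_1$ and $G'_n/\snorm{G'_n}_1$ are step graphons on the same partition of $[0,1]$ into $\sabs{V(G_n)}$ equal intervals, the infimum over measure-preserving bijections in the definition of $\delta_\square$ coincides --- up to an $o(1)$ error independent of the pointwise scale of the graphons --- with the minimum over permutations of the partition intervals, equivalently over reorderings of the vertex sets, by a standard fractional-overlay argument: a measure-preserving bijection induces a doubly stochastic overlay matrix, which by Birkhoff's theorem can be replaced by a permutation matrix. The main obstacle is precisely this final reduction: because the normalized graphons can have pointwise values of order $1/\rho_n$, one must verify that the fractional-to-integer overlay rounding does not produce a cut-norm error that grows with $1/\rho_n$. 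Once this is handled, selecting the vertex orderings that witness the $\delta_\square$ optimum delivers the desired $d_\square(G_n/\snorm{G_n}_1, G'_n/\snorm{G'_n}_1) \to 0$ a.s.
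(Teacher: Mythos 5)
Your first half tracks the paper's proof: Corollary~\ref{cor:G-conv-to-Lp} gives $\norm{W}_p$-upper $L^p$ regularity, $W \ge 0$ and $\norm{W}_1 = 1$ come from positivity and testing with $S=T=[0,1]$, Proposition~\ref{prop:superlinear} gives $\abs{V(G_n)}\norm{G_n}_1 \to \infty$, and Theorem~\ref{thm:rand-converge}(b) together with the normalization step (this is exactly Corollary~\ref{cor:G-rand-converge-normalized}) gives $\delta_\square(G'_n/\norm{G'_n}_1, W) \to 0$ almost surely. The paper then applies these two $\delta_\square$-convergences to $W$ separately, whereas you first combine them into $\delta_\square(G_n/\norm{G_n}_1, G'_n/\norm{G'_n}_1) \to 0$; that difference is cosmetic.

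The genuine gap is the last step, which is precisely where the paper invokes Proposition~\ref{prop:order}. Your proposed ``standard fractional-overlay argument'' via Birkhoff does not work: the overlaid graphon depends quadratically, not linearly, on the doubly stochastic coupling matrix, so Birkhoff's theorem gives no reduction of the infimum over measure-preserving bijections to a minimum over vertex permutations, and indeed the paper points out that even the question whether $\hat\delta_\square(G_1,G_2) = O(\delta_\square(G_1,G_2))$ is open. The best available comparison, Lemma~\ref{lem:integral-vs-fractional-overlay}, carries an additive error $34K/\sqrt{\log \abs{V(G_n)}}$ that scales with the $L^\infty$ bound $K$ of the step functions, and here the normalized graphons have values of order $1/\norm{G_n}_1$ and $\norm{W^{G_n}/\norm{G_n}_1}_p = \norm{G_n}_1^{1/p-1} \to \infty$, so the error is not $o(1)$ ``independent of the pointwise scale'' as you assert. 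You flag exactly this obstacle and then write ``once this is handled,'' but handling it is the substance of the argument: the paper's proof of Proposition~\ref{prop:order} first uses the equitable weak regularity lemma for $L^p$ upper regular graphs (Remark~\ref{remark:G-upper-reg-equi}) to replace $W^{G_n}/\norm{G_n}_1$ by a step function $W'_n$ with $\norm{W'_n}_p \le C$, then truncates at a level $K_n$ chosen so that $K_n \to \infty$ while $K_n/\sqrt{\log\abs{V(G_n)}} \to 0$, applies Alon's lemma to the truncated graphons, and controls the truncation error by $C^p/K_n^{p-1}$ using the $L^p$ bound. Without this (or an equivalent) your reduction from $\delta_\square$ to $d_\square$ after reordering is unproved, so the proposal as written is incomplete at its decisive step.
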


See \textsection\ref{sec:sparse-random} for the proof, and
Proposition~\ref{prop:Wquasirandomunif} for a generalization to $p=1$.

\subsection{From upper regular sequences to graphons and back} \label{sec:defres-diagram}

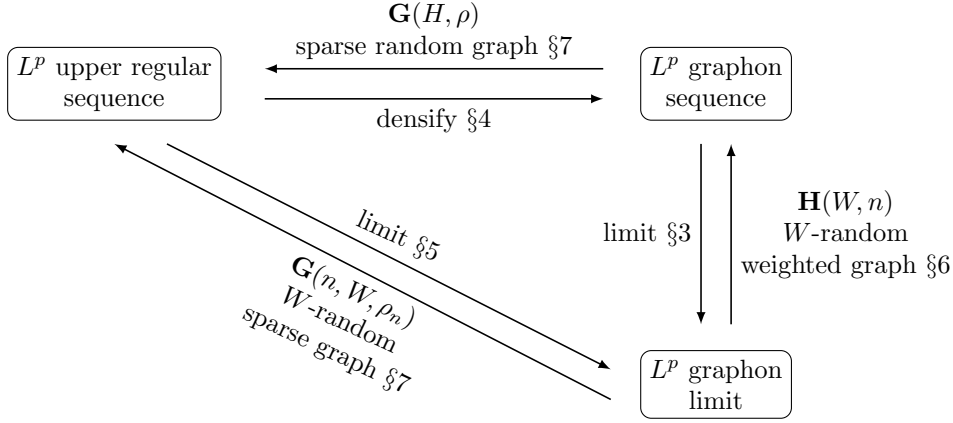
\begin{figure} \centering
\begin{tikzpicture}
  \node[draw,rounded corners,align=center] (A) at (-4,4) {$L^p$ upper regular \\ sequence};
  \node[draw,rounded corners,align=center] (B) at (4,4) {$L^p$ graphon \\ sequence};
  \node[draw,rounded corners,align=center] (C) at (4,0) {$L^p$ graphon \\ limit };

  \draw[latex-,semithick] (-2,4.2) -- node[above,align=center]
  {$\bG(H,\rho)$ \\ sparse random graph \S\ref{sec:sparse-random}} (2.5,4.2);
  \draw[-latex,semithick] (-2,3.8) -- node[below] {densify \S\ref{sec:reg-lem-up-reg}} (2.5,3.8);
  \draw[latex-,semithick] (4.2,3.2) -- node[right,align=center]
  {$\bH(W,n)$ \\ $W$-random \\ weighted graph \S\ref{sec:weighted-random}} (4.2,.8);
  \draw[-latex,semithick] (3.8,3.2) -- node[left] {limit \S\ref{sec:Lp-graphons}}
(3.8,.8);
  \draw[-latex,semithick] (2.6,-.2) -- node[below,rotate=-25,align=center]
  {$\bG(n,W,\rho_n)$ \\ $W$-random \\ sparse graph \S\ref{sec:sparse-random}} (-4,3.2);
  \draw[latex-,semithick] (2.6,.2) -- node[above,rotate=-25] {limit
  \S\ref{sec:limit}} (-3.3,3.2);
\end{tikzpicture}
\caption{The relationships between the objects studied in this
  paper. The arrows are labeled with the relevant sections.} \label{fig:summary}
\end{figure}

In Figure~\ref{fig:summary} we summarize the relationship between the
objects studied in this paper. The inner set of arrows describe the
process of going from a sequence to a limit, while the outer arrows
describe the process of starting from a graphon $W$ and constructing a
sequence via a $W$-random graph model. Although we are primarily
interested in the diagonal arrows connecting $L^p$ upper regular
sequences and $L^p$ graphon limits, the proofs, in both directions, go
through $L^p$ graphons as a useful intermediate step.

We have not yet discussed the term \emph{densify} in
Figure~\ref{fig:summary}. By densifying we mean approximating (in the sense
of cut distance) an $L^p$ upper regular graph by an $L^p$ graphon. The former
can be thought of as a sequence of sparse graphs with large edge weights
supported on a sparse set of edges (although they do not have to be), and the
latter as graphs on a dense set of edges with small weights (in the sense of
being $L^p$ bounded). More precisely, we prove the following result, which we
think of as a transference theorem in the spirit of Green and Tao \cite{GT}.

\begin{proposition} \label{prop:densify}
  For every $p > 1$ and $\e > 0$ there exists an $\eta > 0$ such that for
  every $(C,\eta)$-upper $L^p$ regular weighted graph $G$ (or
  graphon $W$), there exists an $L^p$ graphon $U$ with $\norm{U}_p
  \leq C$ such that
  \[
  \delta_\square\paren{\frac{G}{\snorm{G}_1}, U} \leq C \e \qquad
  \text{(respectively, }
  \delta_\square(W, U) \leq C \e\text{).}
  \]
\end{proposition}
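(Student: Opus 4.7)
The plan is to deduce the proposition from an $L^p$ version of the weak Szemer\'edi regularity lemma of Frieze and Kannan. In both the graph and graphon settings, I would construct $U$ as a stepping $U = W_\cP$ (respectively $(W^G/\snorm{G}_1)_\cP$) for a partition $\cP$ of $[0,1]$ into cells each of measure at least $\eta$; in the weighted-graph case $\cP$ is additionally taken as a coarsening of the partition of $[0,1]$ into the vertex intervals $I_v$ of $W^G$, so that the step function descends to a vertex partition of $G$ and the hypothesis of $(C,\eta)$-upper $L^p$ regularity for $G$ applies directly. With $\cP$ of this form, Definition~\ref{def:W-upper-reg} gives $\snorm{U}_p \leq C$ immediately, and the task reduces to arranging $\snorm{W - W_\cP}_\square \leq C\varepsilon$.

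To produce $\cP$, I would iterate the Frieze--Kannan refinement. Starting from $\cP_0 = \{[0,1]\}$, at step $k$ in which $\snorm{W - W_{\cP_k}}_\square > C\varepsilon$, pick witnessing sets $S_k, T_k \subseteq [0,1]$ with $\abs{\int_{S_k \times T_k}(W - W_{\cP_k})} > C\varepsilon$ and let $\cP_{k+1}$ be the refinement obtained by cutting along $S_k$ and $T_k$. Since $W \in L^1$ and the stepping operator is the $L^2$-orthogonal projection onto $\cP$-measurable step functions, the Pythagorean identity (equivalently, the tower property of conditional expectation) yields
\[
\snorm{W_{\cP_{k+1}}}_2^2 - \snorm{W_{\cP_k}}_2^2 = \snorm{W_{\cP_{k+1}} - W_{\cP_k}}_2^2 \geq \snorm{W_{\cP_{k+1}} - W_{\cP_k}}_\square^2 \geq (C\varepsilon)^2
\]
at each step. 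The role of $L^p$ upper regularity is now to cap the accumulated energy from above: so long as every cell of $\cP_k$ has measure at least $\eta$, Definition~\ref{def:W-upper-reg} forces $\snorm{W_{\cP_k}}_p \leq C$, which via the step-function bound $\snorm{W_{\cP_k}}_\infty \leq C\eta^{-2/p}$ and interpolation yields $\snorm{W_{\cP_k}}_2^2 \leq C^2 \eta^{-2\max(0,2-p)/p}$, bounding the iteration count by a quantity $K = K(\varepsilon, p, C, \eta)$ depending only on these parameters.

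The main obstacle is maintaining the cell-size lower bound $\eta$ under refinement, since a naive cut along $S_k, T_k$ can produce cells of negligible measure and invalidate the upper regularity bound on $\snorm{W_{\cP_k}}_p$. I would handle this by a cleanup step after each refinement: any sub-cell of measure smaller than $\eta$ is reabsorbed into a sibling sub-cell within the same parent cell of $\cP_k$. Estimating the $L^2$ loss from this merging is the delicate part, since after refinement sub-cell values of $W_{\cP_{k+1}}$ can in principle exceed the pre-refinement $L^\infty$ bound; however, the mass-conservation identity $\lambda(J)v_J = \sum_i \lambda(J_i)v_{J_i}$ within each parent cell controls how collectively wild they can be, and by choosing $\eta$ sufficiently small in terms of $\varepsilon, p, C$ the net energy increment per step can be kept at least $(C\varepsilon)^2/2$. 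The iteration then terminates in at most $K$ steps with a partition $\cP$ satisfying $\snorm{W - W_\cP}_\square \leq C\varepsilon$ and all cells of measure at least $\eta$. In the weighted-graph case, $S_k, T_k$ are additionally rounded to unions of vertex intervals, with the rounding error controlled by $\alpha_i \leq \eta\alpha_G$ and absorbed into the target $C\varepsilon$.
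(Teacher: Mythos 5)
Your high-level plan (build $U=W_\cP$ by an energy-increment iteration and get $\snorm{U}_p\le C$ directly from Definition~\ref{def:W-upper-reg}) is the same as the paper's, which proves the proposition via Theorem~\ref{thm:W-upper-reg}. But the step you yourself flag as delicate --- repairing small cells by merging them into siblings \emph{after} refining along $S_k,T_k$ --- is a genuine gap, not a technicality. The energy lost in the cleanup is $\snorm{W_{\cP_{k+1}}-W_{\cQ_{k+1}}}_2^2$, where $\cQ_{k+1}$ is the merged partition, and nothing in the hypotheses bounds it: $(C,\eta)$-upper regularity constrains $W_\cP$ only for partitions all of whose parts have measure at least $\eta$, so the average of $W$ over a cell of measure much smaller than $\eta$ (a ``dense spot,'' exactly what this theory is designed to allow) can be arbitrarily large; a single small cell $A\x B$ contributes $\paren{\int_{A\x B}W}^2/(\lambda(A)\lambda(B))$ to the refined energy, which is unbounded even when $\int_{A\x B}W$ is small. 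The mass-conservation identity pins down the merged value but says nothing about this $L^2$ drop, so no choice of $\eta$ in terms of $\e,p,C$ keeps the net increment above $(C\e)^2/2$. The paper avoids ever creating small parts: it rounds the witnessing sets to $S',T'$ \emph{before} refining, and the substance of that step is Lemma~\ref{lem:small-cut} (hence Lemma~\ref{lem:W-stab}), a nontrivial consequence of upper regularity asserting $\abs{\ang{W,1_{S\x T}}}\le 10C\delta^{1-1/p}$ whenever $\lambda(S)\le\delta$ and $\delta\ge\eta$; your proposal contains no substitute for this estimate, and without it even the rounding variant of your argument cannot be closed.

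There is a second, independent gap when $1<p<2$. Your cap on the number of iterations comes from $\snorm{W_{\cP_k}}_2^2\le C^2\eta^{-2(2-p)/p}$, so your $K$ blows up as $\eta\to 0$. But the admissible perturbation per step (the measure of the cells you merge, or of $S\symmdiff S'$ in the rounding version) is of order $\abs{\cP_k}\,\eta$ with $\abs{\cP_k}$ growing in the number of steps, so keeping the per-step error below a fixed fraction of $C\e$ requires something like $4^K\eta$ to be small --- while $K$ itself grows like a power of $1/\eta$ in your scheme. These requirements are circular and cannot be met simultaneously. This is precisely why the paper does not bound $\snorm{W_{\cP_k}}_2$ at all for $p<2$: it truncates the final step function at level $K=C(6/\e)^{1/(p-1)}$ and runs the energy count on the truncated graphon (Case II of the proof of Theorem~\ref{thm:W-upper-reg}), making the iteration bound depend only on $\e$ and $p$, after which $\eta$ can be chosen. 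Your outline is thus recoverable for $p\ge 2$ once Lemma~\ref{lem:small-cut} is supplied and merging is replaced by rounding of $S,T$, but for $1<p<2$ it is missing the truncation idea on which the proposition actually turns.
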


We establish Proposition~\ref{prop:densify} as a weak regularity lemma. In
fact, $U$ can be constructed from $G$ by averaging the edge weights over a
partition of the vertex set of $G$. As with other regularity lemmas, the
number of parts used in the partition will be bounded.  See
\textsection\ref{sec:reg-lem-up-reg} for the proof.

The regularity lemma for dense graphs was developed by Szemer\'edi~\cite{Sz}.
Extensions of Szemer\'edi's regularity lemma to sparse graphs were developed
independently by Kohayakawa and R\"odl~\cite{Koh,KR} under an $L^\infty$
upper regularity assumption. Scott~\cite{Sco} gave another proof of a sparse
regularity lemma without any assumptions, but as in Szemer\'edi's regularity
lemma, it allows for exceptional parts that could potentially hide all the
``dense spots.'' Frieze and Kannan~\cite{FK} developed a weak version of
regularity lemma with better bounds on the number of parts needed, and it is
the version that we extend. This weak regularity lemma was extended to sparse
graphs under the $L^\infty$ upper regularity assumption in~\cite{BR} and
\cite{COCF}. In our work, we extend the weak regularity lemma to $L^p$ upper
regular graphs.

Our proof of the weak regularity lemma for $L^p$ upper regular graphs is an
extension of the usual $L^2$ energy increment argument. However, the
extension is not completely straightforward. Due to the nesting of norms,
when $1 < p < 2$, we do not have very much control over the maximum $L^2$
energy for an $L^p$ upper regular graph. This issue does not arise when $p
\geq 2$ (e.g., $p = \infty$ in previous works). We resolve this issue via a
careful truncation argument when $1 < p < 2$.  As it turns out, these
truncation arguments can be generalized to the case $p=1$, provided we have
sufficient control over the tails of $W$; see Appendix~\ref{sec:uniform}.

\subsection{Counting lemma for $L^p$ graphons} \label{sec:defres-count}

We have not yet addressed the issue of subgraph counts.\footnote{We
  actually only talk about homomorphism counts in this paper. There is
  a subtle yet significant distinction between homomorphisms and
  subgraphs, namely that subgraphs arise as homomorphisms for which the
  map $V(F) \to V(G)$ is injective. When $G$ is a large, dense graph and $F$ is fixed, this
  distinction is not important, since all but a vanishing proportion
  of maps $V(F) \to V(G)$ are injective. However, when $G$ is sparse,
  this distinction could be significant (since the normalization is to
  divide the subgraph count by $\norm{G}_1^{\abs{E(F)}}
  \abs{V(G)}^{\abs{V(F)}}$). As an example, when $\rho =
  o(n^{-1/2})$, we have $n^4 \rho^4 = o(n^3 \rho^2)$, so the main contribution
  to the number of homomorphisms from $C_4$ to the random graph $G(n,
  \rho)$ is no longer coming from 4-cycles, but rather from paths of
  length $2$ (each of which is the image of a homomorphism from
  $C_4$). However, as it turns out, we will not say much about
  either homomorphism densities or subgraph counts for sparse graphs
  anyway (our counting lemmas are for $L^p$ graphons), so let us not
  dwell on the distinction between subgraphs and homomorphisms.}
For simple graphs $F$ and $G$, a graph homomorphism from $F$ to $G$ is a map
$V(F) \to V(G)$ that sends every edge of $F$ to an edge of $G$. Let $\hom(F,
G)$ be the number of homomorphisms. The homomorphism density, or
\emph{$F$-density}, is defined by $t(F, G) := \hom(F,
G)/\abs{V(G)}^{\abs{V(F)}}$, which is equal to the probability that a random
map $V(F) \to V(G)$ is a homomorphism.

In the theory of dense graph limits, the importance of homomorphism densities
is that they characterize convergence under the cut metric: a sequence of
dense graphs converges if and only if its $F$-densities converge for all $F$,
and the limiting $F$-densities then describe the resulting graphon
\cite[Theorem~3.8]{BCLSV1}. This notion of convergence is called \emph{left
convergence}.

The situation is decidedly different for sparse graphs, and left convergence
is not even implied by cut metric convergence, as we will see below.  The
irrelevance of left convergence is the most striking difference between dense
and sparse graph limits, and it is an unavoidable consequence of sparsity. By
contrast, right convergence (defined by quotients or statistical physics
models) remains equivalent to metric convergence, as we show in \cite{BCCZ}.

Before explaining further, we must extend the definition of homomorphism
density to weighted graphs and graphons. For any simple graph $F$ and graphon
$W$, we define
\[
t(F,W) := \int_{[0,1]^{\abs{V(F)}}} \prod_{ij \in E(F)} W(x_i, x_j)
\,dx_1\dots dx_{\abs{V(F)}}.
\]
Note that $t(F, G) = t(F, W^G)$ for simple graphs $G$, and we take
this as the definition of $t(F, G)$ for weighted graphs $G$.

A \emph{counting lemma} is a claim that any two graphs/graphons that are
close in cut metric must have similar $F$-densities. For dense graphs (or
more generally, graphs with uniformly bounded edge weights), this claim is
not too hard to show. For example, the following counting lemma appears in
\cite[Theorem~3.7(a)]{BCLSV1}.

\begin{theorem}[Counting lemma for $L^\infty$ graphons] \label{thm:L^infty-counting}
  Let $F$ be a simple graph with $m$ edges. If $U$ and $W$ are graphons with
  $\norm{U}_\infty \leq 1$, $\norm{W}_\infty \leq 1$, and $\delta_\square(U, W)
  \leq \e$, then
  \[
  \abs{t(F,U) - t(F,W)} \leq 4m \e.
  \]
\end{theorem}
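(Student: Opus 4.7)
The plan is to reduce to the absolute cut distance and then telescope one edge at a time. Since $t(F,\cdot)$ is invariant under measure-preserving bijections of $[0,1]$ (change of variables inside the defining integral gives $t(F,U^\sigma) = t(F,U)$), taking an infimum over $\sigma$ lets me replace the hypothesis by $\snorm{U-W}_\square \le \e$. Enumerating the edges of $F$ as $e_1 = u_1v_1,\dots,e_m = u_mv_m$ and setting, for $0 \le k \le m$,
\[
I_k := \int_{[0,1]^{\abs{V(F)}}} \prod_{j=1}^k U(x_{u_j},x_{v_j}) \prod_{j=k+1}^m W(x_{u_j},x_{v_j}) \prod_{v\in V(F)} dx_v,
\]
I get $I_0 = t(F,W)$, $I_m = t(F,U)$, and $t(F,U) - t(F,W) = \sum_{k=1}^m (I_k - I_{k-1})$, so it suffices to bound each summand by $4\e$.

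To estimate $I_k - I_{k-1}$, write $e_k = ab$ and momentarily fix the other variables $(x_c)_{c \ne a,b}$. Because $F$ is a simple graph, $e_k$ is the unique edge of $F$ joining $a$ and $b$, so the remaining product $\prod_{j \ne k} \kappa_j(x_{u_j},x_{v_j})$ (with $\kappa_j$ equal to $U$ if $j<k$ and to $W$ if $j>k$) splits into three pieces: a constant $\alpha$ coming from edges disjoint from $\{a,b\}$; a function $f(x_a)$ coming from edges incident to $a$ only; and a function $g(x_b)$ coming from edges incident to $b$ only. Each is bounded by $1$ in absolute value, since $\snorm{U}_\infty,\snorm{W}_\infty \le 1$. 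Applying \eqref{eq:cut-to-operator-norm} to the inner integral gives
\[
\abs{\,\alpha \int (U-W)(x_a,x_b)\, f(x_a)\, g(x_b)\, dx_a\, dx_b\,} \le 4\snorm{U-W}_\square \le 4\e,
\]
and since this bound is uniform in $(x_c)_{c \ne a,b}$, integrating out those variables (which have total measure $1$) preserves it. Hence $\abs{I_k - I_{k-1}} \le 4\e$ and therefore $\abs{t(F,U) - t(F,W)} \le 4m\e$.

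No single step is really an obstacle, but the key observation making everything work is the factorization of the integrand in the two variables at the endpoints of $e_k$ once the other variables are fixed; this depends crucially on $F$ being a simple graph, so that no other edge of $F$ reintroduces joint dependence on $(x_a,x_b)$. The uniform bound $\snorm{U}_\infty,\snorm{W}_\infty \le 1$ is what guarantees that the surrounding product of kernels has sup-norm at most $1$, which is precisely the ingredient that fails in the unbounded $L^p$ regime and is what will force the paper to develop a more delicate $L^p$-style counting lemma later on.
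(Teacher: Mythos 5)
Your proof is correct, and it is essentially the argument behind this statement: telescoping one edge at a time, fixing the remaining variables so that the integrand factors as $f(x_a)g(x_b)$ with $\snorm{f}_\infty,\snorm{g}_\infty\le 1$ (using that $F$ is simple and the kernels are bounded by $1$), and then invoking the $L^\infty\to L^1$ bound \eqref{eq:cut-to-operator-norm} to get $4\e$ per edge; this is also precisely the skeleton the paper uses in proving Theorem~\ref{thm:Lp-counting}, with truncation added to handle unbounded kernels (the $L^\infty$ statement itself is quoted from \cite{BCLSV1} rather than reproved). The reduction from $\delta_\square(U,W)\le\e$ to $\norm{U-W}_\square\le\e$ is handled the same way in the paper, so no issues there.
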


However, for sparse graphs, a general counting lemma of this form is too much
to ask for, even for $L^\infty$ upper regular graphs. Here is an example
illustrating this difficulty. Let $G_n$ be an instance of the Erd\H
os-R\'enyi random graph $G(n, \rho_n)$, where $\rho_n > 0$ is the edge
probability. If $n \rho_n \to \infty$, then $\rho_n^{-3} t(K_3, G_n) \to 1$
by a standard second moment argument, e.g., \cite[Theorem~4.4.4]{AS}. Let
$G'_n$ be obtained from $G_n$ by deleting edges from all triangles in $G_n$.
If we additionally assume $\rho_n = o(n^{-1/2})$, so that $n^3 \rho_n^3 =
o(n^2 \rho_n)$ and hence only an $o(1)$ fraction of the edges of $G_n$ are
deleted, then $d_\square(\rho_n^{-1} G_n, \rho_n^{-1} G'_n) = o(1)$. It
follows that $G_n$ and $G'_n$ are close in (normalized) cut distance, but
have very different (normalized) triangle densities, as $t(K_3, G_n') = 0$.
This example shows that we cannot expect a general counting lemma even for
$L^\infty$ upper regular sparse graphs, let alone $L^p$ upper regular graphs.

Nevertheless, we will give a counting lemma for $L^p$ graphons (which is the
``dense setting,'' as opposed to the ``sparse setting'' of $L^p$ upper
regular graphons). There is already an initial difficulty, which is that
$t(F, W)$ might not be finite. The next proposition shows the conditions for
$t(F, W)$ to be finite.

\begin{proposition} \label{prop:count-finiteness}
  Let $F$ be a simple graph with maximum degree $\Delta$. For
    every $p < \Delta$, there exists an $L^p$ graphon $W$ with $t(F, W)
    = \infty$. On the other hand, if $W$ is an $L^\Delta$ graphon,
    then $t(F, W)$ is well-defined and finite. Furthermore, $\abs{t(F, W)} \leq \norm{W}_\Delta^{\abs{E(F)}}$.
\end{proposition}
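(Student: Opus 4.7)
The plan is to prove the quantitative bound (c) first via a generalized H\"older inequality, deduce the well-definedness statement in (b) as an immediate consequence through Fubini's theorem, and finally produce an explicit $L^p$ counterexample for (a) using a separable power-function graphon.

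For (c), the essential tool is a generalized H\"older inequality for graph-indexed product integrals. Writing $n = |V(F)|$ and $m = |E(F)|$, and setting $f_e(x_1,\dots,x_n) := |W(x_{u_e}, x_{v_e})|$ for each edge $e = \{u_e,v_e\}$, we have $\norm{f_e}_\Delta = \norm{W}_\Delta$ since the other $n-2$ variables integrate trivially. The combinatorial input is that every variable $x_v$ appears in exactly $\deg_F(v) \le \Delta$ of the edge factors, so the uniform exponent $\Delta$ satisfies the admissibility condition $\sum_{e \ni v} 1/\Delta = \deg_F(v)/\Delta \le 1$ at every vertex. The generalized H\"older inequality then yields
\[
\int_{[0,1]^n} \prod_{e \in E(F)} |W(x_{u_e}, x_{v_e})|\, dx \;\le\; \prod_{e \in E(F)} \norm{f_e}_\Delta \;=\; \norm{W}_\Delta^{m}.
\]
Since the absolute-value integrand is integrable whenever $W \in L^\Delta$, Fubini's theorem makes $t(F,W)$ well-defined, and the bound passes to $|t(F,W)| \le \norm{W}_\Delta^{|E(F)|}$, proving both (b) and (c).

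For (a), given $p < \Delta$, take $\alpha := 1/\Delta$ and let $W(x,y) := (xy)^{-\alpha}$, a symmetric nonnegative graphon. Since $\alpha p = p/\Delta < 1$, one computes $\norm{W}_p^p = (1-\alpha p)^{-2} < \infty$, so $W \in L^p$. On the other hand, the integrand factors across vertices,
\[
\prod_{ij \in E(F)} W(x_i, x_j) \;=\; \prod_{v \in V(F)} x_v^{-\alpha \deg_F(v)},
\]
and Tonelli's theorem gives $t(F,W) = \prod_v \int_0^1 x^{-\alpha \deg_F(v)}\, dx$. Any maximum-degree vertex $v^*$ contributes $\int_0^1 x^{-1}\, dx = +\infty$, while every other vertex contributes a positive finite factor since $\alpha \deg_F(v) < 1$; hence $t(F,W) = \infty$.

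The main obstacle is justifying the generalized H\"older step in (c). The cleanest direct argument is an induction on $|V(F)|$: integrate out one variable $x_v$ using ordinary H\"older with $\deg_F(v)$ equal exponents, bound each resulting $L^{\deg_F(v)}$ norm in $x_v$ by the corresponding $L^\Delta$ norm via the nesting of $L^r$ norms on the probability space $[0,1]$ (using $\deg_F(v) \le \Delta$), and then apply the induction hypothesis to the graph $F - v$. Alternatively, one can invoke Finner's inequality directly.
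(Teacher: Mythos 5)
Your proposal is correct and follows essentially the same route as the paper: the finiteness bound via the generalized H\"older (Finner) inequality with the uniform exponent $\Delta$ on each edge, and the counterexample via the separable graphon $W(x,y)=(xy)^{-1/\Delta}$ whose maximum-degree vertex factor $\int_0^1 x^{-1}\,dx$ diverges. The inductive proof of the generalized H\"older step you sketch is optional (and would need a slightly stronger inductive hypothesis to absorb the single-variable factors created when a vertex is integrated out), but since you also invoke Finner's inequality directly, the argument matches the paper's.
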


We want a counting lemma which asserts that if $U$ and $W$ are graphons with
bounded $L^p$ norms, then $\abs{t(F, U) - t(F, W)}$ is small whenever
$\delta_\square(U, W)$ is small. Proposition~\ref{prop:count-finiteness}
suggests we should not expect such a counting lemma to hold when $p <
\Delta$. In fact, we give a counting lemma whenever $p > \Delta$ and show
that no counting lemma can hold when $p \leq \Delta$.

We prove the following extension of Theorem~\ref{thm:L^infty-counting} to
$L^p$ graphons. Note that for fixed $F$ and $p$, the bound in
\eqref{eq:Lp-counting} is a function of $\e$ that goes to zero as $\e \to 0$.
As $p \to \infty$, the bound in Theorem~\ref{thm:Lp-counting} converges to
that of Theorem~\ref{thm:L^infty-counting}.

\begin{theorem}[Counting lemma for $L^p$ graphons] \label{thm:Lp-counting}
Let $F$ be a simple graph with $m$ edges and maximum degree $\Delta$. Let
$\Delta < p < \infty$. If $U$ and $W$ are graphons with $\norm{U}_p \leq 1$,
$\norm{W}_p \leq 1$, and $\delta_\square(U, W) \leq \e$, then
\begin{equation} \label{eq:Lp-counting}
\abs{t(F,U) - t(F,W)} \leq
2m(m-1+p-\Delta) \paren{\frac{2\e}{p-\Delta}}^{\frac{p-\Delta}{p-\Delta+m-1}}.
\end{equation}
\end{theorem}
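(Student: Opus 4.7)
The plan is to combine a telescoping argument over the edges of $F$ with a truncation of $U$ and $W$. By replacing $U$ with $U^\sigma$ for a near-optimal measure-preserving bijection $\sigma$, we may assume $\norm{U - W}_\square \leq \e$. For a truncation level $M > 0$ to be chosen later, let $\tilde V := \sign(V) \min(|V|, M)$ for $V \in \{U, W\}$, so that $|\tilde V| \leq M$ and $|V - \tilde V| = (|V| - M)_+$ is supported on $\{|V| > M\}$. Decompose
\[
t(F, U) - t(F, W) = [t(F, U) - t(F, \tilde U)] + [t(F, \tilde U) - t(F, \tilde W)] + [t(F, \tilde W) - t(F, W)].
\]

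For the middle (main) term, I would telescope edge by edge from $\tilde U$ to $\tilde W$; the $k$-th difference has the form $\int (\tilde U - \tilde W)(x_{e_k}) \prod_{j \ne k} \tilde V_j(x_{e_j}) \, dx$ with each $\tilde V_j \in \{\tilde U, \tilde W\}$. Writing $\tilde U - \tilde W = (U - W) - (U - \tilde U) + (W - \tilde W)$, the $(U - W)$ contribution is $\leq 4 \e M^{m - 1}$: with the variables outside $e_k = uv$ fixed, the product of the remaining truncated factors factors as $\Phi_u(x_u)\,\Phi_v(x_v)\,\Phi_0$ (no edge besides $e_k$ joins $u$ and $v$) with $\norm{\Phi_u}_\infty \norm{\Phi_v}_\infty |\Phi_0| \leq M^{m - 1}$ pointwise, so the cut-norm bound $\norm{U - W}_{\infty \to 1} \leq 4 \norm{U - W}_\square$ closes the estimate. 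The ``correction'' contributions from $(U - \tilde U)$ and $(W - \tilde W)$, together with the outer truncation differences $[t(F, U) - t(F, \tilde U)]$ and $[t(F, \tilde W) - t(F, W)]$, are all controlled by the tail estimate
\[
\int (|V| - M)_+(x_{e_0}) \prod_{e \ne e_0} |V_e|(x_e) \, dx \leq M^{-(p - \Delta)}, \qquad V, V_e \in \{U, W\}.
\]
I would prove this using the pointwise inequality $(|V| - M)_+ \leq |V|^{p - \Delta + 1} M^{-(p - \Delta)}$ (valid on $\{|V| > M\}$ since $p \geq \Delta$ forces $(|V|/M)^{p - \Delta} \geq 1$), reducing to
\[
\int |V|^{p - \Delta + 1}(x_{e_0}) \prod_{e \ne e_0} |V_e|(x_e) \, dx \leq \norm{V}_p^{p - \Delta + 1} \prod_{e \ne e_0} \norm{V_e}_p \leq 1,
\]
which is a weighted generalization of Proposition~\ref{prop:count-finiteness}: assigning weight $p - \Delta + 1$ to $e_0$ and weight $1$ to the remaining edges, the weighted degree at every vertex is at most $p$ (namely $p - \Delta + d_v \leq p$ at endpoints of $e_0$ and $d_v \leq \Delta \leq p$ elsewhere), so iterated H\"older applied vertex by vertex delivers the bound.

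Collecting contributions gives $|t(F, U) - t(F, W)| \leq 4m M^{m - 1} \e + 4m M^{-(p - \Delta)}$, and choosing $M = (2\e/(p - \Delta))^{-1/(m - 1 + p - \Delta)}$ balances the two terms to yield~\eqref{eq:Lp-counting} after routine arithmetic. The main obstacle is obtaining the sharp $M^{-(p - \Delta)}$ decay in the tail estimate: a uniform application of Finner's inequality with $L^\Delta$ exponents on every edge only gives $M^{-(p - \Delta)/\Delta}$, producing a strictly weaker final exponent in $\e$. The non-uniform weighting---concentrating weight $p - \Delta + 1$ on the distinguished edge $e_0$ while keeping weight $1$ elsewhere---is essential, and executing iterated H\"older for general $F$ (especially in the presence of low-degree vertices, where a direct Finner bound fails to satisfy the fractional covering constraint) requires integrating variables in a carefully chosen order.
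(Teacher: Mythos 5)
Your proposal follows essentially the same route as the paper's proof: telescope over the edges of $F$, bound the main term via $\norm{U-W}_{\infty\to 1}\le 4\norm{U-W}_\square$ after freezing the other variables, control the truncation errors by the generalized H\"older (Finner) inequality with the distinguished edge carrying exponent $p/(p-\Delta+1)$ and the rest carrying $p$, and optimize the truncation level at the end. Your tail estimate is exactly the paper's Lemma~\ref{lem:Holder-graph} in disguise (note $\snorm{\,\abs{V}^{p-\Delta+1}}_{p/(p-\Delta+1)}=\norm{V}_p^{p-\Delta+1}$), and your closing worry is unfounded: Theorem~\ref{thm:gen-holder} only requires $\sum_{i:\ell\in A_i}1/p_i\le 1$ at each vertex, so low-degree vertices only create slack and no careful integration order is needed. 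The one real discrepancy is quantitative bookkeeping. You truncate both graphons globally and collect $4m\,M^{m-1}\e + 4m\,M^{-(p-\Delta)}$, whereas the paper truncates inside each telescoping term and re-telescopes the truncation error over the other $m-1$ factors, getting $4K^{m-1}\e + 2(m-1)K^{-(p-\Delta)}$ per term, i.e.\ tail coefficient $2m(m-1)$ rather than $4m$. With your choice of $M$ this yields the stated constant only when $m\ge 3$ (where $4m\le 2m(m-1)$); for $m=2$ even the optimal $M$ leaves you a factor of up to $2^{1/(p-\Delta+1)}$ above \eqref{eq:Lp-counting}, and for $m=1$ your specific $M$ overshoots as well (though that case is trivial by taking $M\to\infty$ or directly $\abs{t(F,U)-t(F,W)}\le\e$). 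So either handle $m\le 2$ separately or adopt the paper's per-term truncation to recover the exact constant; everything else in your argument is sound.
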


The counting lemma implies the following corollary for sequences of graphons
that are uniformly bounded in $L^p$ norm. As we saw above, $L^p$ upper
regularity would not suffice.

\begin{corollary} \label{cor:Lp-count-conv}
Let $p > 1$ and $C > 0$, and let $W_n$ be a sequence of graphons converging
to $W$ in cut metric. Suppose $\norm{W_n}_p \leq C$ for all $n$ and
$\norm{W}_p \leq C$. Then for every simple graph $F$ with maximum degree less
than $p$, we have $t(F, W_n) \to t(F, W)$ as $n \to \infty$.
\end{corollary}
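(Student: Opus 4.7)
The plan is to reduce the corollary directly to Theorem~\ref{thm:Lp-counting} by rescaling. Since the counting lemma is stated for graphons of $L^p$ norm at most $1$, I first normalize. Set $U_n := W_n/C$ and $U := W/C$. Then $\norm{U_n}_p \leq 1$ and $\norm{U}_p \leq 1$, and since the cut norm and the infimum defining $\delta_\square$ are both $1$-homogeneous in the graphon, we have $\delta_\square(U_n, U) = \delta_\square(W_n, W)/C =: \e_n$, which tends to $0$ by hypothesis.

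Next I would use the simple homogeneity property $t(F, cW) = c^{\abs{E(F)}} t(F, W)$, which is immediate from the definition
\[
t(F,W) = \int_{[0,1]^{\abs{V(F)}}} \prod_{ij \in E(F)} W(x_i, x_j) \, dx_1 \cdots dx_{\abs{V(F)}},
\]
since there are exactly $m = \abs{E(F)}$ factors in the integrand. Thus $t(F, W_n) - t(F, W) = C^m (t(F, U_n) - t(F, U))$. Because $\Delta < p$ by hypothesis, we may apply Theorem~\ref{thm:Lp-counting} to $U_n$ and $U$, yielding
\[
\abs{t(F, W_n) - t(F, W)} \leq 2 C^m m (m-1+p-\Delta) \paren{\frac{2\e_n}{p-\Delta}}^{\frac{p-\Delta}{p-\Delta+m-1}}.
\]

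Finally, since $p > \Delta$ the exponent $(p-\Delta)/(p-\Delta+m-1)$ is strictly positive, so the right-hand side tends to $0$ as $\e_n \to 0$, giving $t(F, W_n) \to t(F, W)$. There is essentially no obstacle here: the entire argument is a one-line bookkeeping reduction, the only subtlety being the verification that $\delta_\square$ scales linearly under multiplication of the graphon by a constant, and that the integral defining $t(F,\cdot)$ scales by $C^m$. (Note that Proposition~\ref{prop:count-finiteness} guarantees the quantities $t(F, W_n)$ and $t(F, W)$ are all finite, since $p > \Delta$ implies $W, W_n \in L^\Delta$.)
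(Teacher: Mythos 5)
Your reduction is exactly how the paper intends Corollary~\ref{cor:Lp-count-conv} to be read: it is stated as an immediate consequence of Theorem~\ref{thm:Lp-counting}, and your rescaling by $C$, the homogeneity $t(F,cW)=c^{\abs{E(F)}}t(F,W)$, and the $1$-homogeneity of $\delta_\square$ are precisely the intended bookkeeping, with Proposition~\ref{prop:count-finiteness} guaranteeing finiteness. (The only cosmetic caveat is the case $p=\infty$, if one reads the hypothesis as allowing it: there, first pass to any finite $q$ with $\Delta<q<\infty$ via nesting of norms and then run the same argument.)
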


On the other hand, no counting lemma can hold when $p \leq \Delta$, even if we replace
the cut norm by the $L^1$ norm.

\begin{proposition} \label{eq:Lp-no-count}
Let $F$ be a simple graph with maximum degree $\Delta \geq 2$, and let $1 \leq p
\leq \Delta$. Then there exists a sequence $(W_n)_{n \ge 0}$ of graphons with
$\norm{W_n}_p \leq 4$ such that $\norm{W_n - 1}_1 \to 0$ as $n \to \infty$
yet
\[
\lim_{n \to \infty} t(F, W_n) = 2^{\abs{\{v \in V(G) \,:\, \deg_F(v) = \Delta\}}} > 1 = t(F,1). 
\]
\end{proposition}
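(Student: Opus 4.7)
The plan is to take a rank-one graphon $W_n(x,y) = \phi_n(x)\phi_n(y)$ for a carefully chosen probability density $\phi_n \colon [0,1] \to [0,\infty)$ with $\int_0^1 \phi_n = 1$. The advantage of the rank-one structure is that, by Fubini,
\[
t(F, W_n) = \prod_{v \in V(F)} \int_0^1 \phi_n(x)^{\deg(v)}\, dx,
\]
which reduces the problem to engineering the moments of a one-variable density. The goal is to arrange $\int \phi_n^d \to 1$ for every $d < \Delta$ while $\int \phi_n^\Delta \to 2$; this would force $t(F, W_n) \to 2^{\abs{M}}$ with $M = \{v \in V(F) : \deg(v) = \Delta\}$, matching the claimed limit.

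The natural candidate is a tall narrow spike: set $\phi_n = n$ on $[0, n^{-\Delta}]$ and $\phi_n = b_n$ on $(n^{-\Delta}, 1]$, with $b_n \to 1$ uniquely determined by $\int \phi_n = 1$. A direct computation gives $\int_0^1 \phi_n^d = n^{d-\Delta} + b_n^d(1 - n^{-\Delta})$, which has exactly the required limits ($1$ for $d < \Delta$ and $2$ for $d = \Delta$). Writing $g_n = \phi_n - 1$, one has $\int g_n = 0$ and $\norm{g_n}_1 = 2(n-1)n^{-\Delta} = O(n^{1-\Delta}) \to 0$ (this is where $\Delta \geq 2$ is essential). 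The identity $W_n - 1 = g_n(x) + g_n(y) + g_n(x)g_n(y)$ then yields $\norm{W_n - 1}_1 \leq 2\norm{g_n}_1 + \norm{g_n}_1^2 \to 0$. Since $W_n$ is nonnegative and rank-one, $\norm{W_n}_p = (\int \phi_n^p)^{2/p}$; the same moment computation gives $\norm{W_n}_1 = 1$ exactly, $\norm{W_n}_\Delta \to 4^{1/\Delta} \leq 2$, and $\norm{W_n}_p \to 1$ for $1 < p < \Delta$. In particular $\norm{W_n}_p \leq 4$ for all sufficiently large $n$, and discarding finitely many initial terms produces the desired sequence.

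The main balancing issue, which I expect to be the real content of the argument, is the simultaneous requirement that the spike contribute order one to the $\Delta$-th moment of $\phi_n$ while contributing $o(1)$ to every lower moment and to the $L^1$ mass $\norm{\phi_n - 1}_1$. This forces the critical scaling in which the height is $\sim n$ and the width is $\sim n^{-\Delta}$, and the hypothesis $\Delta \geq 2$ is exactly what makes the width-times-height $n^{1-\Delta}$ tend to zero, which drives $W_n \to 1$ in $L^1$ and makes the construction succeed.
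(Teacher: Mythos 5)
Your construction is correct and is essentially the paper's own argument: both take a separable graphon $W_n(x,y)=w_n(x)w_n(y)$, use $t(F,W_n)=\prod_{v\in V(F)}\int_0^1 w_n^{\deg v}$, and engineer $w_n$ so that the moments of order $<\Delta$ tend to $1$ while the $\Delta$-th moment tends to $2$, with $\norm{w_n-1}_1\to 0$ giving $\norm{W_n-1}_1\to 0$ and the $L^\Delta$ (hence $L^p$) bound. The only difference is cosmetic: you use a flat spike of height $n$ and width $n^{-\Delta}$ (normalized to unit integral), whereas the paper uses $w_n=1+u_n$ with $u_n(x)=(x\ln n)^{-1/\Delta}1_{[1/n,1]}(x)$; both yield the same moment asymptotics.
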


See \textsection\ref{sec:counting} for proofs of these results.

\section{$L^p$ graphons} \label{sec:Lp-graphons}

Recall that an $L^p$ graphon is a symmetric and integrable function $W \colon
[0,1]^2 \to \RR$ with $\norm{W}_p < \infty$. In this section, we prove
Theorem~\ref{thm:compact}, which gives a limit theorem for $L^p$ graphons.
The results in this section form the ($L^p$ graphon sequence) $\to$ ($L^p$
graphon limit) arrow in Figure~\ref{fig:summary}.

The proof technique is an extension of that of \cite{LS07}. We will need a
weak regularity lemma for $L^p$ graphons. The standard proof of the weak
regularity lemma involving $L^2$ energy increments, based on ideas from
\textsection{8} of \cite{FK}, works for $L^2$ graphons and hence $L^p$
graphons for $p \geq 2$. Since several of our proofs are based on the same
basic idea, we include the proof here.  When $1 < p < 2$, we use a
truncation argument to reduce to the $p=2$ case.

\begin{lemma}[Weak regularity lemma for $L^2$ graphons] \label{lem:W-L^2-reg}
Let $\e >0 $, let $W \colon [0,1]^2 \to \RR$ be an $L^2$
graphon, and let $\cP$ be a partition of
$[0,1]$. Then there exists a partition $\cQ$ refining $\cP$ into
at most $4^{1/\e^2} \abs{\cP}$ parts so that
\[
\norm{W - W_\cQ}_\square \leq \e \norm{W}_2.
\]
\end{lemma}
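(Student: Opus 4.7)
The plan is to carry out the standard Frieze--Kannan energy increment argument, in its $L^2$ form, with the partition $\cP$ serving as the starting point of the iteration. Set $\cQ_0 := \cP$. At each step $k$, if $\norm{W - W_{\cQ_k}}_\square > \e \norm{W}_2$, choose witnessing measurable sets $S_k, T_k \subseteq [0,1]$ with
\[
\abs{\int_{S_k \x T_k} (W - W_{\cQ_k}) \, dx\,dy} > \e \norm{W}_2,
\]
and let $\cQ_{k+1}$ be the common refinement of $\cQ_k$ with the (at most $4$-part) partition generated by $S_k$ and $T_k$. Then $\abs{\cQ_{k+1}} \leq 4 \abs{\cQ_k}$, and it remains only to bound the number of iterations by $1/\e^2$.

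The heart of the matter is the energy increment. Since $S_k$ and $T_k$ are unions of parts of $\cQ_{k+1}$, the rectangle $S_k \x T_k$ lies in the $\sigma$-algebra generated by $\cQ_{k+1} \x \cQ_{k+1}$, so the defining property of the stepping operator gives
\[
\int_{S_k \x T_k} (W - W_{\cQ_k}) \,dx\,dy = \int_{S_k \x T_k} (W_{\cQ_{k+1}} - W_{\cQ_k}) \,dx\,dy.
\]
Applying Cauchy--Schwarz to the right-hand side with $\snorm{\mathbf{1}_{S_k \x T_k}}_2 \leq 1$ yields $\snorm{W_{\cQ_{k+1}} - W_{\cQ_k}}_2 > \e \norm{W}_2$. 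Because $W_{\cQ_k}$ is the $L^2$-orthogonal projection of $W$ onto step functions with respect to $\cQ_k$, and similarly for $\cQ_{k+1}$, the Pythagorean identity gives $\snorm{W_{\cQ_{k+1}}}_2^2 = \snorm{W_{\cQ_k}}_2^2 + \snorm{W_{\cQ_{k+1}} - W_{\cQ_k}}_2^2 \geq \snorm{W_{\cQ_k}}_2^2 + \e^2 \snorm{W}_2^2$.

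Since the stepping operator is $L^2$-contractive, $\snorm{W_{\cQ_k}}_2^2 \leq \snorm{W}_2^2$ throughout, so the process must terminate after at most $1/\e^2$ iterations, at which point $\snorm{W - W_{\cQ_k}}_\square \leq \e \norm{W}_2$. The final partition then has at most $4^{1/\e^2}\abs{\cP}$ parts and refines $\cP$ by construction. The only mildly delicate point is the identification of the inner-product with $\mathbf{1}_{S_k} \otimes \mathbf{1}_{T_k}$ as a genuine $L^2$ energy increment, and this is handled cleanly via the stepping/projection identity above; the rest is bookkeeping.
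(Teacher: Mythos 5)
Your proof is correct and is essentially the paper's own argument: the same Frieze--Kannan iteration starting from $\cP$, refining by the witnessing sets $S_k,T_k$ (at most a factor of $4$ per step), and the same energy increment via the stepping/projection identity, Cauchy--Schwarz, the Pythagorean theorem, and $L^2$-contractivity to bound the number of steps by $1/\e^2$. No gaps.
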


\begin{proof}
  We build a sequence $\cP_0, \cP_1, \cP_2, \dots$ of partitions of
  $[0,1]$, starting with $\cP_0 = \cP$. For each $i \geq 0$, the
  partition $\cP_{i+1}$ refines $\cP_i$ by dividing each part of
  $\cP_i$ into at most four subparts. So in particular $\abs{\cP_i}
  \leq 4^i \abs{\cP_0}$.

  These partitions are constructed as follows. If for some $i$,
  $\cP_i$ satisfies $\norm{W - W_{\cP_i}}_\square \leq \e\snorm{W}_2$,
  then we stop. Otherwise, by the definition of the cut norm, there
  exists measurable subsets $S,T \subseteq [0,1]$ with
  \[
    \abs{\ang{W - W_{\cP_i},1_{S
          \x T}}} > \e\norm{W}_2.
  \]
  Let $\cP_{i+1}$ be the common refinement of $\cP_i$ with $S$ and
  $T$. Since $S$ and $T$ are both unions of parts in $\cP_{i+1}$,
  \[
  \abs{\ang{W_{\cP_{i+1}} - W_{\cP_i},1_{S \x T}}} = \abs{\ang{W -
      W_{\cP_i},1_{S \x T}}} >  \e\norm{W}_2.
  \]
  Since $\cP_{i+1}$ is a
refinement of $\cP_i$, $\ang{W_{\cP_{i+1}} - W_{\cP_i}, W_{\cP_i}} = 0$. So
by the Pythagorean theorem, followed by the Cauchy-Schwarz inequality,
\[
\norm{W_{\cP_{i+1}}}_2^2 - \norm{W_{\cP_{i}}}_2^2
= \norm{W_{\cP_{i+1}} - W_{\cP_{i}}}_2^2
\geq \abs{\ang{W_{\cP_{i+1}} - W_{\cP_i},1_{S \x T}}}^2
> \e^2\norm{W}_2^2.
\]
Since $\snorm{W_{\cP_i}}_2 \leq \snorm{W}_2$ (by the convexity of $x
\mapsto x^2$), we see that the process must stop with $i \leq
1/\e^2$. The final $\cP_i$ is the desired $\cQ$.
\end{proof}

An \emph{equipartition} of $[0,1]$ is a partition where all parts have equal
measure. It will be convenient to enforce that the partitions obtained from
the regularity lemma are equipartitions. The following lemma is similar to
\cite[Lemma~9.15(b)]{Lov}.

\begin{lemma}[Equitizing a partition] \label{lem:equitize}
Let $p>1$ and $\e > 0$, and let $k$ be any positive integer. Let $W$ be an
$L^p$ graphon, let $\cP$ be an equipartition of $[0,1]$, and let $\cQ$ be a
partition refining $\cP$. Then there exists an equipartition $\cQ'$ refining
$\cP$ into exactly $k \abs{\cP}$ parts so that
\[
\norm{W - W_{\cQ'}}_\square \leq 2\norm{W - W_{\cQ}}_\square + 2 \norm{W}_p \paren{\frac{2\abs{\cQ}}{k\abs{\cP}}}^{1-1/p}.
\]
\end{lemma}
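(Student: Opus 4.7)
The plan is to build $\cQ'$ by a greedy "lay-out-and-cut" procedure inside each block of $\cP$, and to control $\snorm{W-W_{\cQ'}}_\square$ via the triangle inequality
\[
\snorm{W-W_{\cQ'}}_\square \le \snorm{W-W_\cQ}_\square + \snorm{W_\cQ-W_{\cQ'}}_\square,
\]
bounding the second summand by a H\"older estimate combined with the contractivity of the stepping operator on $\snorm{\cdot}_\square$.

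For the construction, inside each $P_i\in\cP$ (which has measure $1/|\cP|$) I would fix an arbitrary linear order on the $\cQ$-parts contained in $P_i$, imagine them laid out consecutively along a virtual interval of length $1/|\cP|$, and cut at the positions $s,2s,\dots,(k-1)s$ with $s:=1/(k|\cP|)$, measurably subdividing any $\cQ$-part that a cut passes through.  This yields an equipartition $\cQ'$ refining $\cP$ into $k|\cP|$ parts of common measure $s$.  Let $\cR:=\cQ\vee\cQ'$; since each of the $(k-1)|\cP|$ interior cuts adds at most one sub-part beyond $\cQ$, we have $|\cR|\le|\cQ|+(k-1)|\cP|$.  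Call a $\cQ'$-part \emph{bad} if it contains at least two $\cR$-sub-parts, and let $B'\subseteq[0,1]$ be the union of the bad $\cQ'$-parts.  If $m$ is the number of bad parts, then $|\cR|\ge|\cQ'|+m=k|\cP|+m$, forcing $m\le|\cQ|-|\cP|$, so $\lambda(B')\le ms\le|\cQ|/(k|\cP|)$.

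To control $\snorm{W_\cQ-W_{\cQ'}}_\square$ I decompose
\[
W_\cQ-W_{\cQ'}=\bigl(W_\cQ-(W_\cQ)_{\cQ'}\bigr)+\bigl((W_\cQ)_{\cQ'}-W_{\cQ'}\bigr).
\]
For the right piece, linearity of stepping gives $(W_\cQ)_{\cQ'}-W_{\cQ'}=-(W-W_\cQ)_{\cQ'}$, so contractivity of stepping on $\snorm{\cdot}_\square$ yields $\snorm{(W_\cQ)_{\cQ'}-W_{\cQ'}}_\square\le\snorm{W-W_\cQ}_\square$.  For the left piece, the key geometric observation is that if $Q'_a$ and $Q'_b$ are both good then each lies in a single $\cQ$-part, so $W_\cQ$ is constant on $Q'_a\times Q'_b$ and its $\cQ'$-average is the same constant; hence $W_\cQ-(W_\cQ)_{\cQ'}$ is supported on $(B'\times[0,1])\cup([0,1]\times B')$, whose measure is at most $2\lambda(B')$.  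Using $\snorm{\cdot}_\square\le\snorm{\cdot}_1$, H\"older's inequality, and $\snorm{(W_\cQ)_{\cQ'}}_p\le\snorm{W_\cQ}_p\le\snorm{W}_p$, this piece is at most
\[
\snorm{W_\cQ-(W_\cQ)_{\cQ'}}_p\cdot\bigl(2\lambda(B')\bigr)^{1-1/p}\le 2\snorm{W}_p\bigl(2|\cQ|/(k|\cP|)\bigr)^{1-1/p},
\]
and summing the three estimates produces the claimed inequality.

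The main subtlety is picking the correct "bad set" for the H\"older estimate.  A naive attempt to bound $\snorm{W_\cQ-W_{\cQ'}}_1$ directly forces one to use the measure of $\cQ$-parts split by $\cQ'$, which can be of order $1$ (for instance if $\cQ=\cP$, every $\cQ$-part is split).  Routing through the intermediate step function $(W_\cQ)_{\cQ'}$ replaces that by $\lambda(B')$, and since every bad $\cQ'$-part has the uniformly tiny measure $s=1/(k|\cP|)$, the counting bound $m\le|\cQ|-|\cP|$ on $|\cR|$ then delivers the desired $(|\cQ|/(k|\cP|))^{1-1/p}$ factor.
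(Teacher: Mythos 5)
Your proof is correct and follows essentially the same route as the paper's: a greedy construction of $\cQ'$ inside each $\cP$-part so that the union of the $\cQ'$-parts straddling more than one $\cQ$-part has measure at most $\abs{\cQ}/(k\abs{\cP})$, then a three-term triangle inequality in which one term is handled by contractivity of the stepping operator applied to $W-W_\cQ$ and the other by an $L^1$/H\"older estimate over the small bad region. The only cosmetic difference is the intermediate step function: the paper routes through $W_\cR$ with $\cR=\cQ\vee\cQ'$, while you route through $(W_\cQ)_{\cQ'}$; both give the same constants.
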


\begin{proof}
For $\cQ'$ we choose any equipartition refining $\cP$ into exactly
$k\abs{\cP}$ parts, at most $\abs{\cQ}$ of which intersect more than one part
of $\cQ$. We can construct such a $\cQ'$ as follows. For each part $P_i$ of
$\cP$, let $Q_{i1}, \dots, Q_{im}$ be the parts of $\cQ$ contained in $P_i$.
Form $\cQ'$ by dividing up each of $Q_{i1}, \dots, Q_{im}$ into parts of
measure exactly $1/(k\abs{\cP})$ plus a remainder part; then group the
remainder parts in $P_i$ together and divide them into parts of measure
$1/(k\abs{\cP})$. This partitions $P_i$ into $k$ parts of equal size.  At
most $m$ of these new parts intersect more than one part of $\cQ$, because
there were at most $m$ remainder parts, each of size less than
$1/(k\abs{\cP})$.  Now carrying out this procedure for each part of $\cP$
gives an equipartition $\cQ'$ with the desired property.

Let $\cR$ be the common refinement of $\cQ$ and $\cQ'$. Because the stepping
operator is contractive with respect to the cut norm (i.e.,
$\norm{U_\cR}_\square \leq \norm{U}_\square$),
\begin{align*}
  \norm{W - W_{\cQ'}}_\square
  &\leq
  \norm{W - W_{\cQ}}_\square + \norm{W_\cQ - W_\cR}_\square +
  \norm{W_\cR - W_{\cQ'}}_\square
  \\
  &=
  \norm{W - W_{\cQ}}_\square + \norm{(W_\cQ - W)_\cR}_\square + \norm{W_\cR - W_{\cQ'}}_\square
  \\
  &\leq
    2\norm{W - W_{\cQ}}_\square + \norm{W_\cR - W_{\cQ'}}_\square.
\end{align*}
Thus, it will suffice to bound $\norm{W_\cR - W_{\cQ'}}_\square$ by $2
\norm{W}_p \paren{2\abs{\cQ}/(k\abs{\cP})}^{1-1/p}$.

Let $S$ be the union of the parts of $\cQ'$ that were broken up in its
refinement $\cR$. These are exactly the parts that intersect more than one
part of $\cQ$, so $\lambda(S) \leq \abs{\cQ}/(k\abs{\cP})$. Using the
agreement of $W_{\cQ'}$ with $W_\cR$ on $S^c \x S^c$ (where $S^c := [0,1]
\setminus S$), H\"older's inequality with $1/p+1/p'=1$, the bound
$\snorm{W_\cR}_p \leq \snorm{W_{\cQ'}}_p \leq \snorm{W}_p$, and the triangle
inequality, we get
\begin{align*}
  \norm{W_\cR - W_{\cQ'}}_\square &\leq \norm{W_\cR - W_{\cQ'}}_1\\
    &=\norm{(W_\cR - W_{\cQ'})(1 - 1_{S^c \x S^c})}_1 \\
    &\leq \norm{W_\cR - W_{\cQ'}}_p \norm{1 - 1_{S^c \x S^c}}_{p'}\\
    &= \norm{W_\cR - W_{\cQ'}}_p \big(2 \lambda(S) - \lambda(S)^2\big)^{1-1/p}\\
    &\leq 2\norm{W}_p \big(2 \lambda(S) \big)^{1-1/p}\\
    &\leq 2 \norm{W}_p \paren{\frac{2\abs{\cQ}}{k\abs{\cP}}}^{1-1/p},
\end{align*}
as desired.
\end{proof}

The following lemma is the $L^2$ version of Corollary~3.4(i) in
\cite{BCLSV1}, which in fact never required the $L^\infty$ hypothesis
implicitly assumed there.

\begin{lemma}[Weak regularity lemma for $L^2$ graphons, equitable version] \label{lem:W-L^2-reg-equi}
Let $0 < \e < 1/3$ and let $W \colon [0,1]^2 \to \RR$ be an $L^2$ graphon.
Let $\cP$ be an equipartition of $[0,1]$. Then for every integer $k \geq
4^{10/\e^2}$ there exists an equipartition $\cQ$ refining $\cP$ into exactly
$k \abs{\cP}$ parts so that
\begin{equation} \label{eq:L^2-reg-equi}
\norm{W - W_\cQ}_\square \leq \e \norm{W}_2.
\end{equation}
\end{lemma}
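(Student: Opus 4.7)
The plan is to combine the non-equitable weak regularity lemma (Lemma~\ref{lem:W-L^2-reg}) with the equitizing lemma (Lemma~\ref{lem:equitize}) applied at $p=2$, with the error budget $\e$ split so that the equitization step contributes only a small lower-order piece. The basic idea is: first produce a refinement $\cQ_0$ of $\cP$ satisfying the cut-norm bound with a slightly smaller $\e'$, then equitize $\cQ_0$ into exactly $k\abs{\cP}$ parts, paying a price controlled by $\abs{\cQ_0}/(k\abs{\cP})$, which we can make tiny by choosing $k$ large enough.

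Concretely, I would first invoke Lemma~\ref{lem:W-L^2-reg} with parameter $\e/3$ in place of $\e$ to obtain a partition $\cQ_0$ refining $\cP$ into at most $4^{9/\e^2}\abs{\cP}$ parts such that
\[
\snorm{W - W_{\cQ_0}}_\square \leq \frac{\e}{3}\snorm{W}_2.
\]
Then I would apply Lemma~\ref{lem:equitize} with $p=2$ to the equipartition $\cP$, the refinement $\cQ_0$, and the given integer $k$, yielding an equipartition $\cQ$ refining $\cP$ into exactly $k\abs{\cP}$ parts with
\[
\snorm{W - W_\cQ}_\square \leq 2\snorm{W - W_{\cQ_0}}_\square + 2\snorm{W}_2 \sqrt{\frac{2\abs{\cQ_0}}{k\abs{\cP}}} \leq \frac{2\e}{3}\snorm{W}_2 + 2\snorm{W}_2 \sqrt{\frac{2\abs{\cQ_0}}{k\abs{\cP}}}.
\]

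The remaining task is bookkeeping on the constants. Using $\abs{\cQ_0} \leq 4^{9/\e^2}\abs{\cP}$ and $k \geq 4^{10/\e^2}$ gives $2\abs{\cQ_0}/(k\abs{\cP}) \leq 2 \cdot 4^{-1/\e^2}$, so the second term is at most $2\snorm{W}_2 \sqrt{2\cdot 4^{-1/\e^2}}$. To conclude \eqref{eq:L^2-reg-equi} it suffices to check
\[
2\sqrt{2\cdot 4^{-1/\e^2}} \leq \frac{\e}{3}, \qquad \text{i.e., } 4^{1/\e^2} \geq \frac{72}{\e^2},
\]
which, since $1/\e^2 > 9$ whenever $\e < 1/3$, follows from $4^{1/\e^2}$ dominating any polynomial in $1/\e$ on this range (an explicit check at $\e = 1/3$ gives $4^9 = 262144 \geq 648$, and the left side grows super-exponentially as $\e$ decreases). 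The only genuinely delicate step is the choice of split $\e/3$ versus $2\e/3$: using $\delta = \e/2$ in the first step would leave no room for the equitization error, while $\delta = \e/3$ leaves the clean slack $\e/3$ that the chosen $k \geq 4^{10/\e^2}$ easily accommodates. No step presents a conceptual obstacle; the only potential pitfall is the routine but unavoidable verification that the stated threshold for $k$ is consistent with the constraint $\e < 1/3$.
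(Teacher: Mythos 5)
Your proposal is correct and follows essentially the same route as the paper: apply Lemma~\ref{lem:W-L^2-reg} with parameter $\e/3$ to get a refinement into at most $4^{9/\e^2}\abs{\cP}$ parts, then equitize via Lemma~\ref{lem:equitize} with $p=2$, and the numerical condition you verify, $4^{1/\e^2}\geq 72/\e^2$ for $\e<1/3$, is exactly the bookkeeping inequality used in the paper. No discrepancies.
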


\begin{proof}
  Apply Lemma~\ref{lem:W-L^2-reg} to obtain a refinement
  $\cQ$ of $\cP$ into at most $4^{9 / \e^2} \abs{\cP}$
  parts so that $\norm{W - W_\cQ} \leq \frac13 \e \norm{W}_2$. Now apply
  Lemma~\ref{lem:equitize} with $p=2$ to obtain a refinement $\cQ'$ of
  $\cP$ into an equipartition of exactly $k
  \abs{\cP}$  parts satisfying
  \[
  \norm{W - W_{\cQ'}}_\square
  \leq 2\norm{W - W_{\cQ}}_\square + 2 \norm{W}_2
  \sqrt{\frac{2\abs{\cQ}}{k
     \abs{\cP}}}
 \leq 2\cdot \frac{\e}{3}\norm{W}_2 + 2 \norm{W}_2 \cdot \frac{\e}{6}
 \leq \e \norm{W}_2.
  \]
  Here we used $\abs{\cQ}/\abs{\cP} \leq 4^{9/\e^2} \leq
  \frac{\e^2}{72} 4^{10/\e^2} \leq \frac{1}{2}(\frac{\e}{6})^2 k$, which holds for
  $0 < \e < 1/3$.
  So $\cQ'$ is the desired partition.
\end{proof}

Lemma~\ref{lem:W-L^2-reg-equi} also works for $L^p$ graphons for all $p \geq
2$ by nesting of norms, as \eqref{eq:L^2-reg-equi} implies $\norm{W -
W_\cQ}_\square \leq \e \norm{W}_p$. Now we deal with the case $1 < p < 2$.

\begin{lemma}[Weak regularity lemma for $L^p$ graphons] \label{lem:W-L^p-reg-equi} Let $1 < p < 2$ and $0 < \e < 1$.  Let $W \colon [0,1]^2 \to
  \RR$ be an $L^p$ graphon. Let $\cP$ be an equipartition of
  $[0,1]$. Then for any integer $k \geq 4^{10 (3/\e)^{p/(p-1)}}$ there
  exists an equipartition $\cQ$ refining
  $\cP$ into exactly $k \abs{\cP}$ parts so that
  \[
  \norm{W - W_\cQ}_\square \leq
  \e\norm{W}_p.
  \]
\end{lemma}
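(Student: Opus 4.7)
The plan is to reduce to the $L^2$ case by truncating $W$ at a threshold $T$, applying Lemma~\ref{lem:W-L^2-reg-equi} to the bounded piece, and controlling the tail in $L^1$. Without loss of generality assume $\norm{W}_p = 1$ (otherwise rescale and use scale invariance of the inequality). Fix $T>0$ to be chosen and write $W = W^{\le T} + W^{>T}$, where $W^{\le T}(x,y) = W(x,y) \cdot 1_{\sabs{W(x,y)} \le T}$.

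The two basic estimates I need are:
\[
\norm{W^{\le T}}_2^2 \;\le\; \int_{\sabs{W} \le T} \sabs{W}^{2-p} \sabs{W}^p \, d\lambda
\;\le\; T^{2-p}, \qquad
\norm{W^{>T}}_1 \;\le\; T^{1-p} \int \sabs{W}^p \, d\lambda \;=\; T^{1-p}.
\]
The first follows from $\sabs{W}^{2-p}\le T^{2-p}$ on $\{\sabs{W}\le T\}$ (note $1<p<2$, so $2-p>0$), and the second is an immediate H\"older/Markov bound using $\sabs{W}/T \ge 1$ on $\{\sabs{W}>T\}$.

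Next, apply the equitable $L^2$ weak regularity lemma (Lemma~\ref{lem:W-L^2-reg-equi}) to $W^{\le T}$, with a parameter $\e'$ to be chosen, to obtain an equipartition $\cQ$ refining $\cP$ into exactly $k\abs{\cP}$ parts satisfying
\[
\norm{W^{\le T} - (W^{\le T})_\cQ}_\square \;\le\; \e'\, \snorm{W^{\le T}}_2 \;\le\; \e'\, T^{(2-p)/2},
\]
valid whenever $k \ge 4^{10/(\e')^2}$. For the tail, I use that the cut norm is dominated by the $L^1$ norm and that the stepping operator is contractive in $L^1$, giving
\[
\norm{W^{>T} - (W^{>T})_\cQ}_\square \;\le\; \norm{W^{>T}}_1 + \norm{(W^{>T})_\cQ}_1 \;\le\; 2\,T^{1-p}.
\]
The triangle inequality then yields
\[
\norm{W - W_\cQ}_\square \;\le\; \e'\, T^{(2-p)/2} + 2\,T^{1-p}.
\]

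It remains to choose $T$ and $\e'$. Balancing the two terms suggests $T \sim \e^{-1/(p-1)}$ and $\e' \sim \e^{p/(2(p-1))}$; concretely one takes $T$ so that $2T^{1-p}$ is a definite fraction of $\e$ (say $\e/3$), and then picks $\e'$ so that $\e' T^{(2-p)/2}$ absorbs the remainder. Since the exponents satisfy $1 + (2-p)/(2(p-1)) = p/(2(p-1))$, the required lower bound on $k$ becomes $k \ge 4^{10/(\e')^2} \sim 4^{c\, \e^{-p/(p-1)}}$, matching $4^{10(3/\e)^{p/(p-1)}}$ up to the choice of constants. The main (only) obstacle is tuning these constants so that the numerical inequality $k \ge 4^{10(3/\e)^{p/(p-1)}}$ is genuinely sufficient uniformly in $1<p<2$ and $0<\e<1$; this is a routine but slightly fussy optimization, which is why the statement uses the somewhat generous exponent $10(3/\e)^{p/(p-1)}$ rather than the tight one.
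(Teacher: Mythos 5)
Your decomposition is exactly the paper's: truncate at a threshold, apply Lemma~\ref{lem:W-L^2-reg-equi} to the bounded part, control the two tail terms in $L^1$ (using that stepping is an $L^1$ contraction and that the cut norm is dominated by the $L^1$ norm), and combine via the triangle inequality and linearity of the stepping operator. The one place your write-up falls short is precisely the step you dismiss as ``routine but slightly fussy'': the stated exponent $10(3/\e)^{p/(p-1)}$ is not generous --- it is exactly the tight value for the right split --- and your concrete choice of constants does not fit under it uniformly in $p$. With your tuning ($2T^{1-p}=\e/3$, hence $T=(6/\e)^{1/(p-1)}$, and $\e'=\tfrac{2\e}{3}T^{-(2-p)/2}$ so that the $L^2$ term absorbs the remaining $2\e/3$), the requirement from Lemma~\ref{lem:W-L^2-reg-equi} is
\[
\frac{10}{(\e')^2}
\;=\;
10\paren{\frac{3}{2\e}}^2\paren{\frac{6}{\e}}^{\frac{2-p}{p-1}}
\;=\;
\frac{2^{\frac{2-p}{p-1}}}{4}\cdot 10\paren{\frac{3}{\e}}^{\frac{p}{p-1}},
\]
using $\frac{p}{p-1}=2+\frac{2-p}{p-1}$. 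The prefactor $2^{(2-p)/(p-1)}/4$ exceeds $1$ as soon as $p<4/3$ and blows up as $p\to 1^{+}$, so for such $p$ the $k$ you need is strictly larger than the $4^{10(3/\e)^{p/(p-1)}}$ permitted by the statement.

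The fix is the paper's bookkeeping rather than yours: take $T=(3/\e)^{1/(p-1)}\norm{W}_p$, so that \emph{each} of the two tail terms $\norm{W^{>T}}_1$ and $\norm{(W^{>T})_\cQ}_1$ is separately at most $\tfrac{\e}{3}\norm{W}_p$ (rather than budgeting $\e/3$ for their sum), and apply Lemma~\ref{lem:W-L^2-reg-equi} with $\e'=(\e/3)^{p/(2(p-1))}$; then $\e'\norm{W^{\le T}}_2\le \e'\norm{W}_p^{p/2}T^{1-p/2}=\tfrac{\e}{3}\norm{W}_p$ and $10/(\e')^2=10(3/\e)^{p/(p-1)}$ exactly, matching the hypothesis on $k$. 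So your argument is the same as the paper's in substance, but as written the quantitative conclusion is not established for all $1<p<2$; it becomes a complete proof once you make this specific choice of constants instead of deferring it.
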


Note that as $p \nearrow 2$, the exponent $p/(p-1)$ of $1/\e$ in $k$ in the
lemma tends to $2$, which is the best possible exponent in the bound for the
weak regularity lemma when $p \geq 2$ by \cite{CF}.

\begin{proof}
  Set $K = (3/\e)^{1/(p-1)} \norm{W}_p$, and let
  \[
    W' = W1_{\abs{W} \leq K}.
  \]
  We have
  \begin{align*}
  \norm{W'}_2 &= \norm{W 1_{\abs{W} \leq K}}_2\\
  &\leq \norm{W (K/\abs{W})^{1-p/2}}_2\\
  &= \norm{W}_p^{p/2} K^{1-p/2}
  = (3/\e)^{\frac{2-p}{2(p-1)}} \norm{W}_p.
  \end{align*}
  By Lemma~\ref{lem:W-L^2-reg-equi} there exists an equitable
  partition $\cQ$ refining $\cP$ into exactly $k\abs{\cP}$ parts so that
  \[
  \norm{W' - W'_\cQ}_\square \leq \paren{\frac{\e}{3}}^{\frac{p}{2(p-1)}} \norm{W'}_2 \leq \frac{\e}{3} \norm{W}_p.
  \]
  We also have
  \begin{align*}
  \norm{W_\cQ - W'_\cQ}_1
  &= \norm{(W - W')_\cQ}_1\\
  &\leq \norm{W - W'}_1
  = \norm{W 1_{\abs{W} > K}}_1 \\
  &\leq \norm{W (\abs{W}/K)^{p-1}}_1
  = \norm{W}_p^p / K^{p-1} = \frac{\e}{3} \norm{W}_p.
\end{align*}
It follows that
\begin{align*}
\norm{W - W_{\cQ}}_\square
&\leq
\norm{W - W'}_\square + \norm{W' - W'_{\cQ}}_\square + \norm{W'_{\cQ} - W_{\cQ}}_\square
\\
&\leq
\norm{W - W'}_1 + \norm{W' - W'_{\cQ}}_\square + \norm{W'_{\cQ} - W_{\cQ}}_1
\\
&\leq \frac{\e}{3}\norm{W}_p + \frac{\e}{3}\norm{W}_p +
\frac{\e}{3}\norm{W}_p = \e \norm{W}_p.
\end{align*}
Therefore $\cQ$ is the desired partition.
\end{proof}

Now we prove that the $L^p$ ball is compact with respect to the cut metric.

\begin{proof}[Proof of Theorem~\ref{thm:compact}]

The proof of the theorem is a small modification of the argument in
\cite[Theorem~5.1]{LS07}, with adaptations to the $L^p$ setting.  We begin by
using the weak regularity lemmas to produce approximations to the sequence
$(W_n)_{n \ge 0}$.  The approximations using a fixed number of parts are
easier to analyze than the original sequence, because they involve only a
finite amount of information. We take limits of these approximations and show
that they form a martingale as one varies the number of parts. The limit of
the original sequence is then derived using the martingale convergence
theorem.

By scaling we may assume without loss of generality that $C = 1$. For each
$k$ and $n$ we construct an equipartition $\cP_{n,k}$ using
Lemma~\ref{lem:W-L^2-reg-equi} (when $p \geq 2$) or
Lemma~\ref{lem:W-L^p-reg-equi} (when $1 < p < 2$), so that
\[
\norm{W_n - (W_n)_{\cP_{n,k}}}_\square \leq 1/k.
\]
In doing so, we may assume that $\cP_{n,k+1}$ always refines $\cP_{n,k}$ and
that $|\cP_{n,k}|$ is independent of $n$.

The first step is to change variables so the partitions $\cP_{n,k}$ become
the same.  Let $\cP_k$ be a partition of $[0,1]$ into $|\cP_{n,k}|$ intervals
of equal length, and for each $n$ and $k$, let $\sigma_{n,k}$ be a
measure-preserving bijection from $[0,1]$ to itself that transforms
$\cP_{n,k}$ into $\cP_k$. (This can always be done; see, for example,
Theorem~A.7 in \cite{Jan}.)  Now let
\[
W_{n,k} = \big(W_n^{\sigma_{n,k}}\big)_{\cP_k} =
\big((W_n)_{\cP_{n,k}}\big)^{\sigma_{n,k}}.
\]
Then $W_{n,k}$ is a step-function with interval steps formed from $\cP_k$,
and
\[
\delta_\square(W_n,W_{n,k}) \le 1/k.
\]

Since each interval of $\cP_{k}$ has length exactly $1/|\cP_k|$ and the
stepping operator is contractive with respect to the $p$-norm,
\[
|\cP_k|^{-2} \norm{W_{n,k}}_\infty^p \leq \norm{W_{n,k}}_p^p \leq \norm{W_n}_p^p
\leq 1.
\]
Thus $\norm{W_{n,k}}_\infty \leq |\cP_k|^{2/p}$.

We next pass to a subsequence of $(W_n)_{n \ge 0}$ such that for each $k$,
$W_{n,k}$ converges to a limit $U_k$ almost everywhere as $n \to \infty$. For
each fixed $k$, this is easily done using compactness of a
$|\cP_k|^2$-dimensional cube, because the function $W_{n,k}$ is determined by
$|\cP_k|^2$ values corresponding to pairs of parts in $\cP_k$ and
$\norm{W_{n,k}}_\infty$ is uniformly bounded. To find a single subsequence
that ensures convergence for all $k$, we iteratively choose a subsequence for
$k=1,2,\dots$.

For each $k$, the limit $U_k$ is a step function with $|\cP_k|$ steps such
that $\norm{W_{n,k} - U_k}_p \to 0$ as $n \to \infty$.  In particular, this
implies that $\norm{U_k}_p \leq 1$ for all $k$, since $\norm{W_{n,k}}_p \leq
\norm{W_n}_p \leq 1$ for all $n$ and $k$.

The crucial property of the sequence $U_1,U_2,\dots$ is that it forms a
martingale on $[0,1]^2$ with respect to the $\sigma$-algebras generated by
the products of the parts of $\cP_1,\cP_2,\dots$.  In other words,
$(U_{k+1})_{\cP_k} = U_k$. This follows immediately from
\[
(W_{n,k+1})_{\cP_k} = \big(W_n^{\sigma_{n,k+1}}\big)_{\cP_k}
= \big((W_n)_{\cP_{n,k}}\big)^{\sigma_{n,k+1}} =  W_{n,k}.
\]
(Note that $\sigma_{n,k+1}$ transforms $\cP_{n,k}$ into $\cP_k$ because it
does the same for their refinements $\cP_{n,k+1}$ and $\cP_{k+1}$.)

By the $L^p$ martingale convergence theorem \cite[Theorem~5.4.5]{Dur}, there
exists some $W \in L^p([0,1]^2)$ such that $\norm{U_k - W}_p \to 0$ as $k \to
\infty$. Since $\norm{U_k}_p \leq 1$ for all $k$, we have $\norm{W}_p \leq
1$.

Now $W$ is the desired limit, because
\begin{align*}
\delta_\square(W_n, W)
&\leq \delta_\square(W_n,W_{n,k}) + \delta_\square(W_{n,k},U_k) + \delta_\square(U_k,W) \\
&\leq \delta_\square(W_n,W_{n,k}) + \norm{W_{n,k}-U_k}_1 + \norm{U_k-W}_1.
\end{align*}
Each of the terms in this bound can be made arbitrarily small by choosing $k$
and then $n$ large enough.  Thus, $\delta_\square(W_n, W) \to 0$ as $n \to
\infty$, as desired (keeping in mind that we have passed to a subsequence).
\end{proof}

\section{Regularity lemma for $L^p$ upper regular graph(on)s} \label{sec:reg-lem-up-reg}

In this section we prove a regularity lemma for $L^p$ upper regular graphs
and graphons. This forms the ($L^p$ upper regular sequence) $\to$ ($L^p$
graphon sequence) arrow in Figure~\ref{fig:summary}. We will first present
the proof for graphons, since the notation is somewhat simpler. Then we will
explain the minor modifications needed to prove the result for weighted
graphs. The difference between the two settings is that for graphs, the
partitions of $[0,1]$ in the corresponding graphon need to respect the
atomicity of the vertices, but this is only a minor inconvenience since the
$L^p$ upper regularity condition ensures that no vertex has weight too large.

The main ideas of the proof are as follows. Suppose $W$ is a
$(C,\eta)$-upper $L^p$ regular graphon with $p \ge 2$.  We would like to
proceed as in the proof of the $L^2$ weak regularity lemma, by constructing
partitions $\cP_0,\cP_1,\dots$ such that if $\norm{W-W_{\cP_i}}_\square >
C\e$, then
\[
\norm{W_{\cP_{i+1}}}_2^2 \geq \norm{W_{\cP_{i}}}_2^2 +(C\e)^2.
\]
Furthermore, we would like all the parts of $\cP_i$ to have measure at least
$\eta$, so that $\norm{W_{\cP_i}}_2 \le \norm{W_{\cP_i}}_p \le C$.  These
bounds cannot both hold for all $i$, so we must eventually have
$\norm{W-W_{\cP_i}}_\square \le C\e$ for some $i$.

When we try to do this, we run into two problems:
\begin{enumerate}
\item While $\norm{W-W_{\cP_i}}_\square > C\e$ gives sets $S$ and $T$ such
    that $\abs{\ang{W - W_{\cP_i},1_{S \x T}}} > C\e$, the partition
    generated by $\cP_i$, $S$, and $T$ may have a part of size less than
    $\eta$. In that case, we cannot use the upper regularity assumption as
    we proceed.

\item When $p<2$, the $L^2$ increment argument does not work, since we
    only have bounds on $\norm{W_{\cP_i}}_p$, not $\norm{W_{\cP_i}}_2$.
\end{enumerate}

To deal with the first problem, we will modify $S$ and $T$ to $S'$ and $T'$
such that the new partition has large enough parts, while $\abs{\ang{W -
W_{\cP_i},1_{S' \x T'}}} > C\e/2$.  To do so, we will need a technical
lemma, Lemma~\ref{lem:W-stab} below, which allows us to bound the difference
between these inner products, and which itself follows from a simpler lemma,
Lemma~\ref{lem:small-cut}. After stating and proving these lemmas, we will
formulate Theorem~\ref{thm:W-upper-reg}, which is the regularity lemma
version of Proposition~\ref{prop:densify} for graphons.  In its proof, we
deal with the first problem as describe above, while we deal with the second
by a suitable truncation argument.

We begin with a lemma that bounds the weight of $W$ on $1_{S \x T}$ when one
of $S$ and $T$ is small. Recall that $\lambda$ denotes Lebesgue measure.

\begin{lemma} \label{lem:small-cut}
  Assume $\eta < 1/9$. Let $W \colon
  [0,1]^2 \to \RR$ be a $(C,\eta)$-upper $L^p$ regular graphon, and let $S
  , T \subseteq [0,1]$ be measurable subsets. If $\lambda(S) \leq
  \delta$ for some $\delta \geq \eta$, then
\[
\abs{\ang{W, 1_{S \x T}}} \leq 10 C \delta^{1-1/p}.
\]
\end{lemma}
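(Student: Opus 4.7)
I would try to prove this by choosing a partition $\cP$ of $[0,1]$ that is compatible with both $S$ and $T$, then invoking the $L^p$ upper regularity of $W$ together with H\"older's inequality. The natural partition to consider is
\[
\cP_0 \;=\; \{\,S \cap T,\; S \setminus T,\; T \setminus S,\; [0,1] \setminus (S \cup T)\,\},
\]
discarding empty pieces. In the main case, where all four pieces have measure at least $\eta$, this is a valid partition for the hypothesis, so the cell averages $\alpha_{ij}$ of $W$ satisfy $\sum_{i,j} \lambda(J_i)\lambda(J_j)\abs{\alpha_{ij}}^p \leq C^p$. Since $S$ and $T$ are exactly unions of cells of $\cP_0$, we have
\[
\ang{W,\, 1_{S \times T}} \;=\; \sum_{\substack{J_i \subseteq S \\ J_j \subseteq T}} \alpha_{ij}\,\lambda(J_i)\,\lambda(J_j),
\]
and H\"older's inequality on this sum, together with the identity $\sum_{J_i \subseteq S,\,J_j \subseteq T} \lambda(J_i)\lambda(J_j) = \lambda(S)\lambda(T)$, gives
\[
\abs{\ang{W,\, 1_{S \times T}}} \;\leq\; C\,(\lambda(S)\lambda(T))^{1/p'} \;\leq\; C\,\delta^{1-1/p},
\]
using $\lambda(S) \leq \delta$ and $\lambda(T) \leq 1$. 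This already yields the lemma with constant $C$ in place of $10C$, so the factor $10$ is slack left for the other cases.

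The remaining work is to handle the degenerate subcases in which at least one of the four natural pieces has measure less than $\eta$. In each such subcase I would modify $\cP_0$ by absorbing each too-small piece into a larger adjacent piece to produce a valid partition of two or three parts, each of measure at least $\eta$. The cost is that $S$ or $T$ is no longer an exact union of cells in the modified partition, so $\ang{W, 1_{S \times T}}$ splits as a sum of ``full-cell'' contributions, each bounded by the same H\"older argument as above, plus a small number of sub-cell contributions in which one coordinate has measure at most $\eta$. The sub-cell contributions are themselves bounded by approximating them by full-cell integrals of $W_\cP$ (whose cell averages are controlled by $C\lambda(J_i)^{-1/p}\lambda(J_j)^{-1/p}$) plus corrections of measure at most $\eta \leq \delta$; each such correction is then $O(C\delta^{1-1/p})$ by another H\"older estimate.

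The main obstacle is a careful case analysis over which of the four pieces are small, combined with checking that the modified partitions are always valid. The hypothesis $\eta < 1/9$ provides the necessary slack, since the total measure of small pieces is less than $4\eta < 1/2$, leaving ample room in the rest of $[0,1]$ to absorb them; it also ensures that no two modifications interfere. Summing the main term with the bounded number of correction terms from the case analysis produces the final bound $10\,C\,\delta^{1-1/p}$.
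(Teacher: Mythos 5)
Your main case is exactly the paper's first step and is correct: when all four parts of the partition generated by $S$ and $T$ have measure at least $\eta$, H\"older against $\norm{W_\cP}_p \le C$ gives $\abs{\ang{W,1_{S\x T}}} \le C(\lambda(S)\lambda(T))^{1-1/p}$. The gap is in your treatment of the degenerate cases. After you absorb a small piece (say $S\cap T$ with $\lambda(S\cap T)<\eta$) into a neighboring part to obtain an admissible partition $\cP$, the leftover ``sub-cell contribution'' is a term of the form $\int_{(S\cap T)\x T} W$, and you propose to bound it by replacing $W$ by $W_\cP$ and controlling the correction $\int_{(S\cap T)\x T}(W-W_\cP)$ ``by another H\"older estimate.'' But H\"older requires an $L^p$ bound on the integrand, and the upper regularity hypothesis gives no bound on $W$ itself (nor on $W-W_\cP$): it only controls averages of $W$ over partitions all of whose parts have measure at least $\eta$. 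Below the scale $\eta$ the function $W$ can be arbitrarily large, and bounding integrals of $W$ over rectangles with one thin side is precisely the content of the lemma, so this correction step is essentially circular. (Your estimate $C\lambda(J_i)^{-1/p}\lambda(J_j)^{-1/p}$ controls the values of $W_\cP$ on cells, but it does not transfer to $W$ on a subrectangle of a cell.)

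The paper's device is not to absorb the small pieces but to pad them so that no set of measure less than $\eta$ ever appears as a side of a rectangle: when $\lambda(S\cap T)<\eta$ (or $\lambda(S^c\cap T)<\eta$, etc.), one adds or removes auxiliary sets $S_1\subseteq T$ and $S_2\subseteq T^c$ of measure exactly $\eta$, writes $1_{S\x T} = 1_{S'\x T}\pm 1_{S_1\x T}\pm 1_{S_2\x T}$ with $S'=S\symmdiff S_1\symmdiff S_2$, and checks that each of the pairs $(S',T)$, $(S_1,T)$, $(S_2,T)$ generates a partition with all parts of measure at least $\eta$, so the Step-1 bound applies to every term; a final adjustment of $T$ by a set of measure $3\eta$ handles $\lambda(T)<3\eta$ or $\lambda(T)>1-3\eta$, and this is where $\eta<1/9$ and the factor $10$ enter. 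To repair your argument, replace the ``approximate by $W_\cP$'' step by this enlargement/decomposition (or an equivalent); as written, the degenerate cases are not proved.
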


\begin{proof}
  We prove the lemma in three steps.

  \emph{Step 1.} Let $\cP$ be the smallest partition of $[0,1]$ that
  simultaneously refines $S$ and $T$ (i.e., the parts are $S \cap T,
  S^c \cap T, S \cap T^c, S^c \cap T^c$, excluding empty parts, where $S^c
  := [0,1] \setminus S$). If all
  parts of $\cP$ have measure at least $\eta$, then we can apply
  H\"older's inequality (with $1/p + 1/p' = 1$) and the $(C,\eta)$-upper $L^p$
  regularity
  hypothesis to conclude
  \[
  \abs{\ang{W, 1_{S \x T}}}
  = \abs{\ang{W_\cP, 1_{S \x T}}}
  \leq \norm{W_\cP}_p \norm{1_{S \x T}}_{p'}
  \leq C (\lambda(S)\lambda(T))^{1- 1/p}.
  \]

  \emph{Step 2.} In this step we assume that $3 \eta \leq \lambda(T)
  \leq 1 - 3\eta$. The partition $\cP$ generated by $S$ and $T$ as in
  Step 1 might not satisfy the condition of all parts having measure
  at least $\eta$. Define $S_1 \subseteq T$ and $S_2 \subseteq T^c$ as
  follows.

  If $\lambda(S \cap T) < \eta$, then let $S_1$ be an
  arbitrary
  subset of $T \setminus S$ with $\lambda(S_1) = \eta$; else, if $\lambda(S^c \cap T) < \eta$
  (equivalently, $\lambda(S
  \cap T) > \lambda(T) - \eta$), then let $S_1$ be an arbitrary subset
  of $S \cap T$ with $\lambda(S_1) = \eta$; else, let $S_1 = \emptyset$.

  Similarly, if $\lambda(S \cap T^c) < \eta$, then let $S_2$ be an
  arbitrary
  subset of $T^c \setminus S$ with $\lambda(S_2) = \eta$; else, if $\lambda(S
  \cap T^c) > \lambda(T^c) - \eta$, then let $S_2$ be an arbitrary subset
  of $S \cap T^c$ with $\lambda(S_2) = \eta$; else, let $S_2 =
  \emptyset$.

  Let $S' = S \symmdiff S_1 \symmdiff S_2$ (where $\symmdiff$ denotes the
  symmetric difference, and here each $S_i$ is either
  contained in $S$ or disjoint from $S$). Note that the pairs $(S_1, T)$,
  $(S_2, T)$, $(S', T)$ all satisfy the hypotheses of Step 1. So we
  have
  \begin{align*}
  \abs{\ang{W, 1_{S \x T}}}
  &= \abs{\ang{W, 1_{S' \x T} \pm 1_{S_1\x T} \pm 1_{S_2\x T}}}
  \\
  &\leq \abs{\ang{W, 1_{S' \x T}}} + \abs{\ang{W,1_{S_1 \x T}}} +
  \abs{\ang{W,1_{S_2 \x T}}}
  \\
  &\leq C(\lambda(S')\lambda(T))^{1-1/p} +
  C(\lambda(S_1)\lambda(T))^{1-1/p} +
  C(\lambda(S_2)\lambda(T))^{1-1/p}
  \\
  &\leq C(\lambda(S) + 2\eta)^{1-1/p} + 2C\eta^{1-1/p}
  \\
  &\leq 5C \delta^{1-1/p}.
\end{align*}
The last step follows from the assumption $\lambda(S) \leq \delta$ and $\delta \geq \eta$.

\emph{Step 3.} Now we relax the $3\eta \leq \lambda(T) \leq 1 - 3\eta$
assumption. If $\lambda(T) < 3\eta$, then let $T_1$ be any subset of $T^c$
with $\lambda(T_1) = 3\eta$; else, if $\lambda(T) > 1-3\eta$, then let $T_1$
be any subset of $T$ with $\lambda(T_1) = 3\eta$; else, let $T_1 =
\emptyset$. Let $T' = T \symmdiff T_1$. Then $3\eta \leq \lambda(T') \leq 1 -
3\eta$. So applying Step 2, we have
\[
  \abs{\ang{W, 1_{S \x T}}}
  \leq \abs{\ang{W, 1_{S \x T'}}} +  \abs{\ang{W, 1_{S \x T_1}}}
  \leq 10C\delta^{1-1/p}. \qedhere
\]
\end{proof}

\begin{lemma} \label{lem:W-stab}
Assume $\eta < 1/9$. Let $W \colon [0,1]^2 \to \RR$ be a $(C,\eta)$-upper
$L^p$ regular graphon. Let $S, S', T, T' \subseteq [0,1]$ be measurable sets
satisfying $\lambda(S \symmdiff S'), \lambda(T \symmdiff T') \leq \delta$,
for some $\delta \geq \eta$. Then
\[
\abs{\ang{W, 1_{S \x T} - 1_{S' \x T'}}} \leq 40 C \delta^{1-1/p}.
\]
\end{lemma}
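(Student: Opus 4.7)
The plan is to reduce the claim to four applications of Lemma~\ref{lem:small-cut} by writing $1_{S \x T} - 1_{S' \x T'}$ as a telescoping sum in which each term is supported on a rectangle with at least one small side.

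First, I would split
\[
1_{S \x T} - 1_{S' \x T'} = \big(1_{S \x T} - 1_{S \x T'}\big) + \big(1_{S \x T'} - 1_{S' \x T'}\big),
\]
and expand each bracket using symmetric differences:
\[
1_{S \x T} - 1_{S \x T'} = 1_{S \x (T\setminus T')} - 1_{S \x (T'\setminus T)},
\]
\[
1_{S \x T'} - 1_{S' \x T'} = 1_{(S\setminus S') \x T'} - 1_{(S'\setminus S) \x T'}.
\]
Each of the four resulting indicators is of the form $1_{A \x B}$ where one of $A$ or $B$ is contained in $S\symmdiff S'$ or $T\symmdiff T'$, and hence has measure at most $\delta$.

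Next, I would apply Lemma~\ref{lem:small-cut} to each of the four terms. Lemma~\ref{lem:small-cut} bounds $|\langle W, 1_{A \x B}\rangle|$ by $10C\delta^{1-1/p}$ when $\lambda(A)\le\delta$ (with $\delta\ge\eta$); since $W$ is symmetric, the same bound holds when $\lambda(B)\le\delta$ by swapping the roles of the two coordinates. Summing the four contributions via the triangle inequality yields
\[
\abs{\ang{W, 1_{S \x T} - 1_{S' \x T'}}} \leq 4 \cdot 10 C \delta^{1-1/p} = 40 C \delta^{1-1/p},
\]
as required.

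I do not anticipate any real obstacle: the only mild point is making sure the hypothesis $\delta \geq \eta$ of Lemma~\ref{lem:small-cut} is inherited (it is, by assumption), and that symmetry of $W$ lets us apply the small-cut bound whichever side of the rectangle happens to be the small one.
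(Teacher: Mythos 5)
Your proposal is correct and follows essentially the same route as the paper: both decompose $1_{S \x T} - 1_{S' \x T'}$ into four rectangle indicators each having one side of measure at most $\delta$ (your telescoping via $S \x T'$ versus the paper's splitting off $S \cap S'$ is only a cosmetic difference), and then apply Lemma~\ref{lem:small-cut} four times to get $4 \cdot 10C\delta^{1-1/p}$. Your explicit remark that the symmetry of $W$ lets the small side be either coordinate is a point the paper uses implicitly, so nothing is missing.
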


\begin{proof}
  We have
  \[
  1_{S \x T} - 1_{S' \x T'}
  = 1_{(S \setminus S') \x T} + 1_{(S \cap S') \x (T \setminus T')}
    - 1_{(S' \setminus S) \x T'} - 1_{(S \cap S') \x (T' \setminus T)}.
  \]
  Applying Lemma~\ref{lem:small-cut} to each of the four terms below and
  using $\lambda(S\setminus S'), \lambda(S' \setminus S),
  \lambda(T \setminus T'), \lambda(T'\setminus T) \leq \delta$,
  we have
  \begin{align*}
    \abs{\ang{W, 1_{S \x T} - 1_{S' \x T'}}}
     &\le \abs{\ang{W,1_{(S \setminus S') \x T}}}
        + \abs{\ang{W, 1_{(S \cap S') \x (T \setminus T')}}}\\
        & \quad \phantom{} + \abs{\ang{W, 1_{(S' \setminus S) \x T'}}} 
        + \abs{\ang{W, 1_{(S \cap S') \x (T' \setminus T)}}}
    \\
    &\leq 4 \cdot 10C \delta^{1-1/p}. \qedhere
  \end{align*}
\end{proof}

\begin{theorem}[Weak regularity lemma for $L^p$ upper regular graphons] \label{thm:W-upper-reg}
Let $C >0$, $p > 1$, and $0 < \e < 1$. Set
$N =  (6/\e)^{\max\{2,p/(p-1)\}}$ and
$\eta = 4^{-N-1}(\e/160)^{p/(p-1)}$.
Let $W \colon [0,1]^2 \to \RR$ be a $(C,\eta)$-upper $L^p$
regular graphon. Then there exists a partition
$\cP$ of $[0,1]$ into at most $4^N$ measurable parts, each having
measure at least $\eta$, so that
\[
\norm{W - W_\cP}_\square \leq C \e.
\]
\end{theorem}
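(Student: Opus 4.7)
The plan is to follow the $L^2$ energy-increment strategy of Lemma~\ref{lem:W-L^2-reg}, with two extra ingredients to handle the new obstacles: a cell-by-cell modification step that respects the lower bound $\eta$ on part measures, and (when $1 < p < 2$) a truncation argument in the spirit of Lemma~\ref{lem:W-L^p-reg-equi} needed because the $L^2$ norm of $W_{\cP_i}$ is not bounded by $C$. I start with $\cP_0 = \{[0,1]\}$ and construct a chain $\cP_0 \subseteq \cP_1 \subseteq \cdots$, ensuring by induction that $\abs{\cP_i} \le 4^i$ and every part of $\cP_i$ has measure $\ge \eta$. If at some stage $\norm{W - W_{\cP_i}}_\square \le C\e$, then $\cP = \cP_i$ is the partition claimed by the theorem.

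Otherwise, the cut-norm definition yields $S, T \subseteq [0,1]$ with $\abs{\ang{W - W_{\cP_i}, 1_{S \x T}}} > C\e$. The naive refinement $\cP_i \vee \{S, S^c\} \vee \{T, T^c\}$ can have parts of measure less than $\eta$, so for each $P \in \cP_i$ I modify $S \cap P$ and $T \cap P$, absorbing undersized subparts into larger siblings; this yields $S', T'$ with $\lambda(S \symmdiff S'), \lambda(T \symmdiff T') \le 4 \cdot 4^N \eta$. Applying Lemma~\ref{lem:W-stab} to both $W$ and $W_{\cP_i}$---the latter is $(C,\eta)$-upper $L^p$ regular because $\snorm{W_{\cP_i}}_p \le C$ and the stepping operator is contractive---the total change in the inner product is at most $2 \cdot 40 C (4^{N+1}\eta)^{1-1/p}$, which the choice $\eta = 4^{-N-1}(\e/160)^{p/(p-1)}$ forces below $C\e/2$. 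Hence $\abs{\ang{W - W_{\cP_i}, 1_{S' \x T'}}} > C\e/2$, and $\cP_{i+1} := \cP_i \vee \{S', (S')^c\} \vee \{T', (T')^c\}$ has all parts $\ge \eta$ with $\abs{\cP_{i+1}} \le 4\abs{\cP_i}$.

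When $p \ge 2$, the potential $\Phi_i = \snorm{W_{\cP_i}}_2^2$ works out of the box: by orthogonality of $W_{\cP_{i+1}} - W_{\cP_i}$ to $W_{\cP_i}$ in $L^2$ and Cauchy-Schwarz against $1_{S' \x T'}$, one obtains $\Phi_{i+1} - \Phi_i \ge (C\e/2)^2$; and $\Phi_i \le \snorm{W_{\cP_i}}_p^2 \le C^2$ by nesting of norms on the probability space. The number of iterations is therefore at most $4/\e^2$, comfortably within $N = (6/\e)^2$.

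The hard part is $1 < p < 2$, where $\snorm{W_{\cP_i}}_2$ is not controlled uniformly by $C$. I handle this by a truncation argument modeled on the proof of Lemma~\ref{lem:W-L^p-reg-equi}: fix $K = (3/\e)^{1/(p-1)} C$, let $\tilde W_i$ be $W_{\cP_i}$ with each cell value exceeding $K$ in absolute value zeroed out, and use $\Phi_i = \snorm{\tilde W_i}_2^2$ instead. The bound $\snorm{\tilde W_i}_2^2 \le K \snorm{W_{\cP_i}}_1 \le KC$ controls the potential, while the tail estimate $\snorm{W_{\cP_i} - \tilde W_i}_1 \le \snorm{W_{\cP_i}}_p^p / K^{p-1} \le (\e/3)C$ (exactly as in Lemma~\ref{lem:W-L^p-reg-equi}) shows that the inner product against $1_{S' \x T'}$ decreases by at most $(\e/3)C$ when $W_{\cP_i}$ is replaced by $\tilde W_i$, which I absorb into the slack. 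The resulting iteration count is $O(KC / \e^2) = O(\e^{-p/(p-1)})$, matching $N = (6/\e)^{p/(p-1)}$. The weighted-graph version is essentially identical: the condition $\alpha_i \le \eta \alpha_G$ built into the definition of upper $L^p$ regularity ensures that the partitions of $[0,1]$ generated by vertex blocks automatically meet the $\eta$-lower-bound requirement.
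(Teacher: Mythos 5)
Your overall skeleton — the chain $\cP_0,\cP_1,\dots$ with parts of measure at least $\eta$, the rounding of $S,T$ to $S',T'$ inside the cells of $\cP_i$, the transfer of the cut-norm deviation to $\abs{\ang{W-W_{\cP_i},1_{S'\x T'}}}>C\e/2$, and the $L^2$ increment for $p\ge 2$ — is the paper's argument (your use of Lemma~\ref{lem:W-stab} for $W_{\cP_i}$ is legitimate, since $\norm{(W_{\cP_i})_\cQ}_p\le\norm{W_{\cP_i}}_p\le C$; the paper just uses H\"older there). The genuine gap is in your Case $1<p<2$: truncating at \emph{every} step breaks the energy-increment mechanism. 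With $\tilde W_i:=W_{\cP_i}1_{\abs{W_{\cP_i}}\le K}$ one no longer has $\tilde W_i=(\tilde W_{i+1})_{\cP_i}$, because truncation does not commute with the stepping operator; so $\tilde W_0,\tilde W_1,\dots$ is not a martingale and the Pythagorean step fails. Writing $\norm{\tilde W_{i+1}}_2^2-\norm{\tilde W_i}_2^2=\norm{\tilde W_{i+1}-\tilde W_i}_2^2+2\ang{\tilde W_{i+1}-\tilde W_i,\tilde W_i}$, the cross term need not vanish or have a favorable sign: since $\ang{\tilde W_{i+1}-\tilde W_i,\tilde W_i}=\ang{(\tilde W_{i+1})_{\cP_i}-\tilde W_i,\tilde W_i}$, the only available bound is $2K\big(\norm{(\tilde W_{i+1})_{\cP_i}-W_{\cP_i}}_1+\norm{W_{\cP_i}-\tilde W_i}_1\big)\le 2C^pK^{2-p}$, which for $K\asymp C\e^{-1/(p-1)}$ is of order $C^2\e^{-(2-p)/(p-1)}$ — as large as the entire energy budget. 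A single step can therefore \emph{decrease} your potential, and no telescoping of the cross terms saves the count, so no contradiction is reached after $N$ steps. This is precisely why the paper truncates only once, at the end: stop after $n=\ceil{N}$ refinements, set $U=W_{\cP_n}1_{\abs{W_{\cP_n}}\le K}$, and run the increment retrospectively on $U_{\cP_0},\dots,U_{\cP_n}$, which \emph{is} a martingale (averages of one fixed function over nested partitions); the inner-product bounds transfer because $\norm{W_{\cP_i}-U_{\cP_i}}_1=\norm{(W_{\cP_n}-U)_{\cP_i}}_1\le\norm{W_{\cP_n}-U}_1\le C\e/6$.

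Two numerical slips in the same case would also need repair even if the structure were sound. First, with $K=(3/\e)^{1/(p-1)}C$ each tail estimate costs $C\e/3$, so replacing both $W_{\cP_i}$ and $W_{\cP_{i+1}}$ loses $2C\e/3>C\e/2$ and the transferred lower bound becomes vacuous; you need $K\ge C(6/\e)^{1/(p-1)}$, as in the paper. Second, your potential cap $\norm{\tilde W_i}_2^2\le K\norm{W_{\cP_i}}_1\le KC$ yields an iteration count of order $KC/(C\e)^2\asymp\e^{-p/(p-1)-1}$, not $O(\e^{-p/(p-1)})$, which overruns $N=(6/\e)^{p/(p-1)}$; the sharper bound $\norm{\tilde W_i}_2^2\le K^{2-p}\norm{W_{\cP_i}}_p^p\le C^2(6/\e)^{(2-p)/(p-1)}$ is what matches $N$. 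Both are easy fixes, but the broken martingale structure of per-step truncation is the substantive obstacle; adopting the end-of-process truncation resolves all three issues at once.
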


Proposition~\ref{prop:densify} for graphons follows as an immediate
corollary.

\begin{proof}
We consider a sequence of partitions $\cP_0, \cP_1, \cP_2, \dots,
\cP_n$ of $[0,1]$, starting with the trivial partition $\cP_0 =
\{[0,1]\}$. The following properties will be maintained:
\begin{enumerate}
  \item The partition $\cP_{i+1}$ refines $\cP_i$ by dividing each part
      of $\cP_i$ into at most four subparts. So in particular
      $\abs{\cP_i} \leq 4^i$.
  \item For each $i$, all parts of $\cP_i$ have measure at least
    $\eta$.
\end{enumerate}
These partitions are constructed as follows. For each $0 \leq i < n$, if $\cP_i$
satisfies
$\norm{W - W_{\cP_i}}_\square \leq C\e$, then we have found the desired
partition. Otherwise, there exists measurable subsets
$S,T \subseteq [0,1]$ with
\begin{equation} \label{eq:W-cut-dev}
\abs{\ang{W - W_{\cP_i},1_{S \x T}}} > C\e.
\end{equation}
Next we find $S', T'$ so that $\lambda(S\symmdiff S'), \lambda(T \symmdiff
T') \leq 2\abs{\cP_i}\eta$, such that if we define $\cP_{i+1}$ to be the
common refinement of $\cP$, $S'$, and $T'$, then all parts of $\cP_i$ have
size at least $\eta$. Indeed, look at the intersection of $S$ with each part
of $\cP_i$, and obtain $S'$ from $S$ by deleting (rounding down) the parts
that intersect with $S$ in measure less than $\eta$, and then adding
(rounding up) the parts that intersect $S^c$ in measure less than $\eta$.
Let $\cP_{i+1/2}$ be the common refinement of $\cP_i$ and $S'$, so that all
parts of $\cP_{i+1/2}$ have measure at least $\eta$, and $\lambda(S\symmdiff
S') \leq \abs{\cP_i}\eta$. Next, do a similar procedure to $T$ to obtain
$T'$ so that the common refinement $\cP_{i+1}$ of $\cP_{i+1/2}$ and $T'$ has
all parts with measure at least $\eta$. Here we have $\lambda(T \symmdiff
T') \leq \abs{\cP_{i+1/2}} \eta \leq 2\abs{\cP_i} \eta$. So $\cP_{i+1}$ has
the desired properties.

If the construction of the sequence $\cP_0, \dots, \cP_n$ of partitions stops
with $n \leq N$, then we are done. Otherwise let us stop the sequence at
$\cP_n$ with $n = \ceil{N}$. We will derive a contradiction.

Let $0 \leq i < n$, and let $S, S', T, T'$ be the sets used to construct
$\cP_{i+1}$ from $\cP_i$. Using $\lambda(S\symmdiff S'), \lambda(T \symmdiff
T') \leq 2\abs{\cP_i}\eta \leq 2 \cdot 4^N\eta$, we have by
Lemma~\ref{lem:W-stab}
\begin{equation} \label{eq:W-alter1}
\abs{\ang{W,1_{S \x T} - 1_{S' \x T'}}} \leq 40C(2 \cdot 4^N\eta)^{1-1/p}
\leq  C\e/4.
\end{equation}
Also by H\"older's inequality (with $1/p + 1/p' = 1$),
\begin{equation} \label{eq:W-alter2}
\begin{split}
	\abs{\ang{W_{\cP_i},1_{S \x T} - 1_{S' \x T'}}}
	&\leq \norm{W_{\cP_i}}_p \norm{1_{S \x T} - 1_{S' \x T'}}_{p'} \\
	&\leq C (\lambda(S \symmdiff S') + \lambda(T \symmdiff T'))^{1/p'}\\
	& \leq C (4 \cdot 4^N\eta)^{1-1/p} \leq C\e/160
	\leq C\e/8.
\end{split}
\end{equation}
It follows that
\begin{align*}
\abs{\ang{W - W_{\cP_i},1_{S \x T}} - \ang{W - W_{\cP_i},1_{S' \x
			T'}}}
&\leq \abs{\ang{W,1_{S \x T} - 1_{S' \x T'}}}\\
&\quad \phantom{} + \abs{\ang{W_{\cP_i},1_{S \x T} - 1_{S' \x
			T'}}}\\ 
&\leq C\e/2.
\end{align*}
Combing the above inequality with \eqref{eq:W-cut-dev} gives us
\[
\abs{\ang{W - W_{\cP_i},1_{S' \x T'}}} > C\e/2.
\]
Since $S'$ and $T'$ are both unions of parts in $\cP_{i+1}$, we have $\ang{W ,1_{S' \x
		T'}} = \ang{W_{\cP_{i+1}},1_{S' \x T'}}$, so
\begin{equation} \label{eq:W-cut-dev'}
\abs{\ang{W_{\cP_{i+1}} - W_{\cP_i},1_{S' \x T'}}} > C\e/2.
\end{equation}

We consider two cases: $p\geq 2$ and $ 1 < p < 2$.

\medskip 

\noindent\emph{Case I: $p \geq 2$.} This case is easier. Since $\cP_{i+1}$ is a
refinement of $\cP_i$, we have $\ang{W_{\cP_{i+1}} - W_{\cP_i}, W_{\cP_i}} = 0$. So by
the Pythagorean theorem, followed by the Cauchy-Schwarz inequality,
\[
\norm{W_{\cP_{i+1}}}_2^2 - \norm{W_{\cP_{i}}}_2^2
= \norm{W_{\cP_{i+1}} - W_{\cP_{i}}}_2^2
\geq \abs{\ang{W - W_{\cP_i},1_{S' \x T'}}}^2
> C^2\e^2/4.
\]
So $\norm{W_{\cP_n}}_2^2 > n C^2\e^2/4 \geq N C^2\e^2/4
> C^2$, which contradicts
$\norm{W_{\cP_n}}_2 \leq \norm{W_{\cP_n}}_p \leq C$.

\medskip 

\noindent\emph{Case II: $1 < p < 2$.} In this case, we no longer have an
upper bound on $\norm{W_{\cP_n}}_2$ as before. We proceed by truncation: we
stop the partition refinement process at step $n$, truncate the last step
function, and then look back to calculate the energy increment that would
have come from doing the same partition refinement on the truncated graphon.
Set
\[
K := C(6/\e)^{1/(p-1)},
\]
and define the truncation
\[
U := W_{\cP_n} 1_{\abs{W_{\cP_n}} \leq K}.
\]
We claim that for $0 \leq i < n$,
\begin{equation} \label{eq:U-L2-inc}
  \norm{U_{\cP_{i+1}}}_2^2 > \norm{U_{\cP_i}}_2^2 + (C\e/6)^2.
\end{equation}
Then one has $\norm{U_{\cP_n}}_2^2 > n(C\e/6)^2 \geq N (C\e/6)^2 = C^2
(6/\e)^{(2-p)/(p-1)}$, which contradicts
\begin{align*}
\norm{U_{\cP_n}}_2^2
= \norm{W_{\cP_n} 1_{\abs{W_{\cP_n}} \leq K}}_2^2
&\leq \norm{W_{\cP_n} (K/\abs{W_{\cP_n}})^{1-p/2}}_2^2 \\
&= \norm{W_{\cP_n}}_p^p K^{2 - p} \leq C^pK^{2-p} = C^2 (6/\e)^{(2-p)/(p-1)}.
\end{align*}
It remains to prove \eqref{eq:U-L2-inc}. We have
\begin{align*}
\norm{W_{\cP_n} - U}_1
&= \snorm{W_{\cP_n} 1_{\abs{W_{\cP_n}} > K}}_1\\
&\leq \snorm{W_{\cP_n} (\abs{W_{\cP_n}} /K)^{p-1}}_1 \\
&=  \norm{\abs{W_{\cP_n}}^p}_1 / K^{p-1}
= \snorm{W_{\cP_n}}_p^p / K^{p-1}\\
&\leq C^p/K^{p-1}
= C\e/6.
\end{align*}
Since $\cP_n$ is a refinement of $\cP_i$, we have $(W_{\cP_n})_{\cP_i}
= W_{\cP_i}$. So
\begin{equation}\label{eq:W-U}
\norm{W_{\cP_i} - U_{\cP_i}}_1
=\norm{(W_{\cP_n} - U)_{\cP_i}}_1
\leq \norm{W_{\cP_n} - U}_1
\leq C\e/6.
\end{equation}
Similarly, $\snorm{W_{\cP_{i+1}} - U_{\cP_{i+1}}}_1 \leq C\e/6$. Using the triangle
inequality, \eqref{eq:W-cut-dev'}, and \eqref{eq:W-U}, we find that
\begin{align*}
\abs{\ang{U_{\cP_{i+1}} - U_{\cP_i}, 1_{S'\x T'}}}
&\geq \abs{\ang{W_{\cP_{i+1}} - W_{\cP_i}, 1_{S'\x T'}}}\\
&\quad \phantom{} - \norm{W_{\cP_i} - U_{\cP_i}}_1 - \norm{W_{\cP_{i+1}} - U_{\cP_{i+1}}}_1
\\
&> C(\e/2 - \e/6 - \e/6) = C\e/6.
\end{align*}
Since $\cP_{i+1}$ is a refinement of $\cP_i$, we have $\ang{U_{\cP_{i+1}} - U_{\cP_i}, U_{\cP_i}} = 0$.
So by the Pythagorean theorem, followed by the Cauchy-Schwarz inequality, we have
\[
\norm{U_{\cP_{i+1}}}_2^2 - \norm{U_{\cP_{i}}}_2^2
= \norm{U_{\cP_{i+1}} - U_{\cP_{i}}}_2^2
\geq \abs{\ang{U_{\cP_{i+1}} - U_{\cP_i}, 1_{S'\x T'}}}^2
> (C\e/6)^2,
\]
which proves \eqref{eq:U-L2-inc}, as desired.
\end{proof}

This completes the proof of the weak regularity lemma for $L^p$ upper
regular graphons.

\begin{remark} \label{remark:graphon-equi}
At the cost of slightly worse constants, the statement of
Theorem~\ref{thm:W-upper-reg} can be strengthened to provide an
equipartition. To this end, we first apply the theorem to $W$, obtaining a
partition $\cP_0$ into  at most $4^N$ parts such that each part has size at
least $\eta$ and $\norm{W - W_{\cP_0}}_\square \leq C \e$.  Since $W$ is
assumed to be $L^p$ upper regular, we obtain a graphon $U=W_{\cP_0}$ such
that $\norm{U}_p\leq C$. Depending on whether $p\geq 2$ or $p\in(1,2)$, we
then apply Lemma~\ref{lem:W-L^2-reg-equi} or Lemma~\ref{lem:W-L^p-reg-equi}
to $U$ and the trivial partition of $[0,1]$ consisting of the single class
$[0,1]$. As a consequence, for $k\geq 4^{\max\{10/\e^2,10(3/\e)^{p/(p-1)}\}}$
we can find an equipartition $\cP$ of $[0,1]$ into $k$ parts such that
$\norm{W_{\cP_0} - U_\cP}_\square=\norm{U - U_\cP}_\square \leq C \e$. With
the help of the triangle inequality, this implies
\[
\norm{W - U_\cP}_\square \leq 2C \e.
\]
But $U_\cP$ is a step functions with steps in $\cP$, and it should
approximate $W$ at most as well as $W_\cP$.  While this is not quite true, it
is true at the cost of another factor of two.  To see this, we use the
triangle inequality, $U_\cP=(U_\cP)_\cP$, and the fact that the stepping
operator is a contraction with respect to the cut norm to bound
\[
\begin{aligned}
\norm{W - W_\cP}_\square&\leq
\norm{W - U_\cP}_\square + \norm{W_\cP - U_\cP}_\square\\
&=\norm{W - U_\cP}_\square + \norm{(W - U_\cP)_\cP}_\square
\\
&\leq
\norm{W - U_\cP}_\square + \norm{W - U_\cP}_\square\\
& = 2 \norm{W-U_\cP}_\square.
\end{aligned}
\]
Putting everything together, we see that for any $k\geq
4^{\max\{10/\e^2,10(3/\e)^{p/(p-1)}\}}$ we can find an equipartition $\cP$ of
$[0,1]$ into exactly $k$ parts such that
\[
\norm{W - W_\cP}_\square\leq 4C\e,
\]
provided $W$ is $(C,\eta)$-upper $L^p$ regular with  $\eta =
4^{-N-1}(\e/160)^{p/(p-1)}$, where $N =  (6/\e)^{\max\{2,p/(p-1)\}}$.
\end{remark}

Next we state the analogue of Theorem~\ref{thm:W-upper-reg} for weighted
graphs and explain how to modify the above proof to work for weighted
graphs.

If $G$ is a weighted graph, and $\cP = \{V_1, \dots, V_m\}$ is a partition of
$V(G)$, then we denote by $G_\cP$ the weighted graph on $V(G)$ (with the same
vertex weights as $G$) and edge weights as follows. For $s \in V_i, t \in
V_j$ the edge between $s$ and $t$ is given weight
\[
\beta_{st}(G_\cP) = \sum_{x \in V_i, y \in V_j} \frac{\alpha_x
  \alpha_y}{\alpha_{V_i}\alpha_{V_j}} \beta_{xy}(G)
\]
(note that we allow $x = y$). In other words, $G_\cP$ is obtained from $G$ by
averaging the edge weights inside each $V_i \x V_j$. In terms of graphons, we
have $W^{G_\cP} = (W^G)_\cP$, where we abuse notation by letting $\cP$ also
denote the partition of $[0,1]$ corresponding to the vertex partition.

\begin{theorem}[Weak regularity lemma for $L^p$ upper regular graphs] \label{thm:G-upper-reg-lemma} Let $C > 0$, $p > 1$, and $0
  < \e < 1$. Set $N = (6/\e)^{\max\{2,p/(p-1)\}}$ and $\eta = 4^{-N-1}
  (\e/320)^{p/(p-1)}$. Let $G = (V,E)$ be a $(C,\eta)$-upper $L^p$
  regular weighted graph. Then there exists a partition
  $\cP$ of $V$ into at most $4^N$ parts, each having weight at least $\eta
  \alpha_G$, so that
  \[
  d_\square\paren{\frac{G}{\norm{G}_1}, \frac{G_\cP}{\norm{G}_1}} \leq C \e.
  \]
\end{theorem}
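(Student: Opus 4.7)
The plan is to mimic the proof of Theorem~\ref{thm:W-upper-reg} applied to the associated graphon $W = W^G/\snorm{G}_1$, with the only essential modification being that every subset, partition, and rounding in the argument is required to respect the vertex-atomicity of $G$, i.e., to consist of unions of the intervals $I_v$ that correspond to vertices. Recall that the cut norm of $W^G - (W^G)_\cP$ is always realized by sets that are unions of such intervals, so we lose nothing by restricting attention to vertex subsets when producing the witnesses $S,T$ at each stage. Likewise, we will build a sequence $\cP_0,\cP_1,\dots$ of vertex partitions, starting from the trivial partition $\cP_0=\{V(G)\}$, and at each step $\cP_{i+1}$ will refine $\cP_i$ by splitting each part into at most four subparts via common refinement with two vertex subsets $S',T'$.

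To make the iteration work, I first need versions of Lemma~\ref{lem:small-cut} and Lemma~\ref{lem:W-stab} for the graph $G$. Here the hypothesis of $(C,\eta)$-upper $L^p$ regularity is used exactly as for graphons: if $S,T\subseteq V(G)$ generate a vertex partition of $V(G)$ whose parts all have weight at least $\eta\alpha_G$, then H\"older's inequality combined with \eqref{eq:G-Lp} bounds $\abs{\ang{W,1_{S\x T}}}$ by $C(\alpha_S\alpha_T/\alpha_G^2)^{1-1/p}$. The two- and three-step trick in the proof of Lemma~\ref{lem:small-cut} goes through verbatim, provided we take the auxiliary subsets $S_1,S_2,T_1$ to be unions of whole vertex intervals of total weight approximately $\eta\alpha_G$; the existence of such subsets is guaranteed by the assumption $\alpha_i\leq\eta\alpha_G$, at the price of a constant factor of $2$ in the total weight (we may overshoot $\eta$ by at most $\eta$, which is absorbed into $3\eta,9\eta$, etc.). This is precisely what replaces the $160$ by $320$ in the definition of $\eta$.

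The iterative energy-increment step is now identical to the one in Theorem~\ref{thm:W-upper-reg}. If $\norm{W-W_{\cP_i}}_\square>C\e$, we choose vertex subsets $S,T$ witnessing the excess, then round each to vertex partitions: I replace $S$ by $S'$ where, within each block of $\cP_i$, I either drop all vertices of $S$ (if their total weight in that block is less than $\eta\alpha_G$) or add all vertices of $S^c$ (if their total weight is less than $\eta\alpha_G$); similarly for $T'$. Because each vertex carries weight at most $\eta\alpha_G$, I can perform this rounding at the vertex level, and the symmetric-difference weights satisfy $\alpha_{S\symmdiff S'},\alpha_{T\symmdiff T'}\leq 2\abs{\cP_i}\eta\alpha_G\leq 2\cdot 4^N\eta\alpha_G$. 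Applying the graph analog of Lemma~\ref{lem:W-stab}, together with H\"older's inequality against $W_{\cP_i}$ exactly as in \eqref{eq:W-alter1}--\eqref{eq:W-alter2}, the rounding costs at most $C\e/2$ in the inner product, leaving $\abs{\ang{W_{\cP_{i+1}}-W_{\cP_i},1_{S'\x T'}}}>C\e/2$. The Pythagorean/truncation argument from Case~I ($p\geq 2$) and Case~II ($1<p<2$) of the proof of Theorem~\ref{thm:W-upper-reg} then applies unchanged to bound the number of iterations by $N=(6/\e)^{\max\{2,p/(p-1)\}}$, yielding the desired partition with at most $4^N$ parts each of weight $\geq\eta\alpha_G$.

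The main subtlety will be the bookkeeping in the rounding step: ensuring that after rounding $S$ and $T$ to vertex-respecting sets, the resulting common refinement $\cP_{i+1}$ still has every part of weight at least $\eta\alpha_G$, while simultaneously bounding the rounding error in terms of $\abs{\cP_i}$ and $\eta$. Because vertex weights are at most $\eta\alpha_G$, the rounding can always be carried out, but the extra factor of $2$ in the rounded symmetric difference (one factor for rounding down, another for rounding up against the block boundaries) is exactly why the constant in $\eta$ doubles from $160$ (in Theorem~\ref{thm:W-upper-reg}) to $320$ here. Everything else, including the final contradiction with $\norm{W_{\cP_n}}_p\leq C$ or the truncated version $\snorm{U_{\cP_n}}_2$, goes through verbatim.
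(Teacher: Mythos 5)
Your proposal is correct and is essentially the paper's own argument: the paper proves the graph version by rerunning the proof of Theorem~\ref{thm:W-upper-reg} with every set restricted to a vertex subset, observing that the auxiliary sets in the graph analogues of Lemmas~\ref{lem:small-cut} and~\ref{lem:W-stab} can then only be chosen with weight in $[\eta\alpha_G,2\eta\alpha_G)$ (possible because no vertex has weight more than $\eta\alpha_G$), which doubles those constants and is the reason for $320$ in place of $160$. The only slip is in your last paragraph, where you attribute the doubling to the symmetric-difference rounding in the iteration; that rounding is already vertex-respecting and costs the same $2\abs{\cP_i}\eta\alpha_G$ as in the graphon proof, and the correct source of the factor $2$ is the one you give in your second paragraph.
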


Let us explain how one can modify the proofs in this section to prove
Theorem~\ref{thm:G-upper-reg-lemma}. The only difference is that in the
proceeding proofs, instead of taking arbitrary measurable sets, we are only
allowed to take subsets of $[0,1]$ corresponding to subsets of vertices. Another
way to view this is that we are working with a different $\sigma$-algebra on
$[0,1]$, where the new $\sigma$-algebra comes from a partition of $[0,1]$
into parts with measure equal to the vertex weights (as a fraction of the
total vertex weights) of $G$. So previously in certain steps of the argument
in Lemma~\ref{lem:small-cut} where we took an arbitrary subset $S_1$ a
certain specified measure (say $\lambda(S_1) = \eta$), we have to be content
with just having $\lambda(S_1) \in [\eta, 2\eta)$. This can be done since the
$(C,\eta)$-upper $L^p$  regularity assumption implies no vertex occupies
measure greater than $\eta$ times the total vertex weight.

With this modification in place, Lemma~\ref{lem:small-cut} then
becomes the following.

\begin{lemma}
  Assume $\eta < 1/13$. Let $G$ be a
  $(C,\eta)$-upper $L^p$ regular weighted graph with vertex weights
  $\alpha_i$ and edge weights $\beta_{ij}$. Let $S,
  T \subseteq V(G)$. If $\alpha_S \leq \delta \alpha_G$ for some $\delta
  \geq \eta$, then
  \[
  \abs{\sum_{s \in S, t \in T} \beta_{st}} \leq 20 \delta^{1-1/p}
  \sum_{i,j\in V(G)} \abs{\beta_{ij}}.
  \]
\end{lemma}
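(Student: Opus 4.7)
My plan is to mimic the three-step argument of Lemma~\ref{lem:small-cut} verbatim, restricted to the $\sigma$-algebra on $[0,1]$ generated by the vertex intervals $I_v$, i.e., working only with vertex subsets of $V(G)$. The only substantive change is that, where the graphon proof chooses an auxiliary set of a precise measure such as $\eta$ or $3\eta$, in the weighted graph setting we must build such a subset greedily, adding vertices one at a time until the accumulated weight crosses the target. Since each individual vertex has weight at most $\eta \alpha_G$ (part of the $(C,\eta)$-upper $L^p$ regularity hypothesis), the overshoot is at most $\eta \alpha_G$, and so a target of weight $m\eta \alpha_G$ becomes an achieved weight in $[m\eta \alpha_G, (m+1)\eta \alpha_G)$. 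This ``doubling'' of $\eta$-sized adjustments is the entire source of the worse constant ($20$ versus $10$) and the stricter hypothesis ($\eta < 1/13$ versus $\eta < 1/9$).

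Concretely, in Step~1, when the partition $\cP$ of $V(G)$ generated by $S$ and $T$ (with empty parts discarded) has every part of weight at least $\eta \alpha_G$, H\"older's inequality applied to the stepped normalized associated graphon $\widetilde{W}_\cP = (W^G/\snorm{G}_1)_\cP$ against $1_{S \x T}$, together with the $L^p$ bound $\snorm{\widetilde{W}_\cP}_p \le C$, gives the inequality $\abs{\sum_{s\in S, t\in T} \beta_{st}} \le C (\alpha_S\alpha_T/\alpha_G^2)^{1-1/p} \sum_{i,j} \abs{\beta_{ij}}$ after unwinding the definitions of $\snorm{G}_1$ and $\ang{\cdot,\cdot}$. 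In Step~2, assuming $3\eta \alpha_G \le \alpha_T \le (1-3\eta)\alpha_G$, we form $S' = S \symmdiff S_1 \symmdiff S_2$ with each $S_i \subseteq V(G)$ a greedy vertex subset of weight in $[\eta \alpha_G, 2\eta \alpha_G)$, chosen (exactly as in the graphon proof, with the case analysis on whether the relevant piece of $\cP$ is too small) so that the partition generated by $S'$ and $T$ has all parts of weight $\ge \eta \alpha_G$; applying Step~1 to each of $(S',T)$, $(S_1,T)$, $(S_2,T)$ and summing via the triangle inequality yields the Step~2 bound (with a factor of $2$ lost because each $\alpha_{S_i}$ is controlled only by $2\eta \alpha_G$). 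Step~3 removes the size constraint on $T$ by adjusting $T$ to $T' = T \symmdiff T_1$, where $T_1$ is a greedy vertex subset of weight in $[3\eta \alpha_G, 4\eta \alpha_G)$ (lying inside $T^c$ when $\alpha_T<3\eta\alpha_G$ and inside $T$ when $\alpha_T>(1-3\eta)\alpha_G$), applying Step~2 to $(S,T')$, and bounding the remainder $(S,T_1)$ directly via Step~1 (using $\alpha_{T_1} \le 4\eta \alpha_G$ and $\delta \ge \eta$).

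The main obstacle is bookkeeping: tracing each instance of ``measure $\eta$'' or ``measure $3\eta$'' being replaced by ``measure in $[\eta, 2\eta)$'' or ``$[3\eta, 4\eta)$'' and verifying that the final constant $20$ and the hypothesis $\eta < 1/13$ are genuinely sufficient. In particular, Step~3 requires that, when $\alpha_T > (1-3\eta)\alpha_G$, there be enough room inside $T$ to extract a vertex subset of weight $\ge 3\eta \alpha_G$; combined with the overshoot from atomicity and the need to keep the resulting bounds from blowing up, this is what forces $\eta < 1/13$ rather than the $\eta < 1/9$ of the graphon case. Beyond this bookkeeping, the proof contains no analytic content beyond Lemma~\ref{lem:small-cut} itself, and can essentially be read off line by line from that proof with the vertex-atomicity caveat in place.
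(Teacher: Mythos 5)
Your proposal is correct and is essentially the paper's own argument: the paper proves this lemma by repeating the three-step proof of Lemma~\ref{lem:small-cut} within the $\sigma$-algebra generated by the vertex intervals, replacing each auxiliary set of exact measure $\eta$ (or $3\eta$) by a vertex subset of weight in $[\eta\alpha_G,2\eta\alpha_G)$ (respectively $[3\eta\alpha_G,4\eta\alpha_G)$), which is possible because upper regularity forces every vertex weight to be at most $\eta\alpha_G$, and this rounding is exactly what accounts for the constant $20$ and the hypothesis $\eta<1/13$. The paper carries out no more of the bookkeeping than you do, so your write-up matches its level of detail as well as its method.
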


The conclusion of Lemma~\ref{lem:W-stab} must be changed similarly, with the
bound increased by a factor of 2. To prove Theorem~\ref{thm:G-upper-reg-lemma}
we can modify the proof of Theorem~\ref{thm:W-upper-reg} to allow only
subsets of vertices instead of arbitrary measurable sets.

\begin{remark} \label{remark:G-upper-reg-equi}
As in Remark~\ref{remark:graphon-equi}, we can achieve an equipartition in
Theorem~\ref{thm:G-upper-reg-lemma} at the cost of worse constants.  Of course
the indivisibility of vertices means we cannot always achieve an exact
equipartition.  Instead, by an \emph{equipartition} of a graph $G$ we mean a
partition of $V(G)$ into $k$ parts $P_1,\dots,P_k$ such that for each $i$,
\[
\abs{\alpha_{P_i} - \frac{\alpha_G}{k}} < \max_{j \in V(G)} \alpha_j.
\]
The argument is the same as in Remark~\ref{remark:graphon-equi}, except that
we must use an equitable weak $L^p$ regularity lemma for graphs, while
Lemmas~\ref{lem:W-L^2-reg-equi} and~\ref{lem:W-L^p-reg-equi} were stated for
graphons.  For $p\ge2$, Corollary~3.4(ii) in \cite{BCLSV1} supplies what we
need, and exactly the same truncation argument used to derive
Lemma~\ref{lem:W-L^p-reg-equi} from Lemma~\ref{lem:W-L^2-reg-equi} extends
this argument to $p<2$.  The only difference is that the bound on $\eta$ is
now inherited from Theorem~\ref{thm:G-upper-reg-lemma} instead of
Theorem~\ref{thm:W-upper-reg}. We conclude that for $k\geq
4^{\max\{10/\e^2,10(3/\e)^{p/(p-1)}\}}$, we can find an equipartition $\cP$
of $V(G)$ into exactly $k$ parts such that
\[
d_{\square}\paren{\frac{G}{\norm{G}_1},\frac{G_\cP}{\norm{G}_1}}\leq 4C\e,
\]
provided $G$ is $(C,\eta)$-upper $L^p$ regular with  $\eta =
4^{-N-1}(\e/320)^{p/(p-1)}$, where $N = (6/\e)^{\max\{2,p/(p-1)\}}$.
\end{remark}

\section{Limit of an $L^p$ upper regular sequence} \label{sec:limit}

Putting together the results in the last two sections, we obtain the
limit for an $L^p$ upper regular sequence, thereby completing the
($L^p$ upper regular sequence) $\to$ ($L^p$ graphon limit) arrow in Figure~\ref{fig:summary}.

\begin{proof}[Proof of
  Theorems~\ref{thm:G-limit} and \ref{thm:W-upper-reg-limit}]
  We give the proof of Theorem~\ref{thm:W-upper-reg-limit} (for
  graphons). The proof of Theorem~\ref{thm:G-limit} (for weighted graphs) is nearly
  identical (using Theorem~\ref{thm:G-upper-reg-lemma} instead of
  Theorem~\ref{thm:W-upper-reg}).

  Let $W_n$ be a upper $L^p$ regular sequence of graphons. In other words, there
  exists a sequence
  $\eta_n \to 0$ so that $W_n$ is $(C+\eta_n, \eta_n)$-upper $L^p$ regular.  Applying
  Theorem~\ref{thm:W-upper-reg}, we can find a sequence $\e_n \to 0$
  so that for each $n$, there exists a partition $\cP_n$ of $[0,1]$ for which
  each part has measure at least $\eta_n$ and
  $\norm{W_n - (W_n)_{\cP_n}}_\square \leq \e_n$. We have
  $\norm{(W_n)_{\cP_n}}_p \leq C + \eta_n$ due to $L^p$ upper
  regularity. By Theorem~\ref{thm:compact}, there exists an $L^p$
  graphon $W$ so that $\norm{W}_p\leq C$ and $\delta_\square((W_n)_{\cP_n}, W) \to 0$ along
  some subsequence. Since $\e_n \to 0$,
  $\delta_\square(W_n,W) \to 0$ along this subsequence.
\end{proof}

The converse, Proposition~\ref{prop:conv-to-Lp}, follows as an
  corollary of the following lemma. (Note that an $L^p$ graphon $W$ is
  automatically $(\norm{W}_p, \eta)$-upper $L^p$ regular for every
  $\eta \geq 0$.)

\begin{lemma} \label{lem:small-cut-upper-regular}
Let $C > 0$, $\eta >0$, and $1 \leq p \leq \infty$, and let $W \colon
[0,1]^2 \to \RR$ be a $(C,\eta)$-upper $L^p$ regular graphon. Let $U \colon
[0,1]^2 \to \RR$ be another graphon. If $\norm{W - U}_\square \leq \eta^3$,
then $U$ is $(C+\eta,\eta)$-upper $L^p$ regular.
\end{lemma}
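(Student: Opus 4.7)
The plan is to show that for any admissible partition $\cP$ of $[0,1]$ (each part of measure at least $\eta$), one has $\norm{U_\cP}_p\le C+\eta$. I would bound this by decomposing $U_\cP=W_\cP+(U-W)_\cP$ and applying the triangle inequality: since $W$ is $(C,\eta)$-upper $L^p$ regular, $\norm{W_\cP}_p\le C$, so it suffices to show $\norm{(U-W)_\cP}_p\le \eta$.

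The key observation is that each step of $(U-W)_\cP$ can be directly controlled by the cut norm. On a cell $J_i\x J_j$, the step function takes the constant value
\[
a_{ij}=\frac{1}{\lambda(J_i)\lambda(J_j)}\int_{J_i\x J_j}(U-W)\,d\lambda,
\]
and since $J_i\x J_j$ is a rectangle, the numerator is bounded in absolute value by $\norm{U-W}_\square\le \eta^3$. Thus $\abs{a_{ij}}\le \eta^3/(\lambda(J_i)\lambda(J_j))$.

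Now I would compute
\[
\norm{(U-W)_\cP}_p^p
=\sum_{i,j}\lambda(J_i)\lambda(J_j)\abs{a_{ij}}^p
\le \eta^{3p}\sum_{i,j}(\lambda(J_i)\lambda(J_j))^{1-p}.
\]
Using $\lambda(J_i)\lambda(J_j)\ge \eta^2$ and the fact that there are at most $\eta^{-2}$ cells in $\cP\x\cP$, each term in the sum is at most $\eta^{2(1-p)}$, giving a total of at most $\eta^{-2}\cdot\eta^{3p}\cdot\eta^{2-2p}=\eta^p$. Hence $\norm{(U-W)_\cP}_p\le \eta$, as needed. The case $p=\infty$ is even easier: $\abs{a_{ij}}\le \eta^3/\eta^2=\eta$ directly.

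There is no real obstacle here; the argument is essentially bookkeeping once one realizes that a rectangular integral is the archetypal cut-norm test set, so the stepping operator converts control of $\norm{U-W}_\square$ into pointwise control of $(U-W)_\cP$. The only subtlety worth flagging is calibrating the exponent: the cell-wise bound loses a factor $(\lambda(J_i)\lambda(J_j))^{-1}\le \eta^{-2}$, and summing $L^p$-style loses another factor of $\eta^{-2}$ in total, so the cubic power $\eta^3$ on $\norm{W-U}_\square$ in the hypothesis is exactly what is needed to recover a final bound of $\eta$.
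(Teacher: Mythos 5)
Your proof is correct and follows essentially the same route as the paper's: the cut norm bounds the integral of $U-W$ over each rectangle $J_i\x J_j$, the measure of each cell is at least $\eta^2$, and the triangle inequality in $L^p$ finishes. The paper just phrases the middle step as the pointwise bound $\abs{U_\cP - W_\cP}\le \eta$ on all of $[0,1]^2$ (which immediately gives $\norm{(U-W)_\cP}_p\le\eta$ on a probability space), whereas you recover the same estimate by summing over the at most $\eta^{-2}$ cells; both are fine.
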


\begin{proof}
For any subsets $S, T \subseteq [0,1]$, we have $\abs{\ang{W - U, 1_{S\x
T}}} \leq \norm{W - U}_\square \leq \eta^3$. It follows that
\[
\abs{ \frac{1}{\lambda(S)\lambda(T)}\paren{\int_{S \x T} W \,d\lambda - \int_{S\x T} U \,d\lambda}} \leq \frac{\eta^3}{\lambda(S)\lambda(T)} \leq \eta,
\]
provided $\lambda(S), \lambda(T) \geq \eta$. So for any partition $\cP$ of
$[0,1]$ into sets each having measure at least $\eta$ we have $\abs{U_{\cP}
- W_{\cP}} \leq \eta$ pointwise. Therefore,
\[
\norm{U_\cP}_p \leq \norm{\abs{W_\cP} + \eta}_p \leq \norm{W_\cP}_p + \norm{\eta}_p \leq C + \eta.
\]
It follows that $U$ is $(C + \eta,\eta)$-upper $L^p$ regular.
\end{proof}

Next we prove Proposition~\ref{prop:G-no-limit}, which shows that without the
$L^p$ upper regularity assumption, a sequence of a graphs might not have a
Cauchy subsequence (with respect to $\delta_\square$). Furthermore, even a
Cauchy sequence might not have a limit in the form of a graphon.

\begin{proof}[Proof of Proposition~\ref{prop:G-no-limit}]
(a) For each $n \geq 2$, let $G_n$ be a graph on $n2^n$ vertices consisting
of a single clique on $n$ vertices. Then $\norm{G_n}_1 = 2^{-2n} (n-1)/n$.
Let $W_n = W^{G_n}/\norm{G_n}_1$, where the support of $W_n$ is contained in
$[0,2^{-n}]^2$. We claim that $\delta_\square(W_m, W_n) \geq 1/2$ for any $m
\neq n$. Indeed, for any measure-preserving bijection $\sigma \colon [0,1]
\to [0,1]$,
\begin{align*}
\norm{W_m - W_n^{\sigma}}_\square
&\geq \ang{W_m - W_n^{\sigma}, 1_{[0,2^{-m}]^2}}\\
&\geq 1 -  2^{-2m} \snorm{W_n}_\infty\\
&= 1 - 2^{-2(m-n)} n/(n-1)
\geq 1/2
\end{align*}
for $m>n$.

\medskip 

\noindent (b) Our proof is inspired by a classic example of an $L^1$
martingale that converges almost surely but not in $L^1$: a martingale that
starts at $1$ and then at each step either doubles or becomes zero.  The
analogue of this classic example will be a Cauchy sequence of graphs $G_n$
whose normalized graphons converge to zero pointwise almost everywhere but
not in cut distance.  We will build this sequence inductively so that
$G_{n+1}$ is formed from $G_n$ by replacing every edge of $G_n$ with a
quasi-random bipartite graph.

More precisely, for every $n$, let $\e_n = 4^{-n}$, and fix a simple graph
$H_n$ with $\delta_\square(H_n, 1_{[0,1]^2}/2) \leq \e_n$. Let $G_1$ be the
graph with one edge on two vertices. Set $G_{n+1} := G_n \x H_n$. In other
words, to obtain $G_{n+1}$ from $G_n$, replace every vertex $v$ of $G_n$ by
$k = \abs{V(H_n)}$ copies $v_1, \dots, v_k$. The edges of $G_{n+1}$ consists
of $u_i v_j$ where $uv$ is an edge of $G_n$ and $ij$ is an edge of $H_{n}$.

Now we show that $(G_n)_{n \ge 0}$ is a Cauchy sequence with respect to the
normalized cut metric. First, using the natural overlay between $W^{G_n}$ and
$W^{G_{n+1}}$ (the intervals $I_1, \dots, I_{\abs{V(G_n)}}$ corresponding to
the vertices of $G_n$ are each partitioned into $\abs{H_n}$ parts
corresponding to the vertices of $G_{n+1}$), we see that
\[
\delta_\square\paren{G_{n+1}, \frac{1}{2}G_n} \leq
\norm{W^{G_{n+1}}-\frac{1}{2} W^{G_n}}_\square \leq
\norm{W^{H_n}- \frac{1}{2} 1_{[0,1]^2}}_\square \leq \e_n,
\]
since any $\ang{W^{G_{n+1}} - W^{G_n}/2, 1_{A \x B}}$ is equal to the sum of
the contributions from each of the $\abs{V(G_n)}^2$ cells $I_i \x I_j$, and
the contribution from each cell is bounded by $\norm{W^{H_n}-
1_{[0,1]^2}/2}_\square / \abs{V(G_n)}^2$. Note that $\norm{G_{n+1}}_1 /
\norm{G_n}_1 \in [1/2 - \e_n, 1/2 + \e_n]$. It follows that
\begin{align*}
\delta_\square\paren{\frac{G_{n+1}}{\norm{G_{n+1}}_1},
  \frac{G_n}{\norm{G_n}_1}}
&=
\frac{1}{\norm{G_{n+1}}_1} \delta_\square\paren{G_{n+1},
  \frac{\norm{G_{n+1}}_1}{\norm{G_n}_1} G_n}
\\
&\leq
3^{n+1}\paren{\delta_\square\paren{G_{n+1}, \frac{1}{2} G_n} + \e_n}
\\
&\leq 3^{n+1} \cdot 2\e_n = 6 \cdot (3/4)^n.
\end{align*}
Thus the graphs $G_n / \norm{G_n}_1$ form a Cauchy sequence with respect to
$\delta_\square$.

Next we show that $G_n / \norm{G_n}_1$ does not converge to any graphon with
respect to $\delta_\square$. Let $W_n = W^{G_n}/\norm{G_n}_1$ (properly
aligned, so that the support of $W_{n+1}$ is contained in the support of
$W_n$). Then $W_n$ converges to zero pointwise almost everywhere, but zero
cannot be the $\delta_\square$-limit of the sequence since $\EE W_n = 1$ for
all $n$.  Indeed, as we will see shortly, there can be no $U$ such that
$\delta_\square(W_n, U) \to 0$.   Assume by contradiction that there is such
a graphon.  Since $W_n$ is non-negative, $\ang{U, 1_{A \x B}} \geq 0$ for
every $A, B\subseteq [0,1]$, implying that $U$ is nonnegative as well.
Furthermore $\EE U=1$, since $\EE W_n = 1$ and $\abs{\EE W_n-\EE U} \leq
\delta_\square(U,W_n)$  (note that $\EE U=\EE U^\sigma$ for every
measure-preserving bijection $\sigma$). We will show that $U$ has the
following property: for every $\e > 0$, there exists a subset $S \subseteq
[0,1]^2$ with $\lambda(S) \geq 1- \e$ and $\ang{U, 1_S} \leq \e$. It would
then follow that $U \equiv 0$, which is a contradiction.

Now it remains to verify the claim. There exists a
sequence of measure-preserving bijections $\sigma_n \colon [0,1] \to
[0,1]$ such that $\norm{W_n - U^{\sigma_n}}_\square \to 0$. Fix an $m$ with
$\norm{G_m}_1 \leq \e$, and let $S$ be the complement of the support
of $W_m$. So $S$ is the disjoint union of at most
$\abs{V(G_m)}^2$ rectangles and $\lambda(S) \geq 1 - \e$. Choose an $n > m$ so that $\delta_\square(W_n,
U) < \abs{V(G_m)}^{-2} \e$. Since $W_n$ is also zero on $S$, we have $\ang{U^{\sigma_n},
  1_{A \x B}} \leq \delta_\square(W_n,
U) < \abs{V(G_m)}^{-2} \e$ for every rectangle $A \x B$ contained in
$S$. Summing over the at most $\abs{V(G_m)}^2$ such rectangles whose
disjoint union is $S$, we find that $\ang{U^{\sigma_n},1_S}
\leq \e$. The claim then follows.
\end{proof}

The following proposition shows that when dealing with graphs, we can replace
the measure-preserving bijection implicit in $\delta_\square$ with a
permutation of the vertices.

\begin{proposition} \label{prop:order}
Let $C>0$ and $p > 1$, and let $(G_n)_{n \ge 0}$ be a $C$-upper $L^p$ regular
sequence of weighted graphs such that $\delta_\square(G_n/\norm{G_n}_1, U)
\to 0$ for some $L^p$ graphon $U$. Then the vertices of the graphs $G_n$ may
be ordered in such a way that $\norm{W^{G_n}/\norm{G_n}_1 - U}_\square \to
0$.
\end{proposition}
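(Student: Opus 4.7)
The plan is to apply the weak regularity lemma on both sides to reduce the problem to matching two step functions with a bounded number of parts, and then to convert the optimal measure-preserving bijection into a vertex permutation via a Birkhoff--von Neumann argument. Fix $\e > 0$; by the equipartition version of Theorem~\ref{thm:G-upper-reg-lemma} (Remark~\ref{remark:G-upper-reg-equi}), choose $k = k(\e, C, p)$ so that for all sufficiently large $n$ there is an equipartition $\cP_n$ of $V(G_n)$ into $k$ parts with $d_\square(G_n/\snorm{G_n}_1, (G_n)_{\cP_n}/\snorm{G_n}_1) \leq \e$. Applying Lemma~\ref{lem:W-L^p-reg-equi} (or Lemma~\ref{lem:W-L^2-reg-equi} when $p \geq 2$) to $U$ yields an equipartition $\cR$ of $[0,1]$ into $k$ equal intervals of length $1/k$ with $\snorm{U - U_\cR}_\square \leq \e$ (enlarging $k$ if necessary). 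Combined with the hypothesis $\delta_\square(G_n/\snorm{G_n}_1, U) \to 0$ and the triangle inequality, this gives $\delta_\square((G_n)_{\cP_n}/\snorm{G_n}_1, U_\cR) \leq 3\e$ for all large $n$.

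Both step functions are block constant on equipartitions of $[0,1]$ into essentially $k$ equal parts; on the $G_n$-side the parts deviate in length from $1/k$ by at most $\max_v \alpha_v(G_n)/\alpha_{G_n}$, which is $o(1)$ because the condition $\alpha_i \leq \eta \alpha_G$ built into Definition~\ref{def:Lp-upper-reg} holds with $\eta \to 0$ along the upper regular sequence. For such step functions one checks, up to this $o(1)$ deviation, that any measure-preserving bijection $\sigma$ is captured by its overlap matrix $A = \bigl(k \lambda(\sigma^{-1}(I_i) \cap J_j)\bigr)_{i,j}$, which is doubly stochastic; averaging over the internal symmetries of the partitions produces a fractional step function $W_1^A$, and Birkhoff--von Neumann together with convexity of the cut norm on the doubly stochastic polytope yields a permutation $\pi_n$ of the $k$ parts that achieves the cut-distance infimum up to $o(1)$.

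Finally, order the vertices of $G_n$ so that those in the $\pi_n^{-1}(i)$-th part of $\cP_n$ occupy the $i$-th block of the linear order, with internal order chosen arbitrarily. The triangle inequality then yields $\snorm{W^{G_n}/\snorm{G_n}_1 - U}_\square \leq 5\e + o(1)$ for all large $n$, and a diagonal argument letting $\e \to 0$ produces the required sequence of orderings. The principal obstacle is the Birkhoff--von Neumann rounding step: one must verify that at the resolution of the $k$-equipartition, the best measure-preserving bijection can be replaced by an honest permutation of parts without essential loss in the cut norm, while simultaneously absorbing the small mismatch arising from the fact that the $V(G_n)$ equipartition is only approximately equal.
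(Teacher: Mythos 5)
The reduction to step functions with a bounded number $k$ of nearly equal parts is fine, but the crucial step of your argument --- ``Birkhoff--von Neumann together with convexity of the cut norm on the doubly stochastic polytope yields a permutation $\pi_n$ of the $k$ parts that achieves the cut-distance infimum up to $o(1)$'' --- is a genuine gap, and it is exactly the hard point of the proposition. If $A$ is the overlap matrix of a measure-preserving bijection, the fractionally rearranged step function $W^A$ depends \emph{quadratically} on $A$ (its $(i,j)$ block value is $\sum_{i',j'}A_{ii'}A_{jj'}W_{i'j'}$), and $A \mapsto \norm{W^A - U}_\square$ is built from a norm, hence convex-type, not concave; so nothing forces the minimum over the Birkhoff polytope to be attained, even approximately, at a permutation matrix. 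Writing $A = \sum_t \lambda_t P_t$ produces cross terms $P_s \ne P_t$, i.e.\ non-symmetric row/column rearrangements, which cannot be compared to honest part permutations. Passing from a fractional overlay to an integral one is precisely the difficulty addressed by Lemma~\ref{lem:integral-vs-fractional-overlay} (Alon) and by the bound $\hat\delta_\square \leq 32\,\delta_\square^{1/67}$ of \cite{BCLSV1}, both of which incur losses; indeed the paper notes that whether $\hat\delta_\square = O(\delta_\square)$ is open, so a clean loss-free rounding of the kind you assert cannot be taken for granted. Moreover the loss in Alon's lemma at the level of $k$ parts is an additive $34\snorm{\cdot}_\infty/\sqrt{\log k}$, which for fixed $k$ does not tend to $0$ as $n \to \infty$, and you cannot repair this by letting $k \to \infty$: the $L^p$ bound only gives $\snorm{(G_n)_{\cP_n}/\snorm{G_n}_1}_\infty \lesssim C k^{2/p}$, and $k^{2/p}/\sqrt{\log k} \to \infty$.

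The paper's proof circumvents this by applying Lemma~\ref{lem:integral-vs-fractional-overlay} at the \emph{vertex} level, where the number of vertices $\abs{V(G_n)} \to \infty$, so the additive error can be beaten: one truncates both $W_n = W^{G_n}/\snorm{G_n}_1$ and the step approximation $U_n$ of $U$ at a level $K_n$ chosen with $K_n \to \infty$ and $K_n/\sqrt{\log\abs{V(G_n)}} \to 0$, uses $\norm{W_n 1_{\abs{W_n}>K_n}}_1 \le C^p/K_n^{p-1}$ (this is where $p>1$ and upper regularity enter) to control the truncation error, and only then uses the equitable regularity lemma of Remark~\ref{remark:G-upper-reg-equi} --- not to reduce to $k$ parts for the matching, but to reduce general node weights to the nearly-equal-weight case. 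If you want to salvage your outline, you must either prove the rounding claim for step functions (which would be a new result strengthening the cited bounds) or replace it by the truncation-plus-Alon argument at the vertex scale as in the paper.
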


We recall the following lemma\footnote{In \cite{Lov}, Theorem~9.29 is stated
for weighted graphs whose edge weights lie in $[0,1]$, but it immediately
implies the version stated here.} from \cite[Theorem~9.29]{Lov}, where it is
attributed to Alon. Here $\hat\delta_\square(G_1, G_2)$ denotes the cut
distance with respect to the optimal \emph{integral overlay}, i.e.,
$\hat\delta_\square(G_1, G_2) := \min_{G'_1} d_\square(G'_1, G_2)$, where
$G'_1$ is $G_1$ with any reordering of its vertices (assuming $|V(G_1)| =
|V(G_2)|$).

\begin{lemma}\label{lem:integral-vs-fractional-overlay}
For any two weighted graphs $G_1$ and $G_2$ with the same number $v$ of
vertices, unit node weights, and edge weights in $[-1,1]$,
\[
\hat\delta_\square(G_1, G_2) \leq \delta_\square(G_1, G_2) +
\frac{34}{\sqrt{\log v}}.
\]
\end{lemma}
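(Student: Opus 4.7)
My plan is to use the weak Szemer\'edi regularity lemma to approximate the measure-preserving bijection $\sigma$ witnessing $\delta_\square(G_1, G_2)$ by an integral vertex permutation $\pi$, losing only $O(\e + k/v)$ in cut norm, where $k = 2^{O(1/\e^2)}$ is the number of parts in the regularity equipartition. The choice $\e \sim 1/\sqrt{\log v}$ then balances both sources of error and yields a bound of order $1/\sqrt{\log v}$; the explicit constant $34$ comes from tracking the constants through the regularity lemma.

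Concretely, fix a measure-preserving bijection $\sigma$ with $\snorm{W^{G_1^\sigma} - W^{G_2}}_\square \le \delta_\square(G_1, G_2) + o(1)$. Since the edge weights lie in $[-1,1]$, both $W^{G_1}$ and $W^{G_2}$ have $L^2$ norm at most $1$, so Lemma~\ref{lem:W-L^2-reg-equi} produces equipartitions $\cP_1, \cP_2$ of $[0,1]$ into $k = 2^{O(1/\e^2)}$ equal parts with $\snorm{W^{G_i} - (W^{G_i})_{\cP_i}}_\square \le \e$. The bijection $\sigma$ induces a doubly stochastic matrix $X_{ab} = k \cdot \lambda(\sigma^{-1}(P_{1,a}) \cap P_{2,b})$ at the coarse level; by the Birkhoff--von Neumann theorem, $X$ is a convex combination of $k \x k$ permutation matrices, and sampling one of these (weighted by the decomposition coefficients) gives a permutation $\tau$ of coarse parts that agrees with $X$ in expectation on every coarse cut. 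Within each matched pair of coarse parts, each containing $v/k \pm O(1)$ vertices, match vertices of $G_1$ with vertices of $G_2$ via an arbitrary bijection to produce the desired vertex permutation $\pi$.

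The cost in cut norm of replacing $\sigma$ by $\pi$ decomposes into three pieces: (i) the two regularity approximations contribute $2\e$; (ii) the rounding of the coarse doubly stochastic matrix $X$ to a single permutation $\tau$ contributes at most $O(1/\sqrt{k})$ by Hoeffding-type concentration for linear statistics of Birkhoff-sampled permutations (using that the variance per coarse cut of the rounding error is $O(1/k^2)$, combined with a union bound over the $2^{2k}$ coarse cuts, which suffice by (i)); and (iii) vertex-level boundary effects, due to the indivisibility of vertex intervals of length $1/v$ inside coarse parts of measure $1/k$, contribute at most $O(k/v)$. In the regime $k = 2^{\Theta(1/\e^2)}$, pieces (i) and (iii) dominate, and the balance $k/v \le \e$ forces $\e \gtrsim 1/\sqrt{\log v}$, yielding the $1/\sqrt{\log v}$ rate; a careful bookkeeping of the constants in Lemma~\ref{lem:W-L^2-reg-equi} and the rounding constants gives the coefficient $34$.

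The main obstacle is step (ii): ensuring that a single permutation $\tau$, rather than merely an expectation over Birkhoff-sampled permutations, is cut-norm close to $X$ across all $2^{2k}$ coarse cuts simultaneously. This is precisely why the preliminary regularity reduction to only $k$ coarse blocks is essential --- a naive union bound over arbitrary measurable cuts $S, T \subseteq [0,1]$ would contain uncountably many cuts, but after (i) only the $2^{2k}$ coarse cuts matter, and for these the concentration bound for random permutations is strong enough to afford a deterministic choice of $\tau$ that controls all cuts at once.
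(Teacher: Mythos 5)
First, a remark on provenance: the paper does not prove this lemma at all --- it is quoted from \cite{Lov} (Theorem~9.29 there), where it is attributed to Alon --- so you are attempting to reprove a cited result. Your skeleton (weak regularity with $k = 2^{\Theta(1/\e^2)}$ parts, balancing $k$ against $v$ so that $\e \asymp 1/\sqrt{\log v}$) is the right shape, but step~(ii), the rounding of the coarse doubly stochastic matrix $X$ to a single coarse permutation $\tau$, is a genuine gap, and it is exactly where the difficulty of the lemma lives. The quantity you must control, $\ang{U^\tau - U^X, 1_{S\x T}}$ with $U=(W^{G_1})_{\cP_1}$, where $U^\tau$ is the $\tau$-relabeled step matrix and $U^X$ the $X$-averaged (fractionally overlaid) one, involves $\tau$ in both coordinates, so it is quadratic in the permutation matrix; the claim that a Birkhoff-sampled $\tau$ ``agrees with $X$ in expectation on every coarse cut'' is therefore unjustified (the expectation of $P\otimes P$ is not $X\otimes X$). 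Worse, a Birkhoff sample is a single categorical draw, not a sum of many independent contributions, so there is no Hoeffding-type concentration and no basis for the ``variance $O(1/k^2)$ per cut'' assertion. Concretely, take $X = J/k$ ($J$ the all-ones matrix), which is a uniform mixture of the $k$ cyclic shifts, and let $U$ be the two-block $\pm 1$ step matrix ($+1$ within halves, $-1$ across): then $U^X \equiv 0$, while every $\tau$ in the Birkhoff support yields a relabeled two-block matrix of cut norm $1/4$. So the rounding loss is $\Theta(1)$, not $O(1/\sqrt{k})$, and no cleverer choice of coarse permutation can help: matching whole parts of $\cP_1$ to whole parts of $\cP_2$ is the wrong granularity once the optimal $\sigma$ splits parts. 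Indeed, the statement you actually need --- given doubly stochastic $X$ with $\snorm{U^X - V}_\square$ small, produce a permutation $\tau$ with $\snorm{U^\tau - V}_\square$ nearly as small --- is essentially the lemma itself applied to the $k$-vertex weighted graphs $U$ and $V$, so the reduction is circular in strength.

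The way to avoid this (and essentially how the cited bound is obtained) is to never generate an independent coarse correspondence to round: regularize $G_1$ once with a partition $\cP$ respecting vertex atomicity, push the parts forward through the near-optimal $\sigma$, and round each set $\sigma(P_a)$ to a union of whole vertices of $G_2$ of the correct cardinality; then let $\pi$ send the vertices of $G_1$ in $P_a$ onto that rounded set, arbitrarily within parts. The coarse correspondence is then the identity by construction, and the rounding cost is a pure $L^1$ estimate --- the step functions $((W^{G_1})_\cP)^\sigma$ and $((W^{G_1})_\cP)^\pi$ take the same values on blocks that coincide up to measure $O(k/v)$ --- rather than a cut-norm comparison between a fractional and an integral coarse overlay. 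With that substitution your error budget $2\e + O(k/v)$ and the choice $\e \asymp 1/\sqrt{\log v}$ do give a bound of the stated form; as written, however, the proof does not go through.
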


As an immediate corollary, if the graphs in the lemma have edge weights in
$[-K, K]$ instead for some $K > 0$, then the same inequality holds with the
final term replaced by $34K/\sqrt{\log v}$.

Note that it was proved in \cite[Theorem 2.3]{BCLSV1} that
\[
\delta_\square(G_1, G_2) \leq \hat\delta_\square(G_1, G_2) \leq 32
\delta_\square(G_1, G_2)^{1/67}
\]
under the hypotheses of Lemma~\ref{lem:integral-vs-fractional-overlay}. It
remains open whether $\hat\delta_\square(G_1, G_2) = O(
\delta_\square(G_1,G_2))$, which would slightly simplify the proof of
Proposition~\ref{prop:order} if true.

\begin{proof}[Proof of Proposition~\ref{prop:order}]
Let $W_n = W^{G_n}/\norm{G_n}_1$, which depends on the ordering of the
vertices of $G_n$. We need to show that some such ordering of vertices yields
$d_\square(W_n,U) \to 0$, given that $\delta_\square(W_n, U) \to 0$.

First we prove the lemma by a truncation argument under the additional
hypotheses that the graphs
$G_n$ all have unit node weights and $\norm{W_n}_p \leq C$.  We begin by
choosing a sequence of truncations $K_n$ so that $K_n \to \infty$ and $K_n /
\sqrt{\log|V(G_n)|} \to 0$.  (Note that $|V(G_n)| \to \infty$ because
$(G_n)_{n \ge 0}$ is a $C$-upper $L^p$ regular sequence.)

Let $U_n$ denote the step function $U_{\cP_n}$, where $\cP_n$ is the
partition of $[0,1]$ into $|V(G_n)|$ equal length intervals. By
Lemma~\ref{lem:integral-vs-fractional-overlay}, we can reorder the vertices
of $G_n$ so that the corresponding graphon $W_n$ satisfies
\begin{align*}
d_\square\big(W_n 1_{|W_n| \leq K_n}, U_n 1_{|U_n| \leq K_n}\big)
&\le \delta_\square \big(W_n 1_{|W_n| \leq K_n}, U_n 1_{|U_n| \leq K_n}\big) +
\frac{34 K_n}{\sqrt{\log|V(G_n)|}}\\
&\le \delta_\square\big(W_n,W_n 1_{|W_n| \leq K_n}\big) + \delta_\square\big(U_n 1_{|U_n| \leq K_n},U\big)\\
&\qquad + \delta_\square(W_n,U)  +
\frac{34 K_n}{\sqrt{\log|V(G_n)|}}.
\end{align*}
Using this inequality to bound the right side of
\begin{align*}
d_\square(W_n, U) &\le d_\square(W_n, W_n 1_{|W_n| \leq K_n})
+ d_\square(U_n 1_{|U_n| \leq K_n},U)\\
&\qquad \phantom{} + d_\square(W_n 1_{|W_n| \leq K_n},U_n 1_{|U_n| \leq K_n})
\end{align*}
and bounding $\delta_\square$ by $d_\square$ yields
\begin{align*}
d_\square(W_n,U) &\le \delta_\square(W_n,U) + \frac{34 K_n}{\sqrt{\log|V(G_n)|}}\\
&\qquad \phantom{} +  2d_\square(W_n, W_n 1_{|W_n| \leq K_n})
+ 2d_\square(U_n 1_{|U_n| \leq K_n},U).
\end{align*}
To estimate $2d_\square(W_n, W_n 1_{|W_n| \leq K_n}) + 2d_\square(U_n
1_{|U_n| \leq K_n},U)$, we will bound $d_\square$ by $d_1$. We have
\begin{align*}
d_1\big(W_n, W_n 1_{|W_n| \leq K_n}\big) &\leq \norm{W_n1_{\abs{W_n} > K_n}}_1\\
& \leq \norm{W_n(\abs{W_n}/K_n)^{p-1}}_1 = \norm{W_n}_p^p / K_n^{p-1}
\leq C^p/K_n^{p-1}.
\end{align*}
Similarly,
\[
d_1\big(U_n, U_n 1_{|U_n| \leq K_n}\big) \leq \norm{U_n}_p^p /
K_n^{p-1} \leq \norm{U}_p^p / K_n^{p-1} \leq C^p/K_n^{p-1}
\]
(note that $\norm{U}_p \le C$ by Theorem~\ref{thm:G-limit}). It follows that
\[
d_\square(W_n, U) \leq \delta_\square(W_n, U) +
\frac{34 K_n}{\sqrt{\log|V(G_n)|}} + \frac{2C^p}{K_n^{p-1}} + 2d_1(U, U_n).
\]
We have $d_1(U, U_n) \to 0$ by the Lebesgue differentiation theorem, and all
the other terms tend to zero by assumption. Thus $d_\square(W_n, U) \to 0$.

Next we relax the assumption of unit node weights, and instead assume that
every vertex in $G_n$ has weight $1 + o(|V(G_n)|^{-1})$ (i.e., nearly equal
node weights).  Let $\wt W_n$ be a step function with the same values as
$W_n$, but where the step widths have all been modified to be exactly
$1/|V(G_n)|$.  We will show that $\norm{\wt W_n - W_n}_1 = o(1)$, which
suffices to reduce this case to the previous one.  Indeed, suppose the step
widths of $W_n$ are all in the interval $[1/|V(G_n)| - \alpha_n, 1/|V(G_n)| +
\alpha_n]$, where $\alpha_n|V(G_n)|^2 \to 0$. Then $\wt W_n$ and $W_n$ differ
on a set $B_n$ of measure at most $2|V(G_n)|^2\alpha_n = o(1)$, because each of
the lines separating the steps is moved by less than $|V(G_n)|\alpha_n$
(typically much less). By H\"older's inequality,
\[
\norm{\wt W_n - W_n}_1 = \int_{B_n} \abs{\wt W_n - W_n} \, d\lambda
\leq \norm{\wt W_n - W_n}_p \lambda(B_n)^{1-1/p}.
\]
We know that $\norm{W_n}_p \leq C$, and it is easy to check that $\norm{\wt
W_n}_p$ is bounded as well. Since $\lambda(B_n) \to 0$, it follows that
$\norm{\wt W_n - W_n}_1 \to 0$. This reduces the case of nearly equal node
weights to that of equal node weights.

Finally, we prove the result for a $C$-upper $L^p$ regular sequence of
weighted graphs. We may replace $C$ by a larger value if necessary and assume
that $G_n$ is $(C, \eta_n)$-upper $L^p$ regular with $\eta_n \to 0$.  Let
$\alpha_{\max}(G_n)$ denote the largest node weight in $G_n$, and recall that
$\alpha_{\max}(G_n)/\alpha_{G_n} \le \eta_n $ by
Definition~\ref{def:Lp-upper-reg}. By Remark~\ref{remark:G-upper-reg-equi},
there is some equipartition $\cP_n$ of $V(G_n)$ into $k_n$ parts, for some
slowly growing $k_n$ satisfying $k_n \to \infty$ and $k_n^2 \eta_n \to 0$, so
that $W'_n := (W_n)_{\cP_n}$ satisfies $d_\square(W'_n, W_n) \to 0$. Then
$\delta_\square(W'_n, U) \to 0$. Furthermore, $\norm{W'_n}_p \leq C$ since
$G_n$ is $(C,\eta_n)$-upper $L^p$ regular. Note that $W'_n$ is a step
function with step widths $1/k_n + o(1/k_n^2)$, since $\cP_n$ is an
equipartition into $k_n$ parts and $\alpha_{\max}(G_n) / \alpha_G =
o(1/k_n^2)$. Now we apply the case in the previous paragraph to $W'_n$ to
reorder the parts of $\cP_n$ so that $d_\square(W'_n,U) \to 0$.  If we order
the vertices of $G_n$ according to this ordering of $\cP_n$ and arbitrarily
order the vertices within each part, then $d_\square(W_n, U) \leq
d_\square(W'_n, W_n) + d_\square(W'_n, U) \to 0$, as desired.
\end{proof}

\section{$W$-random weighted graphs} \label{sec:weighted-random}

In this section and the next, we prove Theorem~\ref{thm:rand-converge}
on $W$-random graphs, thereby traversing the outer arrows of
Figure~\ref{fig:summary}. First, in this section, we address the arrow
($L^p$ graphon limit) $\to$ ($L^p$ graphon sequence) by proving Theorem~\ref{thm:rand-converge}(a), which says that
$d_1(\bH(W, n), W) \to 1$ almost surely (i.e., with probability $1$) for any $L^1$
graphon $W$.

The following theorem of Hoeffding on $U$-statistics implies that
$\norm{\bH(W,n)}_1 \to \norm{W}_1$ almost surely.

\begin{theorem}[Hoeffding \cite{H}] \label{thm:SLLN-W} Let $W \colon [0,1]^2 \to \RR$ be a
  symmetric, integrable function, and let
  $x_1, x_2, \dots$ be a sequence of i.i.d.\ random variables
  uniformly chosen from $[0,1]$. Then with probability $1$,
\[
\lim_{n \to \infty} \frac{1}{\binom{n}{2}} \sum_{1 \leq i < j \leq n} W(x_i, x_j) \to \int_{[0,1]^2} W(x,y) \,dx\,dy.
\]
\end{theorem}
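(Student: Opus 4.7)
The plan is to prove this via the reverse martingale convergence theorem, which handles the general $L^1$ setting uniformly and avoids any truncation argument. Let $U_n := \binom{n}{2}^{-1} \sum_{1 \le i < j \le n} W(x_i, x_j)$, and let $\mathcal{F}_n$ denote the $\sigma$-algebra of events invariant under permutations of $(x_1,\dots,x_n)$, together with the individual values of $x_{n+1}, x_{n+2}, \dots$. As $n$ grows, $\mathcal{F}_n$ shrinks (more symmetrization imposes more constraints on invariant events), so $(\mathcal{F}_n)_{n\ge 2}$ is a reverse filtration; verifying $\mathcal{F}_{n+1} \subseteq \mathcal{F}_n$ amounts to observing that any event symmetric in $(x_1,\dots,x_{n+1})$ can be re-expressed, once $x_{n+1}$ is known, as an event symmetric in $(x_1,\dots,x_n)$.

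The first substantive step is to show that $(U_n, \mathcal{F}_n)$ is a reverse martingale. The exchangeability of $(x_1,\dots,x_n)$ given $\mathcal{F}_n$ forces $\EE[W(x_i,x_j)\mid\mathcal{F}_n]$ to take the same value for every pair $1 \le i<j\le n$; averaging this common value over the $\binom{n}{2}$ pairs yields $\EE[W(x_i,x_j)\mid\mathcal{F}_n] = U_n$, from which $\EE[U_m\mid\mathcal{F}_n] = U_n$ for $m\le n$ follows by linearity. Integrability of the starting term $U_2 = W(x_1,x_2)$ is immediate from $W\in L^1$.

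Next I would invoke the reverse martingale convergence theorem to conclude that $U_n \to U_\infty$ almost surely and in $L^1$, where $U_\infty := \EE[W(x_1,x_2)\mid\mathcal{F}_\infty]$ and $\mathcal{F}_\infty := \bigcap_{n\ge 2}\mathcal{F}_n$. To finish, I would identify $U_\infty$ as the constant $\int_{[0,1]^2} W$. The key observation is that every event in $\mathcal{F}_\infty$ is invariant under every finite permutation of the sequence $(x_i)$: for a permutation $\pi$ of $\{1,\dots,N\}$, any event lying in $\mathcal{F}_n$ with $n\ge N$ is already $\pi$-invariant by construction, and $\mathcal{F}_\infty \subseteq \mathcal{F}_n$ for all $n$. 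Thus $\mathcal{F}_\infty$ lies inside the exchangeable $\sigma$-algebra, which is trivial by the Hewitt--Savage zero-one law, so $U_\infty = \EE[W(x_1,x_2)] = \int_{[0,1]^2}W$ almost surely.

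The main obstacle is structural rather than computational: one must set up the reverse filtration carefully so that the exchangeability identity $\EE[W(x_i,x_j)\mid\mathcal{F}_n]=U_n$ really holds (the $\sigma$-algebra $\mathcal{F}_n$ must know the unordered multiset $\{x_1,\dots,x_n\}$ but not the labels), and then connect the tail $\mathcal{F}_\infty$ to the exchangeable $\sigma$-algebra so that Hewitt--Savage applies. Once these structural points are in place, the argument reduces to citing standard probabilistic tools and never needs to engage the possible unboundedness of $W$.
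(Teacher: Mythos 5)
Your argument is correct, and it is the standard backward-martingale proof of Hoeffding's strong law for $U$-statistics: $U_n$ is measurable with respect to the $\sigma$-algebra $\mathcal{F}_n$ of events invariant under permutations of the first $n$ coordinates, exchangeability gives $U_n = \EE[W(x_1,x_2)\mid\mathcal{F}_n]$ and hence the reverse martingale property, reverse martingale convergence requires nothing beyond $\EE\abs{W(x_1,x_2)} = \norm{W}_1 < \infty$, and $\bigcap_n \mathcal{F}_n$ is contained in the exchangeable $\sigma$-algebra, which is trivial by Hewitt--Savage, so the limit is the constant $\int_{[0,1]^2} W$. The comparison with the paper is lopsided, though: the paper does not prove Theorem~\ref{thm:SLLN-W} at all---it states it as Hoeffding's theorem with a citation to his 1961 report and uses it as a black box in the proofs of Theorem~\ref{thm:rand-converge}(a) and (b). So where the authors buy brevity by quoting the literature, you supply a self-contained derivation; and, as you note, your route handles arbitrary $L^1$ kernels with no truncation, since reverse martingales are automatically uniformly integrable. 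The only point requiring care is the one you flag yourself: $\mathcal{F}_n$ must be generated by the unordered multiset $\{x_1,\dots,x_n\}$ together with $x_{n+1},x_{n+2},\dots$, so that permuting the first $n$ coordinates preserves the law of the sequence and fixes every $\mathcal{F}_n$-measurable variable; with that definition the identity $\EE[W(x_i,x_j)\mid\mathcal{F}_n]=U_n$, the inclusion $\mathcal{F}_{n+1}\subseteq\mathcal{F}_n$, and the inclusion of $\mathcal{F}_\infty$ in the exchangeable $\sigma$-algebra all hold exactly as you describe, so the proof is complete.
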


\begin{proof}[Proof of Theorem~\ref{thm:rand-converge}(a)]
  All weighted random graphs $\bH(\cdot,n)$ in this proof come from
  the same random sequence $x_1, x_2, \dots$ with terms drawn
  uniformly  i.i.d.\ from $[0,1]$.

    Fix $\e > 0$. It suffices to show that
  $\limsup_{n \to \infty} d_1(\bH(W,n), W) \leq \e$ holds with
  probability $1$.

  Let $\cP$ denote the partition of $[0,1]$
  into $m$ equal intervals, where $m$ is chosen to be sufficiently
  large that $\norm{W - W_\cP}_1 \leq \e/2$. Fix this $m$ and
  $\cP$.   Since the sequence $x_1, x_2, \dots$ is equidistributed among the $m$
  intervals of $\cP$, with probability $1$ we have $d_1(\bH(W_\cP, n), W_\cP) \to 0$ as $n
  \to \infty$.

  We have $d_1(\bH(W, n), \bH(W_\cP,n)) = \norm{\bH(W - W_\cP, n)}_1$,
  which by Theorem~\ref{thm:SLLN-W} converges almost surely to
  $\norm{W - W_\cP}_1$.  It follows that, with probability $1$, the
  limit superior (as $n \to \infty$) of
  \[
  d_1(\bH(W,n), W) \leq d_1(W, W_\cP) + d_1(W_\cP, \bH(W_\cP,n)) + d_1(\bH(W_\cP,n),\bH(W,n))
  \]
  is at most $2\norm{W -
    W_\cP}_1 \leq \e$, as claimed.
\end{proof}

\section{Sparse random graphs} \label{sec:sparse-random}

In this section we prove Theorem~\ref{thm:rand-converge}(b); i.e., we prove that with
probability $1$,
$d_\square(\rho_n^{-1} \bG(n, W, \rho_n), W) \to 0$. From Theorem~\ref{thm:rand-converge}(a) we know that
$\lim_{n \to \infty} d_1(\bH(n, W), W)=0$ with probability $1$. So it remains to show that
\begin{equation}\label{eq:d(G,H)}
d_\square(\rho_n^{-1}\bG(n,W,\rho_n), \bH(n,W)) \to  0 \qquad \text{as
} n \to \infty.
\end{equation}
Here $\bG(n,W,\rho_n)$ and $\bH(n,W)$ are both generated from a common
i.i.d.\ random sequence $x_1, x_2, \ldots \in [0,1]$. We keep this assumption
throughout the section.

We will need the following variant of the Chernoff bound. The proof (a
modification of the usual proof) is included in Appendix~\ref{sec:chernoff-proof}.

\begin{lemma} \label{lem:Chernoff}
  Let $X_1, \dots, X_n$ be independent random variables, where for
  each $i$, $X_i$ is distributed as either $\mathrm{Bernoulli}(p_i)$
  or $-\mathrm{Bernoulli}(p_i)$. Let
  $X = X_1 + \cdots + X_n$ and $q = p_1 + \cdots + p_n$. Then for every $\lambda > 0$,
  \[
  \PP \paren{ \abs{X - \EE X} \geq \lambda q } \leq
  \begin{cases}
    2\exp\paren{-\frac{1}{3} \lambda^2 q} & \text{if } 0
    < \lambda \leq 1, \\
    2\exp\paren{-\frac{1}{3} \lambda q} & \text{if }
    \lambda > 1.
  \end{cases}
  \]
\end{lemma}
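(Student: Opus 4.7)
The plan is to use the standard exponential-moment (Chernoff) method, applied to the centered variables $Z_i := X_i - \EE X_i$. Writing $X_i = \epsilon_i B_i$ with $\epsilon_i \in \{\pm 1\}$ and $B_i \sim \mathrm{Bernoulli}(p_i)$, we have $Z_i = \epsilon_i(B_i - p_i)$, which is independent of the other $Z_j$, mean zero, bounded by $1$ in absolute value, and has variance $p_i(1-p_i) \leq p_i$, so the variances sum to at most $q$. The first step is to bound the moment generating function of each $Z_i$: for $t \geq 0$ and $\abs{z} \leq 1$, each Taylor coefficient gives $\abs{(tz)^k/k!} \leq z^2 t^k/k!$ for $k \geq 2$, so $e^{tz} - 1 - tz \leq z^2(e^t - 1 - t)$. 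Taking expectations and using the variance bound yields $\EE[\exp(tZ_i)] \leq \exp((e^t - 1 - t) p_i)$, and then by independence $\EE[\exp(t(X - \EE X))] \leq \exp(q(e^t - 1 - t))$.

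Next, Markov's inequality gives, for $t > 0$, $\PP(X - \EE X \geq \lambda q) \leq \exp(q(e^t - 1 - t(1 + \lambda)))$, and the optimal choice $t = \log(1+\lambda)$ produces the classical Chernoff estimate $\PP(X - \EE X \geq \lambda q) \leq \exp(-q h(\lambda))$, where $h(\lambda) := (1+\lambda)\log(1+\lambda) - \lambda$. The same argument applied to $-X$, which is a sum of the same form with the signs $\epsilon_i$ flipped, gives the matching lower-tail bound, and a union bound over the two tails produces the factor of $2$ in the statement.

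The only remaining task is to verify the two elementary estimates $h(\lambda) \geq \lambda^2/3$ for $0 < \lambda \leq 1$ and $h(\lambda) \geq \lambda/3$ for $\lambda \geq 1$. Both follow by routine calculus: for the first, the function $g(\lambda) := h(\lambda) - \lambda^2/3$ satisfies $g(0) = g'(0) = 0$, and one checks that $g'(\lambda) = \log(1+\lambda) - 2\lambda/3$ stays nonnegative on $[0,1]$ (it is concave after $\lambda = 1/2$ but its value at $\lambda = 1$ is still $\log 2 - 2/3 > 0$); for the second, $h(\lambda) - \lambda/3$ has derivative $\log(1+\lambda) - 1/3$, which is positive on $[1,\infty)$, and the value at $\lambda = 1$ is $2\log 2 - 4/3 > 0$. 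The only novelty compared to the classical Chernoff bound is handling the mixed signs $\epsilon_i$, but this is absorbed immediately since the MGF bound depends only on $p_i(1-p_i)$ and not on $\epsilon_i$, so there is no real obstacle.
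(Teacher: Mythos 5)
Your proof is correct and follows essentially the same route as the paper: the exponential-moment (Chernoff) method with the optimal choice $t=\log(1+\lambda)$, reduced to the elementary inequalities $(1+\lambda)\log(1+\lambda)-\lambda \ge \lambda^2/3$ on $(0,1]$ and $\ge \lambda/3$ on $[1,\infty)$, plus a union bound over the two tails. The only difference is cosmetic: you absorb the signs $\epsilon_i$ in one stroke via the variance-based bound $\EE[e^{tZ_i}]\le \exp\paren{p_i(e^t-1-t)}$, whereas the paper bounds the moment generating function separately in the Bernoulli and minus-Bernoulli cases and obtains the lower tail by negating all the $X_i$.
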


For a weighted graph $H$ with unit vertex weights and edge weights
$\beta_{ij} \in [-1,1]$, we use $\bG(H)$ to denote the random graph with
vertex set $V(H)$ and an edge between $i$ and $j$ with probability
$\sabs{\beta_{ij}}$, and we assign the edge weight $+1$ if $\beta_{ij} > 0$
and $-1$ if $\beta_{ij} < 0$.  (In other words, $\bG(H) = \bG(H,1)$ in the
notation of \textsection\ref{sec:defres-W-random}.)

The next two lemmas form the ($L^p$ graphon sequence) $\to$ ($L^p$
upper regular sequence) arrow in Figure~\ref{fig:summary}.

\begin{lemma} \label{eq:Gnp-cut}
  Let $\e > 0$. Let $H$ be a weighted graph on $n$ vertices with unit vertex weights, edge
weights $\beta_{ij}(H) \in [-1,1]$, and $\beta_{ii}(H) = 0$ for all $i,j \in
V(H)$. Then
  \[
  \PP\paren{d_\square(\bG(H), H) \leq \e \norm{H}_1}
  \geq
   1 - 2^{n+1}
\exp\paren{-\frac1{24} \min\{\e,\e^2\} \norm{H}_1 n^2}.
  \]
\end{lemma}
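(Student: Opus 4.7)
Set $Y_{ij}:=\beta_{ij}(\bG(H))-\beta_{ij}(H)$, which are mean-zero with $Y_{ij}=Y_{ji}$, $|Y_{ij}|\le 1$, and independent across distinct unordered pairs $\{i,j\}$ (with $Y_{ii}=0$).  Writing $Y_{ij}=\sigma_{ij}(Z_{ij}-p_{ij})$ with $\sigma_{ij}=\sign(\beta_{ij}(H))$, $p_{ij}=|\beta_{ij}(H)|$, and $Z_{ij}=Z_{ji}\sim\mathrm{Bernoulli}(p_{ij})$ (independent over unordered pairs) exhibits each $Y_{ij}$ in the form required by Lemma~\ref{lem:Chernoff}.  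Since the cut norm on a graph-associated graphon is realized on vertex subsets,
\[
n^2\, d_\square(\bG(H), H) = \sup_{S,T\subseteq V(H)} \Bigl|\sum_{i\in S,\,j\in T} Y_{ij}\Bigr|,
\]
so the strategy is the standard Chernoff-plus-union-bound.

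\emph{Chernoff for fixed $(S,T)$.}  Regrouping by unordered pairs gives $\sum_{i\in S,j\in T}Y_{ij}=\sum_{i<j}c^{S,T}_{ij}\sigma_{ij}(Z_{ij}-p_{ij})$, where $c^{S,T}_{ij}:=\mathbf{1}[i\in S,j\in T]+\mathbf{1}[j\in S,i\in T]\in\{0,1,2\}$.  I would split into the $c=1$ and $c=2$ pieces, apply Lemma~\ref{lem:Chernoff} to each (absorbing the factor of $2$ in the second piece into the deviation parameter $\lambda$), and use $\sum_{\{i,j\}:\,c^{S,T}_{ij}\ge1}p_{ij}\le\tfrac12 n^2\snorm{H}_1$.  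This produces a tail bound of the form
\[
\PP\Bigl(\Bigl|\sum_{i\in S,\,j\in T}Y_{ij}\Bigr|\ge\e n^2\snorm{H}_1\Bigr)\le 4\exp\bigl(-c_0\min\{\e,\e^2\}\,n^2\snorm{H}_1\bigr)
\]
for an absolute constant $c_0>0$, which bookkeeping will push down to match the stated $\tfrac{1}{24}$.

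\emph{Union bound.}  A naive union bound over all $4^n$ pairs $(S,T)$ would introduce a factor of $2^{2n}$, not the advertised $2^{n+1}$.  To sharpen, I would fix $S$ and eliminate the inner supremum: with $W_j:=\sum_{i\in S}Y_{ij}$, one has $\sup_T|\sum_{j\in T}W_j|=\max(\sum_j W_j^+,\sum_j W_j^-)$, and each one-sided maximum equals $\sum_{i\in S,\,j\in T^\pm(S)}Y_{ij}$ for a data-determined $T^\pm(S)$.  A careful conditioning argument -- or equivalently passage through the equivalent operator-norm formulation $\snorm{X}_\square\le\snorm{X}_{\infty\to 1}=\sup_{s\in\{\pm1\}^n}\snorm{Xs}_1$ -- lets Lemma~\ref{lem:Chernoff} control each one-sided maximum with the same rate as the fixed-$(S,T)$ bound.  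The final union bound then costs only $2^n$ (for $S$) times $2$ (for the sign), producing the prefactor $2^{n+1}$.

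\emph{Main obstacle.}  The delicate step is controlling the one-sided maxima $\sum_j W_j^\pm$ without losing the crucial $\snorm{H}_1$ scaling: a crude McDiarmid-type concentration would only give the weaker $\snorm{H}_1^2$ dependence, inadequate in the sparse regime $\snorm{H}_1\to 0$.  Exploiting that the optimal sign pattern is \emph{data-determined} (not worst-case quantified over), together with the variance bound $\sum p_{ij}= n^2\snorm{H}_1$, is what makes the Chernoff exponent match the fixed-$(S,T)$ bound and avoids an extra $2^n$ factor from a union bound over sign patterns.
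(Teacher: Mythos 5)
Your reduction to vertex subsets and the fixed-$(S,T)$ Chernoff estimate are fine, but the step that is supposed to deliver the $2^{n+1}$ prefactor --- controlling the data-determined one-sided maxima $\sum_j W_j^{\pm}$ ``at the same rate as the fixed-$(S,T)$ bound'' via ``a careful conditioning argument'' --- is precisely where the proof is missing, and it is not clear it can be carried out as sketched. Lemma~\ref{lem:Chernoff} applies to a \emph{fixed} sum of independent signed Bernoulli variables, whereas $\sum_j W_j^+ = \sum_{i\in S,\, j\in T^+(S)} Y_{ij}$ involves the random set $T^+(S)$. If one attacks $\sum_j W_j^+$ by the exponential moment method, the bound $\EE e^{tW_j^+} \le 1 + \EE e^{tW_j} \le 2\,\EE e^{tW_j}$ reintroduces a factor $2^n$, i.e.\ exactly the union bound over $T$ you were trying to avoid (and the subsequent union over $S$ then gives $4^n$); a Bernstein-type bound instead must use the worst-case bound $\sabs{W_j^+}\le \abs{S}\le n$ and loses a factor of $n$ in the exponent, yielding only $\exp(-c\,\e\, n \snorm{H}_1)$ rather than $\exp(-c\,\e\, n^2 \snorm{H}_1)$. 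Passing to the $\infty\to1$ operator norm does not help, since it merely reformulates the supremum over sign patterns. There is also a smaller unaddressed point: for fixed $S$ the variables $W_j$ are not independent across $j$, because an edge with both endpoints in $S$ contributes to two of them. So, as written, the key step is a genuine gap rather than a routine verification (a crude union bound over all $4^n$ pairs $(S,T)$ would be rigorous and still suffices for the application in Lemma~\ref{eq:random-sparsify}, but it does not give the stated constant).

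The paper sidesteps the problem with a different decomposition: for a single subset $U\subseteq V$ let $\beta_U$ denote the sum of the edge weights inside $U$; then $\beta_{S\times T}=\beta_{S\cup T}+\beta_{S\cap T}-\beta_{S\setminus T}-\beta_{T\setminus S}$, so it suffices that every internal sum $\beta_U(\bG(H))$ deviate from $\beta_U(H)$ by at most $\tfrac14 \e n^2 \snorm{H}_1$. Each such event concerns a fixed sum of independent signed Bernoullis, so Lemma~\ref{lem:Chernoff} applies directly with $\lambda = \e n^2\snorm{H}_1/(4\abs{\beta}_U(H)) \ge \e/2$, and a union bound over the $2^n$ subsets $U$ (with the factor $2$ coming from the two-sided Chernoff bound) produces the prefactor $2^{n+1}$; the factor of $4$ lost in the deviation is absorbed into the constant $\tfrac1{24}$. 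If you want to rescue your route, either accept a $4^n$-type prefactor or replace your one-sided-maximum step by this four-subset identity.
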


\begin{proof}
  Let $V = V(G) = V(H) = [n]$.
  For any subset $U \subseteq V$, let
  \[
  \beta_{U}(H) = \sum_{\substack{i < j \\ i,j \in U}} \beta_{ij}(H)
  \]
  be the sum of the edge weights of $H$ inside $U$. Similarly define
  $\beta_U(G)$, where $G = \bG(H)$. We also define
  \[
  \abs{\beta}_{U}(H) = \sum_{\substack{i < j \\ i,j \in U}} \abs{\beta_{ij}(H)}.
  \]
  Set
  \[
  \lambda = \frac{\e n^2\norm{H}_1}{4\abs{\beta}_U(H)}
  \geq
  \frac{\e n^2\norm{H}_1}{4\abs{\beta}_V(H)}
  = \frac{\e}{2}.
  \]
  It follows from Lemma~\ref{lem:Chernoff} that
  \begin{align*}
    \PP\paren{ \abs{\beta_U(G) - \beta_U(H)} \geq \frac{1}{4}\e n^2\norm{H}_1}
    &=
    \PP\paren{ \abs{\beta_U(G) - \beta_U(H)} \geq \lambda
      \abs{\beta}_U(H)}
    \\
    &\leq
    2\exp\paren{-\frac13 \min\{\lambda,1\}\lambda \abs{\beta}_U(H)}
    \\
    &\leq
    2\exp\paren{-\frac1{12} \min\set{\frac{\e}{2},1} \e n^2
      \norm{H}_1}
    \\
    &\leq
    2\exp\paren{-\frac1{24} \min\{\e^2,\e\} n^2 \norm{H}_1}.
  \end{align*}
By the union bound, with probability at least $1 - 2^{n+1}
\exp\paren{-\frac1{24} \min\{\e^2,\e\} n^2 \norm{H}_1}$,
\begin{equation} \label{eq:Gnp-cut-all-U}
\abs{\beta_U(G) - \beta_U(H)} \leq\frac14 \e n^2 \norm{H}_1 \quad \text{for all } U
\subseteq [n].
\end{equation}
For $S, T \subseteq V$, let
\[
\beta_{S \x T} = \sum_{s \in S, t \in T} \beta_{st}.
\]
We have
\[
\beta_{S \x T} = \beta_{S \cup T} + \beta_{S \cap T} - \beta_{S
  \setminus T} - \beta_{T \setminus S}.
\]
We deduce from \eqref{eq:Gnp-cut-all-U} that
\[
\abs{\beta_{S \x T} (G) - \beta_{S \x T}(H)} \leq \e n^2 \norm{H}_1 \quad \text{for all } S,T
\subseteq [n],
\]
which is equivalent to $d_\square(G,H) \leq \e \norm{H}_1$.
\end{proof}

The following lemma shows that $d_\square (\rho_n^{-1}\bG(H_n, \rho_n),
H_n) \to 0$ for any sequence of weighted graphs that satisfy certain mild
conditions on the edge weights. Recall the definition of the random
graph $\bG(H_n, \rho_n)$ from \S\ref{sec:defres-W-random}.

\begin{lemma} \label{eq:random-sparsify}
Let $\rho_n > 0$ with $\rho_n \to 0$ and $n \rho_n \to \infty$. For each $n$
let $H_n$ be a weighted graph with $n$ vertices all with unit vertex weights,
and containing no loops. Suppose that $\norm{H_n}_1$ is uniformly bounded and
the edge weights $\beta_{ij}(H)$ satisfy
\begin{equation} \label{eq:G-conv-cutoff-condition}
\lim_{n \to \infty} \frac{1}{n^2} \sum_{1 \leq i < j \leq n} \max\{\abs{\beta_{ij}(H_n)} -
\rho_n^{-1}, 0\} =0.
\end{equation}
Then
\[
\lim_{n \to \infty} d_\square (\rho_n^{-1}\bG(H_n, \rho_n),
H_n) = 0
\]
with probability $1$.
\end{lemma}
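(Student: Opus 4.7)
The plan is to reduce the claim to Lemma~\ref{eq:Gnp-cut} via a truncation that replaces the possibly unbounded edge weights of $H_n$ with the bounded edge weights of $\rho_n H_n'$, so that the concentration inequality becomes applicable.

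Define the truncated weighted graph $H_n'$ on $V(H_n)$ with unit vertex weights and edge weights
\[
\beta_{ij}(H_n') := \sign(\beta_{ij}(H_n))\min\set{\sabs{\beta_{ij}(H_n)}, \rho_n^{-1}}.
\]
Two elementary observations drive the argument. First,
\[
d_\square(H_n, H_n') \le \snorm{H_n-H_n'}_1 = \frac{2}{n^2}\sum_{i<j}\max\set{\sabs{\beta_{ij}(H_n)}-\rho_n^{-1},\, 0} \to 0
\]
by hypothesis \eqref{eq:G-conv-cutoff-condition}. Second, $\bG(H_n,\rho_n)$ and $\bG(\rho_n H_n')$ agree in distribution, since for each pair $(i,j)$ both include $ij$ with probability $\min\set{\rho_n\sabs{\beta_{ij}(H_n)},1}$ and with matching sign.

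Now I would apply Lemma~\ref{eq:Gnp-cut} to $\rho_n H_n'$, which has unit vertex weights, no loops, and edge weights in $[-1,1]$. For any fixed $\e>0$,
\[
d_\square(\bG(\rho_n H_n'),\,\rho_n H_n') \le \e\,\rho_n\snorm{H_n'}_1
\]
with probability at least $1-2^{n+1}\exp\!\paren{-\tfrac{1}{24}\min\set{\e,\e^2}\rho_n\snorm{H_n'}_1 n^2}$. Dividing by $\rho_n$ and using the uniform bound $\snorm{H_n'}_1 \le \snorm{H_n}_1 \le C$ gives $d_\square(\rho_n^{-1}\bG(H_n,\rho_n), H_n') \le \e C$ on this event. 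Combined with the first observation, the triangle inequality then yields $d_\square(\rho_n^{-1}\bG(H_n,\rho_n), H_n) \le \e C + o(1)$.

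To upgrade to almost sure convergence, I would apply Borel--Cantelli. Rewriting the exponent as $\tfrac{1}{24}\min\set{\e,\e^2}\cdot n\cdot n\rho_n\snorm{H_n'}_1$ and using $n\rho_n\to\infty$, the failure probabilities form a summable sequence as long as $\snorm{H_n'}_1$ is bounded below; intersecting the resulting full-measure events over a countable sequence $\e\downarrow 0$ then yields the conclusion. The main obstacle will be handling the degenerate case $\snorm{H_n'}_1\to 0$ along a subsequence: there $H_n \to 0$ in cut norm automatically, and a first-moment estimate (using that the expected edge-weight sum of $\rho_n^{-1}\bG(H_n,\rho_n)$ is exactly $\snorm{H_n'}_1$) together with a Chernoff bound on the binomial edge count --- which has mean $\tfrac{1}{2}\rho_n n^2\snorm{H_n'}_1$ and for which the scaling $n^2\rho_n\to\infty$ still affords adequate concentration --- shows that $\rho_n^{-1}\bG(H_n,\rho_n)$ also tends to $0$ in cut norm, recovering the conclusion.
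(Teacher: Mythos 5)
Your proposal is correct and follows essentially the same route as the paper: truncate the edge weights at $\rho_n^{-1}$, observe that $\bG(H_n,\rho_n)$ has the law of the edge-percolation model applied to the truncated graph, apply Lemma~\ref{eq:Gnp-cut}, and finish with Borel--Cantelli and the triangle inequality. The one divergence is in how Lemma~\ref{eq:Gnp-cut} is calibrated: you invoke it with a fixed relative accuracy $\e$ (guaranteed error $\e\rho_n\snorm{H_n'}_1$), so the exponent degenerates when $\snorm{H_n'}_1$ is small and you need your separate first-moment/Chernoff patch for that case, whereas the paper requests absolute accuracy $\e\rho_n$ (equivalently, applies the lemma with parameter $\e\rho_n/\snorm{H'_n}_1$), so the exponent is at least $\tfrac{1}{24}\min\set{\Omega(\e),1}\e\rho_n n^2=\omega(n)$ using only the upper bound $\rho_n^{-1}\snorm{H'_n}_1=O(1)$, and no case distinction arises. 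Your patch does work---the edge count is a sum of independent Bernoullis with mean $\tfrac12 n^2\rho_n\snorm{H_n'}_1$ and $n^2\rho_n=\omega(n)$ gives summable tails---so the difference is one of economy rather than correctness.
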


\begin{proof}
  Define the weighted graph $H'_n$ with edge weights
  \[
  \beta_{ij}(H'_n)
  = \sign(\beta_{ij}(H_n))\min\{\rho_n \abs{\beta_{ij}(H_n)},1\}.
  \]
So $\bG(H_n,\rho_n) = \bG(H'_n)$.  We have
\begin{equation} \label{eq:G-conv-cutoff-0}
\begin{split}
d_1(\rho_n^{-1} H'_n, H_n) &= \frac{1}{n^2} \sum_{i,j=1}^n
\sabs{\rho_n^{-1} \beta_{ij}(H_n') - \beta_{ij}(H_n)}\\
&=
\frac{1}{n^2} \sum_{i,j=1}^n \max\{\abs{\beta_{ij}(H_n)}
  - \rho_n^{-1}, 0\},
\end{split}
\end{equation}
which goes to 0 as $n \to \infty$,
by
assumption~\eqref{eq:G-conv-cutoff-condition}. It follows that
$\rho_n^{-1} \snorm{H'_n}_1 = \snorm{H_n}_1 + o(1) = O(1)$, as we
assumed that $\snorm{H_n}_1$ is uniformly bounded.
By Lemma~\ref{eq:Gnp-cut} for every $\e > 0$ we have
\begin{align*}
  \PP(d_\square(\bG(H'_n), H'_n)
  \leq \e \rho_n)
  &\geq
  1 - 2^{n+1} \exp\paren{-\frac{1}{24}\min\set{\frac{\e
        \rho_n}{\norm{H'_n}_1}, 1}\e
    \rho_n n^2}
  \\
  &\geq
  1 - 2^{n+1} \exp\paren{-\frac{1}{24}\min\set{\Omega(\e), 1}\e
    \rho_n n^2}
  \\
  &\geq 1 - 2^{-\omega(n)}
\end{align*}
as $n \to \infty$, since $n \rho_n \to \infty$. So by the Borel-Cantelli
lemma,
\[
\lim_{n \to \infty} \rho_n^{-1} d_\square(\bG(H'_n), H'_n) = 0
\]
with probability $1$. Combined with \eqref{eq:G-conv-cutoff-0} we obtain the
desired conclusion.
\end{proof}

Finally we put everything together and complete
Figure~\ref{fig:summary} with the arrow ($L^p$ graphon limit) $\to$
($L^p$ upper regular sequence).

\begin{proof}[Proof of Theorem~\ref{thm:rand-converge}(b)]
We need to show \eqref{eq:d(G,H)}. We apply
Lemma~\ref{eq:random-sparsify} with $H_n = \bH(W, n)$. By
Theorem~\ref{thm:SLLN-W}, $\norm{H_n}_1 \to \norm{W}_1$ almost
surely, so in particular $\norm{H_n}_1$ is uniformly bounded. It
remains to check~\eqref{eq:G-conv-cutoff-condition}. We have
\[
\frac{1}{n^2} \sum_{1 \leq i < j \leq n} \max\{\abs{\beta_{ij}(H_n)} -
\rho_n^{-1}, 0\}
= \frac{1}{n^2} \sum_{1 \leq i < j \leq n} \max\{\abs{W(x_i, x_j)} -
\rho_n^{-1}, 0\},
\]
which converges to 0 as $n \to \infty$ with probability $1$ by
Theorem~\ref{thm:SLLN-W}. Indeed, since $\rho_n \to 0$, for every $K >0$ the
limit superior of the above expression is bounded by
$\tfrac{1}{2}\snorm{\max\{\abs{W}- K,0\}}_1$ by Theorem~\ref{thm:SLLN-W}, and
this can be made arbitrarily small by choosing $K$ large.
\end{proof}

\begin{proof}[Proof of Corollary~\ref{cor:G-rand-converge-normalized}]
  By Theorem~\ref{thm:rand-converge}(b), $\delta_\square(\rho_n^{-1}G_n,W) \to 0$
  with probability $1$ as $n \to \infty$, and applying the theorem to $\abs{W}$ shows that
  $\rho_n^{-1} \norm{G_n}_1 \to \norm{W}_1$ with probability $1$.  It follows that
  \begin{align*}
    \delta_\square\paren{\frac{G_n}{\norm{G_n}_1},
    \frac{W}{\norm{W}_1}} &=
    \frac{\rho_n}{\norm{G_n}_1} \delta_\square \paren{\rho_n^{-1} G_n,
    \frac{\norm{G_n}_1}{\rho_n \norm{W}_1} W}\\
    &\le  \frac{\rho_n}{\norm{G_n}_1}\paren{\delta_\square \paren{\rho_n^{-1} G_n,
    W}
    + \delta_\square \paren{W,
    \frac{\norm{G_n}_1}{\rho_n \norm{W}_1} W}}\\
    &\le \frac{\rho_n}{\norm{G_n}_1}\paren{\delta_\square \paren{\rho_n^{-1} G_n,
    W}
    + \norm{W}_\square \abs{1-
    \frac{\norm{G_n}_1}{\rho_n \norm{W}_1}}}\\
    & \to 0,
  \end{align*}
  as desired.
\end{proof}

\begin{proof}[Proof of Proposition~\ref{prop:Wquasirandom}]
  By Corollary~\ref{cor:G-conv-to-Lp}, the sequence $(G_n)_{n \ge 0}$
  must be $\norm{W}_p$-upper $L^p$ regular.
  From $\delta_\square(G_n/\norm{G_n}_1, W) \to 0$ we obtain
  $\norm{W}_1 = 1$ (note that $W \ge 0$ because $G_n$ is simple),
  and by Proposition~\ref{prop:superlinear} we have $n
  \norm{G}_1 \to \infty$. It then follows from
  Corollary~\ref{cor:G-rand-converge-normalized} that
  $\delta_\square(G'_n/\norm{G'_n}_1, W) \to 0$ with probability $1$.
  By Proposition~\ref{prop:order} we can order the vertices of $G_n$
  and $G'_n$ so that $d_\square(G_n /\norm{G_n}_1, W) \to 0$ and
  $d_\square(G'_n /\norm{G'_n}_1, W) \to 0$, and thus
  \[
  d_\square\paren{\frac{G_n}{\norm{G_n}_1}, \frac{G'_n}{\norm{G'_n}_1}} \to 0,
  \]
  as desired.
\end{proof}

\section{Counting lemma for $L^p$ graphons} \label{sec:counting}

In this section we establish results relating to counting lemmas for $L^p$
graphons, as stated in \S\ref{sec:defres-count}.

We use the following generalization of H\"older's inequality from \cite{Fin}
(also see \cite[Theorem~3.1]{LZ}). This inequality played a key role in
recent work by the fourth author and Lubetzky \cite{LZ} resolving a
conjecture of Chatterjee and Varadhan~\cite{CV} on large deviations in random
graphs, which involves an application of graph limits.

\begin{theorem}[Generalized H\"older's inequality]
  \label{thm:gen-holder}
  Let $\mu_1,\ldots,\mu_n$ be probability measures on $\Omega_1,\ldots,\Omega_n$,
  respectively, and let $\mu=\prod_{i=1}^n \mu_i$ be the product measure on
  $\Omega = \prod_{i=1}^n \Omega_i$.  Let $A_1, \ldots, A_m$ be
  nonempty subsets of $[n] := \set{1, \dots, n}$ and write
  $\Omega_{A}=\prod_{\ell\in A}\Omega_\ell$ and $\mu_{A} =
  \prod_{\ell\in A}\mu_\ell$.  Let $f_i \in L^{p_i}\left(\Omega_{A_i},
    \mu_{A_i}\right)$ with $p_i\geq 1$ for each $i\in[m]$ and suppose
  in addition that $\sum_{i: \ell\in A_i} (1/p_i) \leq 1$ for each $
  \ell\in [n]$. Then
 \[
 \int \prod_{i=1}^m \left|f_i\right| \, d\mu \leq \prod_{i=1}^m
 \left(\int \left|f_i\right|^{p_i} \, d\mu_{A_i}\right)^{1/p_i}\,.
 \]
\end{theorem}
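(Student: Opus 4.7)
The plan is induction on $n$, the number of variables. The base case $n=1$ reduces to the classical $m$-fold H\"older inequality on the probability space $(\Omega_1, \mu_1)$; the slack in $\sum_{i=1}^m 1/p_i \le 1$ (as opposed to $= 1$) is absorbed by padding with a constant function with an auxiliary exponent, using $\norm{1}_p = 1$ on any probability space.

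For the inductive step, I would integrate out the variable $\omega_n$ first. Split the index set into $I = \set{i : n \in A_i}$ and its complement. By Fubini,
\[
\int \prod_{i=1}^m |f_i| \, d\mu = \int_{\Omega_{[n-1]}} \paren{\prod_{i \notin I} |f_i|} \paren{\int_{\Omega_n} \prod_{i \in I} |f_i| \, d\mu_n} d\mu_{[n-1]}.
\]
The hypothesis specialized to $\ell = n$ gives $\sum_{i \in I} 1/p_i \le 1$, which is precisely the H\"older condition on $(\Omega_n, \mu_n)$, so for each fixed choice of the remaining variables,
\[
\int_{\Omega_n} \prod_{i \in I} |f_i| \, d\mu_n \le \prod_{i \in I} g_i, \qquad g_i := \paren{\int_{\Omega_n} |f_i|^{p_i} \, d\mu_n}^{1/p_i}.
\]
Each $g_i$ is a function of $\omega_{A_i \setminus \set{n}}$ alone, and another application of Fubini yields $\snorm{g_i}_{L^{p_i}(\Omega_{A_i \setminus \set{n}})} = \snorm{f_i}_{L^{p_i}(\Omega_{A_i})}$.

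This reduces the problem to the same inequality on $n-1$ variables, applied to $\set{|f_i|}_{i \notin I} \cup \set{g_i}_{i \in I}$ with subsets $A_i' := A_i \setminus \set{n}$. The condition $\sum_{i : \ell \in A_i'} 1/p_i \le 1$ for each $\ell \in [n-1]$ is inherited directly from the original, so the inductive hypothesis applies and, combined with the norm identity above, produces the desired bound. The one bookkeeping point is that if some $A_i = \set{n}$ then $A_i'$ is empty after the reduction; in this case $g_i$ is simply the constant $\snorm{f_i}_{p_i}$ and is pulled out of the integral before invoking the inductive hypothesis on the remaining, genuinely $(n-1)$-variable integrand. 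Beyond this and the base-case padding to handle strict inequality in the exponent sum, the proof is a clean iteration of Fubini and classical H\"older, with no substantive obstacle expected.
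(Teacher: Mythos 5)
Your argument is correct: the induction on the number of coordinates, integrating out $\omega_n$ via Tonelli, applying classical H\"older on $(\Omega_n,\mu_n)$ to the factors with $n\in A_i$ (the sub-unity exponent sum being harmless on a probability space, by the padding trick you describe), and then using $\snorm{g_i}_{L^{p_i}(\mu_{A_i\setminus\{n\}})}=\snorm{f_i}_{L^{p_i}(\mu_{A_i})}$ to close the induction, is exactly the standard proof of this inequality. Note that the paper itself does not prove Theorem~\ref{thm:gen-holder}; it is quoted from Finner \cite{Fin} (see also \cite[Theorem~3.1]{LZ}), and those sources prove it by essentially the same coordinate-by-coordinate induction you propose. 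Your handling of the two boundary cases (empty $A_i\setminus\{n\}$, and strict inequality in the exponent sum) is the only bookkeeping needed, so there is no gap.
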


\begin{proof}[Proof of Proposition~\ref{prop:count-finiteness}]
For the first assertion, we can give an example in the form of a separable
graphon, i.e., one of the form $W(x,y) = w(x)w(y)$. Let $w \colon [0,1] \to
[0,\infty)$ be in $L^p([0,1])$ for all $p < \Delta$ but not $p = \Delta$,
e.g., $w(x) = x^{-1/\Delta}$ (and $w(0) = 0$). Then $\norm{W}_p =
\norm{w}_p^2 < \infty$ for all $p < \Delta$, but $t(F, W) = \prod_{v \in
V(G)} \norm{w}_{\deg_F(v)}^{\deg_F(v)}$, which is infinite since
$\norm{w}_\Delta = \infty$.

For the second assertion, apply Theorem~\ref{thm:gen-holder} with $n =
\abs{V(F)}$, $\Omega_i = [0,1]$, $\mu_i$ equal to Lebesgue measure, $A_1,
\dots, A_m$ the edges of $F$ (i.e., they are two-element subsets of $V(F)$),
and $p_i = \Delta$ for all $i$.
\end{proof}

\begin{lemma} \label{lem:Holder-graph}
  Let $F$ be a simple graph with maximum degree $\Delta$. Let $\Delta
  < p < \infty$ and let $q = p/(p-\Delta + 1)$. For each edge $e
  \in E(F)$, let $W_e$ be an $L^p$ graphon. Fix an edge $e_1 \in
  E(F)$. Then
  \[
  \abs{\int_{[0,1]^{\abs{V(F)}}} \prod_{ij \in E(F)} W_{ij}(x_i,x_j)
      \,dx_1\cdots d x_{\abs{V(F)}}} \leq \norm{W_{e_1}}_q \prod_{e
      \in E(F) \setminus \{e_1\}} \norm{W_e}_p.
  \]
\end{lemma}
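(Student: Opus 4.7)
The plan is to derive this as a direct application of the generalized H\"older inequality (Theorem~\ref{thm:gen-holder}), with a carefully chosen assignment of exponents to the edges. Take $n = \abs{V(F)}$, $\Omega_i = [0,1]$ with $\mu_i$ Lebesgue measure, and let the subsets $A_1, \dots, A_m$ be precisely the edges of $F$, so that $f_i$ corresponds to some $W_e$ with $e \in E(F)$. The question is what exponent $p_e$ to assign to each edge.

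The natural choice, motivated by the shape of the right-hand side, is $p_{e_1} = q = p/(p-\Delta+1)$ and $p_e = p$ for every other edge $e$. First I would check the admissibility hypothesis $p_e \geq 1$: this holds since $p > \Delta \geq 1$ forces $q \geq 1$. The key calculation is then the per-vertex constraint
\[
\sum_{e \ni v} \frac{1}{p_e} \leq 1 \qquad \text{for every } v \in V(F).
\]
For a vertex $v$ not incident to $e_1$, the sum is $\deg_F(v)/p \leq \Delta/p < 1$, which is fine. For a vertex $v$ incident to $e_1$, the sum becomes
\[
\frac{1}{q} + \frac{\deg_F(v) - 1}{p} = \frac{p - \Delta + 1}{p} + \frac{\deg_F(v)-1}{p} = \frac{p - \Delta + \deg_F(v)}{p} \leq 1,
\]
where the final inequality uses $\deg_F(v) \leq \Delta$. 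So the exponents are admissible.

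Applying Theorem~\ref{thm:gen-holder} with these exponents immediately yields
\[
\abs{\int_{[0,1]^{\abs{V(F)}}} \prod_{ij\in E(F)} W_{ij}(x_i,x_j)\, dx_1\cdots dx_{\abs{V(F)}}}
\leq \norm{W_{e_1}}_q \prod_{e \in E(F)\setminus\{e_1\}} \norm{W_e}_p,
\]
which is the desired bound. There is no real obstacle beyond identifying the right exponents; the only subtle point is noticing that the slack $\Delta - \deg_F(v)$ available at the endpoints of $e_1$ is exactly enough to reduce one of the exponents from $p$ down to $q = p/(p-\Delta+1)$, which is the sharpest possible gain from this H\"older scheme and is what makes the counting lemma (Theorem~\ref{thm:Lp-counting}) work with its specific exponent $(p-\Delta)/(p-\Delta+m-1)$.
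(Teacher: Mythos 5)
Your proposal is correct and is essentially identical to the paper's proof: both apply Theorem~\ref{thm:gen-holder} with the edges of $F$ as the sets $A_i$, exponent $q$ on $e_1$ and $p$ on all other edges, and verify the per-vertex condition via $1/q + (\Delta-1)/p = 1$. Nothing is missing.
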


\begin{proof}
Apply Theorem~\ref{thm:gen-holder} with $n = \abs{V(F)}$, $\Omega_i = [0,1]$,
$\mu_i$ equal to Lebesgue measure, $A_1, \dots, A_m$ the edges of $F$ (with
$A_1 = e_1$), $p_1 = q$, and $p_i = p$ for $i \geq 2$. The inequality
$\sum_{i: \ell\in A_i} (1/p_i) \leq 1$ is satisfied for each $\ell$ because
$q < p$ and $1/q + (\Delta - 1)/p = 1$ (at most one term $1/p_i$ with $\ell
\in A_i$ can equal $1/q$, the others equal $1/p$, and there are at most
$\Delta$ terms).
\end{proof}

\begin{proof}[Proof of Theorem~\ref{thm:Lp-counting}]
  Let $V(F) = \{1, 2, \dots, n\}$ and $E(F) = \{e_1, \dots,
  e_m\}$. Let $i_t, j_t$ be the endpoints of $e_t$, for $1 \leq t
  \leq m$. We may assume that $\norm{U - W}_\square \leq \e$. We have
  \begin{align*}
  t(F, U) - t(F, W)
  &= \int_{[0,1]^n}\paren{\prod_{t=1}^m U(x_{i_t},x_{j_t}) -
    \prod_{t=1}^m W(x_{i_t},x_{j_t})} \,dx_1\cdots dx_n.
  \\
  &= \sum_{t=1}^m \int_{[0,1]^n} \paren{\prod_{s<t}
    U(x_{i_s},x_{j_s})} \paren{U(x_{i_t},x_{j_t})-W(x_{i_t},x_{j_t})} \cdot\\
    & \qquad \qquad \qquad\!\! 
    \paren{\prod_{s>t}
    W(x_{i_s},x_{j_s})} \,dx_1\cdots dx_n.
\end{align*}
It suffices to show that for each $t = 1, \dots, m$,
\begin{equation}
  \label{eq:telescope-term}
\begin{split}
  \abs{\int_{[0,1]^n} \paren{\prod_{s<t}
    U(x_{i_s},x_{j_s})} \paren{U(x_{i_t},x_{j_t})-W(x_{i_t},x_{j_t})}  \paren{\prod_{s>t}
    W(x_{i_s},x_{j_s})} \,dx_1\cdots dx_n}\\
\leq 2(m-1+p-\Delta) \paren{\frac{2\e}{p-\Delta}}^{\frac{p-\Delta}{p-\Delta+m-1}}.
\end{split}
\end{equation}
Let $K > 0$, which we will choose later. Let $U = U_{\leq K} + U_{>
  K}$, where $U_{\leq K} := U 1_{\abs{U}\leq K}$ and $U_{> K} := U
1_{\abs{U}> K}$. Similarly, let $W_{\leq K} := W 1_{\abs{W}\leq K}$ and
$W_{> K} := W 1_{\abs{W}> K}$. We claim that
\begin{equation}
  \label{eq:telescope-term-cut}
\begin{split}
  \left|\int_{[0,1]^n} \paren{\prod_{s<t}
    U_{\leq K}(x_{i_s},x_{j_s})} \paren{U(x_{i_t},x_{j_t})-W(x_{i_t},x_{j_t})}\cdot\right.\\
    \left.\paren{\prod_{s>t}
    W_{\leq K}(x_{i_s},x_{j_s})} \,dx_1\cdots dx_n\right| &
\\
& \leq 4K^{m-1} \e.  
\end{split}
\end{equation}
Indeed, if we fix
the value of $x_i$ for all $i \in [n] \setminus \{i_t,j_t\}$, then  the
integral in \eqref{eq:telescope-term-cut} has the form
\begin{equation} \label{eq:telescope-term-2-var}
K^{m-1} \int_{[0,1]^2} (U(x_{i_t},x_{j_t}) - W(x_{i_t},x_{j_t})) a(x_{i_t})
b(x_{j_t}) \,dx_{i_t}\,dx_{j_t}
\end{equation}
for some functions $a(\cdot)$ and $b(\cdot)$ with $\norm{a}_\infty,
\norm{b}_\infty \leq 1$, where $a(\cdot)$ and $b(\cdot)$ depend on the values
of $x_i$ for $i \in [n] \setminus \{i_t,j_t\})$ that we fixed. Thus
\eqref{eq:telescope-term-2-var} is bounded in absolute value by $K^{m-1}
\norm{U - W}_{\infty \to 1} \leq 4K^{m-1} \e$, using
\eqref{eq:cut-to-operator-norm}. The inequality \eqref{eq:telescope-term-cut}
then follows.

Next we claim that the difference between the integral in
\eqref{eq:telescope-term} and the integral in \eqref{eq:telescope-term-cut}
is bounded in absolute value by $2(m-1)/K^{p-\Delta}$. Indeed, writing this
difference as a telescoping sum in a similar fashion to what we did at the
beginning of this proof, it suffices to show that each expression of the
following form is bounded in absolute value by $2/K^{p-\Delta}$:
\begin{equation} \label{eq:telescope-term-*}
\int_{[0,1]^n} \paren{\prod_{s<t}
    U_{*}(x_{i_s},x_{j_s})} \paren{U(x_{i_t},x_{j_t})-W(x_{i_t},x_{j_t})} \paren{\prod_{s>t}
    W_{*}(x_{i_s},x_{j_s})} \,dx_1\cdots dx_n,
\end{equation}
where we replace exactly one of the $m-1$ subscript $*$'s by `$> K$', replace
some of the other $*$'s by `$\leq K$', and then erase the remaining $*$'s.
Now we apply Lemma~\ref{lem:Holder-graph} with the special edge $e_0$
corresponding to the factor whose subscript is replaced by `$>K$'. We use
$\norm{U_{\leq K}}_p \leq \norm{U}_p \leq 1$ and $\norm{W_{\leq K}}_p \leq
\norm{W}_p \leq 1$. Using the triangle inequality we have $\norm{U - W}_p
\leq 2$. Also,
\[
\norm{U_{>K}}_q \leq \norm{U (\abs{U}/K)^{p/q - 1}}_q =
\norm{U}_p^{p/q} / K^{p/q-1}  \leq 1/K^{p-\Delta}.
\]
It then follows from Lemma~\ref{lem:Holder-graph} that an integral
of the form~\eqref{eq:telescope-term-*} is at most $2/K^{p-\Delta}$ in
absolute value.

Combining the bounds on \eqref{eq:telescope-term-cut} and
\eqref{eq:telescope-term-*}, we see that the integral in
\eqref{eq:telescope-term} is bounded in absolute value by
\[
4K^{m-1} \e + 2(m-1)/K^{p-\Delta}.
\]
We optimize this bound by choosing $K =
((p-\Delta)/(2\e))^{1/(m-1+p-\Delta)}$, which gives the bound
in \eqref{eq:telescope-term} that we claimed.
\end{proof}

Next we  give an example showing that no counting lemma can hold
when $p \leq \Delta$.

\begin{proof}[Proof of Proposition~\ref{eq:Lp-no-count}]
  By nesting of norms, we only need to consider the case $p = \Delta$.
  For for each $n\geq 1$, consider
  the separable graphon $W_n$ defined by
  \[
  W_n(x,y) := w_n(x)w_n(y),
  \]
  where $w_n(x) := 1 + u_n(x)$ with $u_n(x) := (x \ln n)^{-1/\Delta} 1_{[1/n,1]}(x)$.
  We chose $u_n$ so that it satisfies $\norm{u_n}_\Delta = 1$ and
  $\lim_{n \to \infty} \norm{u_n}_p = 0$ for $1 \leq p < \Delta$.

  We have
  \[
  \norm{W_n}_\Delta = \norm{w_n}_\Delta^2 \leq (1 +
  \norm{u_n}_\Delta)^2 = 4.
  \]
  Also, since $W_n(x,y) - 1 = u_n(x) + u_n(y) + u_n(x)u_n(y)$,
  \[
  \norm{W_n - 1}_1 \leq 2\norm{u_n}_1 + \norm{u_n}_1^2 \to 0 \quad
  \text{as } n \to \infty.
  \]
  It remains to verify that $\liminf_{n \to \infty} t(F, W_n) >
  1$. Since $W_n$ is separable,
  \[
  t(F, W_n) = \prod_{v \in V(F)} \norm{w_n}_{\deg_F(v)}^{\deg_F(v)}.
  \]
  For any integer $k$,
  \[
  \norm{w_n}_k^k = \EE [(1 + u_n)^k] = \sum_{i=0}^k \binom{n}{i} \norm{u_n}_i^i.
  \]
  Since $\norm{u_n}_\Delta = 1$ and
  $\lim_{n \to \infty} \norm{u_n}_p = 0$ for any $1 \leq p < \Delta$,
  we find that $\lim_{n\to\infty} \norm{w_n}_k^k = 1$ when $1 \le k <
  \Delta$, and $\lim_{n\to\infty} \norm{w_n}_\Delta^\Delta =
  2$. Therefore,
  \[
  \lim_{n \to \infty} t(F, W_n) = 2^{\abs{\{v \in V(G) \,:\, \deg_F(v) = 
      \Delta\}}} > 1,
  \]
  as desired.
\end{proof}

There has been some recent work by the fourth author along with Conlon and
Fox~\cite{CFZ1,CFZ2} developing counting lemmas for sparse graphs assuming
additional hypotheses. Namely one assumes that the sparse graph $G$ is a
relatively dense subgraph of another sparse graph $\Gamma$ that has certain
pseudorandomness properties. For example, to obtain a counting lemma for
$K_3$ in $G$, one assumes that $t(H, \Gamma/\norm{\Gamma}_1) = 1 + o(1)$
whenever $H$ is a subgraph of $K_{2,2,2}$ (which is the 2-blow-up of $K_3$).
More generally, an $F$-counting lemma needs $t(H, \Gamma/\norm{\Gamma}_1) = 1
+ o(1)$ whenever $H$ is a subgraph of the 2-blow-up of $F$. One might ask
whether this result can be extended to $L^p$ upper regular graphs. This is an
interesting and non-trivial problem, and we leave it open for future work.

\section*{Acknowledgments}

We thank Oliver Riordan for suggesting the topic of Appendix~\ref{sec:uniform},
Svante Janson for providing valuable feedback on our manuscript, Omer Tamuz for helpful
discussions, Remco van der Hofstad for comments about $U$-statistics that
led us to \cite{H}, Donald Cohn for advice about measure theory, and Patrick
Wolfe and Sofia Olhede for bringing \cite{WO} to our attention.

\appendix

\section{$L^p$ upper regularity implies unbounded average degree} \label{sec:superlinear}

\begin{proposition} \label{prop:superlinear}
Let $C > 0$ and $p > 1$, and let $(G_n)_{n \ge 0}$ be a $C$-upper $L^p$
regular sequence of simple graphs. Then $\abs{E(G_n)}/\abs{V(G_n)} \to
\infty$ as $n \to \infty$.
\end{proposition}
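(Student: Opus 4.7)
The plan is to prove the contrapositive: suppose $|E(G_n)|/|V(G_n)|$ is bounded along a subsequence, say by $M$, and pass to this subsequence. Setting $n = |V(G_n)|$, for each such $G_n$ and suitable $\eta>0$ we will exhibit a partition of $V(G_n)$ into parts of size $\ge \eta n$ whose associated sum $\sum_{i,j}(|V_i||V_j|/n^2)(\rho(V_i,V_j)/\norm{G_n}_1)^p$ exceeds any fixed constant as $\eta\to 0$, contradicting $(C+o(1),\eta)$-upper $L^p$ regularity.

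The main tool is a row-sum inequality. Setting $r_{ij}=\rho(V_i,V_j)/\norm{G_n}_1$, a direct calculation gives the row sum $R_i := \sum_j (|V_j|/n) r_{ij} = \bar d_i/d$, where $d=2|E(G_n)|/n$ is the average degree of $G_n$ and $\bar d_i = |V_i|^{-1}\sum_{v\in V_i}\deg(v)$ is the average degree of $V_i$. H\"older applied to each row then yields
\[
\sum_{i,j}(|V_i||V_j|/n^2) r_{ij}^p \;\ge\; \sum_i (|V_i|/n)(\bar d_i/d)^p,
\]
reducing the task to finding a partition on which $\bar d_i/d$ is large on enough parts.

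Two complementary constructions handle the extremes of degree distribution. If $G_n$ has a small vertex set $H$ carrying a positive fraction of the total degree $\sum_v \deg(v)$, placing $H$ into a single part $V_1$ of size $\eta n$ (with $|H|\le \eta n$) forces $\bar d_1 = \Omega(1/\eta)$; the row-sum contribution from $V_1$ alone is then at least $\eta(\Omega(1/\eta))^p = \Omega(\eta^{1-p})$, which diverges for $p>1$. If instead the degrees of $G_n$ are bounded by some constant $\Delta=\Delta(M)$, greedy ball-packing yields $\Omega(n)$ disjoint closed $1$-neighborhoods, each of size $\le \Delta+1$ and containing $\ge 1$ edge; grouping these into $\Theta(1/\eta)$ parts of size $\eta n$ gives each such part $\Omega(\eta n)$ internal edges and thus $r_{ii}\ge c/\eta$ for a constant $c=c(M,\Delta)>0$. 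Summing the diagonal contributions $\sum_i (|V_i|/n)^2 r_{ii}^p$ again yields $\Theta(\eta^{1-p})$.

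The main obstacle is the intermediate regime, where the maximum degree is unbounded but sublinear and the degree mass is not concentrated on any small vertex set. Here one must split $V(G_n)$ by a threshold $D=D(M)$ into a high-degree portion $H$ and a low-degree portion $L$, then show that whichever of $G_n[H]$, $G_n[H,L]$, or $G_n[L]$ carries a positive fraction of the edges, the corresponding tailored construction (heavy-part argument on $H$, star-like construction exploiting $H$-to-$L$ cross pairs, or ball-packing on $G_n[L]$) produces the desired $\Theta(\eta^{1-p})$ divergence. Choosing $D$, the partition structure, and $\eta$ uniformly in $n$ is the core technical step; once arranged, the contradiction with $L^p$ upper regularity follows for all $\eta$ below a threshold depending only on $C,p,M$.
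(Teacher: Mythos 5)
Your reduction is sound as far as it goes: the identity $\sum_j (|V_j|/n)\,r_{ij}=\bar d_i/d$, the Jensen step, the concentrated-degree-mass construction, and the bounded-degree packing (when $m=\Omega(n)$ with a constant independent of $\eta$) are all correct. But the proof is incomplete, and the gap is exactly the step you yourself flag as the ``core technical step'': the intermediate regime is not carried out, and the sketch does not close it with the stated parameters. If you choose the threshold $D$ so that the high-degree set $H$ fits inside a single part of measure $\eta$ (forcing $D\approx 2M/\eta$), then in the complementary case $G[L]$ has maximum degree $O(M/\eta)$, a maximal matching in $G[L]$ has size only $O(\eta n)$, and no matter how you distribute these disjoint edges among parts of size $\eta n$, the guaranteed diagonal sum $\sum_i (|V_i|/n)^2 r_{ii}^p$ is only of order $\eta^{2-p}$ (at best a bounded number of parts with $r_{ii}=\Theta(1/\eta)$), which does not diverge for $1<p\le 2$ --- precisely the range this theory cares most about. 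Separately, the claim that greedy ball-packing yields $\Omega(n)$ disjoint neighborhoods each containing an edge silently assumes $m=\Omega(n)$, which is not among your hypotheses (you only assume $m\le Mn$; if $m=o(n)$ the claim is false and that case must be routed through the concentrated-mass argument, after passing to a further subsequence). So the choice of $D$, the organization of the cases, and the choice of $\eta$ all have to be arranged so the constants do not degrade with $\eta$, and none of that is done: the decisive step is asserted rather than proved.

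For comparison, the paper avoids the degree-distribution analysis entirely with one device you do not use: a maximal matching $T$ with vertex set $A$. Since every edge meets $A$, applying the upper-regularity inequality to the two-part partition $\{A, V\setminus A\}$ together with convexity forces $|A|\ge (2C)^{-p/(p-1)}n$, i.e.\ a matching of linear size whose constant depends only on $C$ and $p$ (independent of $\eta$ and of any degree hypotheses); packing the matching edges into parts of measure $\eta$ and using convexity again gives $2Cm/n^2 = C\norm{G}_1 \ge \norm{G_\cP}_p = \Omega\paren{\eta^{-(p-1)/p}/n}$, hence the direct quantitative bound $m/n=\Omega\paren{\eta^{-(p-1)/p}}$ with no contrapositive and no case analysis. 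If you want to rescue your plan, the repair is to keep $D$ a constant depending only on $C,p,M$: then $|H|\le 2Mn/D=\alpha n$ with $\alpha$ a small constant, and your row-sum bound applied to a partition of $H$ into parts of measure $\eta$ already yields $\gtrsim \alpha\,(\bar d_H/d)^p \gtrsim \alpha^{1-p}$, which exceeds $C^p$ once $\alpha$ is small enough depending only on $C$ and $p$ --- you never need $|H|\le\eta n$ --- while in the complementary case $e(G[L])\ge m/2$ and $\Delta(G[L])\le D$, so your packing argument runs with constants independent of $\eta$ (after disposing of $m=o(n)$ separately). As written, however, the proposal has a genuine gap at its central case.
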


This proposition follows immediately from the following lemma.

\begin{lemma}
For every $C>0$ and $p > 1$ there exist $\eta_0 > 0$ and $c > 0$ such that
if $0 < \eta < \eta_0$ and $G$ is a $(C, \eta)$-upper $L^p$ regular simple
graph, then $\abs{E(G)}/\abs{V(G)} \geq c \eta^{-1+1/p}$.
\end{lemma}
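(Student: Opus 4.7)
Let $G$ be a $(C,\eta)$-upper $L^p$ regular simple graph on $n$ vertices with $m$ edges, and write $d=2m/n$ for the average degree.  The plan is to exhibit an equipartition of $V(G)$ whose diagonal contribution to the defining inequality \eqref{eq:G-Lp} forces $d \ge c\eta^{-1+1/p}$.

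First, I would construct an equipartition $V(G) = V_1 \sqcup \cdots \sqcup V_k$ with $k = \lfloor 1/\eta \rfloor$ parts of size $\approx \eta n$ such that the total within-part edge count $m_1 := \sum_i e(V_i)$ satisfies $m_1 \ge c_1 \min(m,n)$ for some absolute constant $c_1 > 0$.  This is done by greedy star packing: process vertices in order of decreasing degree, and include each unassigned vertex together with as many of its still-unassigned neighbors as fit in the current part, starting a new part when the current one is full and padding short parts with leftover low-degree vertices.  A counting argument---splitting on whether the degree distribution is concentrated at a few high-degree vertices or spread out over many moderate ones---shows that the star centers collectively contribute $\Omega(\min(m,n))$ within-part edges.

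Second, apply \eqref{eq:G-Lp} to this partition.  With $w_{ii} = (\abs{V_i}/n)^2 \approx \eta^2$ and $a_{ii} = \rho_G(V_i,V_i)/\snorm{G}_1 = e(V_i)/(m\eta^2)$, keeping only the diagonal terms and invoking the power-mean inequality $\sum_i e(V_i)^p \ge k^{1-p} m_1^p$ yields
\[
C^p \;\ge\; \sum_i w_{ii} a_{ii}^p \;=\; \frac{\eta^{2-2p}}{m^p} \sum_i e(V_i)^p \;\ge\; \eta^{1-p} \paren{\frac{m_1}{m}}^p,
\]
equivalently $m_1 \le Cm\,\eta^{1-1/p}$.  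If $m \le n$, this gives $c_1 \le C\eta^{1-1/p}$, which fails once $\eta < (c_1/C)^{p/(p-1)}$, so choosing $\eta_0$ below this value rules out the case.  Otherwise $m > n$, and then $c_1 n \le Cm\,\eta^{1-1/p}$ gives $m/n \ge (c_1/C)\,\eta^{-1+1/p}$, the desired conclusion with $c = c_1/C$.

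The principal obstacle in this plan is the first step: establishing $m_1 \ge c_1 \min(m,n)$ for the greedy star-packing equipartition.  This requires a careful case analysis on the degree distribution of $G$ and control of the overlap of neighborhoods across adjacent parts.  The remaining two steps are a direct computation from the definition of upper $L^p$ regularity.
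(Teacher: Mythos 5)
Your steps 2--3 (keeping the diagonal terms of \eqref{eq:G-Lp}, using the power-mean/Jensen inequality, and branching on $m\le n$ versus $m>n$) are correct and essentially reproduce the second half of the paper's argument. The gap is in step 1. The claim that \emph{every} graph admits an equipartition into parts of size $\approx \eta n$ whose within-part edge count satisfies $m_1 \ge c_1\min(m,n)$ for an \emph{absolute} constant $c_1$ is false: take $G=K_{1,n-1}$. Every edge contains the center, so in any partition into parts of size $\approx\eta n$ the within-part edges all lie in the single part containing the center, giving $m_1\le \eta n-1$, while $\min(m,n)=n-1$; hence $m_1/\min(m,n)=O(\eta)$, and no absolute $c_1$ works once $\eta$ is small. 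No purely combinatorial ``case analysis on the degree distribution'' can rescue this, because $m_1$ is at most (a constant times) the matching number $\nu(G)$ plus the best you can do per part, and graphs with tiny vertex cover (stars, or a few dominating high-degree vertices) have $\nu(G)=o(\min(m,n))$. Such graphs are exactly the ones that the upper regularity hypothesis excludes, so the hypothesis must enter already in step 1, which your sketch never uses.

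The paper's proof repairs precisely this point. It takes a maximal matching $T$ with vertex set $A$; by maximality every edge of $G$ meets $A$, and applying the $(C,\eta)$-upper $L^p$ regularity condition to the two-part partition $\set{A, V\setminus A}$ together with Jensen's inequality yields $\abs{A}\ge (2C)^{-p/(p-1)}n$, i.e.\ the matching has $t=\abs{A}/2=\Omega_{C,p}(n)$ edges (this is where the star-type obstruction is killed, and it also shows the case $m=o(n)$ cannot occur). One then chooses the partition into parts of size $\approx\eta n$ so that each matching edge lies inside a part, guaranteeing $m_1\ge t=\Omega_{C,p}(n)$ within-part edges, after which your diagonal/power-mean computation finishes the proof exactly as in your steps 2--3. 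So the overall architecture of your plan is sound, but the missing ingredient is to use regularity (via a maximal matching and a two-part partition, or some equivalent device) to certify linearly many within-part edges; as stated, your step 1 is not just hard but false.
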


\begin{proof}
Let $\eta_0 = \min\big((2C)^{-p/(p-1)}/2,1/3\big)$, and suppose $G$ is a $(C,
\eta)$-upper $L^p$ regular simple graph with $0 < \eta < \eta_0$.  We will
omit all floor and ceiling signs below in order to keep the notation clean.

Let $V = V(G)$, $n = \abs{V}$, and $m = \abs{E(G)}$, let $T$ be a maximal
matching (a maximal set of vertex-disjoint edges) in $G$ consisting of $t$
edges, and let $A$ be the set of vertices in $T$.  We begin by showing that
our choice of $\eta_0$ ensures $t \ge \eta_0 n$.

The proof of $t \ge \eta_0 n$ will amount to applying the definition
\eqref{eq:G-Lp} of $(C,\eta)$-upper regularity to the partition $\{A, V
\setminus A\}$.  To do so, we need both $\abs{A}$ and $\abs{V \setminus A}$
to be at least $\eta n$. If $A$ is too small, then we simply enlarge it to
have size $\eta n$; we will see below that this case never actually occurs.
We need not worry about the case when $A$ is too large, because then $t \ge
\eta_0 n$ automatically holds (since in that case $\eta_0 \le 1/3$ implies $t
= \abs{A}/2 \ge (1-\eta)n/2 \ge \eta_0 n$).

Now we can apply upper regularity.  Every edge of $G$ has a vertex in $A$
due to the maximality of $T$, and so from the partition $\{A, V\setminus
A\}$ and the $(C, \eta)$-upper $L^p$ regularity of $G$ we obtain
\begin{align*}
C^p &\ge \frac{\abs{A}^2}{\abs{V}^2}\paren{\frac{\rho_G(A,A)}{\norm{G}_1}}^p
+ \frac{\abs{A} \abs{V \setminus A}}{\abs{V}^2}\paren{\frac{\rho_G(A,V \setminus A)}{\norm{G}_1}}^p\\
&=\frac{\abs{A}^2}{\abs{V}^2}\paren{\frac{\abs{E(A)}}{\abs{A}^2 \norm{G}_1}}^p
+ \frac{\abs{A} \abs{V \setminus A}}{\abs{V}^2}\paren{\frac{\abs{E(G) \setminus E(A)}}{\abs{A} \abs{V \setminus A} \norm{G}_1}}^p\\
& \geq
\frac{\abs{A}}{\abs{V}}\paren{\frac{\abs{E(G)}}{\abs{A}\abs{V}\norm{G}_1}}^p\\
& = \frac{\abs{A}}{n} \paren{\frac{n}{2
\abs{A}}}^p,
\end{align*}
where the last inequality follows from Jensen's inequality and the convexity
of $x \mapsto x^p$.  Thus,
\[
\abs{A} \geq (2C)^{-p/(p-1)} n \ge 2 \eta_0 n
\]
and hence $t = \abs{A}/2 \ge \eta_0 n$.  (In particular, $A$ cannot have been
enlarged in the previous paragraph, because then $\abs{A} = \eta n$ would
contradict $\abs{A} \ge 2 \eta_0 n$.)

Let $\cP = \{P_1, \dots, P_{1/\eta}\}$ be a partition of $V$ into sets of
size $\eta n$ (plus at most one remainder set of size between $\eta n$ and
$2\eta n$) so that every edge of $T$ lies entirely in some part of $\cP$; in
other words, $T \subseteq \bigcup_i P_i \x P_i$. Then, by the definition of
$L^p$ upper regularity and the convexity of $x \mapsto x^p$,
\begin{align*}
2Cm/n^2 =
C \norm{G}_1 \geq \norm{G_\cP}_p
&\geq
\paren{\sum_{i=1}^{1/\eta}
\frac{\abs{P_i}^2}{\abs{V}^2}\paren{\frac{2\abs{T \cap (P_i \x
P_i)}}{\abs{P_i}^2}}^p}^{1/p}
\\
&\geq \paren{\frac{\sum_{i=1}^{1/\eta}
\abs{P_i}^2}{\abs{V}^2} \paren{\frac{2\abs{T}}{\sum_{i=1}^{1/\eta}
\abs{P_i}^2}}^p}^{1/p}
\\
&= \frac{2t}{n^{2/p} \big(\sum_{i=1}^{1/\eta}\abs{P_i}^2\big)^{(p-1)/p}}
\\
&= \Omega\paren{\frac{\eta_0 n}{n^{2/p} \big(\eta^{-1}
(n\eta)^2\big)^{(p-1)/p}}}
\\
&= \Omega\big(\eta_0\eta^{-(p-1)/p}n^{-1}\big).
\end{align*}
It follows that $m/n = \Omega_{p,C}\big(\eta^{-(p-1)/p}\big)$, as desired.
\end{proof}

\section{Proof of a Chernoff bound} \label{sec:chernoff-proof}

\begin{proof}[Proof of Lemma~\ref{lem:Chernoff}]
  Let $t = \ln(1+\lambda)$. We have
  \begin{equation} \label{eq:Chernoff-moment}
  \begin{split}
    \PP \paren{X - \EE X \geq \lambda q}
    &\leq \EE[ \exp(t (X - \EE X - \lambda q))]\\
    &= \prod_{i=1}^n \EE[\exp(t(X_i - \EE X_i - \lambda p_i))].
  \end{split}
  \end{equation}
  If $X_i$ is distributed as  $\mathrm{Bernoulli}(p_i)$, then
  \[
  \begin{split}
  \EE[\exp(t(X_i - \EE X_i - \lambda p_i))]
  &= (1 - p_i + p_i e^t) \exp(- t p_i(1 + \lambda))\\
  &\leq \exp(p_i(e^t - 1 - t(1 + \lambda))).
  \end{split}
  \]
  We have
  \[
  e^t - 1 - t(1 + \lambda) = \lambda - (1+\lambda)\ln(1+\lambda)
  \leq
    \begin{cases}
     - \frac{1}{3} \lambda^2 & \text{if } 0
    < \lambda \leq 1, \\
     - \frac{1}{3} \lambda & \text{if }
    \lambda > 1.
  \end{cases}
  \]
  On the other hand, if $X_i$ is distributed as $-\mathrm{Bernoulli}(p_i)$, then
  \[
  \begin{split}
  \EE[\exp(t(X_i - \EE X_i - \lambda p_i))]
  &= (1 - p_i + p_i e^{-t}) \exp(t p_i(1-\lambda))\\
  &\leq \exp(p_i(e^{-t} - 1 + t(1 - \lambda)))
  \end{split}
  \]
  and
  \[
  e^{-t} - 1 + t(1 - \lambda) = \frac{-\lambda}{1 + \lambda} + (1 - \lambda)\ln(1+\lambda)
  \leq
    \begin{cases}
     - \frac{1}{2} \lambda^2 & \text{if } 0
    < \lambda \leq 1, \\
     - \frac{1}{2} \lambda & \text{if }
    \lambda > 1.
  \end{cases}
  \]
  Thus in both cases,
  \[
  \EE[\exp(t(X_i - \EE X_i - \lambda p_i))] \leq
  \begin{cases}
     \exp\paren{- \frac{1}{3} \lambda^2p_i} & \text{if } 0
    < \lambda \leq 1, \\
     \exp\paren{- \frac{1}{3} \lambda p_i} & \text{if }
    \lambda > 1.
  \end{cases}
  \]
  Using these bounds in \eqref{eq:Chernoff-moment}, we find that
  \[
  \PP \paren{ X - \EE X \geq \lambda q } \leq
  \begin{cases}
    \exp\paren{-\frac{1}{3} \lambda^2 q} & \text{if } 0
    < \lambda \leq 1, \\
    \exp\paren{-\frac{1}{3} \lambda q} & \text{if }
    \lambda > 1.
  \end{cases}
  \]
  The same upper bound holds for $\PP \paren{ X - \EE X \leq - \lambda
    q }$ since it is equivalent to the previous case after negating
  all $X_i$'s. The result follows by combining the two bounds using a
  union bound.
\end{proof}

\section{Uniform upper regularity} \label{sec:uniform}

In the theory of martingale convergence, $L^p$ boundedness implies $L^p$
convergence when $p>1$, but the same is not true for $p=1$.  Instead, $L^1$
convergence is characterized by uniform integrability.  Oliver Riordan asked
whether there is a similar characterization of convergence to $L^1$
graphons.  In this appendix, we show that the answer is yes.  Although
bounding the $L^1$ norm itself is insufficient, more detailed tail bounds
suffice.  In fact, the same truncation arguments that work for $p>1$ then
extend naturally to $p=1$.

\begin{definition}
Let $K \colon (0,\infty)\to (0,\infty)$ be any function.  A graphon $W$ has
\emph{$K$-bounded tails} if for each $\varepsilon>0$,
\[
\norm{W 1_{\abs{W} \ge K(\varepsilon)}}_1 \le \varepsilon.
\]
A set $S$ of graphons is \emph{uniformly integrable} if there exists a
function $K \colon (0,\infty)\to (0,\infty)$ such that all graphons in $S$
have $K$-bounded tails.
\end{definition}

Every graphon has $K$-bounded tails for some $K$, because we have assumed as
part of our definition that all graphons are $L^1$.  For purposes of
analyzing convergence, we consider a tail bound function $K$ to be the $L^1$
equivalent of a bound on the $L^p$ norm for $p>1$.  For comparison, note
that for $K>0$,
\[
\norm{W 1_{\abs{W} \ge K}}_1 \le \norm{W \paren{\frac{\abs{W}}{K}}^{p-1}}_1 = \frac{\norm{W}_p^p}{K^{p-1}},
\]
which tends to zero as $K \to \infty$ as long as $p>1$ and $\norm{W}_p <
\infty$.

Recall that $L^1$ upper regularity is vacuous, since every graphon is $L^1$
upper regular.  To get the right notion of upper regularity, we simply
replace $L^1$ boundedness with $K$-bounded tails:

\begin{definition}
Let $K\colon (0,\infty)\to(0,\infty)$ and $\eta>0$. A graphon $W$ is
\emph{$(K,\eta)$-upper regular} if $W_{\cP}$ has $K$-bounded tails for every
partition $\cP$ of $[0,1]$ with all parts of size at least $\eta$.

A sequence $(W_n)_{n \ge 0}$ of graphons is \emph{uniformly upper regular}
if there exist $K \colon (0,\infty) \to (0,\infty)$ and
$\eta_0,\eta_1,\ldots>0$ such that $\lim_{n \to \infty} \eta_n = 0$ and
$W_n$ is $(K,\eta_n)$-upper regular.

We define $(K,\eta)$-upper regularity of a weighted graph $G$ using the
graphon $W^G/\norm{G}_1$, except that we consider only partitions $\cP$ that
correspond to partitions of $V(G)$ for which all the parts have weight at
least $\eta \alpha_G$, and we require every vertex of $G$ to have weight at
most $\eta \alpha_G$.
\end{definition}

Note that if a graph sequence has no dominant nodes and the corresponding
graphon sequence is uniformly upper regular, then so is the graph sequence.

Uniform upper regularity is the proper $L^1$ analogue of $L^p$ upper
regularity, and imposing uniform integrability avoids the otherwise
pathological behavior of $L^1$ graphons.  Our results for $L^p$ graphons
with $p>1$ then generalize straightforwardly to $L^1$. In the remainder of
this appendix, we state the results and describe the minor modifications
required for their proofs.

We will need the following two lemmas, which are standard facts about
uniform integrability and conditioning a uniformly integrable set of random
variables on different $\sigma$-algebras.

\begin{lemma} \label{lemma:unifint1}
Let $K \colon (0,\infty) \to (0,\infty)$ be any function. Then for each
$\varepsilon>0$, there exists $\delta>0$ such that for every graphon $W$
with $K$-bounded tails and every subset $I$ of $[0,1]^2$ with Lebesgue
measure $\lambda(I) \le \delta$,
\[
\int_I \abs{W} \le \varepsilon.
\]
Explicitly, $\delta$ can be chosen to be $\varepsilon/(2K(\varepsilon/2))$.
\end{lemma}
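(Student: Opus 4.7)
The plan is the standard two-piece decomposition used to prove uniform integrability in measure theory, adapted to the very explicit form claimed in the statement. Given $\varepsilon > 0$, set $L := K(\varepsilon/2)$ and split
\[
\abs{W} = \abs{W} 1_{\abs{W} < L} + \abs{W} 1_{\abs{W} \ge L}.
\]
The first piece is bounded pointwise by $L$, while the second piece has total integral at most $\varepsilon/2$ by the definition of $K$-bounded tails applied at level $\varepsilon/2$.

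Now I would integrate over $I$. For the low part, I would estimate
\[
\int_I \abs{W} 1_{\abs{W} < L} \, d\lambda \le L \cdot \lambda(I) \le L \delta = \frac{\varepsilon}{2},
\]
using the explicit choice $\delta = \varepsilon/(2K(\varepsilon/2)) = \varepsilon/(2L)$. For the high part, I would enlarge the domain of integration from $I$ to $[0,1]^2$ and invoke the tail bound directly:
\[
\int_I \abs{W} 1_{\abs{W} \ge L} \, d\lambda \le \int_{[0,1]^2} \abs{W} 1_{\abs{W} \ge L} \, d\lambda = \snorm{W 1_{\abs{W} \ge K(\varepsilon/2)}}_1 \le \frac{\varepsilon}{2}.
\]
Adding the two estimates gives the desired bound $\int_I \abs{W} \le \varepsilon$.

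There is essentially no obstacle here; the lemma is a direct unfolding of the definition of $K$-bounded tails, and the only content is choosing the truncation level $L$ correctly so that the Chebyshev-type bound on the low part matches $\varepsilon/2$. The one small thing worth double-checking is that the choice of $L$ and $\delta$ depends only on $K$ and $\varepsilon$ (not on $W$ or $I$), which is immediate from the formulas, giving the uniformity claimed by the lemma.
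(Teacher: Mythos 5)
Your proof is correct and is essentially identical to the paper's: both split $\abs{W}$ at the truncation level $K(\varepsilon/2)$, bound the low part by $K(\varepsilon/2)\,\lambda(I) \le \varepsilon/2$, and bound the high part by the $K$-bounded tails condition. No issues.
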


\begin{proof}
For each $I$ satisfying $\lambda(I) \le \varepsilon/(2K(\varepsilon/2))$,
\[
\norm{W 1_I}_1 \le \norm{W 1_{\abs{W} \le K(\varepsilon/2)} 1_I}_1 + \norm{W 1_{\abs{W} \ge K(\varepsilon/2)}}_1 \le K(\varepsilon/2) \lambda(I) + \varepsilon/2 \le \varepsilon.
\qedhere
\]
\end{proof}

\begin{lemma} \label{lemma:unifint2}
Let $S$ be a uniformly integrable set of graphons.  Then
\[
\set{W_\cP : \textup{$W \in S$ and $\cP$ is a partition of $[0,1]$}}
\]
is uniformly integrable.
\end{lemma}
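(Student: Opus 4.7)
The plan is to exhibit, for each $\varepsilon>0$, a single threshold $\tilde K(\varepsilon)$ that bounds the tails of $W_\cP$ uniformly over $W \in S$ and partitions $\cP$ of $[0,1]$. The key conceptual observation is that $W_\cP$ is the conditional expectation of $W$ with respect to the $\sigma$-algebra generated by products of parts of $\cP$, so uniform tail control on $W$ should transfer to $W_\cP$; in probabilistic terms, this is just the standard fact that conditional expectation preserves uniform integrability, dressed up in graphon language.

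First I would extract from the hypothesis a uniform $L^1$ bound $C := \sup_{W \in S} \norm{W}_1 \leq K(1)+1$, obtained by splitting at level $K(1)$ and applying the definition of $K$-bounded tails. Since the stepping operator is an $L^1$ contraction, $\norm{W_\cP}_1 \leq C$ as well, and hence by Markov's inequality the level set $A_M := \{(x,y) : \abs{W_\cP(x,y)} \geq M\}$ satisfies $\lambda(A_M) \leq C/M$.

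The crucial step is the inequality $\int_{A_M} \abs{W_\cP} \leq \int_{A_M} \abs{W}$. I would establish this by noting that $W_\cP$ is constant on every rectangle $J_i \x J_j$ of $\cP \x \cP$, so $A_M$ is itself a union of such rectangles, and the same is true of $A_M^+ := A_M \cap \{W_\cP \geq 0\}$ and $A_M^- := A_M \cap \{W_\cP < 0\}$. By the defining property of the stepping operator, $\int_{A_M^\pm} W_\cP = \int_{A_M^\pm} W$, so
\[
\int_{A_M} \abs{W_\cP} = \int_{A_M^+} W - \int_{A_M^-} W \leq \int_{A_M} \abs{W}.
\]

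Finally I would invoke Lemma~\ref{lemma:unifint1}: for any $\varepsilon>0$ there exists $\delta>0$ (depending only on $K$, not on the particular $W\in S$) such that $\int_I \abs{W}\leq\varepsilon$ whenever $\lambda(I)\leq \delta$. Setting $\tilde K(\varepsilon) := C/\delta$ then forces $\lambda(A_{\tilde K(\varepsilon)}) \leq \delta$, and therefore $\int_{A_{\tilde K(\varepsilon)}} \abs{W_\cP} \leq \varepsilon$ uniformly in $W \in S$ and $\cP$. Since $\tilde K$ depends only on $K$, it is the required tail function. There is no substantive obstacle here; the only point requiring real care is the verification that level sets of $W_\cP$ respect $\cP \x \cP$, which is immediate from the definition of the stepping operator.
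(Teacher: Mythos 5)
Your proof is correct and follows essentially the same route as the paper's: a uniform $L^1$ bound plus a Markov-type estimate shows the level set $\set{\abs{W_\cP}\ge C/\delta}$ has measure at most $\delta$, and since that set is a union of cells of $\cP\x\cP$, the tail mass of $W_\cP$ is dominated by the integral of $\abs{W}$ over it, which Lemma~\ref{lemma:unifint1} makes at most $\varepsilon$, uniformly in $W$ and $\cP$. The only cosmetic difference is that you verify the transfer inequality by splitting according to the sign of $W_\cP$ and using that stepping preserves integrals over unions of cells, whereas the paper bounds $\abs{W_\cP}$ by the stepped version of $\abs{W}$ cell by cell via the triangle inequality.
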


\begin{proof}
Suppose $\norm{W}_1 \le C$ for all $W \in S$ (every uniformly integrable set
is $L^1$ bounded).  Let $\varepsilon>0$, and let $\delta$ be such that
$\norm{W 1_I}_1 \le \varepsilon$ whenever $W \in S$ and $\lambda(I) \le
\delta$, by Lemma~\ref{lemma:unifint1}.  We will show that if $K =
C/\delta$, then $\norm{W_\cP 1_{\abs{W_\cP} \ge K}}_1 \le \varepsilon$ for
all $W \in S$ and $\cP$.

Let $W$ be in $S$ and $\cP$ be a partition, and let $I$ be the set on which
$\abs{W_\cP} \ge K$.  Then
\[
K \lambda(I) \le \norm{W_{\cP}}_1 \le \norm{W}_1 \le C,
\]
and hence $\lambda(I) \le \delta$.  It follows that $\norm{W 1_{\abs{W_\cP}
\ge K}}_1 \le \varepsilon$, while $\norm{W_\cP 1_{\abs{W_\cP} \ge K}}_1 \le
\norm{W 1_{\abs{W_\cP} \ge K}}_1$ thanks to the triangle inequality
(look at each part of $\cP$). Thus,
\[
\norm{W_\cP 1_{\abs{W_\cP} \ge K}}_1 \le \varepsilon,
\]
as desired.
\end{proof}

We begin with the analogue of Proposition~\ref{prop:conv-to-Lp}.

\begin{proposition}
Let $W_0,W_1,\dots$ and $W$ be graphons such that $\delta_\square(W_n,W) \to
0$ as $n \to \infty$.  Then the sequence $(W_n)_{n \ge 0}$ is uniformly
upper regular.
\end{proposition}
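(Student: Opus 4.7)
The plan is to reduce to the uniform integrability of the target graphon $W$ by passing through measure-preserving rearrangements and then applying Lemma~\ref{lemma:unifint2}. Since $\delta_\square(W_n,W)\to 0$, I would first choose positive reals $\eta_n\to 0$ with $\eta_n^3>\delta_\square(W_n,W)$, and then measure-preserving bijections $\sigma_n\colon[0,1]\to[0,1]$ with $\norm{W_n-W^{\sigma_n}}_\square\le\eta_n^3$. These $\eta_n$'s will be the partition-size parameters in the definition of uniform upper regularity.

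The key estimate is exactly the one used in the proof of Lemma~\ref{lem:small-cut-upper-regular}: whenever $\cP$ is a partition of $[0,1]$ all of whose parts have measure at least $\eta_n$, every block $J_i\x J_j$ of the product partition has area at least $\eta_n^2$ while $\int_{J_i\x J_j}(W_n-W^{\sigma_n})\,d\lambda$ has absolute value at most $\eta_n^3$. Dividing gives $\abs{(W_n)_\cP-(W^{\sigma_n})_\cP}\le\eta_n$ pointwise on $[0,1]^2$.

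Next I would invoke Lemma~\ref{lemma:unifint2} on the rearrangements. Each $W^{\sigma_n}$ has the same distribution as the fixed $L^1$ graphon $W$, so the singleton $\{W\}$ being uniformly integrable forces $\{W^{\sigma_n}:n\ge 0\}$ to be uniformly integrable as well. Lemma~\ref{lemma:unifint2} then produces a single function $K_0\colon(0,\infty)\to(0,\infty)$ such that every $(W^{\sigma_n})_\cP$ has $K_0$-bounded tails, uniformly in $n$ and $\cP$.

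To finish, set $K(\e):=K_0(\e/2)+1$. For $n$ large enough that $\eta_n\le\min\{1,\e/2\}$ and any partition $\cP$ with parts of measure at least $\eta_n$, the pointwise bound from the second paragraph forces $A:=\{\abs{(W_n)_\cP}\ge K(\e)\}\subseteq\{\abs{(W^{\sigma_n})_\cP}\ge K_0(\e/2)\}$, so that
\[
\norm{(W_n)_\cP\,1_A}_1\le\norm{(W^{\sigma_n})_\cP\,1_A}_1+\eta_n\,\lambda(A)\le\e/2+\e/2=\e.
\]
The finitely many remaining $n$ can be absorbed by enlarging $K$ to dominate their individual (finite) tail functions. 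I do not expect any genuine obstacle: once one decides to pass through $W^{\sigma_n}$, the argument is essentially forced, and the only care needed is in interleaving the choices of $\eta_n$, $\sigma_n$, and $K(\e)$ so that the quantifiers line up.
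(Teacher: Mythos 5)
Your proof is correct and takes essentially the same route as the paper's: the same choice of $\sigma_n$ and $\eta_n$ with cut-norm error $\eta_n^3$, the same pointwise bound $\abs{(W_n)_\cP-(W^{\sigma_n})_\cP}\le\eta_n$ for partitions with parts of measure at least $\eta_n$, and the same transfer of tail bounds via Lemma~\ref{lemma:unifint2} applied to the rearrangements $W^{\sigma_n}$ of $W$. The only (harmless) difference is bookkeeping: the paper absorbs the $\eta_n$ error multiplicatively by taking $K\ge 2\eta_n$, which covers all $n$ simultaneously, whereas you absorb it additively under the condition $\eta_n\le\e/2$ and then patch the finitely many exceptional indices by enlarging $K(\e)$ pointwise.
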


It follows immediately that the same also holds for graphs, as long as they
have no dominant nodes.

\begin{proof}
Choose $\eta_n$ so that $\eta_n \to 0$ and
\[
\norm{W_n - W^{\sigma_n}}_\square \le \eta_n^3
\]
for some measure-preserving bijection $\sigma_n$ on $[0,1]$. Then
\[
\norm{\paren{W_n}_\cP - \paren{W^{\sigma_n}}_\cP}_\infty \le \eta_n
\]
whenever all the parts of $\cP$ have size at least $\eta_n$, as in
Lemma~\ref{lem:small-cut-upper-regular}.  We would like to show that picking
$K$ large enough forces $\norm{\paren{W_n}_\cP 1_{\abs{\paren{W_n}_\cP} \ge
K}}_1$ to be small.

We have
\begin{align*}
\norm{\paren{W_n}_\cP 1_{\abs{\paren{W_n}_\cP} \ge K}}_1 &\le
\norm{\paren{\paren{W^{\sigma_n}}_\cP+\eta_n} 1_{\abs{\paren{W_n}_\cP} \ge K}}_1\\
&\le \norm{\paren{\paren{W^{\sigma_n}}_\cP+\eta_n} 1_{\abs{\paren{W^{\sigma_n}}_\cP} \ge K-\eta_n}}_1.
\end{align*}
If we take $K \ge 2\eta_n$ (which is possible because $\eta_n \to 0$ as $n
\to \infty$), then we have an upper bound of
\[
2\norm{\paren{W^{\sigma_n}}_\cP 1_{\abs{\paren{W^{\sigma_n}}_\cP} \ge K-\eta_n}}_1,
\]
which tends uniformly to zero as $K \to \infty$ by
Lemma~\ref{lemma:unifint2}.
\end{proof}

The converse is also true: every uniformly upper regular sequence has a
convergent subsequence (Theorem~\ref{thm:unifintlimit}). This is the
analogue of Theorem~\ref{thm:W-upper-reg-limit}, but we will have to develop
machinery for the $L^1$ case before we can prove it.

\begin{theorem}[Weak regularity lemma] \label{thm:weakregL1}
Fix $K \colon (0,\infty) \to (0,\infty)$. For each $\varepsilon>0$, there
exists an $N$ such that for every natural number $k \ge N$, every graphon
$W$ with $K$-bounded tails, and every equipartition $\cP$ of $[0,1]$, there
exists an equipartition $\cQ$ refining $\cP$ into $k \abs{\cP}$ parts such
that
\[
\norm{W - W_\cQ}_\square \le \varepsilon.
\]
\end{theorem}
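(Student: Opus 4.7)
The plan is to mimic the proof of Lemma~\ref{lem:W-L^p-reg-equi} (the weak regularity lemma for $L^p$ graphons with $1<p<2$), replacing the control on the $L^1$ mass of the tail that came from H\"older's inequality and an $L^p$ bound with the control supplied directly by the $K$-bounded tails hypothesis. Concretely, the uniform tail bound $\norm{W 1_{\abs{W}\geq K(\varepsilon/3)}}_1\leq \varepsilon/3$ is exactly the role played by $\norm{W}_p^p/K^{p-1}$ in the $L^p$ argument, so we should be able to truncate, apply the equitable $L^2$ weak regularity lemma to the bounded truncation, and then glue back together using the triangle inequality.

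First, given $\varepsilon>0$ and the tail function $K$, I would set the truncation level $K_0 := K(\varepsilon/3)$ and define $W' := W\,1_{\abs{W}\leq K_0}$, so that $\norm{W-W'}_1\leq \varepsilon/3$ and $\norm{W'}_\infty\leq K_0$, which in turn yields $\norm{W'}_2\leq K_0$. The crucial point is that $K_0$ depends only on $\varepsilon$ and $K$, not on $W$ itself, so all subsequent bounds will be uniform over graphons with $K$-bounded tails.

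Next, I would apply Lemma~\ref{lem:W-L^2-reg-equi} to $W'$ with accuracy parameter $\varepsilon' := \varepsilon/(3K_0)$ and starting equipartition $\cP$, producing (for every $k \geq N := \lceil 4^{10/(\varepsilon')^2}\rceil$) an equipartition $\cQ$ refining $\cP$ into exactly $k\abs{\cP}$ parts with $\norm{W'-W'_\cQ}_\square \leq \varepsilon'\norm{W'}_2 \leq \varepsilon/3$. Since $N$ depends only on $\varepsilon$ and $K_0$, it depends only on $\varepsilon$ and $K$, as the statement demands.

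Finally, I would close the argument by a triangle inequality on the cut norm:
\[
\norm{W-W_\cQ}_\square \leq \norm{W-W'}_\square + \norm{W'-W'_\cQ}_\square + \norm{W'_\cQ-W_\cQ}_\square.
\]
The first term is bounded by $\norm{W-W'}_1\leq \varepsilon/3$; the second was just bounded by $\varepsilon/3$; and the third equals $\norm{(W-W')_\cQ}_\square\leq \norm{(W-W')_\cQ}_1\leq \norm{W-W'}_1\leq \varepsilon/3$ by contractivity of the stepping operator. Summing gives $\norm{W-W_\cQ}_\square\leq \varepsilon$, as required. There is no serious obstacle here: the only thing to verify is that the uniform tail bound truly makes $K_0$ (and hence $N$) independent of $W$, which is immediate from the definition of $K$-bounded tails, and that the $L^p$ truncation bookkeeping in the proof of Lemma~\ref{lem:W-L^p-reg-equi} transfers verbatim once the $L^p$ tail estimate is replaced by the tail-function estimate.
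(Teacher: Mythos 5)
Your proposal is correct and is essentially the paper's own argument: truncate at a level $K(\cdot)$ depending only on $\varepsilon$ and the tail function, apply the equitable $L^2$ weak regularity lemma (Lemma~\ref{lem:W-L^2-reg-equi}) to the bounded truncation, and reassemble with the triangle inequality and contractivity of the stepping operator (the paper phrases this last step via the factor-of-two comparison from Remark~\ref{remark:graphon-equi}, which is the same ingredient). The only cosmetic point is that Lemma~\ref{lem:W-L^2-reg-equi} requires its accuracy parameter to be less than $1/3$, which your choice $\varepsilon/(3K_0)$ satisfies after harmlessly replacing $K$ by a pointwise larger function.
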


\begin{proof}
We start by applying the $L^2$ weak regularity lemma
(Lemma~\ref{lem:W-L^2-reg-equi}) to the truncation $W 1_{\abs{W} \le
K(\varepsilon/4)}$, which has $L^2$ norm at most $K(\varepsilon/4)$.  It
follows that the theorem statement holds with the conclusion replaced by
\[
\norm{W 1_{\abs{W} \le K(\varepsilon/4)} - \paren{W 1_{\abs{W} \le K(\varepsilon/4)}}_\cQ}_\square \le \varepsilon/4.
\]
Thus, for $U := W 1_{\abs{W} \le K(\varepsilon/4)}$ we can find a $\cQ$ such
that
\[
\norm{U-U_\cQ}_\square \le \varepsilon/4.
\]
Then
\[
\norm{W-U_\cQ}_\square \le \norm{W 1_{\abs{W} \ge K(\varepsilon/4)}}_1 + \norm{U-U_\cQ}_\square \le \varepsilon/2,
\]
from which it follows that $\norm{W-W_\cQ}_\square \le \varepsilon$ (see the
end of Remark~\ref{remark:graphon-equi} for this standard inequality).
Thus, the same partitions that give an $\varepsilon/4$-approximation of $U$
give an $\varepsilon$-approximation of $W$.
\end{proof}

The compactness of the $L^p$ ball (Theorem~\ref{thm:compact}) requires
uniform integrability when $p=1$:

\begin{theorem} \label{thm:unifintcompact}
Let $(W_n)_{n \ge 0}$ be uniformly integrable sequence of graphons. Then
there exists a graphon $W$ such that
\[
\liminf_{n \to \infty} \delta_\square(W_n, W) = 0.
\]
\end{theorem}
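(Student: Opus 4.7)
The plan is to mirror the proof of Theorem~\ref{thm:compact}, replacing the $L^p$ weak regularity lemma by Theorem~\ref{thm:weakregL1} and the $L^p$ martingale convergence theorem by its uniformly integrable counterpart. The first move is to apply Theorem~\ref{thm:weakregL1} to produce, for each $k \ge 1$ and each $n$, an equipartition $\cP_{n,k}$ of $[0,1]$ with $|\cP_{n,k}|$ depending only on $k$ such that $\norm{W_n - (W_n)_{\cP_{n,k}}}_\square \le 1/k$, arranging matters so that $\cP_{n,k+1}$ refines $\cP_{n,k}$. Pulling everything back to a fixed equipartition $\cP_k$ of $[0,1]$ via measure-preserving bijections $\sigma_{n,k}$, set $W_{n,k} := (W_n^{\sigma_{n,k}})_{\cP_k}$, so that $\delta_\square(W_n, W_{n,k}) \le 1/k$ and the $W_{n,k}$ are step functions on the common partition $\cP_k$.

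Next I would extract a limit $U_k$ of $W_{n,k}$ as $n \to \infty$, for each fixed $k$. Here the point is that by Lemma~\ref{lemma:unifint2}, the family $\{W_{n,k}\}_{n,k}$ inherits a common tail bound from that of $\{W_n\}$. For fixed $k$, the step function $W_{n,k}$ is determined by at most $|\cP_k|^2$ real numbers, each of which is bounded in absolute value by $|\cP_k|^2 \norm{W_{n,k}}_1$, a uniformly bounded quantity. Passing to a diagonal subsequence, I may assume $W_{n,k} \to U_k$ pointwise a.e.\ as $n \to \infty$ for every $k$; Vitali's convergence theorem then upgrades this to $L^1$ convergence, because $(W_{n,k})_{n,k}$ is uniformly integrable. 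Each $U_k$ is a step function on $\cP_k$, and Fatou or direct inspection shows the sequence $(U_k)_{k \ge 1}$ remains uniformly integrable.

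The sequence $(U_k)_{k \ge 1}$ is a martingale with respect to the $\sigma$-algebras generated by $\cP_k \times \cP_k$: the relation $(W_{n,k+1})_{\cP_k} = W_{n,k}$ passes to the limit in $L^1$ to give $(U_{k+1})_{\cP_k} = U_k$. Since it is uniformly integrable, the $L^1$ martingale convergence theorem produces $W \in L^1([0,1]^2)$ with $\norm{U_k - W}_1 \to 0$ as $k \to \infty$. To conclude, the triangle inequality in the cut metric gives
\[
\delta_\square(W_n, W) \le \delta_\square(W_n, W_{n,k}) + \norm{W_{n,k} - U_k}_1 + \norm{U_k - W}_1 \le \tfrac{1}{k} + \norm{W_{n,k} - U_k}_1 + \norm{U_k - W}_1,
\]
and choosing $k$ large followed by $n$ large (along the diagonal subsequence) drives this to zero.

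The main obstacle is the absence of an $L^\infty$ or $L^p$ bound at the step-function stage: in the $p>1$ setting one reads off a uniform $L^\infty$ bound on $W_{n,k}$ from $\norm{W_n}_p \le C$ and the fact that $\cP_k$ has cells of measure $|\cP_k|^{-2}$, which immediately gives compactness in the finite-dimensional step-function space. Here the only a priori bound is uniform integrability, which is the reason we need both the propagation lemma (Lemma~\ref{lemma:unifint2}) and Vitali's theorem to turn a.e.\ convergence into $L^1$ convergence at the martingale approximation stage, as well as the uniformly integrable form of martingale convergence at the final passage to the limit.
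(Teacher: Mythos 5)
Your proposal is correct and follows essentially the same route as the paper: the paper's proof of Theorem~\ref{thm:unifintcompact} likewise runs the argument of Theorem~\ref{thm:compact} with Theorem~\ref{thm:weakregL1} in place of the $L^p$ weak regularity lemma, uses Lemma~\ref{lemma:unifint2} to get uniform integrability of the step approximations $W_{n,k}$ and hence of the limits $U_k$, and then invokes the uniformly integrable martingale convergence theorem. Your extra remarks (the $|\cP_k|^2\norm{W_{n,k}}_1$ bound on step values and the Vitali upgrade to $L^1$ convergence) are just slightly more explicit versions of the same steps.
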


\begin{proof}
The proof is almost the same as that of Theorem~\ref{thm:compact} with
$p=1$, but it uses the martingale convergence theorem for uniformly
integrable martingales \cite[Theorem~5.5.6]{Dur}, rather than $L^p$
martingales, and it uses Theorem~\ref{thm:weakregL1} for weak regularity.
The only substantive difference is in verifying that the martingale
$U_1,U_2,\dots$ is uniformly integrable (using the notation from the proof).
To do so, we start by observing that the graphons $W_{n,k}$ are uniformly
integrable by Lemma~\ref{lemma:unifint2}. Now uniform integrability for
$U_k$ follows straightforwardly, since $W_{n,k}$ converges pointwise to
$U_k$ as $n \to \infty$ and has only $\abs{\cP_k}$ parts.
\end{proof}

\begin{corollary}
Every set of graphons that is uniformly integrable and closed under the cut
metric is compact under that metric.
\end{corollary}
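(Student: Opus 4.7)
The plan is to reduce the corollary to the sequential compactness statement already supplied by Theorem~\ref{thm:unifintcompact}. Let $S$ be a uniformly integrable set of graphons that is closed with respect to the cut metric. Given any sequence $(W_n)_{n \ge 0}$ in $S$, uniform integrability means there is a single tail bound function $K$ controlling every $W_n$, so the hypotheses of Theorem~\ref{thm:unifintcompact} are met. That theorem produces a graphon $W$ with $\liminf_{n \to \infty} \delta_\square(W_n, W) = 0$, so after passing to a subsequence we have $\delta_\square(W_{n_k}, W) \to 0$. Since $S$ is closed under the cut metric, $W$ must (up to the standing identification of graphons at cut distance zero) lie in $S$.

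This shows that $S$ is sequentially compact. To convert this to topological compactness, I would pass to the quotient space in which graphons at cut distance zero are identified, which turns $\delta_\square$ into a genuine metric; in a metric space, sequential compactness is equivalent to compactness, and the cut-metric topology on $S$ is by definition inherited from this quotient. The only mild obstacle is keeping the pseudometric/metric distinction straight, but it is handled exactly as in the standard argument: closedness under $\delta_\square$ automatically respects the equivalence classes, so the image of $S$ in the quotient is closed, and the sequential compactness argument above applies verbatim to the quotient to give compactness there, which pulls back to compactness of $S$ in the original pseudometric topology.
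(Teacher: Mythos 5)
Your proposal is correct and is essentially the paper's (implicit) argument: the corollary is stated as an immediate consequence of Theorem~\ref{thm:unifintcompact}, obtained exactly as you do by extracting a convergent subsequence from any sequence in the set, using closedness to place the limit in the set, and invoking the equivalence of sequential compactness and compactness for the (pseudo)metric $\delta_\square$ after identifying graphons at cut distance zero. Your care with the pseudometric/quotient point is a fine, if routine, addition.
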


We will also need analogues of the results of
Section~\ref{sec:reg-lem-up-reg} for uniform upper regularity. The analogues
of Lemmas~\ref{lem:small-cut} and~\ref{lem:W-stab} are straightforward (they
use Lemma~\ref{lemma:unifint1} to replace H\"older's inequality):

\begin{lemma}
Let $K\colon (0,\infty)\to (0,\infty)$ and  $\e>0$.  Then there exists a
constant $\eta_0=\eta_0(K,\e)$ such that the following holds for all
$\eta\in (0,\eta_0)$: if $W \colon [0,1]^2 \to \RR$ is a $(K,\eta)$-upper
regular graphon and $S, T \subseteq [0,1]$ are measurable subsets with
$\lambda(S) \leq\eta_0$, then
\[
\abs{\ang{W, 1_{S \x T}}} \leq \e.
\]
\end{lemma}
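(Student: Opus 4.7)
The plan is to mimic the three-step structure of the proof of Lemma~\ref{lem:small-cut} verbatim, but replace each appeal to H\"older's inequality (which provided the $\delta^{1-1/p}$ factor in the $L^p$ setting) by a corresponding appeal to Lemma~\ref{lemma:unifint1}. The key observation is that, by the definition of $(K,\eta)$-upper regularity, every step-function $W_\cP$ arising from a partition $\cP$ with all parts of measure at least $\eta$ has $K$-bounded tails, and so Lemma~\ref{lemma:unifint1} provides a uniform modulus of absolute continuity for the family $\{W_\cP\}$.

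Concretely, given $K$ and $\e>0$, I would first invoke Lemma~\ref{lemma:unifint1} to select $\delta>0$ with the property that $\int_I |U| \le \e/5$ whenever $U$ has $K$-bounded tails and $\lambda(I) \le \delta$. Then I would set $\eta_0 := \min\{\delta/10,\,1/27\}$ and take any $\eta \in (0,\eta_0)$ and any $(K,\eta)$-upper regular $W$. In Step 1, if the partition $\cP$ of $[0,1]$ generated by $S$ and $T$ happens to have every part of measure at least $\eta$, then $W_\cP$ has $K$-bounded tails, and since $\lambda(S\x T)\le \lambda(S) \le \eta_0 \le \delta$,
\[
|\ang{W,1_{S\x T}}| = |\ang{W_\cP,1_{S\x T}}| \le \int_{S\x T} |W_\cP|\, d\lambda \le \e/5.
\]

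In Step 2, assuming $3\eta \le \lambda(T)\le 1-3\eta$, I would construct $S_1,S_2,S'$ exactly as in Lemma~\ref{lem:small-cut}, each of measure at most $\lambda(S) + 2\eta \le 3\eta_0 \le \delta$, so that the pairs $(S_1,T)$, $(S_2,T)$, and $(S',T)$ each satisfy the Step 1 hypothesis; three applications of Step 1 together with the triangle inequality then yield $|\ang{W,1_{S\x T}}| \le 3\e/5$. In Step 3, I would remove the size restriction on $T$ by modifying $T$ to $T'$ via the addition or removal of an auxiliary set $T_1$ of measure $3\eta$ to bring $\lambda(T')$ into $[3\eta,1-3\eta]$, and then bound $|\ang{W,1_{S\x T}}|$ by $|\ang{W,1_{S\x T'}}| + |\ang{W,1_{S\x T_1}}|$. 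The first summand is handled by Step 2, while the second is obtained by swapping the roles of the two coordinates and applying Steps 1--2 to the pair $(T_1,S)$, using that $\lambda(T_1)=3\eta \le 3\eta_0 \le \delta$ plays the role of the small set.

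The only real bookkeeping issue---and the main (minor) obstacle---is to confirm that $\eta_0$ has been chosen small enough relative to $\delta$ that all of the perturbed sets $S',S_1,S_2,T',T_1$ appearing in Steps 2 and 3, whose measures are controlled by small multiples of $\eta_0$, still lie below the uniform integrability threshold $\delta$. This is why I included the safety factor of $10$ in the definition $\eta_0 = \delta/10$; by keeping track of constants in each step and collecting the resulting fractions of $\e$ (a term of $3\e/5$ from Step 2 plus another contribution of at most $3\e/5$ from the treatment of $T_1$), the total can be made at most $\e$ after adjusting the preliminary choice of $\e/5$ in Lemma~\ref{lemma:unifint1} to a smaller fraction such as $\e/10$ if needed. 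No new ideas beyond those in the proof of Lemma~\ref{lem:small-cut} are required.
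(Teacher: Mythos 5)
Your overall strategy is exactly the one the paper intends: the paper does not write out a proof of this lemma at all, remarking only that the analogues of Lemmas~\ref{lem:small-cut} and~\ref{lem:W-stab} are straightforward once H\"older's inequality is replaced by Lemma~\ref{lemma:unifint1}, and your Steps 1 and 2 carry this out correctly ($(K,\eta)$-upper regularity makes every admissible step function $W_\cP$ have $K$-bounded tails, and $\lambda(S\x T)\le\lambda(S)\le\eta_0\le\delta$ puts you below the uniform-integrability threshold, so the three Step-1 applications inside Step 2 go through verbatim).

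The one place where your write-up would fail as stated is Step 3. For the term $\abs{\ang{W,1_{S\x T_1}}}$ you propose to swap the coordinates and apply Steps 1--2 to the pair $(T_1,S)$, with $T_1$ playing the role of the small set. But Step 2 requires the \emph{other} set to have measure in $[3\eta,1-3\eta]$, and after the swap that set is $S$, whose measure is only known to be at most $\eta_0$ and may well be smaller than $3\eta$; in that case the construction of an auxiliary subset of measure exactly $\eta$ inside $S\setminus T_1$ (or $S\cap T_1$) breaks down, so Step 2 simply does not apply. No swap is needed: since $\lambda(T_1)=3\eta\le 1-3\eta$ (as $\eta<\eta_0\le 1/27$), you can apply Step 2 directly to the pair $(S,T_1)$, keeping $S$ as the small set, exactly as in the proof of Lemma~\ref{lem:small-cut}. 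With that correction, and with the $\e$-fraction bookkeeping you already anticipate (for instance, invoking Lemma~\ref{lemma:unifint1} with $\e/6$, since Step 3 makes two calls to Step 2 and hence six calls to Step 1), the argument is complete and matches the paper's intended proof.
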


\begin{lemma}
\label{lem:W-stab-L1} Let $K\colon (0,\infty)\to (0,\infty)$ and  $\e>0$.
Then there exists a constant $\eta_0=\eta_0(K,\e)$ such that the following
holds for all $\eta\in (0,\eta_0)$ and every $(K,\eta)$-upper regular
graphon $W$: if $S, S', T, T' \subseteq [0,1]$ are measurable sets
satisfying $\lambda(S \symmdiff S'), \lambda(T \symmdiff T') \leq \eta_0$,
then
\[
\abs{\ang{W, 1_{S \x T} - 1_{S' \x T'}}} \leq \e.
\]
\end{lemma}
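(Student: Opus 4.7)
The plan is to mimic the proof of Lemma~\ref{lem:W-stab} verbatim, substituting the preceding $L^1$ small-cut lemma for Lemma~\ref{lem:small-cut}. First I would fix the constant: given $K$ and $\e$, let $\eta_0' = \eta_0'(K,\e/4)$ be the constant supplied by the preceding lemma when applied with tolerance $\e/4$ in place of $\e$, and set $\eta_0 := \eta_0'$ (shrinking if needed so that $\eta_0 < 1/9$, for instance, so the preceding lemma applies cleanly). For any $\eta \in (0,\eta_0)$ and any $(K,\eta)$-upper regular graphon $W$, I would then use the algebraic identity
\[
1_{S \x T} - 1_{S' \x T'}
= 1_{(S \setminus S') \x T} + 1_{(S \cap S') \x (T \setminus T')}
  - 1_{(S' \setminus S) \x T'} - 1_{(S \cap S') \x (T' \setminus T)}.
\]

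Now for each of the four terms, one of the two factors has measure at most $\eta_0$: the first and third pieces use $S\setminus S'$ and $S'\setminus S$, both of which are subsets of $S \symmdiff S'$ and so have measure at most $\eta_0$; the second and fourth use $T\setminus T'$ and $T'\setminus T$, likewise of measure at most $\eta_0$. Since $W$ is $(K,\eta)$-upper regular with $\eta<\eta_0$, the preceding lemma (applied with $\delta=\eta_0$ and tolerance $\e/4$) bounds each of the four inner products $|\langle W,1_{A\x B}\rangle|$ by $\e/4$. Summing the four contributions by the triangle inequality yields
\[
\abs{\ang{W, 1_{S \x T} - 1_{S' \x T'}}} \leq 4 \cdot \e/4 = \e,
\]
which is the desired bound.

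The proof is essentially mechanical once the preceding small-cut lemma is in hand, so there is no substantive obstacle; the only thing to watch is that the constant $\eta_0$ produced by that small-cut lemma depends only on $K$ and the tolerance, not on $W$, so the dependence structure $\eta_0=\eta_0(K,\e)$ claimed in the statement is preserved. One minor point of care is that the preceding lemma is stated under the hypothesis $\lambda(S)\le\delta$ for some $\delta\ge\eta$; since we have chosen $\eta<\eta_0=\delta$, this hypothesis is satisfied automatically for each of the four small factors.
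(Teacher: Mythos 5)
Your proof is correct and is essentially the paper's own argument: the paper treats this lemma as the direct analogue of Lemma~\ref{lem:W-stab}, using the same four-term decomposition of $1_{S \x T} - 1_{S' \x T'}$ and applying the $L^1$ small-cut lemma (with a suitably adjusted tolerance) to each piece. The only cosmetic difference is your explicit bookkeeping of the tolerance $\e/4$ versus the paper's implicit constant tracking, which does not change the argument.
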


Using these two lemmas, one can then prove the analogue of
Theorem~\ref{thm:W-upper-reg}.  Indeed, \eqref{eq:W-alter1} and
\eqref{eq:W-alter2} (with $C=1$) follow from Lemmas~\ref{lem:W-stab-L1} and
\ref{lemma:unifint1}, leading again to a bound of the form
\eqref{eq:W-cut-dev'}.  Once \eqref{eq:W-cut-dev'} is established, the proof
then just proceeds as in the truncation argument in Case~II of the proof of
Theorem~\ref{thm:W-upper-reg} by setting $U=W_{\cP_n} 1_{\abs{W_{\cP_n}}\leq
K(\tilde\e)}$ for some suitable $\tilde\e$.  This leads to the following
theorem:

\begin{theorem} [Weak regularity lemma for $(K,\eta)$-upper regular graphons]
Let $K\colon (0,\infty)\to(0,\infty)$ and $0<\e<1$. Then there exist
constants $N=N(K,\e)$ and $\eta_0=\eta_0(K,\e)$ such that the following
holds for all $\eta\leq \eta_0$: for every $(K,\eta)$-upper regular graphon
$W$, there exists a partition $\cP$ of $[0,1]$ into at most $4^N$ measurable
parts, each having measure at least $\eta$, so that
\[
\norm{W - W_\cP}_\square \leq  \e.
\]
\end{theorem}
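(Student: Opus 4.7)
The plan is to follow the structure of the proof of Theorem~\ref{thm:W-upper-reg}, specifically its Case~II (the truncation argument for $1<p<2$), but to replace every use of an $L^p$ bound on $\|W_{\cP_i}\|_p$ by the $K$-bounded tail property that is built into $(K,\eta)$-upper regularity. The overall scheme is still the energy-increment construction: iteratively refine a partition $\cP_0 = \{[0,1]\}, \cP_1, \cP_2, \dots$ whose parts always have measure at least $\eta$, halting as soon as $\|W - W_{\cP_i}\|_\square \le \e$, and using the ambient $L^2$ theory only after a truncation.

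First I would fix the parameters in the order $\e \to \tilde\e \to K^* \to N \to \eta_0$. Choose $\tilde\e$ a small absolute constant times $\e$ (say $\e/12$), set the truncation level $K^* := K(\tilde\e)$, and use the Lemma~\ref{lem:W-stab-L1} stability estimate with target error $\e/4$ to obtain a threshold $\eta_1=\eta_1(K,\e)$; use Lemma~\ref{lemma:unifint1} applied to the uniformly integrable family $\{W_\cP : \cP\text{ into parts of measure}\ge\eta\}$ (uniform integrability here follows from Lemma~\ref{lemma:unifint2} applied to the single graphon $W$, which has $K$-bounded tails) to get a measure threshold $\delta_0$ guaranteeing $\int_I |W_\cP|\,d\lambda \le \e/8$ whenever $\lambda(I)\le\delta_0$. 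Set $N := \lceil 36(K^*)^2/\e^2\rceil$ and then $\eta_0 := \min(\eta_1,\delta_0)/(4\cdot 4^N)$. The key point is that each of the three thresholds depends only on $K$ and $\e$, so the nested choice terminates.

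With these parameters, I would carry out the iteration exactly as in the proof of Theorem~\ref{thm:W-upper-reg}. At step $i$, if $\|W-W_{\cP_i}\|_\square > \e$, pick witnesses $S,T\subseteq [0,1]$ and round them to $S',T'$ (by deleting/adding parts of $\cP_i$ that intersect $S,T$ in small measure) so that the common refinement $\cP_{i+1}$ of $\cP_i$ with $S',T'$ still has all parts of measure $\ge\eta$ and $\lambda(S\symmdiff S'),\lambda(T\symmdiff T') \le 2\abs{\cP_i}\eta \le 2\cdot 4^N\eta_0$. Lemma~\ref{lem:W-stab-L1} bounds $|\langle W,1_{S\times T}-1_{S'\times T'}\rangle|\le \e/4$, while Lemma~\ref{lemma:unifint1} applied to $W_{\cP_i}$ on the set where $1_{S\times T}\ne 1_{S'\times T'}$ bounds the other contribution by $\e/8$; together these yield
\[
|\langle W_{\cP_{i+1}} - W_{\cP_i},1_{S'\times T'}\rangle| > \e/2.
\]
Now mimic Case~II: define $U := W_{\cP_N}\,1_{|W_{\cP_N}|\le K^*}$. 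Since $\cP_N$ has parts of measure $\ge\eta\ge\eta_0$, the $(K,\eta)$-upper regularity of $W$ gives the tail bound $\|W_{\cP_N}-U\|_1\le\tilde\e$, and because $\cP_N$ refines every $\cP_i$ and the stepping operator contracts $L^1$, the same bound holds for $\|W_{\cP_i}-U_{\cP_i}\|_1$. This upgrades the cut-norm gap to $|\langle U_{\cP_{i+1}}-U_{\cP_i},1_{S'\times T'}\rangle| > \e/6$, and the Pythagorean identity forces $\|U_{\cP_{i+1}}\|_2^2 - \|U_{\cP_i}\|_2^2 > (\e/6)^2$ at every step. But $\|U\|_\infty\le K^*$ implies $\|U_{\cP_N}\|_2\le K^*$, so the iteration must halt within $N$ steps, producing the desired $\cP$ of at most $4^N$ parts.

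The main obstacle is the calibration of parameters. In the $L^p$ proof a uniform $L^p$ bound on $W_{\cP_i}$ did double duty: it controlled $\langle W_{\cP_i},1_{S\times T}-1_{S'\times T'}\rangle$ via H\"older and it forced a truncation error bound via Markov. In the $L^1$ setting we must replace H\"older by a uniform-integrability argument (Lemmas~\ref{lemma:unifint1} and~\ref{lemma:unifint2}), and the truncation level $K^*=K(\tilde\e)$ is now prescribed abstractly by the tail function $K$ rather than by an explicit formula. Once the parameters are chosen in the correct order so that $\eta_0$ is small enough to survive $N$ doublings of partition size, the rest of the proof is parallel to Case~II of Theorem~\ref{thm:W-upper-reg}.
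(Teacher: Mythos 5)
Your proposal is correct and follows essentially the same route as the paper: the paper likewise establishes the analogue of \eqref{eq:W-cut-dev'} by replacing Lemma~\ref{lem:W-stab} with Lemma~\ref{lem:W-stab-L1} and the H\"older step with Lemma~\ref{lemma:unifint1}, and then runs the Case~II truncation argument of Theorem~\ref{thm:W-upper-reg} with $U = W_{\cP_n}1_{\abs{W_{\cP_n}}\le K(\tilde\e)}$, so that the energy is capped by $K(\tilde\e)^2$ and the iteration halts after $N(K,\e)$ steps, exactly as you calibrate. One small correction: $W$ itself need not have $K$-bounded tails (upper regularity only constrains the averaged graphons), so your parenthetical appeal to Lemma~\ref{lemma:unifint2} is misplaced, but this is harmless since the $(K,\eta)$-upper regularity hypothesis directly supplies $K$-bounded tails for every $W_{\cP_i}$ with parts of measure at least $\eta$, which is all your argument uses and keeps $\delta_0$ depending only on $K$ and $\e$.
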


Following the strategy leading to Remark~\ref{remark:graphon-equi} for
graphons, and that leading to Theorem~\ref{thm:G-upper-reg-lemma} and
Remark~\ref{remark:G-upper-reg-equi} for graphs, one then gets the following
version involving equipartitions and holding also for graphs.

\begin{theorem} \label{thm:weakregunif}
Let $K\colon (0,\infty)\to (0,\infty)$ and $0<\e<1$. Then there exist
constants $N=N(K,\e)$ and $\eta_0=\eta_0(K,\e)$ such that the following
holds for all $\eta \le \eta_0$: for every $(K,\eta)$-upper regular graphon
$W$ and each natural number $k \ge N$, there exists a equipartition $\cP$ of
$[0,1]$ into $k$ parts so that
\[
\norm{W - W_\cP}_\square \leq \e.
\]
The same holds for a weighted graph $G$ with $W = W^G/\norm{G}_1$, in which
case we can use an equipartition of the vertex set, as in
Remark~\ref{remark:G-upper-reg-equi}.
\end{theorem}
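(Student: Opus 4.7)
The plan is to deduce Theorem~\ref{thm:weakregunif} from the non-equitable version stated immediately above it, together with the uniformly integrable equipartition regularity lemma (Theorem~\ref{thm:weakregL1}), in essentially the same two-step manner as Remark~\ref{remark:graphon-equi} derives its equitable consequence from Theorem~\ref{thm:W-upper-reg}. First I would apply the non-equitable weak regularity lemma to $W$ with error parameter $\e/4$, producing a partition $\cP_0$ of $[0,1]$ into at most $4^{N_0}$ parts, each of measure at least $\eta$ (for $\eta$ sufficiently small depending on $K$ and $\e$), such that $\norm{W - W_{\cP_0}}_\square \le \e/4$.

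Next I would set $U := W_{\cP_0}$ and observe that the family $\{W_{\cP_0} : W \text{ is }(K,\eta)\text{-upper regular}\}$ is uniformly integrable by Lemma~\ref{lemma:unifint2} applied to the collection of $(K,\eta)$-upper regular graphons, which is uniformly integrable by definition. Thus $U$ has $K'$-bounded tails for some function $K'$ depending only on $K$. I would then apply Theorem~\ref{thm:weakregL1} to $U$ with the trivial starting equipartition (a single class) and error $\e/4$; this yields an integer $N_1 = N_1(K',\e)$ such that for every $k \ge N_1$ we can find an equipartition $\cP$ of $[0,1]$ into exactly $k$ parts satisfying $\norm{U - U_\cP}_\square \le \e/4$. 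Setting $N := \max\{N_0, N_1\}$ (and refining $\cP$ within each class of $\cP_0$ if one wants $\cP$ to refine $\cP_0$, although this is not needed) gives the desired equipartition.

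To finish, I would use the standard contraction trick: by the triangle inequality and the fact that the stepping operator is contractive with respect to $\norm{\cdot}_\square$,
\[
\norm{W - W_\cP}_\square
\le \norm{W - U_\cP}_\square + \norm{W_\cP - U_\cP}_\square
= \norm{W - U_\cP}_\square + \norm{(W - U_\cP)_\cP}_\square
\le 2\norm{W - U_\cP}_\square,
\]
and $\norm{W - U_\cP}_\square \le \norm{W - W_{\cP_0}}_\square + \norm{U - U_\cP}_\square \le \e/2$, so $\norm{W - W_\cP}_\square \le \e$ after renaming the error (or one absorbs the factors of $4$ into the choices of $\e$ used in the two invocations).

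For the weighted-graph statement, the same scheme applies but with every set appearing in the argument required to correspond to a union of vertex classes, exactly as in the passage from Theorem~\ref{thm:W-upper-reg} to Theorem~\ref{thm:G-upper-reg-lemma} and Remark~\ref{remark:G-upper-reg-equi}. The only subtlety is that vertices are indivisible, so the "equipartition" of $V(G)$ obtained in the second step satisfies the weaker form $\abs{\alpha_{P_i} - \alpha_G/k} < \max_j \alpha_j$; the $(K,\eta)$-upper regularity assumption, which forces $\max_j \alpha_j / \alpha_G \le \eta$, makes this loss negligible provided $\eta_0$ is chosen small relative to $1/N$. I expect no genuine obstacle beyond bookkeeping: the real content has already been absorbed into Theorem~\ref{thm:weakregL1} (uniform integrability allows truncation to the $L^2$ case) and the non-equitable theorem (the truncation version of the energy-increment argument), and the remaining work is the standard averaging-then-equitizing two-step reduction.
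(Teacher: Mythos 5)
Your proposal follows essentially the same route as the paper, which itself only sketches the argument by pointing to the strategy of Remark~\ref{remark:graphon-equi}: apply the non-equitable weak regularity lemma for $(K,\eta)$-upper regular graphons to get $U = W_{\cP_0}$, then apply the equitable lemma for graphons with bounded tails (Theorem~\ref{thm:weakregL1}) to $U$ starting from the trivial partition, and finish with the triangle inequality plus the contraction trick $\norm{W - W_\cP}_\square \le 2\norm{W - U_\cP}_\square$; the graph case is handled exactly as in Remark~\ref{remark:G-upper-reg-equi}, using that upper regularity forces all vertex weights to be at most $\eta\alpha_G$.

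One justification in your middle step is wrong, though easily repaired: the collection of all $(K,\eta)$-upper regular graphons is \emph{not} uniformly integrable ``by definition,'' since $(K,\eta)$-upper regularity only constrains the steppings $W_\cP$ over partitions with parts of measure at least $\eta$, not $W$ itself (for instance, graphons like $n^2 1_{[0,1/n]^2}$, symmetrized, are $(K,\eta)$-upper regular for a fixed $K$ but have no uniform tail bound), so the appeal to Lemma~\ref{lemma:unifint2} does not apply as stated. The fix is immediate and does not need that lemma: the partition $\cP_0$ produced by the non-equitable theorem has all parts of measure at least $\eta$, so $U = W_{\cP_0}$ has $K$-bounded tails directly from the definition of $(K,\eta)$-upper regularity, which is all that Theorem~\ref{thm:weakregL1} requires.
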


Theorem~\ref{thm:weakregunif} now allows us to prove the analogue of
Theorem~\ref{thm:G-limit} and~\ref{thm:W-upper-reg-limit}.

\begin{theorem} \label{thm:unifintlimit}
Every uniformly upper regular sequence of graphons or weighted graphs has a
subsequence that converges to an $L^1$ graphon under the normalized cut
metric.
\end{theorem}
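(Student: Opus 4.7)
The plan is to follow the same template used for Theorems~\ref{thm:G-limit} and~\ref{thm:W-upper-reg-limit}, but with the $L^p$ weak regularity lemma replaced by Theorem~\ref{thm:weakregunif} and the compactness theorem replaced by Theorem~\ref{thm:unifintcompact}. I will present the argument for graphons; the extension to weighted graphs is routine, using the graph version of Theorem~\ref{thm:weakregunif} and passing from $G_n$ to $W^{G_n}/\snorm{G_n}_1$ (the no-dominant-nodes hypothesis from Corollary~\ref{cor:G-conv-to-Lp} is built into the definition of $(K,\eta)$-upper regularity for graphs).

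So let $(W_n)_{n\ge 0}$ be a uniformly upper regular sequence, with tail bound function $K$ and a sequence $\eta_n \to 0$ such that $W_n$ is $(K,\eta_n)$-upper regular. First I would choose a sequence $\e_n \to 0$ slowly enough that Theorem~\ref{thm:weakregunif} applies for $W_n$ with parameters $(K,\e_n)$ and $\eta_n \le \eta_0(K,\e_n)$ for all $n$ large enough (replacing the sequence by a tail if necessary). Applying the theorem produces an equipartition $\cP_n$ of $[0,1]$ whose parts all have measure at least $\eta_n$ and which satisfies
\[
\snorm{W_n - (W_n)_{\cP_n}}_\square \le \e_n.
\]

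The next step is to show that the step-function sequence $U_n := (W_n)_{\cP_n}$ is itself uniformly integrable. This follows directly from the definition of $(K,\eta_n)$-upper regularity: because every part of $\cP_n$ has measure at least $\eta_n$, the graphon $U_n = (W_n)_{\cP_n}$ has $K$-bounded tails, uniformly in $n$. Hence $\{U_n\}$ is a uniformly integrable family, so Theorem~\ref{thm:unifintcompact} yields a subsequence and an $L^1$ graphon $W$ with $\delta_\square(U_n, W) \to 0$ along that subsequence. Combining with the cut-norm approximation bound and the triangle inequality,
\[
\delta_\square(W_n, W) \le \snorm{W_n - U_n}_\square + \delta_\square(U_n, W) \le \e_n + \delta_\square(U_n, W) \to 0
\]
along this subsequence, which is what we wanted.

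There is no single hard step here: all the work has been pushed into Theorems~\ref{thm:weakregunif} and~\ref{thm:unifintcompact}, whose $L^1$ incarnations were set up precisely to make this argument go through. The one place where I would be a bit careful is the uniform integrability of the $U_n$'s: one needs to observe that $(K,\eta_n)$-upper regularity is applied to a partition whose parts meet the lower-size threshold $\eta_n$, which is guaranteed by the regularity lemma. For weighted graphs, the same argument applies with $W^{G_n}/\snorm{G_n}_1$ in place of $W_n$, using that the graph version of Theorem~\ref{thm:weakregunif} already delivers a vertex-partition (equipartition in the sense of Remark~\ref{remark:G-upper-reg-equi}) of the required type.
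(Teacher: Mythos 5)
Your proposal is correct and follows essentially the same route as the paper: the paper's proof is exactly the reduction you describe, applying the weak regularity lemma for uniformly upper regular graph(on)s to get step-function approximations with parts of measure at least $\eta_n$ (hence with $K$-bounded tails by the definition of $(K,\eta_n)$-upper regularity), then invoking Theorem~\ref{thm:unifintcompact} and the triangle inequality, just as in the proofs of Theorems~\ref{thm:G-limit} and~\ref{thm:W-upper-reg-limit}. The only point needing the care you already flag is choosing $\e_n \to 0$ slowly enough that the partition's parts indeed have measure at least $\eta_n$ (e.g.\ $\eta_n \le \min\{\eta_0(K,\e_n), 1/N(K,\e_n)\}$), which is exactly how the paper's argument is meant to be read.
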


The proof is almost identical to that of Theorems~\ref{thm:G-limit}
and~\ref{thm:W-upper-reg-limit}: we use the transference theorem
(Theorem~\ref{thm:weakregunif}) to reduce to the compactness theorem
(Theorem~\ref{thm:unifintcompact}).

Finally, we conclude by noting that the proofs of
Propositions~\ref{prop:order}, \ref{prop:superlinear}, and
\ref{prop:Wquasirandom} carry over to uniform upper regularity:

\begin{proposition} \label{prop:orderunif}
Let $(G_n)_{n \ge 0}$ be a uniformly upper regular sequence of weighted
graphs with $\delta_\square(G_n/\norm{G_n}_1, W) \to 0$ for some graphon
$W$. Then the vertices of the graphs $G_n$ may be ordered in such a way that
$\norm{W^{G_n}/\norm{G_n}_1 - W}_\square \to 0$.
\end{proposition}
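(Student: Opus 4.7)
The plan is to adapt the proof of Proposition~\ref{prop:order}, replacing the $L^p$ tail bounds by uniform integrability estimates. Let $W_n := W^{G_n}/\snorm{G_n}_1$; by hypothesis $(W_n)_{n \ge 0}$ is uniformly upper regular, say $(K, \eta_n)$-upper regular with $\eta_n \to 0$, and $\delta_\square(W_n, W) \to 0$.

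First I would apply the equipartition form of the weak regularity lemma for uniformly upper regular graphs (Theorem~\ref{thm:weakregunif} in the graph-equipartition version analogous to Remark~\ref{remark:G-upper-reg-equi}) to find a slowly growing $k_n \to \infty$ with $k_n^2 \eta_n \to 0$ and an equipartition $\cP_n$ of $V(G_n)$ into $k_n$ parts such that, setting $W'_n := (W_n)_{\cP_n}$, we have $d_\square(W_n, W'_n) \to 0$ and hence $\delta_\square(W'_n, W) \to 0$. Because the parts of $\cP_n$ all have weight of order $\alpha_{G_n}/k_n$, which is much larger than $\eta_n \alpha_{G_n}$, the definition of $(K,\eta_n)$-upper regularity applied to $\cP_n$ directly yields that $W'_n$ has $K$-bounded tails; hence $(W'_n)_{n \ge 0}$ is uniformly integrable.

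Next I would perform the reduction to exactly equal step widths, as in the third paragraph of the proof of Proposition~\ref{prop:order}. Let $\wt W_n$ be the step function with the same values as $W'_n$ but with all step widths exactly $1/k_n$; since each node weight is at most $\eta_n \alpha_{G_n}$, the two graphons disagree on a set $B_n$ of measure $O(k_n^2 \eta_n) = o(1)$. In place of the H\"older bound $\snorm{\wt W_n - W'_n}_p \lambda(B_n)^{1-1/p}$ used in the original argument, I would invoke Lemma~\ref{lemma:unifint1}, applied to the uniformly integrable families $(W'_n)$ and $(\wt W_n)$ (the latter inheriting uniform integrability from the former, up to a harmless $1+o(1)$ factor from rescaling step widths), to obtain $\snorm{\wt W_n - W'_n}_1 \le \int_{B_n}(\abs{W'_n} + \abs{\wt W_n}) \, d\lambda \to 0$.

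Finally, I would apply Alon's integral-overlay lemma (Lemma~\ref{lem:integral-vs-fractional-overlay}) to the truncations $\wt W_n \cdot 1_{\abs{\wt W_n} \le L_n}$ and $W \cdot 1_{\abs{W} \le L_n}$, choosing $L_n \to \infty$ slowly enough that $L_n/\sqrt{\log k_n} \to 0$ (for example $L_n = \log\log k_n$). Because $(\wt W_n)$ is uniformly integrable and $W \in L^1$, any such $L_n$ forces both truncation errors to vanish in the limit, while the error from Alon's lemma is $O(L_n/\sqrt{\log k_n}) = o(1)$. Reordering the parts of $\cP_n$ accordingly, and then ordering the vertices within each part arbitrarily, yields $d_\square(W_n, W) \to 0$ by the triangle inequality.

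The main obstacle relative to the $L^p$ proof is that uniform integrability is not quantitative: the explicit estimate $\snorm{W_n 1_{\abs{W_n} > L}}_1 \le C^p/L^{p-1}$ is replaced by a non-quantitative vanishing statement. Consequently, instead of optimizing $L_n$ as an explicit function, one must balance the two qualitative limits ``$L_n \to \infty$'' (needed to absorb the tails) and ``$L_n/\sqrt{\log k_n} \to 0$'' (needed for Alon's lemma). This is adequate since the proposition asserts only convergence, not a rate.
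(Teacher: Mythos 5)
Your proposal follows essentially the same route as the paper, which proves this proposition simply by observing that the argument for Proposition~\ref{prop:order} carries over once the $L^p$/H\"older tail estimates are replaced by applications of Lemma~\ref{lemma:unifint1}: your three steps (the equitable weak regularity lemma of Theorem~\ref{thm:weakregunif} with $k_n \to \infty$ and $k_n^2\eta_n \to 0$, the equal-step-width reduction with the H\"older bound replaced by a uniform integrability estimate, and the truncation-plus-integral-overlay argument with $L_n \to \infty$ and $L_n/\sqrt{\log k_n} \to 0$) are exactly the intended adaptation, and the non-quantitative balancing of the two limits that you describe is all that is needed.

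One detail is stated imprecisely: Lemma~\ref{lem:integral-vs-fractional-overlay} compares two weighted graphs on the same number of vertices, so you cannot apply it to $\wt W_n 1_{\abs{\wt W_n}\le L_n}$ and $W 1_{\abs{W}\le L_n}$ directly, since the latter is not a step function on $k_n$ steps. As in the proof of Proposition~\ref{prop:order} (where the role of $W$ is played by $U$ and of $W_{\cQ_n}$ by $U_n$), you should first replace $W$ by its stepping $W_{\cQ_n}$ over the partition $\cQ_n$ of $[0,1]$ into $k_n$ equal intervals, truncate that, apply the overlay lemma, and then account for the extra term $d_1(W, W_{\cQ_n}) \to 0$, which holds by the Lebesgue differentiation theorem; the truncation error for $W_{\cQ_n}$ vanishes because $\{W_{\cQ_n}\}$ is uniformly integrable by Lemma~\ref{lemma:unifint2}. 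With this routine repair your argument is complete and coincides with the paper's.
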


\begin{proposition} \label{prop:superlinearunif}
Let $(G_n)_{n \ge 0}$ be a uniformly upper regular sequence of simple
graphs. Then $\abs{E(G_n)}/\abs{V(G_n)} \to \infty$ as $n \to \infty$.
\end{proposition}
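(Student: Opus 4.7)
The plan is to adapt the matching-based argument for Proposition~\ref{prop:superlinear} from the $L^p$ setting to the $L^1$ uniform setting, replacing $L^p$-norm bounds by tail bounds. I will prove the following analogue of the lemma underlying Proposition~\ref{prop:superlinear}: for every tail function $K$ and every $\varepsilon>0$ there exist $\eta_0=\eta_0(K,\varepsilon)>0$ and $c=c(K,\varepsilon)>0$ with $c\to\infty$ as $\varepsilon\to 0$ (for $K$ fixed) such that every $(K,\eta)$-upper regular simple graph $G$ with $0<\eta\le\eta_0$ satisfies $\abs{E(G)}/\abs{V(G)}\ge c$. Given this lemma, the proposition follows immediately: uniform upper regularity provides a single $K$ and a sequence $\eta_n\to 0$, so for any fixed $\varepsilon>0$ eventually $\eta_n\le\eta_0(K,\varepsilon)$, giving $\abs{E(G_n)}/\abs{V(G_n)}\ge c(K,\varepsilon)$, and sending $\varepsilon\to 0$ yields the conclusion.

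The lemma is proved in two steps mirroring those in the proof of Proposition~\ref{prop:superlinear}. In Step~1 I fix a maximal matching $T\subseteq E(G)$ with vertex set $A$, noting that $V(G)\setminus A$ is an independent set. Applying the tail bound to the partition $\{A,V(G)\setminus A\}$ (or, if $\abs{A}<\eta\abs{V(G)}$, to its padded version $\{\wt A,V(G)\setminus\wt A\}$ with $\abs{\wt A}=\lceil\eta\abs{V(G)}\rceil$, for which $V(G)\setminus\wt A$ remains independent so the mass identity persists), the normalized step function takes values $v_1$ on $A\times A$ and $v_2$ on $A\times(V(G)\setminus A)$ satisfying $\alpha^2 v_1+2\alpha(1-\alpha)v_2=1$, where $\alpha=\abs{A}/\abs{V(G)}$. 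A case analysis at a fixed $\varepsilon_0\in(0,1/2)$ (say $\varepsilon_0=1/3$) shows: if both $v_i<K(\varepsilon_0)$, the identity forces $\alpha(2-\alpha)>1/K(\varepsilon_0)$ and hence $\alpha\ge 1-\sqrt{1-1/K(\varepsilon_0)}$; if exactly one of $v_1,v_2$ exceeds $K(\varepsilon_0)$, combining the tail bound on the corresponding cell with the identity gives another positive lower bound on $\alpha$; and if both exceed $K(\varepsilon_0)$, the tail equals the full mass $1>\varepsilon_0$, contradicting the tail bound. These cases together yield $\alpha\ge c_1:=c_1(K)>0$.

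In Step~2, with $\alpha\ge c_1$, I partition $V(G)$ greedily into parts of size approximately $\eta\abs{V(G)}$ such that every matching edge lies inside a single part, obtaining at least $c_1/\eta$ ``full'' parts each containing $\lfloor\eta\abs{V(G)}/2\rfloor$ matching edges. Each full part produces a diagonal cell whose normalized step-function value is at least $1/(d\eta)$, where $d=2\abs{E(G)}/\abs{V(G)}$. When $\eta\le 1/(K(\varepsilon)d)$, these densities all exceed $K(\varepsilon)$, and summing their contributions to the $L^1$ tail yields at least $c_1/d$. The tail bound then forces $c_1/d\le\varepsilon$, i.e., $d\ge c_1/\varepsilon$; taking $\eta_0(K,\varepsilon)=\varepsilon/(c_1 K(\varepsilon))$ makes the condition $\eta\le 1/(K(\varepsilon)d)$ automatic whenever $d<c_1/\varepsilon$, so $d\ge c_1/\varepsilon$ holds for $\eta\le\eta_0$, and letting $\varepsilon\to 0$ gives $d\to\infty$.

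The main technical obstacle I anticipate is making Step~1 watertight in the degenerate regime $\abs{A}<\eta\abs{V(G)}$. The padding argument above handles this, because applying the same case analysis to $\{\wt A,V(G)\setminus\wt A\}$ with $\wt\alpha=\eta$ in place of $\alpha$ forces $\eta\ge c_1$, so for $\eta<c_1$ (eventually true since $\eta_n\to 0$) this regime cannot occur and the unpadded partition is already admissible on the $A$-side. The symmetric degeneracy $\abs{V(G)\setminus A}<\eta\abs{V(G)}$ corresponds to $\alpha$ being close to $1$, which trivially satisfies $\alpha\ge c_1$; in this nearly-perfect-matching regime Step~2 applies directly.
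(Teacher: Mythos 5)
Your proposal is correct and follows essentially the same route as the paper: the paper's proof of Proposition~\ref{prop:superlinearunif} is simply the matching argument behind Proposition~\ref{prop:superlinear} with the $L^p$-norm and convexity steps replaced by uniform-integrability (tail-bound) arguments, which is exactly what you carry out. The only cosmetic difference is that the paper channels both steps through Lemma~\ref{lemma:unifint1} (mass on a small set is small), which slightly shortens your Step~1 case analysis and Step~2 threshold argument but is equivalent in substance.
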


\begin{proposition} \label{prop:Wquasirandomunif}
  Let $W$ be any graphon, and let $(G_n)_{n \ge 0}$ be a
  sequence of simple graphs such that $\norm{G_n}_1 \to 0$ and
  $\delta_\square(G_n/\norm{G_n}_1, W) \to 0$.
  Let $G'_n = \bG(\abs{V(G_n)}, W, \norm{G_n}_1)$. Then with probability $1$,
  one can order the vertices of $G_n$ and $G'_n$ so that
  \[
  d_\square\paren{\frac{G_n}{\norm{G_n}_1}, \frac{G'_n}{\norm{G'_n}_1}} \to 0.
  \]
\end{proposition}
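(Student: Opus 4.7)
The plan is to mimic the proof of Proposition~\ref{prop:Wquasirandom}, substituting each $L^p$ ingredient by its uniform upper regularity analogue developed earlier in this appendix.

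First, I would use the proposition preceding this one (the uniform analogue of Proposition~\ref{prop:conv-to-Lp}, established earlier in the appendix) to conclude from $\delta_\square(G_n/\snorm{G_n}_1, W) \to 0$ that $(G_n)_{n \ge 0}$ is a uniformly upper regular sequence of simple graphs; note that the no-dominant-nodes condition needed to transfer upper regularity from graphons to graphs is automatic because $G_n$ is simple and $|V(G_n)|\to\infty$ (the latter follows from $\snorm{G_n}_1\to 0$, as otherwise the edge density would be bounded below).  Since $G_n$ is simple, $W \geq 0$ almost everywhere, and cut-norm convergence preserves the integral, so $\snorm{W}_1 = \EE W = \lim_n \EE[W^{G_n}/\snorm{G_n}_1] = 1$.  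In particular $W/\snorm{W}_1 = W$.

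Next I would apply Proposition~\ref{prop:superlinearunif} to the uniformly upper regular sequence $(G_n)_{n \ge 0}$, obtaining $|E(G_n)|/|V(G_n)| \to \infty$, which with $\rho_n := \snorm{G_n}_1$ gives $n\rho_n \to \infty$ (and $\rho_n \to 0$ holds by hypothesis).  These are precisely the sparsity conditions required in Corollary~\ref{cor:G-rand-converge-normalized}, so with probability one,
\[
\delta_\square\!\paren{\frac{G'_n}{\snorm{G'_n}_1},\, W} = \delta_\square\!\paren{\frac{G'_n}{\snorm{G'_n}_1},\, \frac{W}{\snorm{W}_1}} \to 0.
\]

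Finally, with probability one the sequence $(G'_n)_{n \ge 0}$ is itself uniformly upper regular (again by the converse direction applied to its cut-distance convergence to $W$), so Proposition~\ref{prop:orderunif} applies to both $(G_n)_{n \ge 0}$ and $(G'_n)_{n \ge 0}$.  Hence we can reorder the vertices of each graph so that $d_\square(W^{G_n}/\snorm{G_n}_1, W) \to 0$ and $d_\square(W^{G'_n}/\snorm{G'_n}_1, W) \to 0$, and the triangle inequality yields the desired conclusion.

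I do not anticipate any substantive obstacle: every ingredient is already in place in this appendix, and the argument is essentially a diagram chase through the uniform versions of Propositions~\ref{prop:conv-to-Lp}, \ref{prop:superlinear}, and \ref{prop:order} together with Corollary~\ref{cor:G-rand-converge-normalized}.  The only points that require a moment's care are verifying (a) that $(G_n)_{n \ge 0}$ has no dominant nodes, which lets us pass uniform upper regularity from the graphon side to the graph side, and (b) that the $L^1$ graphon $W$ satisfies $\snorm{W}_1 = 1$, so that the normalization in Corollary~\ref{cor:G-rand-converge-normalized} is trivial.
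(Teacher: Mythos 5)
Your proposal is correct and follows essentially the same route as the paper: the paper proves Proposition~\ref{prop:Wquasirandomunif} by noting that the proof of Proposition~\ref{prop:Wquasirandom} carries over verbatim, replacing Corollary~\ref{cor:G-conv-to-Lp}, Proposition~\ref{prop:superlinear}, and Proposition~\ref{prop:order} by their uniform analogues and reusing Corollary~\ref{cor:G-rand-converge-normalized}, which already holds for $L^1$ graphons. Your extra checks (no dominant nodes via $\abs{V(G_n)}\to\infty$, and $\norm{W}_1=1$ from nonnegativity and preservation of the integral under cut convergence) are exactly the points the paper's argument relies on as well.
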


The only substantive modification required for the proofs is that the $L^p$
upper regularity and convexity arguments in the proof of
Proposition~\ref{prop:superlinear} must be replaced with applications of
Lemma~\ref{lemma:unifint1}.

\end{document}